%% file: main.tex
\documentclass[10pt]{article}

\PassOptionsToPackage{table}{xcolor}
\newcommand{\ArticleTitle}{Lexicographic Minimax Load Balancing for T-Adaptive Segment Routing}
\newcommand{\ArticleAuthor}{Kaoutar Bouaachra}
\input{preamble}

\usepackage{thmtools}
\usepackage{thm-restate}
\usepackage{pifont}
\usepackage{arydshln}

\definecolor{babypink}{rgb}{0.96, 0.76, 0.76}

\definecolor{boxgrey}{RGB}{80,75,80}
\tikzstyle{problembox}  = [draw=boxgrey, fill=white, thin, rectangle,
                           rounded corners=2pt,
                           inner sep=9pt, inner ysep=10pt]
\tikzstyle{problemtitle}= [fill=white, text=boxgrey, font=\small\itshape]
\newcommand{\pboptdef}[3]{%
  \begin{center}
  \begin{tikzpicture}
    \node[problembox] (box){%
      \begin{minipage}{0.88\textwidth}
        \textbf{Input:~}{#2}\\[2pt]
        \textbf{Output:~}{#3}
      \end{minipage}};
    \node[problemtitle, right=8pt] at (box.north west) {\textsc{#1}};
  \end{tikzpicture}
  \end{center}}

\newenvironment{researchquestion}{%
  \par\smallskip\noindent
  \begin{minipage}{\linewidth}\hrule\vspace{0.5em}
  \noindent\textit{Research question.}\quad}{%
  \vspace{0.5em}\hrule\end{minipage}\par\smallskip}

\usepackage[colorinlistoftodos,prependcaption,textsize=scriptsize]{todonotes}

\newcommand{\cmark}{{\color{green}\Large\ding{51}}}%
\newcommand{\xmark}{{\color{red}\Large\ding{55}}}%

\title{On The Lexicographic Minimax Load Balancing\\
  for T-Adaptive Segment Routing}
\author{Kaoutar Bouaachra}
\date{}

\title{Lexicographic Minimax Load Balancing\\
       for T-Adaptive Segment Routing}

\author{Kaoutar Bouaachra\\[2pt]
  \small École polytechnique, Institut Polytechnique de Paris}

\date{Internship report}

\begin{document}
\maketitle

\begin{center}
\small
\begin{tabular}{@{}ll@{}}
  \textbf{Host organization}    & Orange Research, Data \& AI Division, MORE team \\
  \textbf{Industry supervisor}  & Yannick Carlinet \\
  \textbf{Context}              & ROADEF/EURO Challenge 2026, organized by  Orange \\
\end{tabular}
\end{center}

\begin{abstract}

We introduce the $T$-\srchallengeLong, a multi-period optimization problem that emerges in the traffic engineering of core IP/MPLS networks when scheduled maintenance operations are involved. The problem seeks a sequence of \textit{Segment Routing (SR)} configurations that can adapt to a multi-period scheduled maintenance, while allowing limited number of path reconfigurations between successive time steps.

Instead of only minimizing the classical \textit{Maximum Link Utilization (MLU)} criteria, we propose a more refined lexicographic objective that minimizes the sorted vector of link loads in its entirety, thereby enabling a more efficient resource utilization across all links.

To tackle this problem, we develop three exact formulations for solving the resulting lexicographic optimization problem. Among these, the \textsc{Stela} and \textsc{Carla} formulations provide the most favorable computational basis for trajectory-based optimization. We thus develop a trajectory column-generation scheme for these formulations, using pricing oracles that vary from heuristic to exact. Exact pricing gives an exact solution of the LP relaxation of the trajectory master, but it does not itself give a certificate of integer optimality: solving the integer master over the columns generated at the root node only certifies optimality within the generated column pool.

To overcome this gap, we incorporate trajectory column generation into a Branch-and-Price framework. The branching rule operates on aggregate variables representing segment usage, which correspond to the original compact routing variables. Thus, Branch-and-Price guarantees global integer optimality for every \textsc{Stela} and \textsc{Carla} rank, and when all ranks are solved to exactness, it guarantees lexicographic optimality of the final solution.

\end{abstract}
\noindent\textbf{Keywords:}
Segment Routing, Lexicographic Optimization, Min-Max Optimization,
Column Generation, Combinatorial Optimization, Branch-and-Bound, Branch-and-Price.

\input{sections/01-introduction}
\newpage
\input{sections/notation}
\newpage
\input{sections/02-background}

\input{sections/03-model}
\input{sections/04-methodsexact}

\input{sections/05-methodCG}
\input{sections/06-branchprice}

\input{sections/conclusionanddiscussion}
\printbibliography

\end{document}

%% file: preamble.tex
\usepackage[utf8]{inputenc}
\usepackage[english]{babel}

\usepackage[final]{microtype}
\usepackage{setspace}
\usepackage{indentfirst}
\usepackage[a4paper, margin=0.9in]{geometry}

\usepackage{xcolor}

\definecolor{burgundy}{RGB}{128,0,32}
\definecolor{darkblue}{RGB}{0,0,139}
\definecolor{normalblue}{RGB}{0,90,180}

\usepackage[
    colorlinks=true,
    linkcolor=burgundy,   
    citecolor=blue,   
    urlcolor=normalblue   
]{hyperref}

\hypersetup{
  pdftitle  = {\ArticleTitle},
  pdfauthor = {\ArticleAuthor},
  pdfcreator= {LaTeX}
}

\usepackage{amsmath, amssymb, amsthm, mathtools, bm}
\usepackage{xspace}

\usepackage{dsfont}
\usepackage{siunitx}
\newcommand{\lsr}{\langle}
\newcommand{\rsr}{\rangle}
      
\newcommand{\calA}{\mathcal{A}}

\newcommand{\Es}{E^{\cup}_d}

\newcommand{\minsr}{\textsc{Min Segment Routing}\xspace}
\newcommand{\srchallengeLong}{\textsc{Adaptive Segment Routing}}
\newcommand{\srchallenge}{\textsc{ASR}\xspace}

\newcommand{\segs}{\mathrm{segs}}

\newcommand{\mlu}{\mathrm{MLU}}

\newcommand{\maxSeg}{\texttt{maxSeg}}

\newcommand{\dist}{\operatorname{dist}}

\newcommand{\pool}{\mathrm{pool}}

\newcommand{\rc}{\bar{c}}
\newcommand{\AT}{\mathcal{A}_T}            

\newcommand{\bw}{\mathbf{w}}

\newcommand{\lex}{\leq_{\mathrm{lex}}}
\newcommand{\GG}{\mathcal{G}}

\DeclareMathOperator{\val}{val}

\theoremstyle{plain}
\newtheorem{theorem}{Theorem}
\newtheorem{proposition}{Proposition}
\newtheorem{lemma}{Lemma}
\newtheorem{corollary}{Corollary}

\theoremstyle{definition}
\newtheorem{definition}{Definition}

\theoremstyle{remark}
\newtheorem{remark}{Remark}

\usepackage{graphicx}
\graphicspath{{figures/}}
\usepackage{booktabs}
\usepackage{tabularx}
\usepackage{array}
\usepackage{multirow}
\usepackage{caption}
\usepackage{subcaption}

\usepackage{float}
\usepackage{algorithm}
\usepackage[noend]{algpseudocode}
\algnewcommand\algorithmicinput{\textbf{Input:}}
\algnewcommand\Input{\item[\algorithmicinput]}

\algnewcommand\algorithmicoutput{\textbf{Output:}}
\algnewcommand\Output{\item[\algorithmicoutput]}

\usepackage{listings}
\lstdefinestyle{report}{
  basicstyle=\ttfamily\small,
  numbers=left, numberstyle=\scriptsize, numbersep=8pt,
  frame=single, framerule=0.3pt, breaklines=true,
  showstringspaces=false, tabsize=2,
  columns=fullflexible, keepspaces=true
}
\usepackage{enumitem}
\setlist[itemize] {topsep=0.3em, itemsep=0.1em, parsep=0pt, leftmargin=1.6em}
\setlist[enumerate]{topsep=0.3em, itemsep=0.1em, parsep=0pt, leftmargin=1.8em}

\usepackage{csquotes}
\usepackage[
  backend=biber,
  style=numeric-comp,
  sorting=nyt,
  maxbibnames=99,
  giveninits=true,
  doi=true,
  url=false,
  isbn=false
]{biblatex}
\usepackage{tikz}

\usetikzlibrary{
  arrows.meta,
  decorations.pathreplacing,
  calc,
  positioning,
  shapes.geometric,
  fit,
  backgrounds
}

\definecolor{acc}{RGB}{25,55,110}
\usepackage{pifont}   

\usepackage[nameinlink, noabbrev]{cleveref}
\crefname{theorem}{theorem}{theorems}
\crefname{proposition}{proposition}{propositions}
\crefname{lemma}{lemma}{lemmas}
\crefname{corollary}{corollary}{corollaries}
\crefname{definition}{definition}{definitions}
\crefname{remark}{remark}{remarks}
\crefname{assumption}{assumption}{assumptions}
\crefname{algorithm}{algorithm}{algorithms}
\crefname{figure}{figure}{figures}
\crefname{table}{table}{tables}
\crefname{equation}{equation}{equations}

\usepackage{biblatex}
\usepackage{pgffor}
\usepackage{pgfmath}

%% file: sections/01-introduction.tex
\section{Introduction}
\label{chap:introduction}

\subsection{Context and motivation}

With the arrival of next-generation networks enabled by disruptive technologies such as Software-Defined Networking (SDN) and Network Virtualization, network managers have to reconsider classic \textit{Traffic Engineering} (TE) challenges and appropriate solutions through a different lens. These challenges include the adaptation of networks to accommodate the ever-growing volume of traffic and the multiplicity of user services and contents, all while maintaining high quality of service (QoS). Like for other telecommunications operators, TE is therefore a critical field at Orange as it allows to better tune network parameters so as to make a more efficient use of the existing infrastructure and resources. The purpose is mainly to avoid congestion and hence to improve the QoS experienced by users of the multiple services supported by the network. A standard design rule requires the network to remain 100\% functional whatever single link or node failure might occur. The term failure should be understood broadly, covering cases where a link becomes unavailable either due to unforeseen events or deliberate interventions by a network operator. Moreover, today’s geopolitical tensions and accelerating climate change increase the need for networks that are not only secure but also resilient to multiple router or link failures caused by natural disasters or acts of sabotage.

In this context, a key challenge is to design an optimization algorithm that, given an IP/MPLS network and a traffic matrix (that is, a set of projected values based on current measured volumes of traffic between given origin/destination pairs), computes a routing scheme that maintains acceptable network load, even in scenarios of network equipment unavailability.

The proposed routing scheme relies on the so-called Segment Routing (SR) protocol, a network technology proposed by IETF~\cite{rfc8402} which recently attracted much attention in both telecommunication networking and Operations Research communities. Indeed, the packets in a IP/MPLS network are traditionally routed along the shortest paths from the origin to the destination according to a set of arc weights set by the network administrators. While this approach is easy to implement in practice, it comes with several limitations such as the complexity of finding set of link weights that induce shortest-paths with the least possible network congestion while remaining robust against scenarios where multiple network components become unavailable.

Segment Routing was designed to palliate this issue by enabling the possibility to route packets over non-shortest paths without extensive modifications to the network. More precisely, each packet entering the network is assigned to a so-called \emph{segment path}, which is a sequence of routers referred to as \emph{node segments} or \emph{waypoints} that the packet must visit, one after the other in the network before reaching its final destination. Between two waypoints, traditional shortest path-based routing is used. By encoding routing instructions directly into the packet header, SR allows packets to deviate from the single shortest paths from origin to destination, and hence allowing greater flexibility with minimal additional implementation costs for network administrators.


Beyond its algorithmic difficulty, this problem is motivated by the practical
constraints faced by operational teams. In real IP/MPLS networks, routing
changes cannot be applied arbitrarily from one period to the next: they require
validation, deployment, supervision, and sometimes manual intervention.
Moreover, scheduled maintenance operations must be anticipated without
degrading service quality. The problem therefore consists not only in computing
good routing schemes for each network state, but in producing a sequence of
routing schemes that remains close enough to be operationally deployable. 

The choice of objective function is also important. A classical traffic
engineering objective is to minimize the Maximum Link Utilization (MLU), i.e.,
the load of the most congested link. While this criterion is natural and widely
used, it gives only a partial view of the network state. Two routing schemes may
have the same MLU while being very different operationally: in one solution,
only one link may be close to saturation, whereas in another solution many links
may operate near the same critical load. From an operator's perspective, the
second solution is less desirable, because it leaves less residual capacity to
absorb traffic forecast errors, short-term demand variations, or unexpected
failures.

For this reason, we consider a lexicographic objective over the sorted vector of
link loads across all time periods. This objective first minimizes the largest
load, exactly as MLU minimization does. However, among all solutions with the
same optimal MLU, it then minimizes the second-largest load, then the third one,
and so on. Therefore, the lexicographic criterion strictly refines MLU
minimization: any lexicographically optimal solution is MLU-optimal, but an
MLU-optimal solution is not necessarily lexicographically optimal. In practice,
this leads to routing schemes that distribute congestion more evenly over the
network and avoid hiding secondary bottlenecks behind the value of the single
most loaded link.

This objective is also preferable to simple aggregate criteria such as the
average load or a weighted sum of link utilizations. Average-based objectives
may hide local congestion, since a highly saturated link can be compensated by
many lightly loaded links. Weighted sums, on the other hand, require choosing
weights whose operational meaning may be unclear and may change from one
network to another. The lexicographic objective avoids this difficulty: it does
not require arbitrary tuning parameters and directly reflects the operational
priority of eliminating the worst bottlenecks first, then improving the
remaining critical links in a systematic way.

\subsection{Research question}

\begin{researchquestion}
How can multi-period Segment Routing under scheduled network interventions be modeled and solved efficiently while jointly accounting for congestion and operational stability?
\end{researchquestion}
More precisely, given a multi-period sequence of traffic matrices and network states induced by scheduled arc unavailabilities, how can one formulate a mathematical optimization model that computes a sequence of Segment Routing configurations, lexicographically minimizes the sorted vector of link utilizations across all arcs and time periods, and satisfies period-specific reconfiguration budgets between consecutive configurations? Furthermore, how can the resulting large-scale combinatorial optimization problem be solved efficiently on realistic network instances?

\subsection{Contributions}

The work carried out during my internship focuses on the optimization problem
proposed as the topic of the \emph{ROADEF Challenge 2026}, namely the
\emph{$T$-Adaptive Segment Routing} (T-ASR) problem. The problem definition,
as well as the benchmark suite used in the challenge, therefore constitutes
the starting point of this work. Building upon this framework, the main
contributions developed during my internship are the following:

\begin{itemize}

   \item \textbf{The ROADEF Challenge 2026 problem: a new multi-period
segment-routing problem with temporal coupling.}
The \emph{$T$-Adaptive Segment Routing} (T-ASR) problem is the optimization
problem proposed as the topic of the \emph{ROADEF Challenge 2026}. T-ASR is a
multi-period extension of segment routing in which routing decisions are
coupled across time through a limited reconfiguration budget. Unlike
classical segment-routing formulations that optimize each traffic period
independently, T-ASR explicitly captures the trade-off between congestion
reduction and routing stability. We show that this temporal coupling
fundamentally changes the combinatorial structure of the problem: in
particular, preprocessing and dominance arguments developed for the
single-period setting do not generally remain valid. We identify precisely
why the reconfiguration budget breaks these properties and provide
counterexamples establishing that T-ASR is not merely a straightforward
extension of existing formulations.

    \item \textbf{A principled leximin framework for congestion-aware routing.}
    We cast T-ASR as a \emph{lexicographic min-max}  optimization
    problem, in which the complete vector of link--time loads is minimized in
    decreasing lexicographic order. This objective goes beyond minimizing a
    single congestion metric: it first minimizes the worst congestion level,
    then the second-worst level, and so on. Since the ordering of the
    link--time loads is itself solution-dependent, the resulting objective is
    permutation-invariant and cannot be represented as a standard fixed-order
    lexicographic linear objective. We develop a sequential optimization
    framework that explicitly exploits the structure of the successive order
    statistics and provides a rigorous basis for solving this class of
    leximin problems.

    \item \textbf{Three exact MILP formulations for the successive leximin
    subproblems.}
    We develop and analyze three different formulations---\textsc{Alexa},
    \textsc{Stela}, and \textsc{Carla}---for the sequential leximin
    subproblems arising in T-ASR. The formulations provide different
    representations of the same underlying lexicographic structure, ranging
    from explicit ordering decisions to threshold-based and cumulative
    representations. Beyond proposing the formulations, we establish their
    correctness and show how they preserve the lexicographic structure of the
    original problem. This provides a formulation-level contribution that is
    relevant beyond T-ASR, since the same sequential construction can be
    adapted to other combinatorial optimization settings involving
    lexicographic min-max objectives.

    \item \textbf{A tailored column-generation framework for large-scale
    leximin optimization.}
    To overcome the size of the resulting MILPs, we develop a
    \emph{branch-and-price} framework based on column generation. In
    particular, we derive the corresponding pricing problems for the
    formulations used in the master problems and exploit the specific
    structure of segment routing and ECMP forwarding. This allows the
    method to avoid explicitly enumerating the complete set of feasible
    routing configurations, which becomes prohibitive as the network size
    increases. We further investigate two complementary implementations,
    based on \textsc{Stela} and \textsc{Carla}, and study how the choice of
    formulation affects the resulting column-generation process.

    \item \textbf{An exact branch-and-price algorithm for the complete
    lexicographic problem.}
    We integrate the sequential leximin procedure with column generation and
    branching to obtain an exact branch-and-price framework for T-ASR. The
    resulting algorithm does not optimize a surrogate congestion measure or
    approximate the lexicographic objective: it progressively fixes the
    optimal values of the highest-ranked congestion components and continues
    until the complete lexicographic vector is determined. 

    \item \textbf{A theoretical contribution to sequential leximin
    optimization.}
    Beyond the specific routing application, we formalize structural
    properties of the sequential leximin procedure, including the evolution
    of the optimal thresholds, the preservation of previously optimized
    ranks, and the interaction between the successive subproblems. These
    results clarify how a lexicographic min-max objective can be decomposed
    into a sequence of exact optimization problems. The resulting principles
    are not tied to segment routing and can potentially be transferred to
    other optimization problems in which a vector of competing
    criteria must be optimized lexicographically.

   \item \textbf{The benchmark suite introduced by the ROADEF Challenge 2026 and an extensive computational evaluation.}
We conduct an extensive computational evaluation on the benchmark suite
provided as part of the \emph{ROADEF Challenge 2026}, which is specifically
designed to capture the temporal and reconfiguration aspects of T-ASR. We
evaluate the proposed formulations and algorithms across multiple network
scales, comparing the exact formulations, the corresponding column-generation
approaches, and the resulting \textsc{Stela}- and \textsc{Carla}-driven
methods. The computational study highlights both the scalability of the
proposed approaches and the impact of the underlying formulation on their
ability to recover high-quality lexicographic solutions on larger instances.

\end{itemize}
Taken together, these contributions establish a general exact methodology
for optimization problems whose objective is to lexicographically minimize
the sorted values of a collection of competing criteria. While developed
in the context of multi-period segment routing, the sequential
reformulation, formulation principles, and algorithmic machinery developed
in this work are applicable to broader classes of
optimization problems with leximin objectives.

\subsection{Structure of the report}

The remainder of this report is organised as follows.
\Cref{chap:background} reviews segment routing with ECMP, traffic engineering
formulations, and lexicographic minimax optimisation.
\Cref{chap:model} states the T-ASR problem formally: data, constraints, and the
leximin objective on arc--time loads.
\Cref{sec:milp} develops three compact sequential exact algorithms: \textsc{Alexa}, \textsc{Stela} and \textsc{Carla}, and reports the campaigns
that discard \textsc{Alexa} while retaining the other two.
\Cref{sec:cg} applies trajectory-based column generation to both surviving
drivers, with the master problem and the pricing subproblem derived for each.
\Cref{bp-sec:main} extends this into a branch-and-price framework and discusses
the conditions under which it is exact.

%% file: sections/notation.tex
\section{Notation}
\markboth{Notation}{Notation}

Table~\ref{tab:notations} provides an overview of the general notations used throughout this report. Section-specific notations are introduced in the corresponding chapters as needed.

\begin{table}[ht!]
\centering
\footnotesize
\rowcolors{2}{gray!10}{white}
\setlength{\tabcolsep}{6pt}
\renewcommand{\arraystretch}{1.2}

\begin{tabularx}{\textwidth}{l X}
\toprule
\textbf{Notation} & \textbf{Description} \\
\midrule
\multicolumn{2}{l}{\textit{\textbf{Network \& Graph}}} \\
$G=(V,A)$ &
\begin{tabular}[t]{@{}l@{}}
Directed graph representing the network, with vertex set $V$ and arc set $A$.
\end{tabular} \\
$\omega(a)$ &
\begin{tabular}[t]{@{}l@{}}
Weight function for arc $a\in A$, used for shortest-path
computations.
\end{tabular} \\
$c(a)$ & Capacity (bandwidth) of arc $a\in A$. \\
$FG(u,v)$ & Forwarding Graph: arcs on shortest paths from $u$ to $v$. \\
$r(u,v,a,t)$ & Split coeff.: fraction of flow $u\rightarrow v$ on arc $a$ at time $t$. \\
$\lambda(a,t)$ & Load (link utilization) of arc $a$ at time $t$. \\
$\lambda^{\downarrow}(P)$ & The vector of sorted loadings
\\
 $\lambda_{i}^{\downarrow}(P)$ &  its $i$-th component of $\lambda'$
\\
$\mlu$ & Maximum Link Utilization. \\
\midrule
\multicolumn{2}{l}{\textit{\textbf{Time \& Interventions}}} \\
$T$ & Set of time periods $\{0,1,\ldots,h-1\}$. \\
$T^*$ &
\begin{tabular}[t]{@{}l@{}}
Set of time periods excluding the initial nominal state,
$T^*=T\setminus\{0\}$.
\end{tabular} \\
$q(t)$ & Intervention scenario: subset of arcs unavailable at time $t$. \\
$G_t$ & Subgraph of $G$ with available arcs $A\setminus q(t)$ at time $t$. \\
$\kappa(t)$ & Maximum budget for configuration changes at time $t$. \\
\midrule
\multicolumn{2}{l}{\textit{\textbf{Demands \& Segment Routing}}} \\
$D$ &
\begin{tabular}[t]{@{}l@{}}
Set of traffic demands, where each demand $d=(s,t)$\ has source node $s$ and target node $t$.
\end{tabular} \\
$\nu(d,t)$ & Traffic volume for demand $d$ at time $t$. \\
$\lsr s,w_1,\ldots,t\rsr$ & Notation for a segment path with waypoints $w_i$. \\
$\texttt{maxSeg}$ & Maximum number of segments allowed in a path. \\
$P$ & Routing scheme (set of segment paths for all demands). \\
$\dist(P,P')$ & Distance between two routing schemes (nb. of changes). \\
$\delta(p_d,ij)$ & Indicator equal to 1 if path $p_d$ uses segment $(i,j)$. \\
\midrule
\multicolumn{2}{l}{\textit{\textbf{Decision Variables}}} \\
$x^{dt}_{ij}$ &
\begin{tabular}[t]{@{}l@{}}
Binary variable: 1 if the path for demand $d$ uses 
segment $(i,j)$ at time $t$.
\end{tabular} \\
\bottomrule
\end{tabularx}

\vspace{0.1cm}
\captionsetup{justification=centering}
\caption{Table of Notations}
\label{tab:notations}
\end{table}

%% file: sections/02-background.tex
\section{Background, Related Work}
\label{chap:background}

\subsection{Operational setting}

In operational IP/MPLS networks, routing decisions cannot be considered independently
for each traffic situation. Network operators continuously monitor traffic evolution
and anticipate scheduled interventions such as maintenance operations or planned
equipment unavailability. These interventions modify the network state and may require
the computation of new Segment Routing configurations.

However, changing a routing configuration is not a cost-free operation. Each
reconfiguration requires validation, deployment, and coordination by operational
teams. Therefore, the routing scheme computed for a given period should not only
provide good network performance, but should also remain sufficiently close to the
configuration used in previous periods. This operational constraint is modeled through
a reconfiguration budget limiting the amount of changes between consecutive Segment
Routing solutions.

The operational workflow considered in this work is illustrated in
Figure~\ref{fig:workflow}. For each time period, traffic matrices and network
states affected by scheduled interventions are first used to generate feasible
Segment Routing configurations. Candidate configurations are then evaluated according
to their congestion level, while respecting the allowed reconfiguration budget.
Finally, the optimization model selects a sequence of routing configurations that
provides robust traffic engineering decisions over the whole planning horizon.

\begin{figure*}[ht]
\centering
\begin{tikzpicture}[
  font=\small,
  >={Stealth[length=2.2mm]},
  io/.style   ={draw=black, fill=gray!8,  rounded corners=1pt, align=center,
                text width=30mm, minimum height=9mm, inner sep=3pt},
  proc/.style ={draw=black, fill=gray!22, align=center,
                text width=30mm, minimum height=9mm, inner sep=3pt},
  sel/.style  ={draw=black, fill=gray!32, align=center, thick, inner sep=6pt},
  per/.style  ={draw=black, dashed, rounded corners=3pt, inner sep=6pt},
  flow/.style ={->, thick, draw=black},
  budget/.style={<->, thick, draw=black!75, dashed},
  perlbl/.style={font=\bfseries}
]
\def\colsep{5.4}

\foreach \i/\x in {0/0, 1/\colsep, 2/{2*\colsep} } {
  \node[io]   (tm\i)  at (\x,4.0) {Traffic matrix\\[1pt] $\nu(\cdot,\,\i)$};
  \node[io]   (ns\i)  at (\x,2.3) {Network state $G_{\i}$\\[1pt]
                                   \footnotesize interventions $q(\i)$};
  \node[proc] (gen\i) at (\x,0.6) {Generate feasible\\ SR configurations};

  \draw[flow] (tm\i) -- (gen\i);
  \draw[flow] (ns\i) -- (gen\i);

  \begin{scope}[on background layer]
    \node[per, fit=(tm\i)(ns\i)(gen\i)] (frame\i) {};
  \end{scope}
  \node[perlbl, above=2pt of frame\i.north] {$t=\i$};
}

\draw[budget] (frame0.east|-gen0) -- node[above,font=\footnotesize]{$\kappa(1)$}
              (frame1.west|-gen1);
\draw[budget] (frame1.east|-gen1) -- node[above,font=\footnotesize]{$\kappa(2)$}
              (frame2.west|-gen2);

\node[sel, text width=132mm] (opt) at (\colsep,-1.7)
   {\textbf{lexicographic minimax optimization over the whole horizon}\\[1pt]
    jointly evaluate the loads $\lambda(a,t)$ across all arcs and periods,\\
    and minimize the sorted load vector lexicographically,
    subject to the reconfiguration budget};

\foreach \i in {0,1,2}{ \draw[flow] (gen\i.south) -- (gen\i.south|-opt.north); }

\node[font=\Large] at ({2*\colsep+2.6},0.6) {$\cdots$};
\end{tikzpicture}
\caption{\small Operational workflow over the planning horizon. For each period~$t$,
the traffic matrix $\nu(\cdot,t)$ and the network state $G_t$ induced by the
scheduled interventions $q(t)$ feed the generation of feasible Segment Routing
configurations. The reconfiguration budget $\kappa(t)$ couples consecutive
periods. Congestion is not evaluated period by period: the loads
$\lambda(a,t)$ over all arcs and all periods are assembled into a single sorted
vector, which the lexicographic minimax model minimizes jointly subject to the budget.}
\label{fig:workflow}
\end{figure*}

\subsection{State of the Art}
\label{s:sota}

\paragraph{Segment routing and traffic engineering:}
From the complexity point of view, the $T$-\srchallenge problem studied in this paper generalizes the \minsr problem~\cite{hartert2015solving}:
indeed, one can easily verify that \minsr is equivalent to~$\{0\}$-\srchallengeLong{}.
Consequently, the computational intractability results established for \minsr
carry over directly to $T$-\srchallenge{}.
In particular, this problem remains parameterized intractable and inapproximable
(under standard complexity assumptions), even on highly restricted network
topologies~\cite{camille25}.

Most existing optimization approaches focus on the static segment routing
problem, or on variants of it that do not jointly consider multi-period
planning, scheduled link interventions, and bounded waypoint reconfigurations. Bhatia et al.~\cite{bhatia} formulate SR with two
segments as a linear program using a path-based model.
Building on this, Hartert et al.~\cite{hartert2015solving} quantify the
benefit of SR over standard routing protocols, demonstrating congestion
reductions of 30\% to 50\% under known traffic, and introduce a
MILP-based heuristic to reduce computation times.
Schüller et al.~\cite{schuller} further extend this line of work with a
failure-resilient MILP that minimizes the Maximum Link Utilization (MLU),
though at the cost of prohibitive computational times on large instances.
A model supporting an arbitrary number of segments was proposed by ~\cite{cg4sr}, who combine column generation with a branch-and-bound
procedure to tackle the problem at scale. Pereira et al.~\cite{pereira} propose a heuristic for
three-segment SR that incorporates link failure handling and makes use of
adjacency segments.
Gay et al.~\cite{gay_srls} introduce SRLS, a local search heuristic aimed
at fast re-optimization in response to sudden traffic changes, addressing
the practical limitation of MILP-based approaches in time-sensitive
scenarios.
Parham et al.~\cite{parham} consider the joint optimization of SR paths
and shortest-path routing weights, proposing a heuristic for this richer
combined problem.

These works form the closest algorithmic background for our study.
However, they remain essentially static: the routing configuration is optimized
for a single traffic and network state.
They therefore do not address the temporal and reconfiguration-constrained
setting considered in this paper.

In contrast, the problem considered in this paper is inherently multi-period:
routing decisions must be planned over a horizon with scheduled interventions,
while limiting the number of waypoint changes between consecutive time steps. Moreover, rather than minimizing the MLU alone, we adopt a lexicographic
objective over the sorted vector of arc loads, as formally defined in
Section~\ref{problem definition}.
This objective strictly strengthens MLU minimization: any lex-optimal solution
is also MLU-optimal, but the converse does not necessarily hold. Beyond this algorithmic gap, we also propose a set of benchmark instances
specifically designed for this problem, including network topologies, traffic
matrices, intervention scenarios, and budget parameters, generated according to
the procedures described in~\Cref{sec:generation}.
These instances are made publicly available to the community, providing a
common basis for testing and comparing future approaches on this problem
setting.

Unlike the single-period SR optimization studied by Callebaut et al.~\cite{callebaut},
dominated segment paths; in particular paths containing loops; cannot be
removed from the search space in T-ASR without losing optimality. A path that is dominated in one isolated period may be necessary because it remains close to the previous routing scheme and therefore satisfies the budget (see~\Cref{fig:interventionsar3}). This shows that the budget constraint  can force an optimal solution to include segment paths containing loops, which the preprocessing of~\cite{callebaut} would incorrectly discard.

\begin{figure}[ht]
    \centering

    \includegraphics[
        width=0.4\linewidth,
        keepaspectratio
    ]{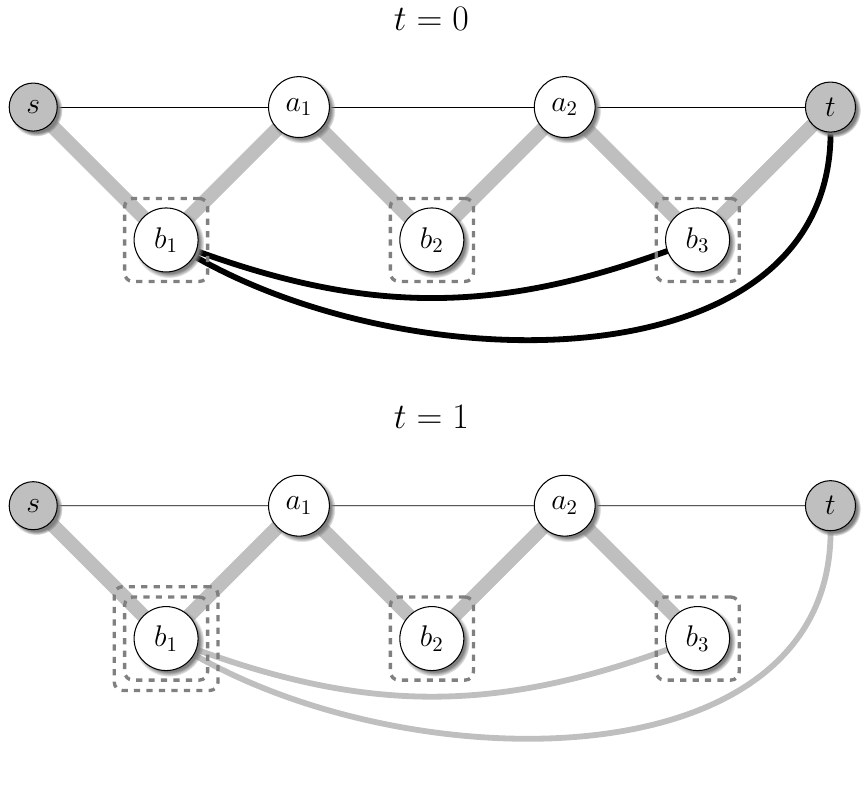}

    \captionsetup{
        font=small,
        skip=3pt,
        justification=justified,
        singlelinecheck=false
    }

    \caption{\small Example of an instance of $\{0,1\}$-\srchallenge{} where the
    only optimal solution includes a segment path containing a loop.
    The input graph is bidirected with each arc~$uv$ having the same
    capacity and weight as its reverse~$vu$. For this reason, we
    represent~$uv$ and~$vu$ as a single edge in the figure for clarity.
    The thickness of an edge represents the arc capacity value. Thin,
    medium, and thick edges correspond to capacities~$1$,~$2$, and~$3$,
    respectively. All arcs have unit weights except for the arcs
    $(b_1,t)$ and $(t,b_1)$, which have weight~$2$. The only demand to be
    routed has traffic volume~$1$. The budget is $\kappa(1)=3$ and the
    intervention scenario is
    $q(1)=\{(b_3,t),(t,b_3)\}$. The grayish links correspond to the arcs
    traversed by the flow. An optimal segment-path solution is
    $p_1=\langle b_1,b_2,b_3\rangle$ and
    $p_2=\langle b_1,b_2,b_3,b_1\rangle$ at times~$0$ and~$1$, with
    induced MLUs of $\frac{1}{3}$ and $\frac{1}{2}$, respectively.
    Notice that $p_1$ yields the best MLU at time~$0$, since it only uses
    arcs with maximum capacity and all three waypoints are necessary to
    achieve this. At time~$t=1$, the given budget allows only the addition
    or deletion of a single waypoint. Deleting any of $p_1$'s waypoints,
    or adding no waypoint at all, would divert the flow onto a thin arc,
    inducing an MLU of~$1$. The remaining possibilities without creating
    a loop consist of adding a waypoint on $a_1$ or $a_2$. However, each
    possibility would again divert the flow onto at least one thin arc.
    Hence, the only remaining option is to add $b_1$ as a new waypoint,
    obtaining an MLU of $\frac{1}{2}$.}

    \label{fig:interventionsar3}
\end{figure}

\paragraph{Lexicographic minimax optimization}
The objective of \mbox{T-ASR} is a \emph{lexicographic min--max} (lexicographic minimax)
criterion applied to the vector of arc loads over all arcs and all time
periods: one minimizes the largest load, then, among the resulting
solutions, the second largest, and so on. This refinement of the
classical min--max (MLU) criterion is the standard way of encoding
\emph{equity} between competing outcomes
\cite{kostreva1999,ogryczak1997location,luss1999}. Its core difficulty is
that the criterion is applied to the \emph{sorted} load vector, whose
underlying permutation is itself solution dependent: which arc--period
pair occupies lexicographic rank $k$ is not known a priori but is part of the decision.
This sets lexicographic minimax apart from lexicographic problems in which the order of
the coordinates is fixed in advance, and it is the source of the
non-linearity that any exact method must address.

%% file: sections/03-model.tex
\section{Mathematical Model}
\label{chap:model}

\subsection{Definitions}
\label{sec:notations}

In simple terms, the problem consists of determining the segment path for each demand in order to balance the network load.
Prior to providing the formal definition of the optimization problem, we introduce the necessary preliminary notations and definitions.

\paragraph*{Network}
A \emph{network} is a tuple $(G=(V,A), \omega, c)$ where $G$ is a directed
graph of~$n$ vertices with vertex set~$V = \{v_0, v_1, \ldots, v_{n-1}\}$
and arc set $A \subseteq \{a_{i,j} : (v_i, v_j) \in V \times V\}$,
$\omega\colon A \rightarrow \mathbb{Q}_{>0}$ is a positive weight function
used to compute the shortest paths in $G$, and
$c\colon A \rightarrow \mathbb{Q}_{>0}$ is a positive capacity function that
represents, for each arc $a \in A$, its bandwidth $c(a)$, that is, the maximum amount of traffic throughput that
it can accommodate. The terms ``arc'' and ``link'' are used interchangeably
throughout this document. Traffic volumes are given by a nonnegative
function $\nu\colon D \times T \rightarrow \mathbb{Q}_{\ge 0}$, where $D$ is
the set of demands and $T$ the set of time periods introduced below.
\paragraph*{Forwarding graph \& ECMP} \label{forwarding_graph}
Let $(G=(V,A), \omega, c)$ be a network. The \emph{forwarding graph} from a \emph{source} $u \in V$ to a \emph{target} $v \in V$ is the subgraph of $G$ containing all arcs that belong to any shortest path (according to weights $\omega$) from $u$ to $v$. It is denoted $FG(u,v)$. When $FG(u,v)$ is not a simple path, the \emph{Equal-Cost Multi-Path} (ECMP) mechanism is activated: the incoming flow in a vertex of $FG(u,v)$ is evenly divided between the outgoing arcs of this vertex in $FG(u,v)$ (see~\Cref{fig:definitions}). 

\paragraph*{Segment routing path} 
A \emph{segment routing path}, or \emph{segment path}, in a network $(G=(V,A), \omega, c)$ is a succession of forwarding graphs such that the target of the previous forwarding graph coincides with the source of the next one. A segment path is denoted by $\lsr s, w_1, \ldots, w_\ell, t \rsr$ where $s$ is the source of the segment path, $t$ the target, and $w_1, \ldots, w_\ell \in V$ are the \textit{waypoints} in that order. The source and target do not count as waypoints. The case where there is no waypoint (i.e. $l = 0$) is possible; it means that the demand is simply routed on the shortest paths from~$s$ to~$t$.

\paragraph*{Segment} 
In this context, we define a \textit{segment} as a pair of successive nodes in a segment path. These nodes are either the source, target or waypoints included in a segment path. For instance, ($s$, $w_{1}$), ($w_{1}$, $w_{2}$), \ldots, ($w_\ell$,~$t$) are the segments composing the segment path $\lsr s, w_1, \ldots, w_\ell, t \rsr$. Each segment~$(u,v)$ of a segment path is thus associated with the forwarding graph~$FG(u,v)$. A typical maximum number of segments in a segment path is between 4 and 10, depending on the underlying protocol and the router technology.

\paragraph*{Demand} 
A \emph{demand} on a network~$(G=(V,A), \omega, c)$ is a couple~$(s,t)$ where the \emph{terminals} $s,t \in V$ are respectively the \emph{source} and \emph{target} of the demand. Each demand is associated with a traffic volume, i.e. the size of the flow that needs to be routed from the source to the target. If the traffic volume is 1, it is referred to as a unit demand. The volume can vary in time, therefore the traffic volume is a function denoted $\nu:  D \times T \to \mathbb{Q}$ with $T$ the discrete set of time periods and $D$ the set of demands, referred to as the traffic matrix.

\paragraph*{Routing scheme} 
Given a network $(G=(V,A), \omega, c)$ and a set of $k$ demands $D = \lbrace (s_i, t_i) : i=1, \ldots, k \rbrace$ on $G$, a \emph{routing scheme} for $D$ is a set~$P$ of $k$ segment paths $\lbrace p_d : d \in D \rbrace$ such that $p_d$ is a segment path from~$s$ to~$t$ associated to the demand~$d=(s,t)$.

\paragraph*{Time horizon} 
The routing scheme has to be decided in advance and over a given time period (from a few days to a couple of weeks). We assume that the traffic demand values can be estimated through forecasting for each time period, whereas the network state (resources status, utilization and availability) depends on the scheduled maintenance interventions. We denote by $T$ = $\{0, 1, \ldots, h-1\}$ the set of time periods, and~${h \geq 1}$ the routing planning horizon. For example, $T$ = $\{0, 1, \ldots, 6\}$ for a daily planning over a week and $T$ = $\{0, 1, 2, 3\}$ for a weekly planning over a month. The time step $0$ is a special case because it represents the nominal situation, when the network runs without any failure. Therefore, we also denote $T^* = T \setminus \{0\}$.

\paragraph*{Interventions} 
An intervention in the network (for maintenance, or other reason) causes a link or a node to be turned down for a certain time. Functionally, it is the same as a failure, except it can be planned in advance. In the following, we consider only links turned down, because if a node is down, it is functionally the same as if all connected links to this node are down. 

\paragraph*{Intervention scenario}
An \emph{intervention scenario} is a function $q\colon T \to 2^A$ satisfying
$q(0) = \varnothing$: for each period $t \in T^*$, the set $q(t) \subseteq A$
contains the arcs of~$G$ made unavailable at period~$t$ by one or several
scheduled maintenance operations, called \textit{interventions}, while
period~$0$ represents the nominal state of the network. We denote by~$G_t$
the subgraph of~$G$ with arc set~$A \setminus q(t)$, that is, the network as
seen at period~$t$ once the intervention scenario has taken down some links.

\paragraph*{Budget}
For a given network and a set of demands, the routing scheme may require some changes or reconfiguration from one time period to the other, in order to take into account the links and nodes that are down. Each change in a routing scheme may require a human action that has a cost. In this context, it is desirable to limit the number of network reconfigurations (i.e. adding/removing waypoints). We denote by~$\dist(P, P') \in \mathbb{N}$ the number of changes between two routing schemes~$P$ and $P'$. More formally, 

$$\dist(P, P') = \sum_{d \in D, i,j \in V, i\neq j} |\delta(p_d,ij) - \delta(p_{d}',ij)|$$
where~$\delta(p_d,ij)$ is equal to 1 if the segment path~$p_d$ associated to the demand~$d$ contains the segment $(i, j)$,~$0$ otherwise (see~\Cref{tab:distances}).
\begin{table}[h!]
\centering
\setlength{\tabcolsep}{10pt}
\renewcommand{\arraystretch}{1}
\rowcolors{2}{gray!10}{white}
\begin{tabular}{lll}
\toprule
\textbf{Routing Scheme} $P$ & \textbf{Routing Scheme} $P'$ & $\dist(P, P')$ \\
\midrule
$\{\lsr s, w, t \rsr\}$ & $\{\lsr s, t \rsr\}$ & $3$ \\
$\{\lsr s, w_1, w_2, t \rsr\}$ & $\{\lsr s, w_1, t \rsr\}$ & $3$ \\
$\{\lsr s, w_1, w_2, t \rsr\}$ & $\{\lsr s, w_2, w_1, t \rsr\}$ & $6$ \\
$\{\lsr s, w_1, t \rsr\}$ & $\{\lsr s, w_2, t \rsr\}$ & $4$ \\
$\{\lsr s_1, w_1, t_1 \rsr,$ & $\{\lsr s_1, w_4, t_1 \rsr,$ & \\
$\quad\lsr s_2, w_2, w_3, t_2 \rsr\}$ & $\quad\lsr s_2, w_3, w_2, t_2 \rsr\}$ & $10~(= 4 + 6)$ \\
\bottomrule
\end{tabular}
\caption{\small Examples of distance values}
\label{tab:distances}
\end{table}
For each time step, a budget function is given, denoted $\kappa : T^* \to \mathbb{N}$, that represents the maximum number of changes allowed from one time step to next one.

\paragraph{Load} 
The load $\lambda(a, t) \in \mathbb{Q}^{+}$ of arc $a \in A$ at time step $t \in T$, is the ratio between the quantity of flow using an arc and its capacity. This ratio is also referred to as \textit{link utilization} and is often used as (part of) the optimization criterion in the literature related to Traffic Engineering. Note that it can be higher than 1 in case of congestion on the link. 
In order to compute the load, we introduce the concept of \textit{split coefficients}, denoted by $r(u, v, a, t)$. They represent the proportion of flow between source node $u$ and target node $v$ that passes through arc $a$ of $FG(u, v)$ under the condition of the network at time $t$ (i.e. in graph $G_{t}$). 
The ratios are given for all couples of nodes $(u, v)$ because the segment $(u, v)$ can be potentially used for routing a demand. 
Note that $r(u, v, a, t)$ is always greater or equal to $0$, and $r(u, v, a, t) = 0$ if arc $a$ is not in the forwarding graph $FG(u, v)$. The load of an arc $a \in A$ at time step $t \in T$ is then:

$$
\lambda(a, t) = \frac{\sum_{d \in D}\sum_{i,j \in V} r(i, j, a, t) \; \nu(d,t) \; \delta(p_d,ij)}{c(a)}
$$
See \Cref{fig:definitions} for an illustration of ECMP, split coefficients and of a segment path with one waypoint.

\begin{figure}[h!]
    \centering
    \includegraphics[width=0.6\linewidth]{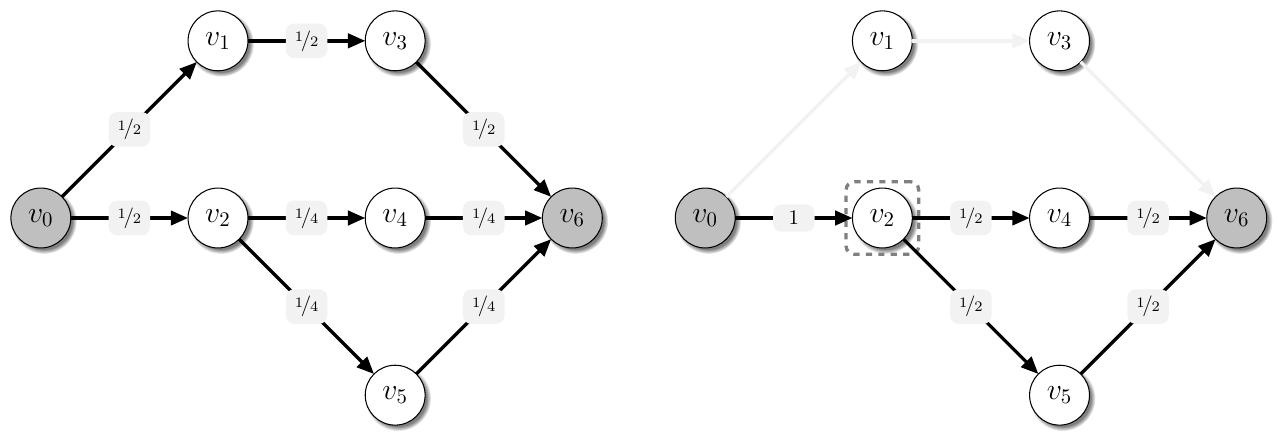}
    \caption{\small On the left, the demand~$(v_0, v_6)$ is routed on a network with unit weights through the segment path~$\lsr v_0, v_6 \rsr$. The associated forwarding graph, $FG(v_0, v_6)$, includes all arcs. The fraction on each arc indicates how the flow is split among the shortest paths (split coefficient) using the ECMP rule. On the right, a waypoint in~$v_2$ is introduced, resulting in the flow being routed through the segment path~$\lsr v_0, v_2, v_6 \rsr$ that is, through the bottom part of the network. The associated forwarding graphs, $FG(v_0, v_2)$ and $FG(v_2, v_6)$, are represented with bold arcs.}
    \label{fig:definitions}
\end{figure}

\paragraph{Computing the split coefficients}
\label{lem:ecmp}
The split coefficients $r(u,v,a,t)$ used in the load expression are not part of
the instance data: they are entirely determined by the topology $G_t$ and the
routing metric $\omega$ through the ECMP rule. Making this dependence explicit
serves two purposes. It provides a constructive way to obtain the values
$r(u,v,a,t)$, and it shows that, for a fixed instance, these values are
\emph{constants}, so that the load $\lambda(a,t)$ is a \emph{linear} function of
the routing variables, the property on which every method developed in this
report relies.

Once $G_t$ and $\omega$ are fixed, the split
coefficients $r(u,v,a,t)$ are obtained by a single topological pass over each
forwarding graph; they are therefore precomputed once and treated as input
parameters of the optimization model. Substituting them into the load definition,
\[
  \lambda(a,t)=\frac{1}{c(a)}\sum_{d\in D}\sum_{i,j\in V}
  r(i,j,a,t)\,\nu(d,t)\,\delta(p_d,ij),
\]
the load is a \emph{linear} function of the routing variables $x^{dt}_{ij}$
(through the indicators $\delta(p_d,ij)$). This linearity is the common foundation
of the exact methods of Chapter~\ref{sec:milp} and of the column
generation of~\Cref{sec:cg}: without it, the load would depend
nonlinearly on the routing, and neither the (MI)LP formulations nor the pricing
subproblem would be available.

\paragraph{Maximum Link Utilization (MLU)} 
Given a network $(G=(V,A), \omega, c)$, a set of $k$ demands $D = \lbrace (s_i, t_i) : i=1, \ldots, k \rbrace$ on $G$, a \emph{routing scheme}~$P$ for $D$, the maximum link utilization, denoted~$\mlu(P,D,G)~\in~\mathbb{Q}^{+}$ is the load of the most loaded link.

\subsection{Problem Formulation}

In this section, we introduce our new problem, which we called the
$T$-\srchallengeLong~($T$-\srchallenge{}). A formal expression of the decision variables, constraints and objective function of $T$-\srchallenge{} is provided.

\subsection{Decisions}

\paragraph{Routing}
Let us define, for every demand $d\in D$, time step $t\in T$ and ordered pair
of distinct nodes $(i,j)$ such that $j$ is reachable from $i$ in $G_t$, the
binary variable $x^{dt}_{ij}$ that takes value~$1$ if the segment path of
demand~$d$ contains the segment~$(i,j)$ at time step~$t$, and~$0$ otherwise.
No variable is introduced for $i=j$, which has no segment interpretation,
nor for a pair whose forwarding graph is empty in~$G_t$, which the model
could otherwise select at no load cost. Such pairs are discarded in a
preprocessing step, and all sums over $i,j\in V$ in the sequel are
understood to range over the remaining segments only.

\noindent Therefore, a solution of the $T$-\srchallenge{}  problem corresponds to a \textit{routing scheme} induced by the variables $x$ with non-zero values, that minimizes the objective described in~\Cref{ss:obj}, and satisfies the constraints given in~\Cref{ss:constraints}, for each period of time. 

\subsection{Constraints} \label{ss:constraints}

\paragraph{Flow-conservation constraints} 
The following set of equalities ensures that each traffic demand $d\in D$ is routed along one segment path which connects its origin and its destination, at each time step $t$, \textit{i.e.}, for all~$i \in V$, $d =(s,t) \in D$ and $t\in T$
\begin{align}
\label{ctn:flow} & \sum_{j \in V\setminus \{i\}} x^{dt}_{ij} - \sum_{j \in V\setminus \{i\}} x^{dt}_{ji} =  
\left \{ \begin{array}{ll}
1 & \mbox{if $i = s$,}\\
-1 & \mbox{if $i=t$,}\\
0 & \mbox{otherwise.}
\end{array} \right.
\end{align}

\paragraph{Number of segments} 
The following inequalities allow, for each demand $d$ and each time step $t$, to limit the number of segments, including the first hop from the source node of $d$ to the first waypoint visited, and the last waypoint visited to the destination node, used in the solution. Formally, for all~$d\in D$ and~$t\in T$,
\begin{align}
\label{ctn:segments} & \sum_{i,j \in V} x^{dt}_{ij} \leq \texttt{maxSeg}
\end{align}

\paragraph{Load constraints}

The total traffic of a link is computed by adding the traffic of all demands that are routed through this link.
Formally, for all~$t \in T$ and $a\in A \setminus q(t)$,
\begin{align}
\label{ctn:load} & \sum_{d \in D}\sum_{i,j \in V} r(i, j, a, t) \; \nu(d,t) \; x^{dt}_{ij}  \leq \lambda(a, t) \; c(a)
\end{align}

\paragraph{Budget constraints} 
Since each network configuration change incurs significant operational costs and may also risk deteriorating the Quality of Service, it is desirable to incorporate budget limitations in the model. The following set of inequalities (\ref{ctn:budget_3}) ensure that the value returned by~$\dist$ (defined in~\Cref{sec:notations}, paragraph {\it Budget}) is indeed bounded by $\kappa$ and allow to restrict the number of waypoint changes scheduled from one time step to the next.
Formally, for all~$t \in T^*$,
\begin{align}
\label{ctn:budget_3} & \sum_{d\in D}\sum_{i,j\in V,i\neq j}  | x^{dt}_{ij} - x^{d, t-1}_{ij} | \leq \kappa(t)
\end{align}

The reconfiguration cost used in this report is the number of segment-usage
changes, i.e.\ the cardinality of the symmetric difference between the segment
sets of two consecutive routing schemes. It should not be interpreted as the
number of waypoint insertions or deletions: one waypoint modification may
change several segments.
The absolute values in~\eqref{ctn:budget_3} admit a standard exact
linearization. Introducing continuous variables $z^{dt}_{ij}\ge 0$ for
$d\in D$, $t\in T^*$ and $i\neq j$, subject to
\begin{equation}\label{ctn:budget-lin}
z^{dt}_{ij}\ \ge\ x^{dt}_{ij}-x^{d,t-1}_{ij},
\qquad
z^{dt}_{ij}\ \ge\ x^{d,t-1}_{ij}-x^{dt}_{ij},
\end{equation}
the budget constraint becomes
\begin{equation}\label{ctn:budget-lin-sum}
\sum_{d\in D}\sum_{\substack{i,j\in V\\ i\neq j}} z^{dt}_{ij}\ \le\ \kappa(t),
\qquad t\in T^*.
\end{equation}
The linearization is exact even though $z$ does not appear in the objective:
\eqref{ctn:budget-lin} forces $z^{dt}_{ij}\ge|x^{dt}_{ij}-x^{d,t-1}_{ij}|$,
so any point feasible for~\eqref{ctn:budget-lin-sum} satisfies the original
constraint; conversely, setting $z^{dt}_{ij}=|x^{dt}_{ij}-x^{d,t-1}_{ij}|$
extends any $x$ feasible for~\eqref{ctn:budget_3} to a feasible point. The
two systems therefore have the same projection onto the $x$-variables, and
the compact formulation is a MILP.

Throughout this report, feasible segment paths are \emph{segment trails}: a
segment may appear at most once in a path, while a node may be visited more
than once. Thus loops created by revisiting a node are permitted, but
traversing the same ordered segment twice is not. The former generality is
necessary: \Cref{fig:interventionsar3} exhibits an instance whose unique
optimal solution uses a segment path revisiting a node. The latter
restriction is forced by the model itself, since $\delta(p_d,ij)$; and
hence $x^{dt}_{ij}$; is a binary indicator and cannot record
multiplicities.
\subsection{Objective}
\label{ss:obj}
For each period $t\in T$, let
$P^t=\{p_d^t:d\in D\}$ denote the routing scheme used at period $t$.
A multi-period routing plan is the tuple
$P=(P^t)_{t\in T}$. Equivalently, $P$ is described by the binary
variables $x^{dt}_{ij}$, and we write $P\in\mathcal{F}$ when $P$ satisfies the
constraints~\eqref{ctn:flow}--\eqref{ctn:budget_3}. Let
$\calA_T = \{(a,t): t\in T,\ a\in A\setminus q(t)\}$ be the set of available
arc--time pairs (with $q(0)=\varnothing$), and $N=|\calA_T|$. Each routing scheme $P$ induce a vector of loads $\lambda(P)$

We denote by
\begin{equation*}
  \label{eq:sorted-vector}
  \lambda^{\downarrow}(P)=\bigl(\lambda^{\downarrow}_{1}(P),\,\ldots,\,
  \lambda^{\downarrow}_{N}(P)\bigr),\qquad
  \lambda^{\downarrow}_{1}(P)\geq\cdots\geq\lambda^{\downarrow}_{N}(P),
\end{equation*}
the vector of these loads sorted in non-increasing order, so that
$\lambda^{\downarrow}_{k}(P)$ is the $k$-th largest load.
Formally, this sorting is realized by a bijection
$\sigma_P:\{1,\dots,N\}\to\calA_T$ satisfying
\begin{equation}
  \label{eq:sorting-perm}
  \lambda\bigl(\sigma_P(1);P\bigr)\geq\cdots\geq
  \lambda\bigl(\sigma_P(N);P\bigr),
  \qquad
  \lambda^{\downarrow}_{k}(P)=\lambda\bigl(\sigma_P(k);P\bigr)
  \ \ \text{for } k=1,\dots,N,
\end{equation}
so that $\sigma_P(k)$ is the arc--time pair carrying the $k$-th largest load.
The sorted vector $\lambda^{\downarrow}(P)$ is uniquely determined by $P$,
whereas $\sigma_P$ is unique only up to the reordering of tied loads.
Crucially, this order is \emph{not} fixed a priori: the sorting permutation
$\sigma_P$; and hence the identity of the pair realizing the $k$-th largest
load; is \emph{endogenous}, i.e.\ it depends on the routing scheme $P$ itself.
This is what sets \eqref{ctn:objlex} apart from classical lexicographic
optimization, where the coordinates are prioritized according to an order
known in advance: here the priority order is a function of the decision
variables, and two feasible schemes may be compared along entirely different
orderings of $\calA_T$.

The objective is to minimize this sorted vector lexicographically,
\begin{equation}
  \label{ctn:objlex}
  \operatorname*{lex\,min}_{P\in\mathcal{F}}\ \lambda^{\downarrow}(P),
\end{equation}
that is, $\lambda^{\downarrow}_{1}(P)$ is minimized first, then $\lambda^{\downarrow}_{2}(P)$ among all schemes achieving the
optimal first component, and so on.

\subsection{Problem definition} \label{problem definition}

We are now in position to give the formal definition of the $T$-\srchallenge{} problem

\noindent
\pboptdef{$T$-\srchallengeLong~($T$-\srchallenge{})}
{A network~$(G=(V,A), \omega, c)$, a set of demands~$D = \lbrace (s_i, t_i) : i=1, \ldots, k \rbrace$, a traffic volume $\nu: D \times T \to \mathbb{Q}$, set of time periods~$T = \{0, 1, \ldots, h-1\}$ with horizon~$h\geq1$, an intervention scenario~$q: T^* \to 2^A$, a budget~$\kappa : T^* \to \mathbb{N}$ and a maximum number of segments~$\maxSeg \in \mathbb{N}$ ($\maxSeg \geq 2$).}
{A routing scheme~$P\in\mathcal{F}$ (one segment path per demand and time step satisfying constraints~\eqref{ctn:flow}--\eqref{ctn:budget_3}) that minimizes the objective~\eqref{ctn:objlex}.}

\section{Benchmark instances and data generation procedure}
\label{sec:generation}

The benchmark instances were generated in order to provide a diverse and non-trivial test bed for the T-ASR problem~\cite{challenge2026}.
\paragraph{SetA:}The \texttt{setA} benchmark was produced by the challenge organizers and is
not generated in this work; the description below summarizes its design
rationale, as it informs the interpretation of our experiments, but is not
intended as a reproducible specification. The generation code and the
random seeds used to produce these instances are not part of the public
distribution~\cite{roadef2026_instances}, which releases the instance files
only; readers wishing to regenerate them should contact the challenge
organizers. This does not affect the reproducibility of our experiments: all
runs reported here use the unmodified released instance files, listed in
\Cref{tab:instances}, and our own code and driver scripts are available
at~\cite{stela_alexa_code}.

 Each instance is composed of four main elements: a network topology, a traffic matrix, a set of time-dependent intervention scenarios, and operational parameters such as the planning horizon, the maximum number of segments, and the reconfiguration budget.

The network topologies used in the benchmark setA were produced using three complementary generation procedures. The first one is the COLD algorithm, for Complex Optimization of Link Design, which was designed to synthesize realistic PoP-level network topologies~\cite{bowden2014cold}. The second generator is Phoenix, a topology generation procedure based on generative artificial intelligence and implemented for the benchmark production pipeline. The third family corresponds to a random topology generator with probabilities weighted by router sizes. This latter procedure introduces heterogeneous connectivity patterns while keeping the generation process flexible across different network sizes.

For each generated topology, a traffic matrix was produced by selecting a set of source--destination demands. The number of demands varies across instances and is expressed as a percentage of the total number of ordered node pairs, typically ranging from less than $1\%$ to $8\%$ of $|V|^2$. For each demand, a source node, a destination node, and a traffic volume are selected. Two mechanisms were used to generate demands: uniform random sampling, and centrality-based sampling where routers with lower centrality are more likely to be selected as demand endpoints. In the latter case, traffic volumes are drawn from an exponential distribution, which produces heterogeneous traffic matrices with a few large demands and many smaller ones. To avoid trivial instances, several traffic matrices were generated for each topology and then filtered according to a difficulty indicator. A lower bound was computed by solving a multi-commodity flow relaxation, corresponding to an idealized routing model without segment-routing restrictions. An upper bound was obtained from a greedy heuristic producing a feasible segment-routing solution. The gap between these two values was used as a proxy for instance difficulty. Traffic matrices with larger gaps were preferred, since a zero gap would indicate that the greedy heuristic is already optimal for the considered relaxation.

The remaining parameters were chosen to reflect the setting of the first benchmark set. All Set A instances contain two time periods: the nominal period $t=0$ and one intervention period $t=1$. The maximum number of segments per path is set to six, which is consistent with typical values used in commercial routers. The reconfiguration budget is chosen slightly below the number of changes required by the heuristic solution obtained when periods are optimized independently. This design choice prevents the problem from decomposing into two independent single-period routing problems and forces algorithms to explicitly handle the temporal coupling induced by the budget constraint.

Intervention scenarios were generated by selecting links that are relevant for the current routing solution. In particular, the removed link is chosen among the highly loaded links under the heuristic routing, while ensuring that its removal does not disconnect the graph. Links that are not used in the previous time period are avoided, since their removal would have little impact on the routing scheme. This produces intervention scenarios that are both feasible from a connectivity viewpoint and meaningful from a traffic-engineering perspective.

This procedure was used to obtain a benchmark set in which the difficulty does not only come from the size of the topology or the number of demands, but also from the interaction between traffic concentration, scheduled link removals, and limited reconfiguration budgets.
\begin{table}[H]
    \centering
  
    \begin{tabular}{c c r r r c c}
        \toprule
        Name & Generator & Nodes & Arcs & Demands & Demands/Nodes$^2$ & Timesteps \\
        \midrule
        01 & YG & 20  & 80   & 40   & 10\%  & 2 \\
        02 & YG & 30  & 150  & 45   & 5\%   & 2 \\
        03 & YG & 50  & 250  & 20   & 0.8\% & 2 \\
        04 & YG & 50  & 250  & 200  & 8\%   & 2 \\
        05 & CO & 100 & 396  & 100  & 1\%   & 2 \\
        06 & PH & 100 & 500  & 500  & 5\%   & 2 \\
        07 & YG & 100 & 500  & 800  & 8\%   & 2 \\
        08 & CO & 150 & 654  & 200  & 0.9\% & 2 \\
        09 & YG & 150 & 750  & 200  & 0.9\% & 2 \\
        10 & PH & 150 & 966  & 1000 & 4.5\% & 2 \\
        11 & YG & 200 & 1000 & 400  & 1\%   & 2 \\
        12 & CO & 200 & 898  & 400  & 1\%   & 2 \\
        13 & PH & 200 & 1000 & 2000 & 5\%   & 2 \\
        14 & CO & 250 & 1108 & 600  & 1\%   & 2 \\
        15 & YG & 250 & 1250 & 600  & 1\%   & 2 \\
        16 & PH & 250 & 1452 & 4800 & 8\%   & 2 \\
        17 & CO & 300 & 1270 & 2000 & 2.2\% & 2 \\
        18 & YG & 300 & 1500 & 2000 & 2.2\% & 2 \\
        19 & PH & 300 & 1998 & 6000 & 6.6\% & 2 \\
        20 & YG & 400 & 2000 & 6000 & 3.7\% & 2 \\
        \bottomrule
    \end{tabular}
\caption{\small Characteristics of the setA benchmark instances.}
  \label{tab:instances}
\end{table}

\paragraph{SetAP:}
The \texttt{setAP} benchmark~\cite{stela_alexa_code} is derived from
\texttt{setA} through a randomized instance-generation procedure designed to
explore a broader range of problem sizes and traffic configurations: the
process randomly samples and combines the main characteristics of
\texttt{setA} instances while systematically varying the network size and
the number of demands. All instances use $|T|=2$ time periods, and the
number of arcs is fixed to five times the number of nodes, $|A|=5|V|$. The
generation grid consists of $|V|\in\{10,20,30,40,50\}$ and
$|D|\in\{10,20,30,40,50\}$, giving $25$ combinations, each replicated with
$5$ independent random seeds, for a total of $125$ instances. The size class
$N=|\calA_T|=10|V|$ used in the experimental figures is therefore a
derived quantity rather than an independent parameter: it is in bijection
with the number of nodes, with
$N\in\{100,200,300,400,500\}$ corresponding to
$|V|\in\{10,20,30,40,50\}$ respectively.
\paragraph{SetB:} realistic instances with 12 time steps, where interventions affect $1/500$ of the links at each time step. These instances are derived from eight obfuscated real-world topologies (COLD, Phenix, YG, IGEN, RAEI, OTI, RBCI, and IGN), resulting in 15 instances ranging from 200 to 800 nodes and from 1,000 to 4,000 links.

\paragraph{SetC:} a large-scale realistic instance released as a representative case study for future research on multi-period segment routing optimization.

%% file: sections/04-methodsexact.tex
\section{Compact Sequential exact lexicographic minimax Formulations}
\label{sec:milp}

\subsection{Motivation and related work}
The previous section introduced the T-ASR problem and its lexicographic load-balancing objective. Unlike the classical min--max objective, which only minimizes the maximum link utilization, the objective considered here minimizes the entire vector of link loads sorted in non-increasing order. Consequently, after minimizing the largest load, the second-largest load is minimized without degrading the first one, followed by the third-largest load, and so forth. This produces a more balanced distribution of traffic across the network.

However, this objective cannot be represented directly by a standard scalar linear objective. An exact solution method must therefore explicitly model the ordering of the loads or, equivalently, identify their optimal values one rank at a time.

This section develops three exact approaches for the lexicographic
problem~\eqref{ctn:objlex}, which we then compare. All three share the same
feasible set $\mathcal{F}$ and the same sorted load vector
$\lambda^{\downarrow}(P)$, and all three are \emph{sequential}: they optimize one
rank at a time and freeze the optimal value obtained before moving to the next.
They differ in how each rank is encoded and, consequently, in what is frozen from
one level to the next; the purpose of the section is to contrast them from both a
modeling and a computational standpoint.

The first method, \emph{ALEXA} (\emph{Assignment-based LEXicographic Algorithm}),
encodes the sorting explicitly through binary assignment variables that associate
each arc--time load with one position in the sorted load vector, here the assignment takes the form of an
$N\times N$ permutation matrix coupled to the routing variables. Proceeding rank by
rank, it freezes at each level the \emph{continuous} load value assigned to that
rank, while leaving the full permutation matrix free.

The second method, \textsc{Stela} freezes the previously certified
threshold values and re-imposes their exceedance-cardinality constraints.
The binary exceedance variables themselves remain free and may be reassigned;
the formulation therefore does not freeze which arc--time pairs realize the
earlier ranks.
The third method, \emph{\textsc{Carla}} (\emph{Cumulative Aggregation of Ranked Loads
Algorithm}), is an adaptation of the classical cumulative formulation of
lexicographic min--max optimization due to~\cite{ogryczak2006}. It rests on the fact that the sum of the $k$
largest entries of a vector admits a compact linear representation; the lexicographic minimax
optimum is reached by minimizing these cumulative sums $\Theta_1,\Theta_2,\dots$ in
turn, each optimal value being frozen before the next rank is addressed.

\subsection{Overview and Common Framework}
\label{sec:exact-overview}

The three exact methods developed in this section rely on the same feasible
routing set and optimize the same sorted load vector; they differ only in the
mixed-integer formulation solved at each rank. We therefore introduce these common
elements once, together with the auxiliary notation used in the correctness proofs,
before presenting the three formulations.

\paragraph{Feasible set and loads}
Recall that $\mathcal{F}$ denotes the set of routing schemes satisfying
Constraints~\eqref{ctn:flow}--\eqref{ctn:budget_3}, and assume
$\mathcal{F}\neq\varnothing$ (feasible instance).
Each routing scheme $P \in \mathcal{F}$ consists of finitely many segment
paths, each a sequence of at most $\texttt{maxSeg}$ pairwise distinct
segments drawn from the finite set $\{(i,j): i,j \in V,\, i \neq j\}$; hence
$\mathcal{F}$ is finite (and non-empty by assumption). A routing scheme
$P \in \mathcal{F}$ induces, for every demand $d \in D$, time step
$t \in T$, and distinct nodes $i,j \in V$, a value $x^{dt}_{ij} \in \{0,1\}$
equal to $1$ when demand $d$ is routed through segment $(i,j)$ at time $t$
and $0$ otherwise; we write $x(P) \in \{0,1\}^m$ for the vector collecting
these values under a fixed indexing of the coordinates over all
$(d,t,i,j)$. The map $P \mapsto x(P)$ need not be injective when nodes may
repeat, since two orderings of the same segment set induce the same vector;
this is immaterial here, as every quantity appearing in the model; the
segment count~\eqref{ctn:segments}, the arc loads below, and the
reconfiguration distance~\eqref{ctn:budget_3}; is a function of $x(P)$
alone, so that routing schemes with the same image are feasible for exactly
the same instances and attain the same objective value. Making the
dependence on $P$ explicit through $x^{dt}_{ij}$, the load $\lambda(a,t)$ of
an available arc--time pair $(a,t)\in\mathcal{A}_T$ introduced in
Section~\ref{sec:notations} reads
\begin{equation}
\lambda(a,t;P)=\frac{1}{c(a)}
\sum_{d\in D}\sum_{i,j\in V}
r(i,j,a,t)\,\nu(d,t)\,x^{dt}_{ij}, \label{eq:load-P}
\end{equation}
and we use $\lambda(a,t)$ and $\lambda(a,t;P)$ interchangeably, keeping the
explicit form wherever the dependence on $P$ matters.

\paragraph{Sequential decomposition}
The lexicographic objective~\eqref{ctn:objlex} cannot be written as a single linear
function of $x$, since the ordering of the loads depends on $P$ itself
(see~\eqref{eq:sorting-perm}). We therefore decompose it into a sequence of
single-objective problems, each certifying one rank of the sorted load vector while
the previously certified ranks are held fixed. Setting $\mathcal{F}_0=\mathcal{F}$, assuming $\mathcal F \neq \varnothing$ (feasible instance).
we define for each $k\geq1$
\begin{align}
  \bar{\lambda}_k &= \min_{P\in\mathcal{F}_{k-1}} \lambda^{\downarrow}_k(P),
    \label{eq:lex-recursion1}\\
  \mathcal{F}_k &= \bigl\{P\in\mathcal{F}_{k-1} :
    \lambda^{\downarrow}_k(P)=\bar{\lambda}_k\bigr\},
    \label{eq:lex-recursion2}
\end{align}
which yields a nested chain
$\mathcal{F}=\mathcal{F}_0\supseteq\mathcal{F}_1\supseteq\cdots\supseteq\mathcal{F}_N$.

\begin{figure}[h!]
  \centering
  \begin{tikzpicture}[>=Stealth, line join=round, line cap=round]

    \def\ptdot{0.7pt}

    \draw[fill=black!5,  draw=black!55, thick]
      plot[smooth cycle,tension=0.5]
      coordinates {(-3.0,0.4)(-2.1,1.7)(-0.3,2.05)(1.7,1.6)(2.75,0.35)
                   (2.5,-1.15)(1.0,-1.95)(-1.0,-1.85)(-2.6,-1.0)};
    \draw[fill=black!12, draw=black!55, thick]
      plot[smooth cycle,tension=0.75]
      coordinates {(-2.15,0.35)(-1.45,1.35)(-0.1,1.55)(1.25,1.15)(1.95,0.2)
                   (1.7,-0.95)(0.6,-1.4)(-0.75,-1.3)(-1.85,-0.7)};
    \draw[fill=black!22, draw=black!55, thick]
      plot[smooth cycle,tension=0.75]
      coordinates {(-1.35,0.3)(-0.85,1.0)(0.15,1.05)(0.95,0.6)(1.15,-0.25)
                   (0.75,-0.95)(-0.15,-1.05)(-1.05,-0.55)};
    \draw[fill=black!34, draw=black!60, thick]
      plot[smooth cycle,tension=0.8]
      coordinates {(-0.65,0.2)(-0.25,0.6)(0.35,0.55)(0.6,0.05)(0.4,-0.5)(-0.25,-0.55)(-0.6,-0.2)};

    \foreach \p in {(-2.55,0.9),(-1.9,1.55),(1.9,1.15),(2.35,-0.4),(1.55,-1.55),
                    (-1.5,-1.5),(-2.7,-0.4),(0.7,1.75),(2.55,0.6)}
      \fill[black!75] \p circle (\ptdot);
    \foreach \p in {(-1.75,0.75),(1.35,0.75),(1.45,-0.85),(-1.0,-1.05),(0.1,1.3),(-1.55,-0.3)}
      \fill[black!75] \p circle (\ptdot);
    \foreach \p in {(-0.95,0.5),(0.7,0.3),(0.55,-0.7),(-0.6,-0.75),(0.25,0.85)}
      \fill[black!75] \p circle (\ptdot);
    \foreach \p in {(-0.35,0.25),(0.3,0.2),(0.05,-0.35)}
      \fill[black!75] \p circle (\ptdot);
    \node[star,star points=5,star point ratio=2.3,inner sep=0pt,minimum size=9pt,
          fill=black,draw=black] (opt) at (0.0,-0.02) {};

    \node[right] (l0) at (3.45,1.7) {$\mathcal{F}_0=\mathcal{F}$};
    \draw[black!50,thin] (l0.west) -- (2.35,1.35);
    \node[right] (l1) at (3.45,0.7) {$\mathcal{F}_1$};
    \draw[black!50,thin] (l1.west) -- (1.75,0.6);
    \node[right] (l2) at (3.45,-0.3) {$\mathcal{F}_2$};
    \draw[black!50,thin] (l2.west) -- (1.14,-0.22);
    \node[right] (lN) at (3.45,-1.3) {$\mathcal{F}_N$};
    \draw[black!50,thin] (lN.west) -- (0.58,-0.18);

    \node[below=3pt] (ps) at (-0.9,-1.95) {\small $P^\star$ lexicographic minimax-optimal };
    \draw[black!50,thin] (ps.north) -- (opt);

    \begin{scope}[xshift=6.5cm, yshift=-1.5cm]
      \def\bw{0.40}
      \draw[->,thick] (-0.45,0) -- (5.55,0) node[below right=-1pt]{\small rank $k$};
      \draw[->,thick] (-0.45,0) -- (-0.45,4.05) node[above]{\small $\lambda^{\downarrow}_k$};

      \foreach \cx/\h/\ff/\lab in
        {0.25/3.35/{black!12}/{$\mathcal{F}_1$},
         1.15/2.90/{black!22}/{$\mathcal{F}_2$},
         2.05/2.55/{black!30}/{$\mathcal{F}_3$}}
        {\draw[fill=\ff,draw=black!70,thick] (\cx-\bw,0) rectangle (\cx+\bw,\h);
         \node at (\cx,0.45) {\small \lab};}
      \foreach \cx/\h in {2.95/2.05, 3.85/1.65, 4.75/1.30}
        \draw[fill=white,draw=black!70,thick] (\cx-\bw,0) rectangle (\cx+\bw,\h);

      \node[above=1pt] at (0.25,3.35) {\small $\bar{\lambda}_1$};
      \node[above=1pt] at (1.15,2.90) {\small $\bar{\lambda}_2$};
      \node[above=1pt] at (2.05,2.55) {\small $\bar{\lambda}_3$};

      \draw[dashed,black!70] (2.50,-0.05) -- (2.50,3.75);
      \draw[->,thick] (2.65,3.45) .. controls (3.10,3.72) .. (3.38,3.42)
        node[right,align=left,inner sep=1.5pt]{\small solve\\[-1pt]\small rank $k{+}1$};

      \draw[decorate,decoration={brace,amplitude=6pt,mirror},black!70]
        (-0.20,-0.85) -- (2.48,-0.85)
        node[midway,below=7pt,align=center]{\small certified \& frozen\\[-1pt]\small
        $\lambda^{\downarrow}_j=\bar\lambda_j,\ j\le k$};
      \draw[decorate,decoration={brace,amplitude=6pt,mirror},black!70]
        (2.52,-0.85) -- (5.20,-0.85)
        node[midway,below=7pt,align=center]{\small still optimized};
    \end{scope}

  \end{tikzpicture}
  \caption{Sequential lexicographic minimax decomposition. \textbf{Left:} the nested chain
    $\mathcal{F}_0\supseteq\mathcal{F}_1\supseteq\cdots\supseteq\mathcal{F}_N$, contracting
    toward a lexicographic minimax optimum $P^\star$; regions are schematic ($\mathcal{F}$ is finite and,
    the sorting permutation being endogenous, non-polyhedral in load space).
    \textbf{Right:} the induced freezing of the sorted load vector. Certifying rank $k$
    moves the iterate from $\mathcal{F}_{k-1}$ to $\mathcal{F}_k$; each frozen bar is shaded
    with the colour of its set $\mathcal{F}_k$ and fixed at
    $\bar{\lambda}_k=\min_{\mathcal{F}_{k-1}}\lambda^{\downarrow}_k$, while later ranks (hollow,
    no set certified yet) are still optimized. The procedure ends at $\mathcal{F}_N$.}
  \label{fig:lex-decomposition}
\end{figure}
This kind of sequential scheme is standard in lexicographic
optimization. In particular,~\cite{abernethy2024} analyze the generic lexicographic \emph{maximization} of an
arbitrary set $X\subseteq\mathbb{R}^n$, where iteration $k$ maximizes the $k$-th
smallest component while fixing the previously optimized ones, and establish that
the procedure returns exactly the lexicographic optima of $X$. We use the
order-reversed (lexicographic minimax) counterpart; minimizing the $k$-th largest load rather
than maximizing the $k$-th smallest component: which is equivalent up to negation.
Since $\mathcal{F}\subseteq\{0,1\}^m$ is finite and non-empty, a lexicographic
minimizer exists by~\cite{abernethy2024}, and a scheme $P^\star\in\mathcal{F}$ is lexicographic minimax-optimal for T-ASR
if and only if $P^\star\in\mathcal{F}_N$.

\paragraph{Rank subproblems}

The values $(\bar{\lambda}_k)_{k\in\{1,\ldots,N\}}$ in
\eqref{eq:lex-recursion1} are defined, but the procedure for obtaining
them is not specified. We call $\mathcal{M}_k$ any mixed-integer linear
formulation whose resolution \emph{certifies rank $k$}, in the sense that
its optimal solution yields a scheme $P^\star_k\in\mathcal{F}_k$
together with the value $\bar{\lambda}_k$.

We write $\textsc{Solve}(\mathcal{M}_k)$ for a single call to a MILP
solver on $\mathcal{M}_k$, returning the pair
$(\bar{\lambda}_k,P^\star_k)$. Once $\mathcal{M}_k$ has been proved to solve the rank-$k$ subproblem
\eqref{eq:lex-recursion1}, the value $\bar{\lambda}_k$ may be recovered from
the returned optimal scheme by evaluating and sorting its loads according
to~\eqref{eq:load-P}. This evaluation recovers the attained rank value; its
optimality follows from the correctness theorem for $\mathcal{M}_k$.

Algorithm~\ref{alg:lex-template} drives the three methods (\textsc{Alexa}, \textsc{Stela}, \textsc{Carla}) through the common chain:
it solves $\mathcal{M}_1,\dots,\mathcal{M}_N$ in turn and returns a lexicographic minimax-optimal
scheme after at most $N$ solves~\cite{abernethy2024}. The three methods differ only
in the formulation $\mathcal{M}_k$: the assignment model
$\mathcal{M}_k^{\mathrm{\textsc{Alexa}}}$ (Section~\ref{sec:alexa}), the
threshold-exceedance model $\mathcal{M}_k^{\mathrm{\textsc{Stela}}}$
(Section~\ref{sec:stela}) and the cumulative model
$\mathcal{M}_k^{\mathrm{\textsc{Carla}}}$ (Section~\ref{sec:carla}). The validity of each; that solving $\mathcal{M}_k$ indeed
certifies $\bar{\lambda}_k$ over $\mathcal{F}_{k-1}$; is established by the lemmas of
the corresponding section.

The two methods \textsc{Alexa} and \textsc{Stela}, share more than the sequential framework: both instantiate the rank subproblem $\mathcal{M}_k$ by
\emph{directly} encoding the $k$-th largest load $\lambda^{\downarrow}_k$ as the
quantity minimized at level $k$. They differ only in \emph{how} they encode the
sort that singles out this rank; \textsc{Alexa} through an explicit $N\times N$
permutation matrix, \textsc{Stela} through threshold-exceedance counters; and,
consequently, in what each level freezes: a continuous rank-load with the
permutation left free for \textsc{Alexa}, a binary exceedance pattern for \textsc{Stela}. We
present these two direct encodings first.

We then turn to \emph{\textsc{Carla}}, an adaptation of the classical cumulative
formulation of~\cite{ogryczak2006}, which reaches the
\emph{same} nested chain of restrictions by a different route: instead of
$\lambda^{\downarrow}_k$, it minimizes the cumulative sum $\Theta_k$ of the $k$
largest loads, whose partial-sum structure admits a compact linear
representation. 

For $\mathsf{X}\in\{\textsc{Alexa},\textsc{Stela},\textsc{Carla}\}$, we denote by
$\mathcal{M}_k^\mathsf{X}$ the formulation used by method $\mathsf{X}$ to certify rank $k$. The three methods differ only in the formulation $\mathcal{M}_k^\mathsf{X}$:
the assignment model $\mathcal{M}_k^{\mathrm{\textsc{Alexa}}}$,
the threshold-exceedance model $\mathcal{M}_k^{\mathrm{\textsc{Stela}}}$
and the cumulative model $\mathcal{M}_k^{\mathrm{\textsc{Carla}}}$.

\begin{algorithm}[ht]
\caption{Generic sequential lexicographic minimization}
\label{alg:lex-template}
\begin{algorithmic}[1]

\Input Instance $(G,D,T,q,\kappa,\maxSeg)$; method
$\mathsf{X}\in\{\textsc{Alexa},\textsc{Stela},\textsc{Carla}\}$;
a family $\{\mathcal{M}_k^\mathsf{X}\}_{k=1}^{N}$

\Output lexicographic minimax-optimal routing $P^\star$ and
$(\bar{\lambda}_1,\dots,\bar{\lambda}_N)$

\For{$k \leftarrow 1$ \textbf{to} $N$}

\State $(\bar{\lambda}_k, P^\star_k) \leftarrow
\textsc{Solve}(\mathcal{M}_k^X)$
\Comment{certify rank $k$ over $\mathcal{F}_{k-1}$}

\If{$\bar{\lambda}_k = 0$}

\State \Return $P^\star_k,\,
(\bar{\lambda}_1,\dots,\bar{\lambda}_k,0,\dots,0)$
\Comment{$N-k$ trailing zeros}

\EndIf

\EndFor

\State \Return $P^\star_N,\,
(\bar{\lambda}_1,\dots,\bar{\lambda}_N)$

\end{algorithmic}
\end{algorithm}

\Cref{alg:lex-template} presents the generic sequential lexicographic
procedure that we propose for $T$-\srchallenge{}. At iteration $k$, given
a method $\mathsf{X}\in\{\textsc{Alexa},\textsc{Stela},\textsc{Carla}\}$ and its
corresponding MILP formulation $\mathcal{M}_k^X$, the procedure returns
the optimal value
\[
\bar{\lambda}_k
=
\min_{P\in\mathcal{F}_{k-1}}
\lambda^{\downarrow}_k(P),
\]
along with a corresponding minimizer. The procedure instantiates the
same sequential scheme for the three methods and returns a
lexicographically optimal routing scheme after at most $N$ MILP
solves~\cite{abernethy2024}. In what follows, we show that the validity
of each formulation $\mathcal{M}_k^\mathsf{X}$, i.e., that it indeed optimizes
$\lambda^{\downarrow}_k$ over $\mathcal{F}_{k-1}$, is established by the
lemmas in \Cref{sec:alexa,sec:stela,sec:carla}. In particular, we
instantiate the generic subproblem $\mathcal{M}_k$ through three
alternative formulations:
$\mathcal{M}_k^{\mathrm{\textsc{Alexa}}}$,
$\mathcal{M}_k^{\mathrm{\textsc{Stela}}}$, and
$\mathcal{M}_k^{\mathrm{\textsc{Carla}}}$, which are detailed in the
following sections.


\subsection{\textsc{ALEXA}: Assignment-based Lexicographic Algorithm}
\label{sec:alexa}

The difficulty in optimizing the sorted load vector is that the ordering it
refers to is not known before the routing is chosen: the permutation that sorts
the loads is endogenous (see~\eqref{eq:sorting-perm}). \textsc{Alexa} confronts this
difficulty head-on by \emph{materializing the sort as a decision variable}.
Rather than computing the sorted vector \emph{a posteriori} from a given routing,
it lets the model choose an assignment between arc--time pairs and rank positions,
and constrains this assignment to be exactly the sorting permutation of the
induced loads. 

Concretely, \textsc{Alexa} encodes the full ordering in a single MILP. Let
$\mathcal{K}=\{1,\ldots,N\}$ be the set of ranks. We introduce a
\emph{permutation variable} $\pi_{a,t,r}\in\{0,1\}$, equal to $1$ iff the load
$\lambda(a,t;P)$ is placed at rank $r$, and \emph{rank-load variables}
$L_r\in\mathbb{R}_{+}$ holding the $r$-th largest load. The pair $(\pi,L)$ thus
carries both the sort (through $\pi$) and its sorted values (through $L$), while
$P$ carries the routing. The model $\mathcal{M}_k^{\mathrm{\textsc{Alexa}}}$ is
\begin{subequations}
\label{mod:alexa}
\small
\begin{align}
\min_{P\in\mathcal{F},\,\pi,\,L}\ & L_k
  \label{alexa:obj}\\
\text{s.t.}\quad
& \sum_{r\in\mathcal{K}}\pi_{a,t,r}=1,
  \quad \forall (a,t)\in\mathcal{A}_T,
  \label{alexa:row}\\
& \sum_{(a,t)\in\mathcal{A}_T}\pi_{a,t,r}=1,
  \quad \forall r\in\mathcal{K},
  \label{alexa:col}\\
& L_r-\lambda(a,t;P)\le M\,(1-\pi_{a,t,r}),
  \ \ \forall (a,t),\,r,
  \label{alexa:link1}\\
& \lambda(a,t;P)-L_r\le M\,(1-\pi_{a,t,r}),
  \ \ \forall (a,t),\,r,
  \label{alexa:link2}\\
& L_{r-1}\ge L_{r},
  \quad r=2,\ldots,N,
  \label{alexa:sort}\\
& L_j=\bar{\lambda}_j,
  \quad j=1,\ldots,k-1,
  \label{alexa:freeze}\\
& \pi_{a,t,r}\in\{0,1\},\ L_r\ge 0,
  \quad \forall (a,t),\,r.
  \label{alexa:bin}
\end{align}
\end{subequations}

We read the model block by block.
Constraints~\eqref{alexa:row}--\eqref{alexa:col} are the two families of an
assignment problem: \eqref{alexa:row} states that each arc--time pair receives
exactly one rank, and \eqref{alexa:col} that each rank is occupied by exactly one
pair. Together with integrality~\eqref{alexa:bin}, they force $(\pi_{a,t,r})$ to
be a permutation matrix on $\mathcal{A}_T\times\mathcal{K}$; equivalently, $\pi$
encodes a bijection $\sigma:\mathcal{A}_T\to\mathcal{K}$.
The linking inequalities~\eqref{alexa:link1}--\eqref{alexa:link2} then tie the
chosen rank of each pair to its actual load. When $\pi_{a,t,r}=1$, both
right-hand sides vanish and the two inequalities collapse to the single equality
$L_r=\lambda(a,t;P)$; when $\pi_{a,t,r}=0$, both right-hand sides equal $M$ and,
provided $M$ is large enough (Proposition~\ref{prop:alexa-bigm}), leave the pair
$(a,t)$ and the rank $r$ mutually unconstrained. Hence only the diagonal
entries $r=\sigma(a,t)$ are active, and $(L_r)_r$ is a rearrangement of the loads
of $P$.

The role of the ordering constraint~\eqref{alexa:sort} deserves emphasis, as it
is what turns an \emph{arbitrary} assignment into \emph{the} sorting permutation.
Without it, any bijection $\sigma$ would be feasible and $(L_r)_r$ could be any
permutation of the loads; \eqref{alexa:sort} forces $L_1\ge L_2\ge\cdots\ge L_N$,
which singles out the unique non-increasing rearrangement, i.e.
$L_r=\lambda^{\downarrow}_{r}(P)$. Note that this holds \emph{regardless of how
ties are broken}: when several loads are equal, many matrices $\pi$ realize the
same sorted vector $(L_r)_r$, but the values $L_r$ are unaffected; the
non-uniqueness of $\sigma$ under ties does not affect the objective.
Finally, the freezing constraints~\eqref{alexa:freeze} fix the ranks already
certified at previous levels to the constants $\bar{\lambda}_j$, restricting the
search to $\mathcal{F}_{k-1}$.

A single family of constraints thus covers every $k\in\mathcal{K}$: only the
objective index in~\eqref{alexa:obj} and the freezing range
in~\eqref{alexa:freeze} change from one level to the next, while the assignment,
linking and ordering blocks stay identical. At $k=1$ the freezing range is empty,
the objective reduces to $L_1=\lambda^{\downarrow}_{1}(P)$, and no constant from a previous level is needed. In practice, this
means the whole lexicographic minimax computation can be driven on a single model, updated in
place across iterations by appending one equality~\eqref{alexa:freeze} and
shifting the objective to $L_{k+1}$, which lets the solver reuse the search tree
of the previous level.

\paragraph{A valid big-\texorpdfstring{$M$}{M} constant.}
The linking~\eqref{alexa:link1}--\eqref{alexa:link2} is only correct if the
inactive branch ($\pi_{a,t,r}=0$) genuinely imposes nothing, that is, if $M$
dominates every load that can arise. We use a data-driven bound derived from the traffic
volumes and capacities, valid in every regime.

\begin{proposition}
\label{prop:alexa-bigm} 
The constant
\[
  M=\max_{(a,t)\in\mathcal{A}_T}\frac{\maxSeg}{c(a)}
  \bigl(\max_{i\neq j} r(i,j,a,t)\bigr)\sum_{d\in D}\nu(d,t)
\]
satisfies $0\le\lambda(a,t;P)\le M$ for all $(a,t)\in\mathcal{A}_T$ and
$P\in\mathcal{F}$.
\end{proposition}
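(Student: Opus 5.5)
The plan is to start from the load expression \eqref{eq:load-P} and bound it term by term, using only the nonnegativity of the data and the segment-count constraint \eqref{ctn:segments}.

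\emph{Lower bound.} Nonnegativity is immediate. The split coefficients satisfy $r(i,j,a,t)\ge 0$, the volumes satisfy $\nu(d,t)\ge0$, the variables satisfy $x^{dt}_{ij}\in\{0,1\}$, and $c(a)>0$. Hence every summand in \eqref{eq:load-P} is nonnegative, and so is $\lambda(a,t;P)$.

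\emph{Upper bound.} Fix $(a,t)\in\calA_T$ and $P\in\mathcal{F}$, and write $\rho(a,t)=\max_{i\ne j} r(i,j,a,t)$. The sums range only over segments with $i\ne j$, since no variable exists for $i=j$. So each summand satisfies $r(i,j,a,t)\,\nu(d,t)\,x^{dt}_{ij}\le \rho(a,t)\,\nu(d,t)\,x^{dt}_{ij}$. Summing over $(i,j)$ for a fixed demand $d$ and applying \eqref{ctn:segments} gives
\[
\sum_{i,j} r(i,j,a,t)\,\nu(d,t)\,x^{dt}_{ij}\le \rho(a,t)\,\nu(d,t)\,\maxSeg .
\]
Summing over $d$ and dividing by $c(a)$ then yields
\[
\lambda(a,t;P)\le \frac{\maxSeg}{c(a)}\,\rho(a,t)\sum_{d\in D}\nu(d,t).
\]
The right-hand side is exactly the quantity inside the outer maximum defining $M$. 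Since $M$ is the maximum over all pairs in $\calA_T$, it dominates this quantity for every pair. The bound is uniform in $P\in\mathcal{F}$ because the argument used only \eqref{ctn:segments}.

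\emph{Main obstacle.} The only delicate point is bookkeeping: every sum must run over the preprocessed segment set with $i\neq j$, and the maximum in $\rho(a,t)$ must be taken over that same index set. Once this is checked, the argument reduces to monotonicity of the sums.

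Optionally, I would also remark that $r\le 1$, because ECMP splits conserve flow. Then the cruder constant $\maxSeg\sum_d\nu(d,t)/c(a)$ would also be valid, though it is not needed here.
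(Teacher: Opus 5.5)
Your proof is correct and is essentially the paper's argument. The lower bound follows from termwise nonnegativity. For the upper bound, you bound each split coefficient by $\max_{i\neq j} r(i,j,a,t)$, use the segment-count constraint \eqref{ctn:segments} to cap each demand's contribution at $\maxSeg\,\nu(d,t)$ times that maximum, then sum over $d$, divide by $c(a)$ and take the maximum over $\mathcal{A}_T$. The bookkeeping point you flag is harmless, since taking the maximum of $r$ over any superset of the summation indices only enlarges the bound.
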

\begin{proof}
\[
\begin{array}{:c}
\begin{minipage}{0.97\textwidth}
Non-negativity is immediate from the load definition~\eqref{eq:load-P}, since all
its terms are non-negative. For the upper bound, fix $(a,t)$ and $P\in\mathcal{F}$.
The segment-count constraint~\eqref{ctn:segments} allows at most $\maxSeg$
non-zero variables $x^{dt}_{ij}$ per demand $d$ and time $t$, hence at most
$\maxSeg$ non-zero terms in the inner sum of~\eqref{eq:load-P} for that demand.
Each such term is at most $(\max_{i\neq j}r(i,j,a,t))\,\nu(d,t)$, because
$r(\cdot)$ is a split coefficient. Summing over $d\in D$ and dividing by $c(a)$
yields $\lambda(a,t;P)\le \frac{\maxSeg}{c(a)}(\max_{i\neq
j}r(i,j,a,t))\sum_{d\in D}\nu(d,t)$, and taking the maximum over
$(a,t)\in\mathcal{A}_T$ gives the stated $M$.
\end{minipage}
\end{array}
\]
\end{proof}

\paragraph{Correctness.}
We now establish that solving $\mathcal{M}_k^{\mathrm{\textsc{Alexa}}}$ certifies rank $k$.
The argument separates the sorting mechanism (Lemma~\ref{lem:alexa-sort}, proved
without freezing) from the effect of the freezing constraints
(Theorem~\ref{thm:alexa-correctness}).

\begin{lemma}
\label{lem:alexa-sort} 
Fix $P\in\mathcal{F}$ and consider $\mathcal{M}_k^{\mathrm{\textsc{Alexa}}}$ without the
freezing constraints~\eqref{alexa:freeze}. Let $x$ be the routing variables
induced by $P$.
\begin{enumerate}
  \item[(i)]  Every feasible solution $(x,\pi,L)$ satisfies
    $L_r=\lambda^{\downarrow}_{r}(P)$ for all $r\in\mathcal{K}$.
  \item[(ii)] Every $P\in\mathcal{F}$ extends to a
    feasible solution $(x,\pi,L)$.
\end{enumerate}
\end{lemma}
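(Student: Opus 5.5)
For part (i), I take a feasible $(x,\pi,L)$ without the freezing constraints~\eqref{alexa:freeze}. By~\eqref{alexa:row}, \eqref{alexa:col} and the integrality in~\eqref{alexa:bin}, $\pi$ is a permutation matrix on $\mathcal{A}_T\times\mathcal{K}$. It therefore defines a bijection $\sigma:\mathcal{A}_T\to\mathcal{K}$ with $\pi_{a,t,\sigma(a,t)}=1$ and all other entries zero.

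For each pair $(a,t)$ and its rank $r=\sigma(a,t)$, the right-hand sides of~\eqref{alexa:link1}--\eqref{alexa:link2} vanish. The two inequalities then give $L_{\sigma(a,t)}=\lambda(a,t;P)$. Since $\sigma$ is onto $\mathcal{K}$, the multiset $\{L_r:r\in\mathcal{K}\}$ equals the multiset of loads $\{\lambda(a,t;P):(a,t)\in\mathcal{A}_T\}$.

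The ordering constraint~\eqref{alexa:sort} says $L_1\ge\cdots\ge L_N$. A non-increasing listing of a fixed multiset is unique, and $\lambda^{\downarrow}(P)$ is such a listing. Hence $L_r=\lambda^{\downarrow}_r(P)$ for every $r$. To make this rigorous I will note that $L_r$ is the $r$-th largest element of the multiset: exactly $r-1$ indices precede $r$, and ties do not matter because only values are compared. This step uses only the linking equalities and does not need the value of $M$.

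For part (ii), I build the extension explicitly. I take the sorting bijection $\sigma_P$ from~\eqref{eq:sorting-perm}, with ties broken arbitrarily, and set $\pi_{a,t,r}=1$ iff $\sigma_P(r)=(a,t)$, and $L_r=\lambda^{\downarrow}_r(P)$. Then~\eqref{alexa:row} and~\eqref{alexa:col} hold because $\sigma_P$ is a bijection. Constraint~\eqref{alexa:sort} holds by the sorting, and $L_r\ge0$ because loads are nonnegative. The active linking constraints hold with equality, since $L_r=\lambda(\sigma_P(r);P)$.

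The only real check is the inactive linking constraints, where $\pi_{a,t,r}=0$. There I need $|L_r-\lambda(a,t;P)|\le M$. Both quantities are loads of $P$, so by Proposition~\ref{prop:alexa-bigm} both lie in $[0,M]$, and their difference is at most $M$ in absolute value.

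This last point is the one place where the value of $M$ matters. Beyond it, the only obstacle I expect is stating the multiset-uniqueness argument in (i) cleanly.
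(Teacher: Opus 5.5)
Your proof is correct and follows essentially the same route as the paper: for (i), the assignment constraints give a permutation matrix, the active linking rows give $L_{\sigma(a,t)}=\lambda(a,t;P)$, and the ordering constraint fixes the sorted rearrangement; for (ii), you take an explicit sorting bijection and use Proposition~\ref{prop:alexa-bigm} to satisfy the inactive linking rows. Your observation that (i) never uses the value of $M$ is accurate. The paper's proof of (i) also checks the inactive rows, but that check is redundant there because feasibility is already assumed.
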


\begin{proof}
\[
\begin{array}{:c}
\begin{minipage}{0.97\textwidth}
(i) By~\eqref{alexa:row}--\eqref{alexa:col} and integrality~\eqref{alexa:bin},
$\pi$ is a permutation matrix, defining a bijection
$\sigma:\mathcal{A}_T\to\mathcal{K}$ with $\pi_{a,t,\sigma(a,t)}=1$. Consider a
pair $(a,t)$ and the rank $r=\sigma(a,t)$: the two linking
inequalities~\eqref{alexa:link1}--\eqref{alexa:link2} have zero right-hand side
and reduce to $L_{\sigma(a,t)}=\lambda(a,t;P)$. For every other rank
$r'\neq\sigma(a,t)$ we have $\pi_{a,t,r'}=0$, so~\eqref{alexa:link1}--\eqref{alexa:link2}
read $|L_{r'}-\lambda(a,t;P)|\le M$, which holds trivially since both quantities
lie in $[0,M]$ by Proposition~\ref{prop:alexa-bigm}; these constraints are thus
inactive. Consequently $(L_r)_r$ is a rearrangement of the loads of $P$, and the
ordering constraint~\eqref{alexa:sort} makes it non-increasing, so
$L_r=\lambda^{\downarrow}_{r}(P)$ for all $r$.

(ii) Given $P\in\mathcal{F}$, pick any sorting bijection $\sigma$ and set $\pi_{a,t,r}=\mathbf{1}[r=\sigma(a,t)]$ and
$L_r=\lambda^{\downarrow}_{r}(P)$. Then
\eqref{alexa:row}--\eqref{alexa:col} hold because $\sigma$ is a bijection, and
\eqref{alexa:sort} holds because $(L_r)_r$ is sorted by construction. The
linking~\eqref{alexa:link1}--\eqref{alexa:link2} holds with equality when
$\pi_{a,t,r}=1$ and with slack at most $M$ otherwise, since
$L_r,\lambda(a,t;P)\in[0,M]$ by Proposition~\ref{prop:alexa-bigm}. Hence
$(x(P),\pi,L)$ is feasible.
\end{minipage}
\end{array}
\]
\end{proof}

\begin{theorem}
\label{thm:alexa-correctness}
For every $k\in\mathcal{K}$, $\mathcal{M}_k^{\mathrm{\textsc{Alexa}}}$
satisfies~\eqref{eq:lex-recursion1}: its optimal value is
$\bar{\lambda}_k=\min_{P\in\mathcal{F}_{k-1}}\lambda^{\downarrow}_{k}(P)$, and
the scheme $P^\star_k$ read from any optimal solution belongs to $\mathcal{F}_k$.
\end{theorem}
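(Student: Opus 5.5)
The proof rests on Lemma~\ref{lem:alexa-sort}, which says that, without freezing, the feasible set of $\mathcal{M}_k^{\mathrm{\textsc{Alexa}}}$ projects exactly onto $\mathcal{F}$ and the rank-load variables are forced to $L_r=\lambda^{\downarrow}_r(P)$. The plan is to show that adding the freezing constraints~\eqref{alexa:freeze} cuts this projection down to exactly $\mathcal{F}_{k-1}$. Once that holds, the objective $L_k$ equals $\lambda^{\downarrow}_k(P)$ on every feasible point, so the optimal value is $\min_{P\in\mathcal{F}_{k-1}}\lambda^{\downarrow}_k(P)=\bar{\lambda}_k$.

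\textbf{Step 1: the feasible schemes are exactly $\mathcal{F}_{k-1}$.} I would prove by induction on $k$ that
\[
\{P\in\mathcal{F}:\ \lambda^{\downarrow}_j(P)=\bar{\lambda}_j,\ j=1,\dots,k-1\}=\mathcal{F}_{k-1}.
\]
For $k=1$ the condition is empty and both sides are $\mathcal{F}_0=\mathcal{F}$. The inductive step is immediate from the recursion~\eqref{eq:lex-recursion2}, since $\mathcal{F}_k$ is $\mathcal{F}_{k-1}$ with the single extra condition $\lambda^{\downarrow}_k(P)=\bar{\lambda}_k$. The values $\bar{\lambda}_j$ are well defined because each $\mathcal{F}_{j}$ is finite and nonempty: it is nonempty since the minimum in~\eqref{eq:lex-recursion1} is attained on the finite nonempty set $\mathcal{F}_{j-1}$.

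\textbf{Step 2: the feasible set of the model.} By Lemma~\ref{lem:alexa-sort}(i), any feasible $(x,\pi,L)$ of $\mathcal{M}_k^{\mathrm{\textsc{Alexa}}}$ satisfies $L_j=\lambda^{\downarrow}_j(P)$. Constraint~\eqref{alexa:freeze} then reads $\lambda^{\downarrow}_j(P)=\bar{\lambda}_j$ for $j<k$, so by Step 1 the scheme satisfies $P\in\mathcal{F}_{k-1}$. Conversely, for $P\in\mathcal{F}_{k-1}$, the extension given by Lemma~\ref{lem:alexa-sort}(ii) has $L_j=\lambda^{\downarrow}_j(P)=\bar{\lambda}_j$, so it satisfies~\eqref{alexa:freeze} and is feasible.

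\textbf{Step 3: optimal value and membership in $\mathcal{F}_k$.} The objective $L_k$ evaluates to $\lambda^{\downarrow}_k(P)$ at every feasible point. The model is therefore equivalent to minimizing $\lambda^{\downarrow}_k$ over $\mathcal{F}_{k-1}$. This set is finite and nonempty, so the minimum is attained and equals $\bar{\lambda}_k$. Any optimal solution yields a scheme $P^\star_k\in\mathcal{F}_{k-1}$ with $\lambda^{\downarrow}_k(P^\star_k)=\bar{\lambda}_k$, which is exactly the definition of $P^\star_k\in\mathcal{F}_k$.

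\textbf{Main obstacle.} The delicate point is making sure~\eqref{alexa:freeze} constrains the \emph{sorted} values, not an arbitrary rearrangement of the loads. This is precisely what Lemma~\ref{lem:alexa-sort}(i) guarantees: its proof never uses~\eqref{alexa:freeze}, so it remains valid once the freezing constraints are added, because they only shrink the feasible set. A second point to note is that "the scheme read from a solution" is defined only through $x(P)$. This is harmless, since every quantity in the model depends on $x(P)$ alone, as the paper already observes.
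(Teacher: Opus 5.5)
Your proposal is correct and follows essentially the same route as the paper. Lemma~\ref{lem:alexa-sort} forces $L_r=\lambda^{\downarrow}_r(P)$ and shows every $P\in\mathcal{F}$ extends, so the freezing constraints~\eqref{alexa:freeze} cut the projection down to exactly $\mathcal{F}_{k-1}$, on which minimizing $L_k$ is minimizing $\lambda^{\downarrow}_k$. Your Step~1 only unrolls the recursion~\eqref{eq:lex-recursion2} that the paper invokes ``by the definition of the chain,'' and your remark that Lemma~\ref{lem:alexa-sort}(i) still holds once the freezing rows are added makes explicit a point the paper uses tacitly.
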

\begin{proof}
\[
\begin{array}{:c}
\begin{minipage}{0.97\textwidth}
By Lemma~\ref{lem:alexa-sort}, the feasible set of $\mathcal{M}_k^{\mathrm{\textsc{Alexa}}}$
without freezing projects onto
$\{(P,\lambda^{\downarrow}(P)):P\in\mathcal{F}\}$: every feasible triple has
$L_r=\lambda^{\downarrow}_{r}(P)$, and every $P\in\mathcal{F}$ is attained. In
particular $L_j=\lambda^{\downarrow}_{j}(P)$ for all $j$, so the freezing
constraints~\eqref{alexa:freeze} hold if and only if
$\lambda^{\downarrow}_{j}(P)=\bar{\lambda}_j$ for all $j<k$; by the definition of
the chain~\eqref{eq:lex-recursion1}--\eqref{eq:lex-recursion2}, this is exactly $P\in\mathcal{F}_{k-1}$. On
that restricted set the objective~\eqref{alexa:obj} satisfies
$L_k=\lambda^{\downarrow}_{k}(P)$, hence
$\min L_k=\min_{P\in\mathcal{F}_{k-1}}\lambda^{\downarrow}_{k}(P)=\bar{\lambda}_k$.
At an optimal solution we therefore have $P^\star_k\in\mathcal{F}_{k-1}$ and
$\lambda^{\downarrow}_{k}(P^\star_k)=L_k=\bar{\lambda}_k$, which is precisely
$P^\star_k\in\mathcal{F}_k$.
\end{minipage}
\end{array}
\]
\end{proof}

With correctness established, \textsc{Alexa} needs no dedicated driver: it is exactly the
generic procedure of \Cref{alg:lex-template} run on the family
$\{\mathcal{M}_k^{\mathrm{\textsc{Alexa}}}\}_{k=1}^{N}$. At level $k$, the model
\eqref{mod:alexa} is built, $\textsc{Solve}(\mathcal{M}_k^{\mathrm{\textsc{Alexa}}})$
returns $(\bar{\lambda}_k,P^\star_k)$, the freezing equality
$L_k=\bar{\lambda}_k$ is appended, and the objective is shifted to $L_{k+1}$;
at $k=1$ the freezing range is empty and the objective reduces to
$L_1=\lambda^{\downarrow}_{1}(P)$. By \Cref{thm:alexa-correctness} each solve
certifies its rank over $\mathcal{F}_{k-1}$, so the loop returns a
lexicographic minimax-optimal scheme after at most $N$ solves~\cite{abernethy2024}.

\subsection{\textsc{STELA}: A Sequential Threshold-Exceedance Algorithm}
\label{sec:stela}

\textsc{Alexa} pays a high price for materializing the sort: it maintains an $N\times N$
permutation matrix that must be re-optimized in full at every level. \textsc{Stela} avoids this by observing that certifying a
single rank does not require knowing the entire ordering. it only requires
knowing \emph{how many} loads exceed a candidate value. This shift from
\texttt{which pair sits at rank $k$} to \texttt{how many pairs exceed a threshold} replaces
the global assignment by a simple counting mechanism, and is the source of
\textsc{Stela}'s lighter per-level encoding.

Concretely, at iteration $k$ \textsc{Stela} computes the smallest threshold exceeded by at
most $k-1$ loads; this threshold equals $\lambda^{\downarrow}_{k}(P)$, while the
previously optimized ranks are preserved. For $P\in\mathcal{F}$ and $z\ge 0$, let
$E_P(z)=\{(a,t)\in\mathcal{A}_T:\lambda(a,t;P)>z\}$ denote the set of arc--time
pairs whose load strictly exceeds $z$. The following elementary observation is
the backbone of the method.

\begin{proposition}
\label{prop:stela-threshold}
For every $P\in\mathcal{F}$ and $k\in\mathcal{K}$,
$\lambda^{\downarrow}_{k}(P)=\min\{z\ge 0:\lvert E_P(z)\rvert\le k-1\}$.
\end{proposition}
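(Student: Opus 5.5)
We write $v=\lambda^{\downarrow}_{k}(P)$ and $S=\{z\ge 0:\lvert E_P(z)\rvert\le k-1\}$, and show that $v\in S$ and that no $z<v$ lies in $S$. Together these give that $v$ is the minimum of $S$, not merely its infimum. The whole argument rests on the sorting bijection $\sigma_P$ of~\eqref{eq:sorting-perm}. It also uses the non-negativity of loads from Proposition~\ref{prop:alexa-bigm}, which guarantees $v\ge 0$ and hence $v$ is an admissible value of $z$.

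\emph{Step 1 (membership).} Any pair $(a,t)$ with $\lambda(a,t;P)>v$ must equal $\sigma_P(r)$ for some $r$ with $\lambda^{\downarrow}_{r}(P)>v=\lambda^{\downarrow}_{k}(P)$. Since the sorted vector is non-increasing, every such rank satisfies $r<k$, i.e.\ $r\le k-1$. Because $\sigma_P$ is a bijection, this gives $\lvert E_P(v)\rvert\le k-1$, so $v\in S$.

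\emph{Step 2 (minimality).} Take $0\le z<v$. For each $r\le k$ we have $\lambda(\sigma_P(r);P)=\lambda^{\downarrow}_{r}(P)\ge\lambda^{\downarrow}_{k}(P)=v>z$. Hence the $k$ distinct pairs $\sigma_P(1),\dots,\sigma_P(k)$ all belong to $E_P(z)$, so $\lvert E_P(z)\rvert\ge k>k-1$ and $z\notin S$.

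There is no real obstacle. The only points needing care are the following three. First, the strict inequality in the definition of $E_P$ is exactly what makes $v$ itself belong to $S$. Second, ties in the loads are harmless: the arguments only use the sorted values and injectivity of $\sigma_P$, never uniqueness of $\sigma_P$. Third, the case $k=1$ fits the same argument: it reads $\min\{z:E_P(z)=\varnothing\}=\max$ load.
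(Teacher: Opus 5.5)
Your proof is correct and follows the paper's own two-sided argument: at most $k-1$ loads exceed $z\ge\lambda^{\downarrow}_{k}(P)$, while the $k$ largest loads all exceed any $z<\lambda^{\downarrow}_{k}(P)$. Your version is slightly more explicit about two points. It uses the sorting bijection $\sigma_P$ for the counting, and it notes that $\lambda^{\downarrow}_{k}(P)\ge 0$, which makes $z=\lambda^{\downarrow}_{k}(P)$ admissible so the minimum is attained.
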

\begin{proof}
\[
\begin{array}{:c}
\begin{minipage}{0.97\textwidth}
For $z\ge\lambda^{\downarrow}_{k}(P)$, only the ranks $1,\ldots,k-1$ can exceed
$z$, so $\lvert E_P(z)\rvert\le k-1$; for $z<\lambda^{\downarrow}_{k}(P)$, the $k$
largest loads all exceed $z$, so $\lvert E_P(z)\rvert\ge k$. Hence
$\lambda^{\downarrow}_{k}(P)$ is the smallest threshold whose exceedance set has
at most $k-1$ elements.
\end{minipage}
\end{array}
\]
\end{proof}

Proposition~\ref{prop:stela-threshold} rewrite the $k$-th largest load as the
optimal value of a small optimization problem over $z$: push the threshold down
as far as possible, subject to no more than $k-1$ loads exceeding it. It remains
to express the counting constraint $\lvert E_P(z)\rvert\le k-1$ in mixed-integer
form.

\paragraph{Exceedance variables.}
To turn the threshold characterization of
Proposition~\ref{prop:stela-threshold} into a MILP, we introduce, for each
level $j\in\{2,\ldots,N\}$ and each arc--time pair $(a,t)\in\mathcal{A}_T$, a
binary variable $y^{(j)}_{a,t}\in\{0,1\}$ acting as an \emph{exceedance
allowance}. For a threshold value $z$, it is linked to the load by
\begin{equation}
\label{bilinear constraint}
  \lambda(a,t;P) \;\leq\; z + \bigl(\bar{\lambda}_1 - z\bigr)\,y_{a,t}^{(j)},
\end{equation}
so that $y^{(j)}_{a,t}=0$ forces $\lambda(a,t;P)\le z$, while a load strictly
above $z$ forces $y^{(j)}_{a,t}=1$. Here $\bar\lambda_1=\lambda^{\downarrow}_1$
is the optimal value of level~$1$; being the largest load of the level-$1$
optimum, it upper-bounds every load of any solution feasible at
levels~$2,\ldots,N$, which makes~\eqref{bilinear constraint} valid; it cuts
off no admissible routing. The coefficient $\bar\lambda_1-z$ is a
big-$M$ compatible with this validity: when $y^{(j)}_{a,t}=1$ the right-hand
side is exactly $\bar\lambda_1$, so the constraint is vacuous but no looser
than necessary; when $y^{(j)}_{a,t}=0$ it reduces to $\lambda(a,t;P)\le z$.
Any larger coefficient would only weaken the linear relaxation.

Bounding the number of exceedances by the cardinality constraint
\begin{equation}
\label{eq:stela-card}
  \sum_{(a,t)\in\mathcal{A}_T} y_{a,t}^{(j)} \;\leq\; j-1
\end{equation}
then allows at most $j-1$ loads to exceed the threshold, which by
Proposition~\ref{prop:stela-threshold} pins it to
$\lambda^{\downarrow}_{j}(P)$.

The linking is deliberately one-sided: a load below the threshold may still
carry $y^{(j)}_{a,t}=1$, so the variables do not characterize exceedance
exactly. This is immaterial. The variable $y^{(j)}_{a,t}$ occurs only
in~\eqref{bilinear constraint} and in the cardinality
constraint~\eqref{eq:stela-card}; lowering a spurious $y^{(j)}_{a,t}=1$ to
$0$ tightens only its own constraint~\eqref{bilinear constraint}; which is
already satisfied, the load being below $z$; while relaxing the cardinality
budget. Any feasible point thus remains feasible after the reset, with
unchanged loads, and the projection onto the routing variables is the intended
one.

Constraint~\eqref{bilinear constraint} is linear when $z$ is a constant, a
\emph{frozen} level $z=\bar{\lambda}_j$, but becomes bilinear through the
product $z\,y_{a,t}^{(j)}$ when $z$ is itself an optimization variable, as at
the current level. The two regimes are handled separately below, and the
current-level bilinearity is removed in
Remark~\ref{rem:stela-linearization}.
\paragraph{Level 1.}
The first subproblem $\mathcal{M}_1^{\mathrm{\textsc{Stela}}}$ is writen as:
\begin{equation}
  \bar{\lambda}_1=\min_{P\in\mathcal{F},\,z\ge0}
  \{z:\lambda(a,t;P)\le z,\ \forall(a,t)\in\mathcal{A}_T\},
  \label{eq:stela-l1}
\end{equation}
which needs no exceedance variables: forcing $z$ above \emph{every} load makes it
an upper bound on the maximum, and minimizing $z$ pins it to
$\lambda^{\downarrow}_{1}(P)$. Its value $\bar{\lambda}_1$ upper-bounds every load
at all later levels, and this is precisely the constant that will parametrize the
threshold constraints from level $2$ onward.

\paragraph{Level $k\ge 2$.}
Given the frozen values $\bar{\lambda}_1,\ldots,\bar{\lambda}_{k-1}$ certified at
the earlier levels, the subproblem $\mathcal{M}_k^{\mathrm{\textsc{Stela}}}$ is
\begin{subequations}
\small
\label{mod:stela}
\begin{align}
\min_{P\in\mathcal{F},\,z\ge 0,\,y}\ & z
  \label{stela:obj}\\
\text{s.t.}\quad
& \lambda(a,t;P)\le \bar{\lambda}_1,\quad \forall (a,t),
  \label{stela:gb}\\
& \lambda(a,t;P)\le \bar{\lambda}_j+(\bar{\lambda}_1-\bar{\lambda}_j)\,y^{(j)}_{a,t},
  \ \forall (a,t),\ 2\le j<k,
  \label{stela:freeze-t}\\
& \sum_{(a,t)} y^{(j)}_{a,t}\le j-1,\quad 2\le j<k,
  \label{stela:freeze-c}\\
& \lambda(a,t;P)\le z+\bar{\lambda}_1\,y^{(k)}_{a,t},\quad \forall (a,t),
  \label{stela:cur-t}\\
& \sum_{(a,t)} y^{(k)}_{a,t}\le k-1,
  \label{stela:cur-c}\\
& 0\le z\le \bar{\lambda}_{k-1},
  \label{stela:mon}\\
& y^{(j)}_{a,t}\in\{0,1\},\quad \forall (a,t),\ 2\le j\le k.
  \label{stela:bin}
\end{align}
\end{subequations}

We read the model block by block.
The global bound~\eqref{stela:gb} imposes $\lambda(a,t;P)\le\bar{\lambda}_1$ on
every pair; it is the level-$1$ certificate carried forward, and it is what lets
every later threshold constraint use $\bar{\lambda}_1$ as its ``off'' value.
The \emph{freezing block}~\eqref{stela:freeze-t}--\eqref{stela:freeze-c}
reproduces, for each already-certified level $j<k$, the threshold constraint at
$z=\bar{\lambda}_j$ together with its cardinality budget $j-1$: by
Proposition~\ref{prop:stela-threshold}, requiring at most $j-1$ loads to exceed
$\bar{\lambda}_j$ is equivalent to $\lambda^{\downarrow}_{j}(P)=\bar{\lambda}_j$,
so this block re-imposes membership in $\mathcal{F}_{j}$ without fixing which
pairs realize those ranks.
The \emph{current-level block}~\eqref{stela:cur-t}--\eqref{stela:cur-c} is the
same construction with the free threshold $z$ in place of a constant: it counts
how many loads exceed $z$ and caps that count at $k-1$, so minimizing $z$
in~\eqref{stela:obj} drives it down to $\lambda^{\downarrow}_{k}(P)$.
Finally,~\eqref{stela:mon} enforces $z\le\bar{\lambda}_{k-1}$, keeping the
sequence $(\bar{\lambda}_k)_k$ non-increasing, consistent with $z$ being a
lower-ranked (hence no larger) load than the previous one.

\begin{remark}
\label{rem:stela-linearization}
At the current level, the coefficient $\bar{\lambda}_1-z$
in~\eqref{bilinear constraint} depends on the variable $z$, making the constraint
bilinear. \textsc{Stela} avoids this by replacing~\eqref{bilinear constraint} with the pair
\eqref{stela:gb}--\eqref{stela:cur-t},
\begin{equation*}
  \lambda(a,t;P) \leq \bar{\lambda}_1
  \qquad\text{and}\qquad
  \lambda(a,t;P) \leq z + \bar{\lambda}_1\,y_{a,t}^{(k)},
\end{equation*}
in which the coefficient of $y^{(k)}_{a,t}$ is the \emph{constant}
$\bar{\lambda}_1$. These two constraints together are equivalent to the bilinear
form:
\begin{itemize}
  \item if $y_{a,t}^{(k)}=0$, both reduce to $\lambda(a,t;P)\leq z$;
  \item if $y_{a,t}^{(k)}=1$, the first gives $\lambda(a,t;P)\leq\bar{\lambda}_1$,
    which coincides with the bilinear form~\eqref{bilinear constraint} evaluated
    at $y=1$ (right-hand side $z+(\bar{\lambda}_1-z)=\bar{\lambda}_1$).
\end{itemize}
The substitution is therefore exact, not a relaxation. Since $\bar{\lambda}_1$ is
a constant fixed at the first iteration, every subproblem
$\mathcal{M}_k^{\mathrm{\textsc{Stela}}}$ with $k>1$ is a standard mixed-integer linear
program.
\end{remark}

\paragraph{Correctness.}
We now prove that solving $\mathcal{M}_k^{\mathrm{\textsc{Stela}}}$ certifies rank $k$. In
contrast with \textsc{Alexa}, where a sorting lemma is established once and the freezing
constraints are then switched on, \textsc{Stela} intertwines the two: the current level
uses a free threshold while the earlier levels are frozen, so both the value and
the membership $P^\star_k\in\mathcal{F}_{k-1}$ are established simultaneously by a
two-sided bounding argument, itself carried by an induction over the levels.

\begin{theorem}
\label{thm:stela-correctness}
For every $k\in\mathcal{K}$, $\mathcal{M}_k^{\mathrm{\textsc{Stela}}}$, namely
\eqref{eq:stela-l1} for $k=1$ and~\eqref{mod:stela} for $k\ge2$,
satisfies~\eqref{eq:lex-recursion1}: its optimal value is
$\bar{\lambda}_k=\min_{P\in\mathcal{F}_{k-1}}\lambda^{\downarrow}_{k}(P)$, and the
scheme $P^\star_k$ read from any optimal solution belongs to $\mathcal{F}_k$.
\end{theorem}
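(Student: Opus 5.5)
The proof goes by induction on $k$, with the induction hypothesis that for every $j<k$ the constant $\bar{\lambda}_j$ used in the model equals $\min_{P\in\mathcal{F}_{j-1}}\lambda^{\downarrow}_j(P)$. This makes the frozen constants in \eqref{mod:stela} the true chain values of \eqref{eq:lex-recursion1}--\eqref{eq:lex-recursion2}.

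\textbf{Base case $k=1$.} In \eqref{eq:stela-l1}, feasibility of $(P,z)$ means $z\ge\max_{(a,t)}\lambda(a,t;P)=\lambda^{\downarrow}_1(P)$. Minimizing over $z$ for fixed $P$ therefore gives exactly $\lambda^{\downarrow}_1(P)$, and minimizing over $P\in\mathcal{F}=\mathcal{F}_0$ gives $\bar\lambda_1$. The minimum is attained because $\mathcal{F}$ is finite. At any optimum, $\lambda^{\downarrow}_1(P^\star_1)\le z=\bar\lambda_1\le\lambda^{\downarrow}_1(P^\star_1)$, so $P^\star_1\in\mathcal{F}_1$.

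\textbf{Inductive step $k\ge2$.} The key step is a projection claim: for fixed $P$, the frozen block \eqref{stela:gb}--\eqref{stela:freeze-c} admits binary $y^{(j)}$, $2\le j<k$, if and only if $P\in\mathcal{F}_{k-1}$. I argue each direction separately.

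\emph{Only if.} Constraint \eqref{stela:gb} gives $\lambda^{\downarrow}_1(P)\le\bar\lambda_1$, hence equality since $\bar\lambda_1$ is the minimum over $\mathcal{F}$, so $P\in\mathcal{F}_1$. For each $j<k$, constraints \eqref{stela:freeze-t}--\eqref{stela:freeze-c} force every pair with load $>\bar\lambda_j$ to have $y^{(j)}=1$, so $|E_P(\bar\lambda_j)|\le j-1$. Proposition~\ref{prop:stela-threshold} then gives $\lambda^{\downarrow}_j(P)\le\bar\lambda_j$. Proceeding in increasing $j$: if $P\in\mathcal{F}_{j-1}$, then minimality of $\bar\lambda_j$ over $\mathcal{F}_{j-1}$ forces equality, so $P\in\mathcal{F}_j$. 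This yields $P\in\mathcal{F}_{k-1}$.

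\emph{If.} For $P\in\mathcal{F}_{k-1}$, set $y^{(j)}_{a,t}=\mathbf 1[\lambda(a,t;P)>\bar\lambda_j]$. The cardinality bound \eqref{stela:freeze-c} holds by Proposition~\ref{prop:stela-threshold}, because $\lambda^{\downarrow}_j(P)=\bar\lambda_j$. When $y^{(j)}_{a,t}=1$, constraint \eqref{stela:freeze-t} holds because every load is at most $\lambda^{\downarrow}_1(P)=\bar\lambda_1$.

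With the projection established, I analyse the current block \eqref{stela:cur-t}--\eqref{stela:mon} for a fixed $P\in\mathcal{F}_{k-1}$. By Remark~\ref{rem:stela-linearization}, constraint \eqref{stela:cur-t} together with \eqref{stela:gb} is the exact threshold linking. Feasibility of $z$ therefore means $|E_P(z)|\le k-1$ and $0\le z\le\bar\lambda_{k-1}$, so the smallest feasible $z$ is $\lambda^{\downarrow}_k(P)$ by Proposition~\ref{prop:stela-threshold}. This value is admissible because $\lambda^{\downarrow}_k(P)\le\lambda^{\downarrow}_{k-1}(P)=\bar\lambda_{k-1}$; in particular the monotonicity constraint \eqref{stela:mon} cuts off nothing. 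The optimal value of $\mathcal{M}_k^{\textsc{Stela}}$ is therefore $\min_{P\in\mathcal{F}_{k-1}}\lambda^{\downarrow}_k(P)=\bar\lambda_k$.

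For membership: any optimal solution has $P^\star_k\in\mathcal{F}_{k-1}$ and $\bar\lambda_k\le\lambda^{\downarrow}_k(P^\star_k)\le z=\bar\lambda_k$, so $P^\star_k\in\mathcal{F}_k$.

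The main obstacle is the \emph{only if} direction of the projection claim. Each frozen level yields only the one-sided inequality $\lambda^{\downarrow}_j(P)\le\bar\lambda_j$, and upgrading it to equality requires the nested, level-by-level use of the minimality of $\bar\lambda_j$ over $\mathcal{F}_{j-1}$ rather than over $\mathcal{F}$. I will spell out this inner induction on $j$ carefully.
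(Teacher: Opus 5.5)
Your proof is correct and follows essentially the same route as the paper's: your ``if'' direction (exceedance indicators built from $P\in\mathcal{F}_{k-1}$) is the paper's upper-bound construction. Your ``only if'' direction, with the inner induction on $j$ via minimality of $\bar\lambda_j$ over $\mathcal{F}_{j-1}$, followed by Proposition~\ref{prop:stela-threshold} on the current block, is the paper's lower-bound argument. Packaging these as an exact projection onto $\mathcal{F}_{k-1}$ is only a cosmetic difference, since the paper's lower-bound argument never actually uses optimality of the solution.
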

\begin{proof}
\[
\begin{array}{:c}
\begin{minipage}{0.97\textwidth}
By induction on $k$. For $k=1$, the constraints of~\eqref{eq:stela-l1} force
$z\ge\max_{(a,t)}\lambda(a,t;P)=\lambda^{\downarrow}_{1}(P)$,
so~\eqref{eq:stela-l1} minimizes $\lambda^{\downarrow}_{1}$ over
$\mathcal{F}_0=\mathcal{F}$: its value is $\bar{\lambda}_1$ and any optimal
$P^\star_1$ attains it, i.e.\ $P^\star_1\in\mathcal{F}_1$. Let now $k\ge2$ and
assume the claim for all $j<k$; write $v_k$ for the optimal value of
$\mathcal{M}_k^{\mathrm{\textsc{Stela}}}$. The proof has two halves: a feasible point
built from a genuine minimizer shows $v_k\le\bar{\lambda}_k$, and any optimal
solution is shown to lie in $\mathcal{F}_{k-1}$ and to satisfy
$\lambda^{\downarrow}_{k}\le z^\star$, giving $v_k\ge\bar{\lambda}_k$.

\emph{Upper bound: $v_k\le\bar{\lambda}_k$.} Fix a scheme
$P\in\mathcal{F}_{k-1}$ attaining $\bar{\lambda}_k$, and build a candidate
solution by setting $z=\bar{\lambda}_k$,
$y^{(j)}_{a,t}=\mathbf{1}\bigl[\lambda(a,t;P)>\bar{\lambda}_j\bigr]$
$(2\le j<k)$ and $y^{(k)}_{a,t}=\mathbf{1}\bigl[\lambda(a,t;P)>z\bigr]$; that is,
we let the exceedance flags take the values they \emph{must} have for $P$.
Since $P\in\mathcal{F}_{k-1}$, we have $\lambda^{\downarrow}_{1}(P)=\bar{\lambda}_1$,
so every load is at most $\bar{\lambda}_1$ and the global bound~\eqref{stela:gb}
holds. For each $j$ with $2\le j<k$, again $\lambda^{\downarrow}_{j}(P)=\bar{\lambda}_j$,
so by Proposition~\ref{prop:stela-threshold} at most $j-1$ loads exceed
$\bar{\lambda}_j$; with the above choice of $y^{(j)}$ this gives both the
threshold constraint~\eqref{stela:freeze-t} and the cardinality
constraint~\eqref{stela:freeze-c}. Likewise
$z=\bar{\lambda}_k=\lambda^{\downarrow}_{k}(P)$ is exceeded by at most $k-1$
loads, yielding~\eqref{stela:cur-t}--\eqref{stela:cur-c}; and
$z=\bar{\lambda}_k\le\bar{\lambda}_{k-1}$ gives~\eqref{stela:mon}. This feasible
point has objective value $\bar{\lambda}_k$, hence $v_k\le\bar{\lambda}_k$.

\emph{Lower bound: $v_k\ge\bar{\lambda}_k$.} Let $(P_k^\star,z^\star,y)$ be an
optimal solution. We first show $P_k^\star\in\mathcal{F}_{k-1}$ by an inner
induction on $j<k$. For $j=1$, the global bound~\eqref{stela:gb} gives
$\lambda^{\downarrow}_{1}(P_k^\star)\le\bar{\lambda}_1$, and since
$P_k^\star\in\mathcal{F}$ the minimality of $\bar{\lambda}_1$ forces
$\lambda^{\downarrow}_{1}(P_k^\star)=\bar{\lambda}_1$. For $2\le j<k$, the
threshold constraints~\eqref{stela:freeze-t} and the cardinality
constraints~\eqref{stela:freeze-c} imply that at most $j-1$ loads exceed
$\bar{\lambda}_j$, so $\lambda^{\downarrow}_{j}(P_k^\star)\le\bar{\lambda}_j$ by
Proposition~\ref{prop:stela-threshold}; combined with
$P_k^\star\in\mathcal{F}_{j-1}$ (inner hypothesis) and the minimality of
$\bar{\lambda}_j$, this gives $\lambda^{\downarrow}_{j}(P_k^\star)=\bar{\lambda}_j$.
Hence $P_k^\star\in\mathcal{F}_{k-1}$. Finally, the current-level
constraints~\eqref{stela:cur-t}--\eqref{stela:cur-c} and
Proposition~\ref{prop:stela-threshold} give
$\lambda^{\downarrow}_{k}(P_k^\star)\le z^\star$, so
\[
  \bar{\lambda}_k
  =\min_{P\in\mathcal{F}_{k-1}}\lambda^{\downarrow}_{k}(P)
  \le\lambda^{\downarrow}_{k}(P_k^\star)\le z^\star=v_k.
\]

Combining both bounds, $v_k=\bar{\lambda}_k$; the last chain is then tight, so
$\lambda^{\downarrow}_{k}(P_k^\star)=\bar{\lambda}_k$ and
$P_k^\star\in\mathcal{F}_k$.
\end{minipage}
\end{array}
\]
\end{proof}

By Theorems~\ref{thm:alexa-correctness} and~\ref{thm:stela-correctness}, both
formulations meet~\eqref{eq:lex-recursion1}, hence both return a lexicographically
optimal routing scheme when driven by Algorithm~\ref{alg:lex-template}.

\subsection{\textsc{CARLA}: Cumulative Aggregation of Ranked Loads Algorithm.}
\label{sec:carla}

\textsc{Carla} reaches the same nested chain
$\mathcal{F}_0 \supseteq \mathcal{F}_1 \supseteq \cdots \supseteq \mathcal{F}_N$ as \textsc{Alexa} and \textsc{Stela}, but
replaces the \emph{direct} encoding of the $k$-th largest load by its
\emph{cumulative} counterpart. The obstacle to a direct encoding is that, for an intermediate rank
$1<k<N$, the order statistic $\lambda^{\downarrow}_k$ is neither convex nor
concave as a function of the load vector, and cannot serve as a MILP
objective without introducing auxiliary binaries. \textsc{Carla} instead
minimises $\Theta_k$, the sum of the $k$ largest loads: a convex quantity
that admits the compact linear representation
of~\cite{ogryczak2003ktamir} (\Cref{lem:ot}) and whose rank-by-rank
minimisation yields \emph{exactly} the same sequence of optimal sets
(\Cref{prop:same-sequence}). The payoff is structural: the only integer
variables of $\mathcal{M}_k^{\mathrm{\textsc{Carla}}}$ are the routing
variables themselves, every auxiliary being continuous.

\paragraph{Cumulative ordered outcomes.}
\label{sec:cumulative}
We now introduce the cumulative ordered outcomes $\Theta_k$, which serve as
a convenient tool for analyzing and comparing load vectors
\begin{definition}\label{def:theta}
For $\mu \in \mathbb{R}^{N}$ and $k \in \mathcal{K}$, the $k$-th \emph{cumulative ordered
outcome} of $\mu$ is
\begin{equation}\label{eq:theta-def}
  \Theta_k(\mu) \;=\; \sum_{r \leq k} \mu^{\downarrow}_r,
  \qquad \Theta_0(\mu) = 0,
\end{equation}
the sum of the $k$ largest entries of $\mu$. For a routing scheme $P \in \mathcal{F}$ we
abbreviate $\Theta_k(P) = \Theta_k(\lambda(P))$.
\end{definition}

Two facts make these quantities useful here. They carry exactly the same
lexicographic information as the sorted vector itself
(\Cref{lem:cum-order}), and, unlike the individual sorted entries, each of them is
the optimal value of a small linear program (\Cref{lem:ot}).

\paragraph{Cumulative sums carry the lexicographic order.}The next lemma establishes the equivalence between the lexicographic order of ordered vectors and that of their cumulative ordered outcomes.

\begin{lemma}\label{lem:cum-order}
For all $\mu, \nu \in \mathbb{R}^{N}$,
\[
  \mu^{\downarrow} \lex \nu^{\downarrow}
  \qquad \Longleftrightarrow \qquad
  \bigl(\Theta_k(\mu)\bigr)_{k \in \mathcal{K}} \lex \bigl(\Theta_k(\nu)\bigr)_{k \in \mathcal{K}}.
\]
\end{lemma}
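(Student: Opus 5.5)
The plan is to use the fact that the map from a sorted vector to its cumulative sums is a bijection given by partial summation, and that it preserves the ``first differing index''. Write $a=\mu^{\downarrow}$ and $b=\nu^{\downarrow}$, and set $A_k=\Theta_k(\mu)=\sum_{r\le k}a_r$ and $B_k=\Theta_k(\nu)=\sum_{r\le k}b_r$. By \eqref{eq:theta-def}, $A_0=B_0=0$, so $a_k=A_k-A_{k-1}$ and $b_k=B_k-B_{k-1}$ for all $k\in\mathcal{K}$. In particular $(A_k)_k$ and $(B_k)_k$ determine $a$ and $b$, and the two sequences of cumulative sums are equal if and only if $a=b$. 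This disposes of the equality case of $\lex$.

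For the strict part, I would prove the following claim. If $a\ne b$ and $k^\ast$ is the first index with $a_{k^\ast}\ne b_{k^\ast}$, then $k^\ast$ is also the first index with $A_{k^\ast}\ne B_{k^\ast}$, and the sign of $A_{k^\ast}-B_{k^\ast}$ equals the sign of $a_{k^\ast}-b_{k^\ast}$. For $k<k^\ast$, the entries agree term by term, so $A_k=B_k$. At $k^\ast$ we have $A_{k^\ast}-B_{k^\ast}=(A_{k^\ast-1}-B_{k^\ast-1})+(a_{k^\ast}-b_{k^\ast})=a_{k^\ast}-b_{k^\ast}\ne 0$. Hence $a<_{\mathrm{lex}} b$ if and only if $A<_{\mathrm{lex}} B$.

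Combining the equality case with the strict case gives the two implications of the stated equivalence. The converse direction needs no separate argument, because the first-difference index is shared in both directions. Equivalently, one can argue by contrapositive using totality of $\lex$.

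I expect no real obstacle here. The only subtle point is to note that the lemma compares the sorted vectors $\mu^{\downarrow},\nu^{\downarrow}$, not $\mu,\nu$ themselves. Monotonicity of the sorting is never actually used, since the argument is just that partial summation is a prefix-preserving bijection on $\mathbb{R}^N$. I would state this explicitly so that the later use in \Cref{prop:same-sequence} is transparent.
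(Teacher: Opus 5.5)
Your proof is correct and follows essentially the same route as the paper: treat the equality case separately, then show that the first index where $\mu^{\downarrow}$ and $\nu^{\downarrow}$ differ is also the first index where the cumulative sums differ, since $\Theta_{k}(\mu)-\Theta_{k}(\nu)=\mu^{\downarrow}_{k}-\nu^{\downarrow}_{k}$ there, so the sign is the same. Your remark that partial summation is an invertible, prefix-preserving map matches the paper's follow-up observation that the lower-triangular matrix $Q$ has $\det Q=1$.
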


\begin{proof}
\[
\begin{array}{:c}
\begin{minipage}{0.97\textwidth}
If $\mu^{\downarrow} = \nu^{\downarrow}$, both sides hold with equality throughout. Otherwise
let $k$ be the first index with $\mu^{\downarrow}_k \neq \nu^{\downarrow}_k$. The prefixes of
length $k-1$ coincide, hence $\Theta_j(\mu) = \Theta_j(\nu)$ for every $j < k$,
while
\[
  \Theta_k(\mu) - \Theta_k(\nu)
  \;=\; \mu^{\downarrow}_k - \nu^{\downarrow}_k \;\neq\; 0 .
\]
Thus $k$ is also the first index at which the two cumulative vectors differ, and
they differ with the same sign.
\end{minipage}
\end{array}
\]
\end{proof}

\Cref{lem:cum-order} uses only algebra: it assumes nothing about convexity,
continuity, or the set to which the vectors belong. It therefore applies without
change to the discrete set $\mathcal{F} \subseteq \{0,1\}^{m}$.

To see the structure explicitly, fix a sorted vector
$\mu^{\downarrow} = (\mu^{\downarrow}_1, \dots, \mu^{\downarrow}_N)^{\top}$. By
definition, $\Theta_k(\mu) = \sum_{r \leq k} \mu^{\downarrow}_r$ is a partial sum,
so the map $\mu^{\downarrow} \mapsto \bigl(\Theta_1(\mu), \dots, \Theta_N(\mu)\bigr)$
is linear and given by a single constant matrix:
\begin{equation}\label{eq:theta-matrix}
  \begin{pmatrix} \Theta_1 \\ \Theta_2 \\ \Theta_3 \\ \vdots \\ \Theta_N \end{pmatrix}
  =
  \underbrace{\begin{pmatrix}
    1 & 0 & 0 & \cdots & 0 \\
    1 & 1 & 0 & \cdots & 0 \\
    1 & 1 & 1 & \cdots & 0 \\
    \vdots & & & \ddots & \vdots \\
    1 & 1 & 1 & \cdots & 1
  \end{pmatrix}}_{Q}
  \begin{pmatrix}
    \mu^{\downarrow}_1 \\ \mu^{\downarrow}_2 \\ \mu^{\downarrow}_3 \\ \vdots \\ \mu^{\downarrow}_N
  \end{pmatrix},
  \qquad
  Q_{k,r} =
  \begin{cases} 1 & r \leq k, \\ 0 & r > k. \end{cases}
\end{equation}
The matrix $Q$ is lower triangular, since $\Theta_k$ involves only the ranks
$r \leq k$, and its diagonal entries all equal $1$, since $\mu^{\downarrow}_k$
appears exactly once in $\Theta_k$. Hence $\det Q = 1$, so $Q$ is invertible and
the map is a bijection; its inverse recovers each sorted load from two consecutive
cumulative sums,
\[
  \mu^{\downarrow}_k = \Theta_k(\mu) - \Theta_{k-1}(\mu),
  \qquad \Theta_0(\mu) = 0.
\]
Being a bijection, $Q$ maps the lexicographic order on sorted vectors onto the
lexicographic order on cumulative vectors, which is exactly what
\Cref{lem:cum-order} states.

\paragraph{Linear representation.}We now recall a classical linear optimization representation of cumulative ordered outcomes due to \cite{ogryczak2003ktamir}.

\begin{lemma} \label{lem:ot} 
For every $\mu \in \mathbb{R}_+^{N}$ and $k \in \mathcal{K}$,
\begin{equation}\label{eq:ot}
  \Theta_k(\mu)
  \;=\;
  \min_{z \geq 0,\ u \geq 0}
  \Bigl\{\, k z + \sum_{i=1}^{N} u_i
     \ :\ u_i \geq \mu_i - z \quad \forall i \,\Bigr\},
\end{equation}
and the minimum is attained at $z = \mu^{\downarrow}_k$.
\end{lemma}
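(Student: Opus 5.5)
The plan is to prove the two inequalities between $\Theta_k(\mu)$ and the optimal value of the linear program in~\eqref{eq:ot} separately. First, for any fixed $z\ge 0$ the inner minimization over $u$ is solved in closed form: the constraints $u_i\ge \mu_i-z$ and $u_i\ge 0$ decouple, so the optimal choice is $u_i=(\mu_i-z)^+$. The problem therefore reduces to the one-dimensional minimization
\[
  \min_{z\ge 0}\ \phi(z),\qquad \phi(z)=kz+\sum_{i=1}^{N}(\mu_i-z)^+ .
\]
The rest of the argument studies $\phi$.

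For the lower bound, fix $z\ge 0$. Let $I$ be an index set realizing the $k$ largest entries, so that $\sum_{i\in I}\mu_i=\Theta_k(\mu)$. Dropping the terms outside $I$ gives
\[
  \phi(z)\ \ge\ kz+\sum_{i\in I}(\mu_i-z)^+\ \ge\ kz+\sum_{i\in I}(\mu_i-z)\ =\ \Theta_k(\mu).
\]
This shows the LP value is at least $\Theta_k(\mu)$.

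For the upper bound, evaluate $\phi$ at $z=\mu^{\downarrow}_k$. This choice is admissible because $\mu\in\mathbb{R}^N_+$ gives $\mu^{\downarrow}_k\ge 0$, and this is the only place where nonnegativity is needed. At this $z$, the positive parts vanish for every entry ranked $k$ or below. The entries ranked $r<k$ contribute $\mu^{\downarrow}_r-\mu^{\downarrow}_k$, and ties at the value $\mu^{\downarrow}_k$ contribute zero either way. Hence
\[
  \phi(\mu^{\downarrow}_k)=k\mu^{\downarrow}_k+\sum_{r<k}\bigl(\mu^{\downarrow}_r-\mu^{\downarrow}_k\bigr)=\Theta_k(\mu),
\]
and together with the lower bound this also proves that the minimum is attained at $z=\mu^{\downarrow}_k$.

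The only delicate point I expect is the bookkeeping with ties in this evaluation and the role of the sign constraint $z\ge 0$. Since the lower bound holds for all real $z$, the sign constraint does not affect the value once $\mu^{\downarrow}_k\ge 0$. I would also remark, without needing it, that $\phi$ is piecewise linear and convex with slope $k-|\{i:\mu_i>z\}|$, which explains why the minimizer sits at the $k$-th order statistic.
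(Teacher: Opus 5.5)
Your proof is correct, but it obtains the lower bound differently from the paper. After the same reduction to $\phi(z)=kz+\sum_i(\mu_i-z)^+$, the paper argues through convexity. It computes the one-sided slopes $k-|\{i:\mu_i>z\}|$ and $k-|\{i:\mu_i\ge z\}|$, checks that they change sign at $z=\mu^{\downarrow}_k$, concludes that this point is a global minimiser, and only then evaluates $\phi$ there; that evaluation is the same as yours. You instead sandwich the optimal value. Your bound $\phi(z)\ge\Theta_k(\mu)$ for every real $z$, obtained by keeping only a top-$k$ index set $I$ and using $(\cdot)^+\ge(\cdot)$, replaces the subgradient analysis, and the explicit evaluation at $\mu^{\downarrow}_k$ closes the gap. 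This lower bound is exactly weak LP duality with the dual certificate $w=\mathbf{1}_I$, which is the top-$k$ selector the paper only introduces later in \Cref{prop:topk-selector}. Your route is more elementary, avoids tie-counting in one-sided derivatives, and shows directly why the sign constraint on $z$ is harmless. The paper's slope argument, in exchange, explains why the minimiser sits at the $k$-th order statistic. It also gives, at no extra cost, the whole set of minimisers, namely those $z$ with $|\{i:\mu_i>z\}|\le k\le|\{i:\mu_i\ge z\}|$. The paper reuses this reasoning in \Cref{prop:cg-carla-mlu}, and your closing remark on the slope would recover it if needed.
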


\begin{proof}
\[
\begin{array}{:c}
\begin{minipage}{0.97\textwidth}
For a fixed $z$, the inner minimisation is separable and gives
$u_i = (\mu_i - z)_+$. The right-hand side of \eqref{eq:ot} therefore equals
$\min_{z \geq 0} f(z)$ with
\[
  f(z) \;=\; kz + \sum_{i=1}^{N} (\mu_i - z)_+ ,
\]
a convex piecewise-linear function of the single variable $z$, whose one-sided
derivatives are
\[
  f'(z^{+}) = k - \bigl|\{ i : \mu_i > z \}\bigr|,
  \qquad
  f'(z^{-}) = k - \bigl|\{ i : \mu_i \geq z \}\bigr| .
\]
At $z = \mu^{\downarrow}_k$ we have $|\{ i : \mu_i > \mu^{\downarrow}_k \}| \leq k-1$ and
$|\{ i : \mu_i \geq \mu^{\downarrow}_k \}| \geq k$, hence
$f'(z^{+}) \geq 1 > 0 \geq f'(z^{-})$; by convexity, $\mu^{\downarrow}_k$ is a
minimiser. Its value is
\[
  k\,\mu^{\downarrow}_k + \sum_{i} \bigl(\mu_i - \mu^{\downarrow}_k\bigr)_+
  \;=\; k\,\mu^{\downarrow}_k + \sum_{r \leq k} \bigl(\mu^{\downarrow}_r - \mu^{\downarrow}_k\bigr)
  \;=\; \Theta_k(\mu),
\]
where the middle equality holds because $\mu^{\downarrow}_r \leq \mu^{\downarrow}_k$ for
$r > k$. Finally $\mu \geq 0$ implies $\mu^{\downarrow}_k \geq 0$, so the restriction
$z \geq 0$ is inactive.
\end{minipage}
\end{array}
\]
\end{proof}

\paragraph{The two sequences of optimal sets coincide.}
\label{sec:coincide}

Replacing the objective $\lambda^{\downarrow}_k$ by $\Theta_k$ in the recursion
\eqref{eq:lex-recursion1} does not merely preserve the final optimum: it produces the
same nested family of optimal sets at every rank.

\begin{proposition}\label{prop:same-sequence}
Let $\GG_0 = \mathcal{F}$ and, for $k \geq 1$,
\[
  \bar{\Theta}_k \;=\; \min_{P \in \GG_{k-1}} \Theta_k(P),
  \qquad
  \GG_k \;=\; \bigl\{ P \in \GG_{k-1} \,:\, \Theta_k(P) = \bar{\Theta}_k \bigr\}.
\]
Then, for every $k \in \mathcal{K}$,
\[
  \GG_k \;=\; \mathcal{F}_k
  \qquad\text{and}\qquad
  \bar{\Theta}_k \;=\; \sum_{r \leq k} \bar{\lambda}_r ,
\]
where $\mathcal{F}_k$ and $\bar{\lambda}_k$ are as in \eqref{eq:lex-recursion1}.
\end{proposition}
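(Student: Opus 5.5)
The proof goes by induction on $k$, proving both identities at once. The hypothesis at stage $k-1$ is $\GG_{k-1}=\mathcal{F}_{k-1}$ and $\bar{\Theta}_{k-1}=\sum_{r\le k-1}\bar{\lambda}_r$. The base case $k=0$ is immediate from the conventions $\GG_0=\mathcal{F}_0=\mathcal{F}$ and $\Theta_0\equiv 0$, with an empty sum on the right-hand side. Finiteness and non-emptiness of $\mathcal{F}$ guarantee that every minimum in both recursions is attained, so each set $\GG_k$ and $\mathcal{F}_k$ is non-empty.

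For the inductive step, the key observation is that on $\mathcal{F}_{k-1}$ the first $k-1$ sorted loads are frozen. Unwinding the chain \eqref{eq:lex-recursion2}, every $P\in\mathcal{F}_{k-1}$ satisfies $\lambda^{\downarrow}_r(P)=\bar{\lambda}_r$ for all $r\le k-1$. Hence, by \Cref{def:theta},
\[
  \Theta_k(P)=\sum_{r\le k-1}\bar{\lambda}_r+\lambda^{\downarrow}_k(P)
  \qquad\text{for all }P\in\mathcal{F}_{k-1}.
\]
This is the inverse-$Q$ relation $\lambda^{\downarrow}_k=\Theta_k-\Theta_{k-1}$ from \eqref{eq:theta-matrix}, with $\Theta_{k-1}$ constant on the current set. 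Using $\GG_{k-1}=\mathcal{F}_{k-1}$, the two objectives $\Theta_k$ and $\lambda^{\downarrow}_k$ differ on this common feasible set only by the additive constant $\sum_{r\le k-1}\bar{\lambda}_r$. Taking minima gives
\[
  \bar{\Theta}_k=\sum_{r\le k-1}\bar{\lambda}_r+\min_{P\in\mathcal{F}_{k-1}}\lambda^{\downarrow}_k(P)=\sum_{r\le k}\bar{\lambda}_r.
\]
The minimizer sets also coincide: for $P\in\mathcal{F}_{k-1}$, $\Theta_k(P)=\bar{\Theta}_k$ holds if and only if $\lambda^{\downarrow}_k(P)=\bar{\lambda}_k$. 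Comparing with \eqref{eq:lex-recursion2} gives $\GG_k=\mathcal{F}_k$, which closes the induction.

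No step is genuinely hard. The one point requiring care is the claim that membership in $\mathcal{F}_{k-1}$ fixes every earlier rank $r\le k-1$, not only rank $k-1$. This follows because the chain is nested: each $\mathcal{F}_r$ is defined as a subset of $\mathcal{F}_{r-1}$ with the condition $\lambda^{\downarrow}_r=\bar{\lambda}_r$ added, so all of those conditions accumulate. To make the induction self-contained, I would state this accumulation property separately as a one-line remark before the inductive step.
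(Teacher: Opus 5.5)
Your proposal is correct and follows essentially the same route as the paper. Both argue by induction on $k$, using that every $P\in\mathcal{F}_{k-1}=\GG_{k-1}$ has its first $k-1$ sorted loads frozen, so $\Theta_k$ and $\lambda^{\downarrow}_k$ differ by a constant there and therefore share minimizers, with optimal values offset by that constant. The only cosmetic difference is that you write the constant directly as $\sum_{r\le k-1}\bar{\lambda}_r$, so the inductive hypothesis on $\bar{\Theta}_{k-1}$ is not actually needed, whereas the paper writes it as $\bar{\Theta}_{k-1}$.
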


\begin{proof}
\[
\begin{array}{:c}
\begin{minipage}{0.97\textwidth}
By induction on $k$. The case $k = 0$ holds by definition, with
$\bar{\Theta}_0 = 0$. Assume $\GG_{k-1} = \mathcal{F}_{k-1}$ and
$\bar{\Theta}_{k-1} = \sum_{r < k} \bar{\lambda}_r$. Every $P \in \mathcal{F}_{k-1}$
satisfies $\lambda^{\downarrow}_r(P) = \bar{\lambda}_r$ for all $r < k$, hence
\begin{equation}\label{eq:shift}
  \Theta_k(P)
  \;=\; \sum_{r<k} \lambda^{\downarrow}_r(P) + \lambda^{\downarrow}_k(P)
  \;=\; \bar{\Theta}_{k-1} + \lambda^{\downarrow}_k(P) .
\end{equation}
On $\mathcal{F}_{k-1}$ the two objectives $\Theta_k$ and $\lambda^{\downarrow}_k$ therefore
differ by the constant $\bar{\Theta}_{k-1}$. They consequently share the same set
of minimisers, which gives $\GG_k = \mathcal{F}_k$, and their optimal values satisfy
$\bar{\Theta}_k = \bar{\Theta}_{k-1} + \bar{\lambda}_k$.
\end{minipage}
\end{array}
\]
\end{proof}

\Cref{prop:same-sequence} is the structural result that legitimises the whole
approach. It states that \textit{rank $k$ has been solved} means the same thing whether
one optimises the individual sorted loads or their cumulative sums: in both cases
the feasible set has been reduced to $\mathcal{F}_k$, and the certified prefix
$(\bar{\lambda}_1, \dots, \bar{\lambda}_k)$ is the same. 

\begin{corollary}\label{cor:lexopt}
$P^{\star} \in \GG_N$ if and only if $P^{\star}$ is lexicographically optimal
for~\eqref{ctn:objlex}.
\end{corollary}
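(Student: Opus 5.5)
The corollary follows by chaining \Cref{prop:same-sequence} at rank $k=N$ with the characterization of lexicographic optimality through the chain $\mathcal{F}_0\supseteq\cdots\supseteq\mathcal{F}_N$. \Cref{prop:same-sequence} gives $\GG_N=\mathcal{F}_N$. It therefore suffices to show that $P^\star\in\mathcal{F}_N$ if and only if $P^\star$ is lexicographically optimal for~\eqref{ctn:objlex}. This equivalence was asserted earlier via~\cite{abernethy2024}, but I would reprove it directly from the recursion~\eqref{eq:lex-recursion1}--\eqref{eq:lex-recursion2} so that the corollary is self-contained.

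For the forward direction, take $P^\star\in\mathcal{F}_N$. Then $\lambda^{\downarrow}_k(P^\star)=\bar\lambda_k$ for every $k$, since membership in $\mathcal{F}_k$ is retained along the chain. Fix any $P\in\mathcal{F}$ with $\lambda^{\downarrow}(P)\neq\lambda^{\downarrow}(P^\star)$, and let $k$ be the first index where they differ. Then $\lambda^{\downarrow}_j(P)=\bar\lambda_j$ for all $j<k$, and a short induction shows $P\in\mathcal{F}_{k-1}$: at each step $j<k$, $P\in\mathcal{F}_{j-1}$ and $\lambda^{\downarrow}_j(P)=\bar\lambda_j$ give $P\in\mathcal{F}_j$. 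By minimality of $\bar\lambda_k$ over $\mathcal{F}_{k-1}$ we get $\lambda^{\downarrow}_k(P)\ge\bar\lambda_k=\lambda^{\downarrow}_k(P^\star)$, and the inequality is strict because the components differ. Hence $\lambda^{\downarrow}(P^\star)\lex\lambda^{\downarrow}(P)$.

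For the converse, let $P^\star$ be lexicographically optimal. I would show by induction on $k$ that $P^\star\in\mathcal{F}_k$. Assume $P^\star\in\mathcal{F}_{k-1}$ but $\lambda^{\downarrow}_k(P^\star)>\bar\lambda_k$. Then a minimizer $P\in\mathcal{F}_{k-1}$, which exists since $\mathcal{F}$ is finite and nonempty, agrees with $P^\star$ on ranks $<k$ and is strictly smaller at rank $k$. Thus $P$ is strictly lex-smaller than $P^\star$, a contradiction. Equivalently, both directions could be phrased in terms of $\Theta$ using \Cref{lem:cum-order}, which identifies lex-minimality of $\lambda^{\downarrow}$ with lex-minimality of $(\Theta_k)_k$ and so matches the definition of $\GG_k$ directly.

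There is no real obstacle here. The only point needing care is the bookkeeping showing that agreement with $(\bar\lambda_j)_{j<k}$ places a scheme in $\mathcal{F}_{k-1}$, and the handling of ties: the sorted vector is well defined even when $\sigma_P$ is not unique, so lex comparisons are unambiguous.
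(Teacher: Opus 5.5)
Your proof is correct and follows the paper's route: the paper likewise applies \Cref{prop:same-sequence} at $k=N$ to get $\GG_N=\mathcal{F}_N$, then uses the characterisation of the lexicographic optima as $\mathcal{F}_N$. The only difference is that the paper takes that characterisation from~\cite{abernethy2024}, whereas you reprove it directly from the recursion~\eqref{eq:lex-recursion1}--\eqref{eq:lex-recursion2}, which makes the corollary self-contained; in your converse, the minimiser exists because $\mathcal{F}_{k-1}$ is finite and contains $P^\star$, hence is nonempty.
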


\begin{proof}
\[
\begin{array}{:c}
\begin{minipage}{0.97\textwidth}
Immediate from \Cref{prop:same-sequence} with $k = N$ and the characterisation
$P^{\star} \in \mathcal{F}_N$ of the lexicographic optima.
\end{minipage}
\end{array}
\]
\end{proof}

\paragraph{The rank subproblem formulation:}\label{sec:mip}

Fix $k \in \mathcal{K}$ and assume the values $\bar{\lambda}_1, \dots, \bar{\lambda}_{k-1}$
of the preceding ranks known. Define the constants
\begin{equation}\label{eq:cj}
  c_j \;=\; \bar{\Theta}_j - j\,\bar{\lambda}_j
      \;=\; \sum_{r < j} \bigl( \bar{\lambda}_r - \bar{\lambda}_j \bigr),
  \qquad 1 \leq j < k .
\end{equation}
The formulation $\mathcal{M}_k^{\mathrm{\textsc{Carla}}}$ is
\begin{subequations}\label{eq:carla}
\begin{align}
  \min_{P,\, z_k,\, u} \quad
    & k\,z_k + \sum_{(a,t) \in \mathcal{A}_T} u^{(k)}_{a,t}
    \label{eq:carla-obj}\\[2pt]
  \text{s.t.}\quad
    & P \in \mathcal{F},
    \label{eq:carla-routing}\\
    & u^{(j)}_{a,t} \;\geq\; \lambda(a,t;P) - \bar{\lambda}_j,
      && \forall (a,t) \in \mathcal{A}_T,\ 1 \leq j < k,
    \label{eq:carla-frozen-lin}\\
    & \sum_{(a,t) \in \mathcal{A}_T} u^{(j)}_{a,t} \;\leq\; c_j,
      && 1 \leq j < k,
    \label{eq:carla-frozen-card}\\
    & u^{(k)}_{a,t} \;\geq\; \lambda(a,t;P) - z_k,
      && \forall (a,t) \in \mathcal{A}_T,
    \label{eq:carla-current}\\
    & 0 \;\leq\; u^{(j)}_{a,t} \;\leq\; \bar{\lambda}_1 - \bar{\lambda}_j,
      && \forall (a,t) \in \mathcal{A}_T,\ 1 \leq j < k,
    \label{eq:carla-box}\\
    & 0 \;\leq\; z_k \;\leq\; \bar{\lambda}_{k-1},
      \qquad u^{(k)}_{a,t} \;\geq\; 0
      && \forall (a,t) \in \mathcal{A}_T .
    \label{eq:carla-zbox}
\end{align}
\end{subequations}

Constraints \eqref{eq:carla-frozen-lin}--\eqref{eq:carla-frozen-card} preserve
the optimal value of each preceding rank; \eqref{eq:carla-current} together with
the objective \eqref{eq:carla-obj} encodes the current rank through
\Cref{lem:ot}; and \eqref{eq:carla-box}--\eqref{eq:carla-zbox} are valid bounds
tightening the linear relaxation. At $k = 1$ the frozen blocks are empty, the box
\eqref{eq:carla-zbox} reads $z_1 \geq 0$ by the convention
$\bar{\lambda}_0 = +\infty$, and \eqref{eq:carla} reduces to the minimisation of
the maximum link utilisation.

\begin{remark}\label{rem:j1}
For $j = 1$ we have $c_1 = 0$ by \eqref{eq:cj}. Combined with $u^{(1)} \geq 0$,
constraint \eqref{eq:carla-frozen-card} forces $u^{(1)} = 0$, and
\eqref{eq:carla-frozen-lin} then reduces to
\[
  \lambda(a,t;P) \;\leq\; \bar{\lambda}_1
  \qquad \forall (a,t) \in \mathcal{A}_T,
\]
that is, to the global bound stating that no load exceeds the optimal maximum link
utilisation. The box \eqref{eq:carla-box} is consistent with this, since it reads
$u^{(1)}_{a,t} \leq 0$.
\end{remark}

 The next result states that $\mathcal{M}_k^{\mathrm{\textsc{Carla}}}$ correctly freezes
the ranks already certified and optimises the next one: its feasible set projects
onto exactly $\mathcal{F}_{k-1}$, so minimising $\Theta_k$ returns $\bar{\Theta}_k$ and
yields a scheme in $\mathcal{F}_k$.
\begin{theorem}\label{thm:carla}
For every $k \in \mathcal{K}$, the optimal value of $\mathcal{M}_k^{\mathrm{\textsc{Carla}}}$ is $\bar{\Theta}_k$, and the
routing scheme $P^{\star}_k$ read from any optimal solution satisfies
$P^{\star}_k \in \mathcal{F}_k$.
\end{theorem}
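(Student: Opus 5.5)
We prove the statement by induction on $k$, using \Cref{prop:same-sequence} to move freely between $\bar\Theta_k$ and $\bar\lambda_k$, and \Cref{lem:ot} to read the objective. The key intermediate claim is that the projection of the feasible set of $\mathcal{M}_k^{\mathrm{\textsc{Carla}}}$ onto the routing variables is exactly $\mathcal{F}_{k-1}$. Once this holds, the rest is short. For fixed $P$, minimising \eqref{eq:carla-obj} over $(z_k,u^{(k)})$ subject to \eqref{eq:carla-current} and $u^{(k)}\ge 0$ gives $\Theta_k(P)$ by \Cref{lem:ot}, with minimiser $z_k=\lambda^{\downarrow}_k(P)$. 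The extra bound $z_k\le\bar\lambda_{k-1}$ is harmless on $\mathcal{F}_{k-1}$, since there $\lambda^{\downarrow}_k(P)\le\lambda^{\downarrow}_{k-1}(P)=\bar\lambda_{k-1}$; the convexity in $z$ also shows the optimum is attained inside the box. Hence the optimal value is $\min_{P\in\mathcal{F}_{k-1}}\Theta_k(P)=\bar\Theta_k$, because $\GG_{k-1}=\mathcal{F}_{k-1}$. Any optimal $P^\star_k$ then satisfies $\Theta_k(P^\star_k)=\bar\Theta_k$, so $P^\star_k\in\GG_k=\mathcal{F}_k$.

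It remains to prove the projection claim, which has two inclusions.

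\textbf{Inclusion $\mathcal{F}_{k-1}\subseteq$ projection.} Given $P\in\mathcal{F}_{k-1}$, set $u^{(j)}_{a,t}=(\lambda(a,t;P)-\bar\lambda_j)_+$ for each $j<k$. Then \eqref{eq:carla-frozen-lin} holds by construction. The sum $\sum_{(a,t)}u^{(j)}_{a,t}$ equals $\Theta_j(P)-j\lambda^{\downarrow}_j(P)$, which is the value identity of \Cref{lem:ot} at $z=\lambda^{\downarrow}_j(P)=\bar\lambda_j$. Since $\Theta_j(P)=\bar\Theta_j$ on $\mathcal{F}_{k-1}$, this sum equals $c_j$, so \eqref{eq:carla-frozen-card} holds. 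The box \eqref{eq:carla-box} holds because every load is at most $\lambda^{\downarrow}_1(P)=\bar\lambda_1$.

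\textbf{Inclusion projection $\subseteq\mathcal{F}_{k-1}$.} This is the main obstacle. Take a feasible point with routing $P$ and show by an inner induction on $j<k$ that $P\in\mathcal{F}_j$. Assume $P\in\mathcal{F}_{j-1}$, which is vacuous for $j=1$. Constraints \eqref{eq:carla-frozen-lin} and $u^{(j)}\ge0$ give $u^{(j)}_{a,t}\ge(\lambda(a,t;P)-\bar\lambda_j)_+$. Combining this with \eqref{eq:carla-frozen-card} and \Cref{lem:ot} applied with the particular choice $z=\bar\lambda_j$ yields
\[
  \Theta_j(P)\;\le\; j\bar\lambda_j+\sum_{(a,t)}(\lambda(a,t;P)-\bar\lambda_j)_+\;\le\; j\bar\lambda_j+c_j\;=\;\bar\Theta_j .
\]
Since $P\in\mathcal{F}_{j-1}=\GG_{j-1}$ and $\bar\Theta_j$ is the minimum of $\Theta_j$ over $\GG_{j-1}$, equality must hold. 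Hence $P\in\GG_j=\mathcal{F}_j$ by \Cref{prop:same-sequence}.

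The delicate point is that \Cref{lem:ot} is used only as an upper bound at a fixed $z$; it is not used as an exact characterisation. The argument needs the inner induction to access minimality over $\GG_{j-1}$. For $j=1$ this is just \Cref{rem:j1}.
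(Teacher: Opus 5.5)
Your proposal is correct and follows essentially the same route as the paper. Soundness uses \Cref{lem:ot} as an upper bound at the fixed point $z=\bar\lambda_j$, an inner induction on $j$, and minimality over $\GG_{j-1}=\mathcal{F}_{j-1}$ from \Cref{prop:same-sequence}. Completeness uses $u^{(j)}_{a,t}=(\lambda(a,t;P)-\bar\lambda_j)_+$, and the objective step applies \Cref{lem:ot} for fixed $P$, with the box $z_k\le\bar\lambda_{k-1}$ shown non-binding on $\mathcal{F}_{k-1}$.

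Two small remarks. The announced outer induction on $k$ is never actually invoked, since \Cref{prop:same-sequence} already supplies $\GG_{k-1}=\mathcal{F}_{k-1}$; the paper does without it too. In the first inclusion you should also exhibit admissible values of $(z_k,u^{(k)})$, e.g.\ $z_k=\lambda^{\downarrow}_k(P)$ and $u^{(k)}_{a,t}=(\lambda(a,t;P)-z_k)_+$; your objective paragraph already does this implicitly.
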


\begin{proof}
\[
\begin{array}{:c}
\begin{minipage}{0.97\textwidth}
We first show that constraints
\eqref{eq:carla-frozen-lin}--\eqref{eq:carla-frozen-card}, together with
\eqref{eq:carla-box}, cut out exactly $\mathcal{F}_{k-1}$ in the space of routing schemes.

\medskip
\noindent\emph{Soundness.} Let $(P, z_k, u)$ be feasible for \eqref{eq:carla} and
let $j < k$. By \eqref{eq:carla-frozen-lin}, the pair
$\bigl( \bar{\lambda}_j, u^{(j)} \bigr)$ is feasible for the minimisation
\eqref{eq:ot} applied to $\mu = \lambda(P)$; hence by \Cref{lem:ot} and
\eqref{eq:cj},
\[
  \Theta_j(P)
  \;\leq\; j\,\bar{\lambda}_j + \sum_{(a,t) \in \mathcal{A}_T} u^{(j)}_{a,t}
  \;\leq\; j\,\bar{\lambda}_j + c_j
  \;=\; \bar{\Theta}_j .
\]
We now argue by induction on $j$. Suppose $P \in \mathcal{F}_{j-1}$, which holds for
$j = 1$ since $P \in \mathcal{F} = \mathcal{F}_0$. By \Cref{prop:same-sequence},
$\bar{\Theta}_j$ is the minimum of $\Theta_j$ over $\mathcal{F}_{j-1}$, so the inequality
$\Theta_j(P) \leq \bar{\Theta}_j$ forces $\Theta_j(P) = \bar{\Theta}_j$, that is,
$P \in \GG_j = \mathcal{F}_j$. Iterating up to $j = k-1$ gives $P \in \mathcal{F}_{k-1}$.

\medskip
\noindent\emph{Completeness.} Conversely, let $P \in \mathcal{F}_{k-1}$. For $j < k$ set
\[
  u^{(j)}_{a,t} \;=\; \bigl( \lambda(a,t;P) - \bar{\lambda}_j \bigr)_+ .
\]
Since $\lambda^{\downarrow}_j(P) = \bar{\lambda}_j$, \Cref{lem:ot} shows that
$z = \bar{\lambda}_j$ attains the minimum in \eqref{eq:ot}, whence
\[
  j\,\bar{\lambda}_j + \sum_{(a,t) \in \mathcal{A}_T} u^{(j)}_{a,t}
  \;=\; \Theta_j(P) \;=\; \bar{\Theta}_j ,
\]
so that \eqref{eq:carla-frozen-card} holds with equality and
\eqref{eq:carla-frozen-lin} by construction. Moreover
$\lambda^{\downarrow}_1(P) = \bar{\lambda}_1$, so every load is at most
$\bar{\lambda}_1$ and \eqref{eq:carla-box} holds. Finally, taking
\[
  z_k = \lambda^{\downarrow}_k(P) \;\leq\; \lambda^{\downarrow}_{k-1}(P) = \bar{\lambda}_{k-1},
  \qquad
  u^{(k)}_{a,t} = \bigl( \lambda(a,t;P) - z_k \bigr)_+ ,
\]
satisfies \eqref{eq:carla-current}--\eqref{eq:carla-zbox}.

\medskip
\noindent\emph{Objective.} By the two previous steps, the projection of the
feasible set of \eqref{eq:carla} onto the space of routing schemes is exactly
$\mathcal{F}_{k-1}$. For a fixed $P \in \mathcal{F}_{k-1}$, minimising \eqref{eq:carla-obj} over
$(z_k, u^{(k)})$ yields $\Theta_k(P)$ by \Cref{lem:ot}, and the completeness step
exhibits a minimiser $z_k$ lying inside the box \eqref{eq:carla-zbox}, so the
restriction is not binding. Hence
\[
  \min \mathcal{M}_k^{\mathrm{CARLA}} \;=\; \min_{P \in \mathcal{F}_{k-1}} \Theta_k(P) \;=\; \bar{\Theta}_k ,
\]
and any optimal $P^{\star}_k$ attains this minimum, i.e.\
$P^{\star}_k \in \GG_k = \mathcal{F}_k$ by \Cref{prop:same-sequence}.
\end{minipage}
\end{array}
\]
\end{proof}

\paragraph{The algorithm.}The complete \textsc{Carla} framework is summarized in Algorithm~\ref{alg:carla}.
\label{sec:algorithm-carla}
\begin{algorithm}[H]
  \caption{\textsc{Carla} framework}
  \label{alg:carla}
  \begin{algorithmic}[1]
    \Input Instance $(G,D,T,q,\kappa,\maxSeg)$;
    \Output lexicographic minimax-optimal routing $P^\star$ and
      $(\bar{\lambda}_1,\dots,\bar{\lambda}_N)$
    \For{$k \leftarrow 1$ \textbf{to} $N$}
     \State Compute $c_j$ for $j < k$ \Comment From \eqref{eq:cj}\;
     \State Build $\mathcal{M}_k^{\mathrm{\textsc{Carla}}}$ \Comment As in \eqref{eq:carla}\;
     \State $(\bar{\Theta}_k, P^{\star}_k) \leftarrow \textsc{Solve}$ $(\mathcal{M}_k^{\mathrm{\textsc{Carla}}})$\;
     \If{k=1}
        \State $\bar{\lambda}_1\leftarrow \bar{\Theta}_1$ 
     
     \Else
      \State $\bar{\lambda}_k \leftarrow \bar{\Theta}_k -\bar{\Theta}_{k-1}$ 
      \EndIf
  \If{$\bar{\lambda}_k = 0$}{
    \Return $P^{\star}_k$, $(\bar{\lambda}_1, \dots, \bar{\lambda}_k, 0, \dots, 0)$\;\Comment{Early stop}
  }   
      \EndIf
    \EndFor
    \State \Return $P^\star_N,\ (\bar{\lambda}_1,\dots,\bar{\lambda}_N)$
  \end{algorithmic}
\end{algorithm}

\Cref{alg:carla} performs at most $N$ calls to a mixed-integer linear programming
solver. Its correctness follows from \Cref{thm:carla} and
\Cref{cor:lexopt}: after iteration $k$ the incumbent belongs to $\mathcal{F}_k$, and after
the last iteration it belongs to $\mathcal{F}_N$, hence is lexicographically optimal. The
early termination test is justified by the fact that
$\bar{\lambda}_k = 0$ implies $\bar{\lambda}_r = 0$ for all $r > k$, by monotonicity
of the sorted vector.

\subsection{Comparison of the three Methods}
\label{sec:exact-comparison}
\subsubsection{Experimental dataset}
We developed benchmark instances for the $T$-ASR problem. Each instance
combines a network topology, a time-dependent traffic matrix, intervention
scenarios, and operational parameters. Traffic matrices are filtered using a
difficulty indicator based on the gap between a multi-commodity flow lower
bound and a greedy segment-routing upper bound. The reconfiguration budget is
chosen below the cost of independent period-wise optimization to preserve the
temporal coupling. The benchmark instance files are publicly available from the Challenge repository~\cite{roadef2026_instances}. The generation procedure was explained in ~\Cref{sec:generation}. 
In the computational experiments, a rank is called \emph{numerically resolved}
when its MILP terminates under the stated MIP-gap tolerance. This terminology
is deliberately distinct from the exact certification proved for the idealized
formulations: because the experiments use a nonzero MIP gap and a finite
freezing tolerance, the computed prefix is tolerance-dependent and need not
coincide exactly with the mathematical lexicographic optimum.
\subsubsection{Experimental setup}
\label{subsec:setup}

We compare \textsc{Alexa}, \textsc{Stela} and \textsc{Carla} on the \texttt{setAP}~\cite{stela_alexa_code}, one of the benchmark families developed for the
$T$-ASR problem. It contains $125$ instances grouped by
$N\in\{100,200,300,400,500\}$. All three algorithms follow the same sequential
template (\Cref{alg:lex-template}) and differ only in the formulation
$\mathcal{M}_k$ solved at each step, so the comparison isolates the effect of the
encoding.

The evaluation proceeds in two stages. We first run the three algorithms on all
$125$ instances under a time limit of $10$ minutes. This first campaign shows that
\textsc{Alexa} is dominated by both \textsc{Stela} and \textsc{Carla}, and we therefore discard it from the
rest of the study. We then re-run \textsc{Stela} and \textsc{Carla} alone on the same instances
under a longer time limit of $30$ minutes, in order to compare the two remaining
methods on a more demanding budget and to observe how their relative behaviour
evolves as more ranks are certified.

In both campaigns, each run is stopped when its time limit is reached, and we
record the largest rank $k$ reached together with the certified values
$\bar{\lambda}_1,\ldots,\bar{\lambda}_k$. Due to the sequential nature of the
lexicographic procedure, reaching rank $k$ means that the first $k$ ranks have
been solved to optimality. Since each algorithm solves one lexicographic level
through one MILP subproblem, $k$ measures the depth reached within the time
limit. All experiments were implemented in \texttt{Python~3.10.11} using
\texttt{networktools}~\cite{networktools} and \texttt{Gurobi~13.0.2} under
deterministic settings on an \texttt{Intel Xeon~2.80\,GHz} machine with 32\,GB RAM
and \texttt{Ubuntu~24.04}. The complete implementation of the three algorithms is
publicly available in the following Git repository~\cite{stage3A}.

\subsubsection{Numerical tolerances}
\label{alg:num-tolerance}
\Cref{alg:lex-template} assumes exact MILP optimization and exact equality between
successive levels. In the computational experiments both assumptions are relaxed:
each subproblem is solved with a relative MIP gap $\gamma=10^{-8}$, and a certified
level $\bar{\lambda}_j$ is carried into the subsequent subproblems by requiring the
corresponding loads to remain within a band $\bar{\lambda}_j\pm\varepsilon$, with
$\varepsilon=10^{-9}$. The same setting is used for the three algorithms. Enforcing
a two-sided band makes the certified prefix sensitive to the value returned at each
rank: should a level be certified slightly above its true optimum, the lower end of
the band excludes the schemes attaining that optimum. The gap $\gamma$ is chosen
small enough for this to remain marginal, but it is the mechanism behind the
sensitivity reported next.

\paragraph{Observed sensitivity to numerical tolerances.}
For these parameter values, the three algorithms produce consistent solutions on
almost all instances. On a few difficult instances, however, changing $\varepsilon$
was enough to modify the certified lexicographic load vector. The discrepancies we
observed never affected the first ranks: they appeared only from relatively
advanced ranks onward, where many arc-periods carry nearly identical loads and the
lexicographic optimization becomes increasingly degenerate. Since the three
algorithms certify ranks sequentially, a perturbation introduced at one rank is
inherited by all subsequent ones, which is why an isolated tolerance effect
manifests itself as a corrupted tail rather than a single wrong value.
This rank-dependent robustness is qualitatively consistent with the analysis of
Abernethy~et~al.~\cite{abernethy2024}, who establish that the leading lexicographic
components are substantially better conditioned than the later ones. 
\subsubsection{Results}

\begin{figure}[H]
    \centering
    \includegraphics[width=0.4\linewidth]{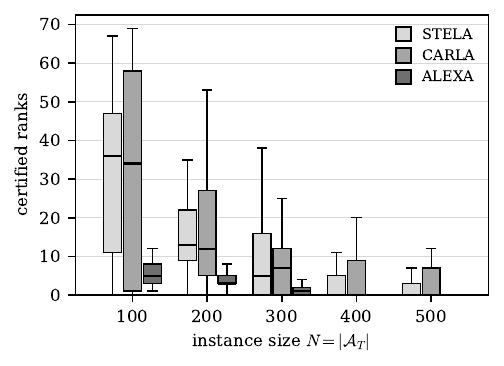}
    \caption{\small Number of certified ranks of the sorted load vector within the
    $10$-minute time limit, for \textsc{Alexa}, \textsc{Stela} and \textsc{Carla}, grouped by instance size
    class $N=|\mathcal{A}_T|$. For each class, the box spans the interquartile
    range (first to third quartile) of the instances in that class, the inner line
    is the median, and the whiskers extend to the most extreme value within
    $1.5\times\mathrm{IQR}$; outliers are not shown.}
    \label{fig:box-three}
\end{figure}

\begin{figure}[H]
    \centering
    \includegraphics[width=0.4\linewidth]{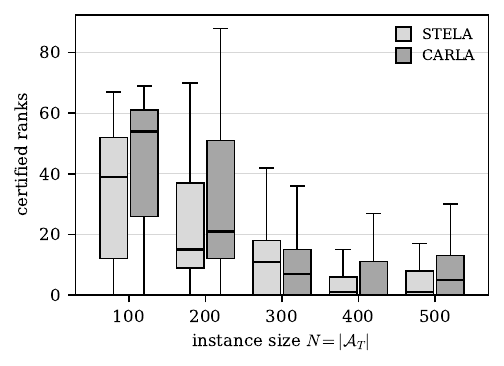}
    \caption{\small Number of certified ranks of the sorted load vector within the
    $30$-minute time limit, for \textsc{Stela} and \textsc{Carla} only, grouped by instance size
    class $N=|\mathcal{A}_T|$. Box, median and whiskers are read as in
    \Cref{fig:box-three}.}
    \label{fig:box-two}
\end{figure}
\Cref{fig:box-three} reports the first campaign, under the $10$-minute limit.
\textsc{Alexa} certifies markedly fewer ranks than \textsc{Stela} and \textsc{Carla} in every size class, and
the gap widens with $N$; this confirms that materialising the full $N\times N$
permutation matrix at each level does not scale, and \textsc{Alexa} is therefore left out of
the second campaign. \textsc{Stela} and \textsc{Carla} reach comparable depths, with a slight
advantage to \textsc{Carla} on the median.

\Cref{fig:box-two} compares \textsc{Stela} and \textsc{Carla} alone under the longer $30$-minute
limit. Both certify more ranks than under $10$ minutes, and the number of certified
ranks decreases as $N$ grows, the larger MILP subproblems being solved more slowly.
Across the classes, \textsc{Carla} remains at or above \textsc{Stela} on the median.

\subsection{Conclusion} \label{conclusion-exact-algo}

The two campaigns support one clear conclusion and one deliberate
non-conclusion.

The clear conclusion concerns \textsc{Alexa}. It certifies markedly fewer ranks than
the other two drivers in every size class, and the gap widens with $N$. This is
directly explained by the encoding: materialising an $N \times N$ permutation
matrix introduces $O(N^2)$ binaries per lexicographic level, against $N$
exceedance binaries for \textsc{Stela} and no additional binary at all for
\textsc{Carla}, whose Ogryczak--Tamir cumulative encoding only requires continuous
auxiliaries. The separation is systematic rather than instance-dependent, so
\textsc{Alexa} is discarded from the remainder of this work.

The non-conclusion concerns \textsc{Stela} and \textsc{Carla}. Their distributions of
certified ranks overlap substantially in every size class, under both the
$10$-minute and the $30$-minute limits; the differences in median are small
relative to the interquartile spread. We therefore do not claim a winner between
the two. 

Both campaigns also show that the number of certified ranks decreases sharply
with $N$: on the largest classes only a short prefix of the sorted load vector is
proved optimal within $30$ minutes, the returned routing scheme being
lexicographic minimax-optimal up to that prefix and merely feasible beyond it. Increasing the
time limit from $10$ to $30$ minutes buys additional ranks without changing the
shape of the curve, which points to the difficulty of the individual rank-$k$
MILPs rather than to the number of levels. This is what motivates the next~\Cref{sec:cg}, where column generation is applied to \emph{both} surviving drivers,
\textsc{Stela} and \textsc{Carla}.

%% file: sections/05-methodCG.tex
\section{TCG: Trajectory-based column generation of STELA and CARLA}
\label{sec:cg}

\subsection{Overview}

\subsubsection{Column generation: master, pool, and pricing}
\label{sec:cg-overview}

The formulations of this Section instantiate the classical
column-generation scheme \cite{dantzig-wolfe-1960,lubbecke-desrosiers-2005};
we recall the terminology in the form used throughout.

A \emph{master problem} is a formulation whose variables select, for each demand $d$, one
trajectory from the set $\Pi_d$ introduced in Definition~\ref{def:cg-traj}. Each trajectory
$\pi\in\Pi_d$ is a \emph{column} of the master. Since $|\Pi_d|$ grows like a product over
periods of the number of admissible paths, the family $\Pi_d$ is astronomically large and
the master cannot be written explicitly.

Column generation solves it without enumeration. For each demand one keeps a \emph{pool}
$\Pi_d^{\mathrm{pool}}\subseteq\Pi_d$, a finite subset of the trajectories generated so far,
and solves the \emph{restricted master problem} (RMP): the master with each $\Pi_d$ replaced
by its pool $\Pi_d^{\mathrm{pool}}$. The LP relaxation of the RMP returns dual multipliers
for its constraints. The \emph{pricing} step then asks, for each demand, whether $\Pi_d$
contains a trajectory of negative \emph{reduced cost}, its master cost corrected by those
duals; such a column can improve the LP and is appended to the pool. Master and pricing
alternate until no demand admits a negative-reduced-cost trajectory, at which point the LP
relaxation of the \emph{full} master is solved to optimality.

The scheme is effective here because, although each $\Pi_d$ is huge, the master has few
rows: one selection row per demand, one load row per arc--time pair, and one budget row per
period. An optimal LP basis therefore activates only a handful of columns per demand, and
pricing discovers exactly those without listing the rest.

\subsubsection{A common trajectory column-generation framework for \textsc{STELA} and \textsc{CARLA}}
\label{sec:cg-motivation}

The theoretical and numerical comparison of Section~\ref{sec:exact-comparison}
identifies \textsc{Stela} and \textsc{Carla} as the two most attractive exact
formulations for the $T$-\srchallenge{} problem. Both formulations avoid the
$O(N^2)$ assignment variables and the associated permutation symmetry of
\textsc{Alexa}, while handling the sorted load vector through a sequential
rank-by-rank optimization scheme. 

And all these exact methods of~\Cref{sec:milp} solve $T$-\srchallenge{} on
the compact arc-based formulation, whose decision variables are the segment
indicators $x^{dt}_{ij}$. This formulation is faithful but does not scale: it
carries one binary per demand, segment and period, the budget
constraint~\eqref{ctn:budget_3} is expressed through absolute values. On the instances of
interest, the number of segments a demand could conceivably use is far larger than
the number it will actually use, so enumerating them implicitly through column
generation is the natural remedy. We therefore use \textsc{Stela} and
\textsc{Carla} as the two compact formulations underlying our
trajectory column-generation framework.

Although their master problems differ, the two formulations share a
fundamental structural property: \emph{they use the same routing decision unit}.
For each demand, the routing decisions over the entire time horizon can be
represented by a single trajectory, i.e., a sequence of segment paths linked
across consecutive periods. Consequently, the temporal coupling induced by
the reconfiguration budget can be absorbed directly into the trajectory.
This observation allows us to develop a common column-generation machinery
for both formulations.

More precisely, the trajectory definition, the restricted master structure,
and the pricing problem associated with a given set of dual variables are
identical for \textsc{Stela} and \textsc{Carla}. The only difference lies in
the \emph{rank-specific master problem}: at each lexicographic rank $k$, the
two formulations impose different rank-dependent constraints and therefore
give rise to different restricted master problems, denoted by
$\mathrm{RMP}^{\textsc{Stela}}_k$ and
$\mathrm{RMP}^{\textsc{Carla}}_k$. The corresponding dual solutions induce
the coefficients of the same underlying trajectory-pricing problem.

This common structure suggests separating the development of the
column-generation method into two layers. We first study the first
lexicographic rank, $k=1$, for which both formulations reduce to a
\emph{minimize-the-maximum-load} problem. At this stage, we introduce the
trajectory masters $\mathrm{RMP}^{\textsc{Stela}}_1$ and $\mathrm{RMP}^{\textsc{Carla}}_1$ , derive their linear relaxation, formulate its dual, and
derive the associated trajectory-pricing problem. Since the decision unit
and the pricing structure are common to both formulations, these elements
are developed once rather than duplicated for \textsc{Stela} and
\textsc{Carla}.

The second layer incorporates the sequential lexicographic structure. Once
the common MLU column-generation machinery has been established, we explain
how the rank-$k$ restricted master is modified when moving from one
lexicographic rank to the next, and how the corresponding dual information
is carried into the pricing problem. This leads to two instances of the
same trajectory column-generation framework, one driven by
$\mathrm{RMP}^{\textsc{Stela}}_k$ and the other by
$\mathrm{RMP}^{\textsc{Carla}}_k$.
\subsubsection{Related work.}
Single-period column generation for segment routing was pioneered by
CG4SR~\cite{cg4sr}, with a resource-constrained shortest-path pricing; it
does not treat temporal coupling. Lexicographic min--max optimization is
classical~\cite{ogryczak1997location,ogryczak2006}. Our setting differs structurally:
their objective is MLU whereas
ours is a lexicographic minimax over the \emph{sorted} load vector, whose coordinates are
induced by the sort; and our columns are coupled across periods by the
reconfiguration budget, a feature absent from CG4SR. It is precisely this coupling that the
trajectory column absorbs.
\subsubsection{Trajectories as decision units}

\begin{definition}
\label{def:cg-traj}
For a horizon $T=\{0,1,\dots,h-1\}$ with $h\ge1$, a \emph{trajectory} for demand $d\in D$
is a tuple $\pi_d=(p^{(0)}_d,\dots,p^{(h-1)}_d)$, where each $p^{(t)}_d$ is an $s_d$--$t_d$
segment path in $G_t$ with at most $\texttt{maxSeg}$ segments. The set of all trajectories
of $d$ is $\Pi_d$, and $\Pi_d^{t}$ denotes the set of feasible $s_d$--$t_d$ segment paths
in $G_t$. The whole treatment is stated for a general horizon $h$; the only coupling
between periods is the reconfiguration chain between consecutive periods.
\end{definition}

A trajectory is meant to be a \emph{self-contained decision unit}: everything the
$T$-\srchallenge{} model needs from demand $d$ over the horizon, the load its routing
places on every arc--time pair, and the reconfiguration cost it incurs between consecutive
periods, is determined by $\pi_d$ \emph{alone}, with no dependence on the trajectories
chosen for the other demands. This locality is the structural point of the formulation and
the reason a per-trajectory column is the right decision unit.

This change of decision unit has two important consequences. First, the
temporal coupling is represented directly in the columns, so that the
reconfiguration budget appears as linear resource constraints in the
master problem. Second, the reduced cost of a trajectory can be evaluated
through a single pricing problem whose structure is common to
\textsc{Stela} and \textsc{Carla}. The two formulations therefore differ
not in how trajectories are generated, but in the master-side information
used to price them at each lexicographic rank.

The remainder of this section develops this common machinery, starting with
the first rank $k=1$ and the associated MLU problem. We then return to the
rank-by-rank lexicographic structure and show how the common trajectory
column-generation scheme is embedded into
$\mathrm{RMP}^{\textsc{Stela}}_k$ and
$\mathrm{RMP}^{\textsc{Carla}}_k$.

\subsection{Restricted master problem (RMP) formulation for rank k=1}

\subsubsection{The trajectory master}
\label{sec:cg-master}

Recall the compact $T$-\srchallenge{} model of \Cref{chap:model}. Its variables are the
binary segments $x^{dt}_{ij}$, its feasible set is
$\mathcal F=\{x\in\{0,1\}:\eqref{ctn:flow},\eqref{ctn:segments},\eqref{ctn:load},
\eqref{ctn:budget_3}\}$, and it minimizes the sorted load vector lexicographically,
\begin{equation}\label{eq:cg-compact}
(\mathrm P)\qquad
\operatorname*{lex\,min}_{x\in\mathcal F}\ \lambda^{\downarrow}(x).
\end{equation}
We build the master from $(\mathrm P)$ in two moves: first we target the top rank of the lexicographic minimax objective, which yields the MLU master carrying all the structure; then we apply a
Dantzig--Wolfe decomposition to $\mathcal F$.

\paragraph{Targeting the top rank (MLU).}
The first lexicographic minimax component is the maximum load
$\lambda^{\downarrow}_1(x)=\max_{(a,t)\in\calA_T}\lambda(a,t)$, whose minimization is the
maximum-link-utilization (MLU) problem. An epigraph variable $U$ linearizes it,
\begin{equation}\label{eq:cg-mlu-linear}
\min\ U
\qquad\text{s.t.}\qquad
\lambda(a,t)\le U,\quad\forall (a,t)\in\calA_T,
\end{equation}
The full lexicographic minimax objective is recovered in \Cref{sec:cg-lex} by stacking, on top
of this master, the rank-$k$ threshold rows of \textsc{Stela}; it therefore suffices to build
the MLU master.

\paragraph{The compact problem and its two families of constraints}

The compact model has binary variables $x^{dt}_{ij}$ and four families of
constraints: flow conservation \eqref{ctn:flow}, the segment count
\eqref{ctn:segments}, the load definition \eqref{ctn:load} and the
reconfiguration budget \eqref{ctn:budget_3}. We classify them by \emph{which
variables they couple}:

\begin{center}
\renewcommand{\arraystretch}{1.25}
\begin{tabular}{@{}lll@{}}
\toprule
Family & Couples & Rôle in the decomposition\\
\midrule
flow conservation, per $(d,t)$ & the variables of one demand only & \emph{separable} $\to$ absorbed in the columns\\
segment count $\sum_{ij}x^{dt}_{ij}\le\maxSeg$ & the variables of one demand only & \emph{separable} $\to$ absorbed in the columns\\
load $\lambda(a,t)=\frac1{c(a)}\sum_{d,ij}r\,\nu\,x^{dt}_{ij}$ & all demands sharing an arc & \emph{coupling} $\to$ stays in the master\\
budget $\sum_{d,ij}|x^{dt}_{ij}-x^{d,t-1}_{ij}|\le\kappa(t)$ & all demands, one period link & \emph{coupling} $\to$ stays in the master\\
\bottomrule
\end{tabular}
\end{center}
 The two coupling families are the only ones that mix demands.

\paragraph{Columns as integer points of the separable block.}

Fix a demand $d$ and consider the separable system
\begin{equation}
\label{eq:sepblock}
S_d:=\Bigl\{x^{d\bullet}\in\{0,1\}^{|T|\times|V|^2}\ :\
\text{flow conservation at every }t,\quad
\textstyle\sum_{i,j}x^{dt}_{ij}\le\maxSeg\ \ \forall t\Bigr\}.
\end{equation}
Because $S_d$ involves no other demand, a feasible point of $S_d$ is a complete
description of ``what demand $d$ does over the whole horizon''. Dantzig--Wolfe
\emph{discretisation} replaces the variables $x^{d\bullet}$ by a selection among
the elements of $S_d$: introduce one weight $\xi_{d,\pi}$ per element $\pi$, and
write
\begin{equation}
\label{eq:cg-recover}
x^{dt}_{ij}=\sum_{\pi}\delta\bigl(p^{(t)}_\pi,ij\bigr)\xi_{d,\pi},
\qquad
\sum_{\pi}\xi_{d,\pi}=1\quad(\mathrm S),\qquad \xi_{d,\pi}\ge0 .
\end{equation}

The integer points of the separable system $S_d$ are the tuples of one feasible
$s_d$--$t_d$ segment path per period with at most $\texttt{maxSeg}$ segments: that is, the
trajectories $\Pi_d$ of \Cref{def:cg-traj}. Dantzig--Wolfe convexification replaces the
$x$-variables of $d$ by a selection over $\Pi_d$ through a binary weight
$\xi_{d,\pi}\in\{0,1\}$, with recovery identity and convexity row
$(\mathrm S)$.

\paragraph{Rewriting the linking constraints.}
Substituting~\eqref{eq:cg-recover} into the two linking families turns them into rows over
the $\xi$-variables, and produces the per-column coefficients.

\emph{Load.} Dividing~\eqref{ctn:load} by $c(a)$ and inserting~\eqref{eq:cg-recover},
\[
\lambda(a,t)\ \ge\
\sum_{d\in D}\sum_{i,j\in V}\frac{r(i,j,a,t)\,\nu(d,t)}{c(a)}\,x^{dt}_{ij}
=\sum_{d\in D}\sum_{\pi\in\Pi_d}
\Bigl[\tfrac{\nu(d,t)}{c(a)}\!\sum_{i,j\in V}\! r(i,j,a,t)\,\delta(p^{(t)}_\pi,ij)\Bigr]\,\xi_{d,\pi}.
\]
The bracketed quantity depends on $\pi$ alone; we name it the \emph{load footprint} of $\pi$
on the pair $(a,t)$,
\begin{equation}
\label{eq:cg-Gamma}
\Gamma_{d,a,t}(\pi)
\;:=\;
\frac{\nu(d,t)}{c(a)}\sum_{i,j\in V} r(i,j,a,t)\,\delta\!\bigl(p^{(t)}_\pi,ij\bigr),
\qquad (a,t)\in\calA_T,
\end{equation}
well defined as a constant precisely because the split coefficients $r(i,j,a,t)$ are
precomputable input data (\Cref{lem:ecmp} of \Cref{chap:model}). Taking this at equality
defines $\lambda(a,t)$ as the row $(\mathrm L)$, and~\eqref{eq:cg-mlu-linear} supplies the
threshold rows $(\mathrm M)$.

\emph{Budget.} For a fixed trajectory $\pi$, the inner sum of~\eqref{ctn:budget_3} across the
chain link $t-1\to t$ depends on $\pi$ alone; we name it the \emph{reconfiguration cost}
\begin{equation}
\label{eq:cg-beta}
\beta_{d,t}(\pi)
\;:=\;
\sum_{i,j\in V}\bigl|\delta(p^{(t)}_\pi,ij)-\delta(p^{(t-1)}_\pi,ij)\bigr|
=\dist\!\bigl(p^{(t-1)}_\pi,p^{(t)}_\pi\bigr),
\qquad t\in T^*,
\end{equation}
a \emph{constant} once $\pi$ is fixed. Hence, using $\sum_\pi\xi_{d,\pi}=1$, the budget
constraint becomes the single linear row
\[
(\mathrm B)\qquad
\sum_{d\in D}\sum_{\pi\in\Pi_d}\beta_{d,t}(\pi)\,\xi_{d,\pi}\le\kappa(t),
\qquad t\in T^*,
\]
with \emph{no absolute values and no auxiliary variables}: the modulus
of~\eqref{ctn:budget_3} is evaluated once per column, at generation time, not carried into
the master.

\paragraph{Absorption of \texttt{maxSeg}.}
Because~\eqref{ctn:flow} and~\eqref{ctn:segments} were used to \emph{define} $\Pi_d$, neither
appears as a master row: both are absorbed into the column set. Only trajectories already
respecting flow conservation and the segment budget are eligible, and this feasibility is
re-imposed structurally inside the pricing subproblem $(\mathrm{SP}_d)$
(\Cref{sec:cg-pricing}), never as an explicit master constraint, which is what keeps the
master rows few and free of the combinatorial segment-count logic. Collecting $(\mathrm S)$,
$(\mathrm L)$, $(\mathrm M)$, $(\mathrm B)$ and the objective $\min U$ gives the master stated
next.

\subsubsection{\textsc{STELA} driven restricted master problem $\mathrm{RMP}^{\textsc{Stela}}_1$ formulation}
\label{sec:cg-rmp}

The master derived above ranges over the full trajectory sets, are far too
large to enumerate. For each demand $d$ we therefore keep a finite \emph{pool}
$\Pi_d^{\mathrm{pool}}\subseteq\Pi_d$ of trajectories and solve the \emph{restricted master
problem} $\mathrm{RMP}^{\textsc{Stela}_1}$: the master with each $\Pi_d$ replaced by its pool. With the per-column
constants $\Gamma_{d,a,t}(\pi)$ of~\eqref{eq:cg-Gamma} and $\beta_{d,t}(\pi)$
of~\eqref{eq:cg-beta}, the MLU restricted master driven by \textsc{Stela} framewoek reads

\begin{subequations}\label{eq:cg-rmp-mlu}
\begin{align}
\mathrm{RMP}^{\textsc{Stela}}_1:\qquad
\min\quad & U \label{eq:cg-rmp-obj}\\
\text{s.t.}\quad
& \sum_{\pi\in\Pi_d^{\pool}} \xi_{d,\pi}=1,
&& \forall d\in D,
&& (\mathrm S)\ \ [\alpha^{\textsc{Stela}}_d\ \text{free}]
\label{eq:cg-S}\\
& \lambda(a,t)-\sum_{d\in D}\sum_{\pi\in\Pi_d^{\pool}}
  \Gamma_{d,a,t}(\pi)\,\xi_{d,\pi}=0,
&& \forall (a,t)\in\AT,
&& (\mathrm L)\ \ [\eta^{\textsc{Stela}}_{a,t}\ \text{free}]
\label{eq:cg-L}\\
& \lambda(a,t)-U\le 0,
&& \forall (a,t)\in\AT,
&& (\mathrm M)\ \ [\mu^{\textsc{Stela}}_{a,t}\le 0]
\label{eq:cg-M}\\
& \sum_{d\in D}\sum_{\pi\in\Pi_d^{\pool}}
  \beta_{d,t}(\pi)\,\xi_{d,\pi}\le\kappa(t),
&& \forall t\in T^*,
&& (\mathrm B)\ \ [\psi^{\textsc{Stela}}_t\le 0]
\label{eq:cg-B}\\
& \xi_{d,\pi}\ge 0,\quad
  \lambda(a,t)\ \text{free},\quad U\ \text{free}.
&& && \notag
\end{align}
\end{subequations}


Constraint $(\mathrm S)$ selects exactly one trajectory per demand, which (since a
trajectory contains one path per period) fixes the whole routing of $d$ over the horizon.
Constraint $(\mathrm L)$ assembles the arc--time loads from the load footprints
$\Gamma_{d,a,t}(\pi)$, constraint $(\mathrm M)$ encodes the MLU objective, and $(\mathrm B)$
is a single linear inequality per period, as established in \Cref{eq:cg-beta}. The relaxation $\xi_{d,\pi}\in\{0,1\}\rightsquigarrow\xi_{d,\pi}\ge0$ loses
      nothing on the upper side: $(\mathrm S)$ together with $\xi\ge0$ already
      implies $\xi_{d,\pi}\le1$.

\begin{proposition}
\label{prop:cg-valid}
Every feasible point of \eqref{eq:cg-rmp-mlu} induces a $T$-\srchallenge{} routing scheme
with the same arc--time loads and the same reconfiguration costs. Conversely, if
$\bigcup_d\Pi_d^{\mathrm{pool}}$ contains an optimal trajectory for every demand, then
\eqref{eq:cg-rmp-mlu} attains the $T$-\srchallenge{} optimum for the MLU objective.
\end{proposition}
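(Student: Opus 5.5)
The plan is to read the statement at the level of \emph{integer} points of~\eqref{eq:cg-rmp-mlu}, i.e.\ with $\xi_{d,\pi}\in\{0,1\}$ as in the Dantzig--Wolfe discretisation~\eqref{eq:cg-recover}. A fractional $\xi$ only induces a convex combination of routings and not a routing scheme. This restriction should be stated explicitly at the start. The converse then concerns the integer restricted master; its LP relaxation may be strictly smaller, which is exactly the gap the paper later closes by branch-and-price.

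\emph{Forward direction.} Take a feasible integer point. By $(\mathrm S)$ each demand $d$ has exactly one $\pi_d\in\Pi_d^{\pool}$ with $\xi_{d,\pi_d}=1$. Define the scheme by $p^t_d:=p^{(t)}_{\pi_d}$ and set $x^{dt}_{ij}:=\delta(p^{(t)}_{\pi_d},ij)$, which is the recovery identity~\eqref{eq:cg-recover} with a single nonzero weight. The compact constraints then follow one at a time:
\begin{itemize}
\item \eqref{ctn:flow} and \eqref{ctn:segments} hold because $\pi_d\in\Pi_d$ consists of $s_d$--$t_d$ segment paths in $G_t$ with at most $\maxSeg$ segments, so $x^{d\bullet}\in S_d$ by Definition~\ref{def:cg-traj} and~\eqref{eq:sepblock}.
\item For the loads, substituting into $(\mathrm L)$ and using the definition~\eqref{eq:cg-Gamma} of $\Gamma$ gives $\lambda(a,t)=\sum_d\Gamma_{d,a,t}(\pi_d)$. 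This is exactly~\eqref{eq:load-P} for the induced scheme, so the arc--time loads coincide.
\item For the budget, by~\eqref{eq:cg-beta} the row $(\mathrm B)$ reads $\sum_d\dist(p^{t-1}_d,p^t_d)\le\kappa(t)$. This is precisely~\eqref{ctn:budget_3}, and the per-period reconfiguration costs coincide termwise.
\end{itemize}
Hence the induced scheme lies in $\mathcal F$. Moreover, $(\mathrm M)$ forces $U\ge\max_{(a,t)}\lambda(a,t)=\lambda^{\downarrow}_1$ of that scheme.

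\emph{Converse.} Let $P^\star\in\mathcal F$ be MLU-optimal with value $\bar\lambda_1$, and assume its trajectory $\pi^\star_d$ lies in $\Pi^{\pool}_d$ for every $d$. Set $\xi_{d,\pi^\star_d}=1$ and all other weights to $0$, define $\lambda(a,t)$ by $(\mathrm L)$, and put $U=\bar\lambda_1$. Then $(\mathrm S)$ and $(\mathrm L)$ hold by construction, and $(\mathrm M)$ holds since every load is at most the MLU. Finally $(\mathrm B)$ holds because the budget sums equal those of $P^\star$, by the computation of the forward direction run backwards. So the restricted master optimum is at most $\bar\lambda_1$. Conversely, the forward direction shows that every integer feasible point yields a scheme in $\mathcal F$ with MLU at most $U$, so $U\ge\bar\lambda_1$. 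Equality follows, and an optimal restricted master solution induces an MLU-optimal scheme.

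The only delicate point, which I expect to be the main obstacle, is the well-definedness of the translation between trajectories and $x$. Coordinates are indexed by segments and the map $P\mapsto x(P)$ is non-injective, as noted in Section~\ref{sec:exact-overview}. I would handle this by observing that loads, segment counts and $\dist$ are all functions of $x$ alone, so any trajectory with the same image works. I would also check that a trajectory uses each segment at most once per period (segment trails), so that $\delta$ is a genuine $0/1$ indicator and the recovery identity produces a binary $x$.
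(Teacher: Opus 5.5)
Your proof is correct and follows the paper's argument essentially step for step. In the forward direction you use $(\mathrm S)$ to pick one trajectory per demand and collapse $(\mathrm L)$ and $(\mathrm B)$ through $\Gamma$ and $\beta$ to recover the compact loads and budget; in the converse you insert the pooled trajectories of an MLU-optimal scheme and bound the restricted optimum from below via the forward direction. Your explicit restriction to integer $\xi$ is a genuine and worthwhile clarification, because the paper's proof relies on it silently: both ``exactly one trajectory has $\xi_{d,\pi_d}=1$'' and ``its value cannot beat the global optimum'' fail for the displayed master with $\xi\ge0$. Your concern about non-injectivity of $P\mapsto x(P)$, on the other hand, is not a real obstacle, since the routing scheme is read directly from the selected paths and never reconstructed from $x$.
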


\begin{proof} \[ \begin{array}{:c} \begin{minipage}{0.97\textwidth}
Let $(\xi,\lambda,U)$ be feasible. By $(\mathrm S)$, for each $d$ exactly one trajectory
$\pi_d$ has $\xi_{d,\pi_d}=1$. Routing $d$ along $p^{(t)}_d$ at each period $t$ defines a
routing scheme $P$. Fix $(a,t)\in\calA_T$. Since exactly one $\xi_{d,\pi}$ is one per demand,
$(\mathrm L)$ collapses to a sum over the selected trajectories, and substituting the
footprint~\eqref{eq:cg-Gamma} gives
\[
\lambda(a,t)=\sum_{d\in D}\Gamma_{d,a,t}(\pi_d)
=\sum_{d\in D}\frac{\nu(d,t)}{c(a)}\sum_{i,j\in V} r(i,j,a,t)\,\delta\!\bigl(p^{(t)}_d,ij\bigr),
\]
exactly the load $\lambda(a,t;P)$ of the model (\Cref{chap:model}). Likewise, for each
$t\in T^*$, substituting~\eqref{eq:cg-beta} into $(\mathrm B)$ reduces it to
$\sum_{d}\dist(p^{(t-1)}_d,p^{(t)}_d)\le\kappa(t)$, i.e.\ the budget
constraint~\eqref{ctn:budget_3}. Hence $P\in\mathcal{F}$ and its loads coincide with the
master variables, so $U\ge\mlu(P)$; minimizing $U$ minimizes the MLU.

Conversely, let $P^\star$ be MLU-optimal with trajectory $\pi_d^\star$ for each demand. If
$\pi_d^\star\in\Pi_d^{\mathrm{pool}}$ for all $d$, setting $\xi_{d,\pi_d^\star}=1$ and all
other selections to zero reproduces, via the same substitutions~\eqref{eq:cg-Gamma}
and~\eqref{eq:cg-beta}, the same loads and budget consumption, hence the same MLU. Since the
RMP optimizes over a subset of trajectories, its value cannot beat the global optimum, so it
attains it.
\end{minipage} \end{array} \] \end{proof}

\subsubsection{\textsc{\textsc{CARLA}}-driven restricted master problem $\mathrm{RMP}^{\textsc{Carla}}_1$ formulation}
\label{sec:cg-carla}

The trajectory master of \Cref{sec:cg-master} rests on three
families of rows that are independent of how the lexicographic minimax objective is encoded: the
selection rows $(\mathrm S)$, the load-assembly rows $(\mathrm L)$ and the budget
rows $(\mathrm B)$ of~\eqref{eq:cg-rmp-mlu}, together with the per-column constants
$\Gamma_{d,a,t}(\pi)$ of~\eqref{eq:cg-Gamma} and $\beta_{d,t}(\pi)$
of~\eqref{eq:cg-beta} attached to the trajectories $\pi\in\Pi_d$ of
\Cref{def:cg-traj}. \textsc{Carla} keeps all of these unchanged and differs from
\textsc{Stela} only in the rows that bound the assembled loads $\lambda(a,t)$. We
therefore build the \textsc{Carla} master the way \Cref{sec:cg-master} built the
\textsc{Stela} one;  from its first rank, the MLU;  prove and price it there,
and only then extend it to the full lexicographic order, where alone the two
encodings diverge. No new column, oracle or pool is introduced.

\paragraph{The cumulative MLU master}
\label{sec:cg-carla-mlu}

Where \textsc{Stela} linearises the maximum load by the epigraph $\min U$,
$\lambda(a,t)\le U$, \textsc{Carla} uses the cumulative (Ogryczak--Tamir)
representation of the sum of the $k$ largest loads,
\begin{equation}
\label{eq:cg-carla-OT}
\Theta_k(\lambda)=\sum_{r\le k}\lambda^{\downarrow}_r
=\min_{z\in\mathbb R}\Big\{k\,z+\sum_{(a,t)\in\calA_T}\bigl(\lambda(a,t)-z\bigr)^+\Big\},
\end{equation}
and minimises the ranks $\Theta_1,\Theta_2,\dots$ in turn. At the first rank
$\Theta_1=\lambda^{\downarrow}_1$ is the MLU itself. Introducing continuous excess
variables $u_{a,t}\ge0$ for the positive parts $(\lambda(a,t)-z_1)^+$, the
\textsc{Carla} MLU restricted master is $(\mathrm S),(\mathrm L),(\mathrm B)$
of~\eqref{eq:cg-rmp-mlu} with the objective $\min U$ and the threshold row
$(\mathrm M)$ replaced by the cumulative block

\begin{subequations}\label{eq:cg-carla-mlu}
\begin{equation}\label{eq:cg-carla-mlu-cur}
\mathrm{RMP}^{\textsc{Carla}}_1:\qquad
\begin{array}[t]{llll}
\min & \displaystyle z_1+\sum_{(a,t)\in\AT}u_{a,t} &&\\[10pt]
\text{s.t.}
&\displaystyle\sum_{\pi\in\Pi_d^{\pool}}\xi_{d,\pi}=1,
& \forall d\in D,
& [\alpha^{\textsc{Carla}}_d\ \text{free}]\quad(\mathrm S)\\[10pt]
&\displaystyle \lambda(a,t)-\sum_{d\in D}\sum_{\pi\in\Pi_d^{\pool}}
   \Gamma_{d,a,t}(\pi)\,\xi_{d,\pi}=0,
& \forall(a,t)\in\AT,
& [\eta^{\textsc{Carla}}_{a,t}\ \text{free}]\quad(\mathrm L)\\[10pt]
&\displaystyle\sum_{d\in D}\sum_{\pi\in\Pi_d^{\pool}}
   \beta_{d,t}(\pi)\,\xi_{d,\pi}\le\kappa(t),
& \forall t\in T^*,
& [\psi^{\textsc{Carla}}_t\le0]\quad(\mathrm B)\\[10pt]
&\lambda(a,t)-z_1-u_{a,t}\le0,
& \forall(a,t)\in\AT,
& [\mu^{\textsc{Carla}}_{a,t}\le0]\quad(\mathrm C)\\[6pt]
&\xi_{d,\pi}\ge0,\quad\lambda(a,t)\ \text{free},\quad
 u_{a,t}\ge0,\quad z_1\ \text{free}. &&
\end{array}
\end{equation}
\end{subequations}

The row $(\mathrm C)$ is the cumulative row~\eqref{eq:cg-carla-mlu} written in the
canonical ``$\le$'' form: $u_{a,t}\ge\lambda(a,t)-z_1$ is
$\lambda(a,t)-z_1-u_{a,t}\le0$. Writing it this way \emph{before} taking the
dual is not cosmetic: the coefficients of $\lambda(a,t)$, $z_1$ and $u_{a,t}$
in the row are read off directly, and they are $+1$, $-1$ and $-1$
respectively.

\begin{proposition}
\label{prop:cg-carla-mlu}
For integer $\xi$, \eqref{eq:cg-carla-mlu} induces a $T$-\srchallenge{} routing
scheme $P$ (\Cref{prop:cg-valid}), and its optimal value equals $\mlu(P)$. Hence the
minimum of~\eqref{eq:cg-carla-mlu} over integer $\xi$ equals $\min_{P}\mlu(P)$, the
same value as the \textsc{Stela} MLU master~\eqref{eq:cg-rmp-mlu}; the two LP
relaxations share this value as well.
\end{proposition}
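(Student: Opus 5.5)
The plan is to reduce both claims to one pointwise identity: for a fixed selection vector $\xi$ (integer or fractional), minimizing the \textsc{Carla} objective over the remaining continuous variables gives exactly the maximum assembled load. Once $\xi$ is fixed, the rows $(\mathrm S)$ and $(\mathrm B)$ no longer involve $(\lambda,z_1,u)$. The rows $(\mathrm L)$ then determine $\lambda(a,t)=\sum_{d}\sum_{\pi}\Gamma_{d,a,t}(\pi)\,\xi_{d,\pi}$ uniquely. This vector is nonnegative, since each footprint~\eqref{eq:cg-Gamma} is a sum of nonnegative terms and $\xi\ge0$. What remains is the one-dimensional problem of minimizing $z_1+\sum_{(a,t)}u_{a,t}$ subject to $u_{a,t}\ge\lambda(a,t)-z_1$ and $u\ge0$, that is, $\min_{z_1\in\mathbb R} f(z_1)$ with $f(z)=z+\sum_{(a,t)}(\lambda(a,t)-z)^+$.

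\textbf{Step 1 (pointwise value).} I would invoke \Cref{lem:ot} with $k=1$ and $\mu=\lambda\in\mathbb R_+^N$. It gives $\min_{z\ge0} f(z)=\Theta_1(\lambda)=\lambda^{\downarrow}_1$, attained at $z=\lambda^{\downarrow}_1$. The only discrepancy is that here $z_1$ is free, whereas the lemma restricts to $z\ge0$, and I expect this to be the one point needing care. To close it, note that for $z<\min_{(a,t)}\lambda(a,t)$ every positive part is active. Hence $f$ has slope $1-N\le0$ there, and more generally the right derivative of $f$ is $1-|\{(a,t):\lambda(a,t)>z\}|$, which is $\le 0$ for every $z<\lambda^{\downarrow}_1$. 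By convexity of $f$, the minimum over $\mathbb R$ coincides with the minimum over $z\ge0$ and equals $\lambda^{\downarrow}_1=\max_{(a,t)}\lambda(a,t)$. This is the same observation as the last line of the proof of \Cref{lem:ot}: the sign constraint on $z$ is inactive.

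\textbf{Step 2 (integer case).} For integer $\xi$, I would apply \Cref{prop:cg-valid}. Its first part depends only on rows $(\mathrm S)$, $(\mathrm L)$ and $(\mathrm B)$, which the \textsc{Carla} master shares. It shows that $\xi$ induces a scheme $P\in\mathcal F$ whose assembled loads are $\lambda(a,t;P)$. By Step~1, the optimal value of~\eqref{eq:cg-carla-mlu} with $\xi$ fixed is therefore $\max_{(a,t)}\lambda(a,t;P)=\mlu(P)$. Minimizing over integer $\xi$ then gives $\min_P\mlu(P)$ over the schemes representable in the pool. This reaches the global $\min_{P\in\mathcal F}\mlu(P)$ under the pool hypothesis of \Cref{prop:cg-valid}, whose converse part transfers verbatim.

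\textbf{Step 3 (comparison with \textsc{Stela} and the LP relaxations).} In~\eqref{eq:cg-rmp-mlu}, fixing $\xi$ and minimizing $U$ subject to $(\mathrm M)$ gives $U=\max_{(a,t)}\lambda(a,t)$ directly. Both masters therefore reduce to the same outer problem,
\[
\min_{\xi}\ \max_{(a,t)\in\AT}\ \sum_{d\in D}\sum_{\pi\in\Pi_d^{\pool}}\Gamma_{d,a,t}(\pi)\,\xi_{d,\pi},
\]
over the same set of $\xi$ defined by $(\mathrm S)$, $(\mathrm B)$ and the domain constraints on $\xi$. Step~1 never used integrality of $\xi$, only $\lambda\ge0$. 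Hence the outer problems coincide both when $\xi$ is restricted to $\{0,1\}$ and when it ranges over $\xi\ge0$, which gives equality of the integer optima and of the LP relaxation values. The main obstacle is the free-$z_1$ point of Step~1; everything else is bookkeeping through \Cref{prop:cg-valid}.
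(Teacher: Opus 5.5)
Your proposal is correct and follows essentially the same route as the paper. Both reduce everything to the pointwise identity $\min_{z_1,u}\{z_1+\sum_{(a,t)}u_{a,t}\}=\max_{(a,t)}\lambda(a,t)$ for the loads assembled by $(\mathrm L)$, use \Cref{prop:cg-valid} for integer $\xi$, and observe that integrality is never used, so the two LP relaxations also coincide. The differences are cosmetic: the paper differentiates $\phi(z_1)$ directly instead of invoking \Cref{lem:ot} with $k=1$, so the free-$z_1$ issue never arises there; and your remark that the integer minimum equals $\min_P\mlu(P)$ only under the pool hypothesis of \Cref{prop:cg-valid} is a correct sharpening of what the statement leaves implicit.
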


\begin{proof} 
\[ \begin{array}{:c} \begin{minipage}{0.97\textwidth}
Fix an integer $\xi$. By \Cref{prop:cg-valid} it selects one trajectory per demand,
hence a routing scheme $P$ with $\lambda(a,t)=\lambda(a,t;P)$ and $(\mathrm B)$
reducing to the budget constraint. With $\lambda$ fixed, the inner minimisation over
$u\ge0$ is separable and solved by $u_{a,t}^{\star}=(\lambda(a,t)-z_1)^+$, leaving
\[
\phi(z_1):=z_1+\sum_{(a,t)\in\calA_T}\bigl(\lambda(a,t)-z_1\bigr)^+ .
\]
$\phi$ is convex and piecewise linear, with right derivative
$1-\#\{(a,t):\lambda(a,t)>z_1\}$, which is nonnegative exactly when at most one load
exceeds $z_1$. Thus $\phi$ attains its minimum for $z_1\in[\lambda^{\downarrow}_2,
\lambda^{\downarrow}_1]$, where only the largest load exceeds $z_1$ and
$\phi(z_1)=z_1+(\lambda^{\downarrow}_1-z_1)=\lambda^{\downarrow}_1=\mlu(P)$. So the
master objective at its optimal $(z_1,u)$ equals $\mlu(P)$, and minimising over
integer $\xi$ minimises the MLU. The same pointwise identity
$\min_{z_1,u}\{\cdot\}=\max_{(a,t)}\lambda(a,t)$ holds for \emph{every} fractional
load vector produced by $(\mathrm L)$, so~\eqref{eq:cg-carla-mlu}
and~\eqref{eq:cg-rmp-mlu} minimise the same function of $\xi$ and have equal LP
optima.
\end{minipage} \end{array} \]\end{proof}

\subsection{Dual formulations and the factorized pricing problem for k=1}
\label{sec:cg-dual}

The two restricted masters of the previous subsections are \emph{not} the same
linear program. They share every row that involves a trajectory, but they encode
the maximum link utilization differently. Consequently
\[
\mathrm{RMP}^{\textsc{Stela}}_1\ \neq\ \mathrm{RMP}^{\textsc{Carla}}_1
\]
as formulations: they carry different variables and different rows, and their
duals are linear programs over different variable sets. The purpose of this
subsection is to show that this difference is confined to the master side: the
reduced cost of a trajectory has, in both formulations, the same
trajectory-dependent structure, so that a single pricing oracle serves both and
only the dual coefficients supplied to it change. Writing
$\mathsf X\in\{\textsc{Stela},\textsc{Carla}\}$ for the formulation index, the
narrative is
\[
\boxed{\ \text{different }\mathrm{RMP}^{\mathsf X}_1
\ \Longrightarrow\ \text{different dual prices }
(\alpha^{\mathsf X},\eta^{\mathsf X},\gamma^{\mathsf X})
\ \Longrightarrow\ \text{same pricing structure}\ }
\]
and we establish each step below. The generic reduced-cost construction used
throughout is detailed in \cite{lubbecke-desrosiers-2005}; we now specialise it
to the two rank-1 restricted masters.

\subsubsection{The two rank-1 restricted masters and their common core}
\label{sec:cg-dual-core}

Throughout, $\mathsf X\in\{\textsc{Stela},\textsc{Carla}\}$ indexes the
formulation, the pools $\Pi_d^{\mathrm{pool}}\subseteq\Pi_d$ of
\Cref{def:cg-traj} are fixed, and the per-column constants are the load footprint
$\Gamma_{d,a,t}(\pi)$ of~\eqref{eq:cg-Gamma} and the reconfiguration cost
$\beta_{d,t}(\pi)$ of~\eqref{eq:cg-beta}. We work with the LP relaxations,
obtained by replacing $\xi_{d,\pi}\in\{0,1\}$ by $\xi_{d,\pi}\ge0$; the selection
rows then force $\xi_{d,\pi}\le1$ automatically.

Both masters are built on the same \emph{core}, namely the rows of
\eqref{eq:cg-rmp-mlu} that contain a selection variable, together with the free
load variables they define:
\begin{subequations}\label{eq:cg-core}
\begin{align}
(\mathcal C)\qquad
&\sum_{\pi\in\Pi_d^{\mathrm{pool}}}\xi_{d,\pi}=1,
&&\forall d\in D,
&&(\mathrm S)\ \ [\alpha^{\mathsf X}_d]
\label{eq:cg-core-S}\\
&\lambda(a,t)-\sum_{d\in D}\sum_{\pi\in\Pi_d^{\mathrm{pool}}}
   \Gamma_{d,a,t}(\pi)\,\xi_{d,\pi}=0,
&&\forall(a,t)\in\calA_T,
&&(\mathrm L)\ \ [\eta^{\mathsf X}_{a,t}]
\label{eq:cg-core-L}\\
&\sum_{d\in D}\sum_{\pi\in\Pi_d^{\mathrm{pool}}}\beta_{d,t}(\pi)\,\xi_{d,\pi}
   \le\kappa(t),
&&\forall t\in T^*,
&&(\mathrm B)\ \ [\psi^{\mathsf X}_t]
\label{eq:cg-core-B}\\
&\xi_{d,\pi}\ge0,\qquad \lambda(a,t)\ \text{free},
&&&&
\label{eq:cg-core-box}
\end{align}
\end{subequations}
the bracketed symbols being the associated dual variables. The two formulations
differ only by the block that constrains the assembled loads and by the objective
it carries:
\begin{align}
(\mathcal M^{\textsc{Stela}})\quad
&\min\ U
&&\text{s.t.}\ \ \lambda(a,t)\le U,
&&\forall(a,t),\ [\mu^{\textsc{Stela}}_{a,t}],
\label{eq:cg-block-stela}\\[2pt]
(\mathcal M^{\textsc{Carla}})\quad
&\min\ z_1+\!\!\sum_{(a,t)\in\calA_T}\!\! u_{a,t}
&&\text{s.t.}\ \ \lambda(a,t)\le z_1+u_{a,t},
&&\forall(a,t),\ [\mu^{\textsc{Carla}}_{a,t}],
\label{eq:cg-block-carla}
\end{align}
where $U$ is free in~\eqref{eq:cg-block-stela}, and $u_{a,t}\ge0$ with $z_1$ free
in~\eqref{eq:cg-block-carla}. Thus
$\mathrm{RMP}^{\textsc{Stela}}_1=(\mathcal C)+(\mathcal M^{\textsc{Stela}})$ is
\eqref{eq:cg-rmp-mlu} and
$\mathrm{RMP}^{\textsc{Carla}}_1=(\mathcal C)+(\mathcal M^{\textsc{Carla}})$ is
\eqref{eq:cg-carla-mlu}. Three observations organise what follows. First, the
selection variables occur \emph{only} in the core: no $\xi_{d,\pi}$ appears in
either MLU block, which reaches them solely through the free load variables
$\lambda(a,t)$. Second, the columns are identical objects in the two masters,
since $\Gamma$ and $\beta$ are attributes of a trajectory and of nothing else.
Third, the objective is carried by the block alone, so $\xi_{d,\pi}$ has zero
direct objective cost in both, as already noted in \Cref{sec:cg-pricing}.

We use throughout the following standing convention for a minimisation primal:
equality rows receive free duals, $\le$ rows receive nonpositive duals, a free
primal variable yields a dual \emph{equality} and a nonnegative primal variable a
dual \emph{inequality} (dual row activity $\le$ objective coefficient), and the
dual objective is the maximisation of the right-hand-side inner product. We also
assume, as is the case once the pool contains the nominal trajectories, that both
restricted masters are feasible; both are bounded below by~$0$, so strong duality
applies to each.

\subsubsection{Dual of $\mathrm{RMP}^{\textsc{Stela}}_1$ and its pricing problem}
\label{sec:cg-dual-stela}

\paragraph{Identification of the primal variables}
The dual of $\mathrm{RMP}^{\textsc{Stela}}_1$ has one relation per primal
variable, determined by the variable's domain, its objective coefficient, and
the rows in which it occurs. Identifying these three items is thus the whole
of the derivation, and we record them in \Cref{tab:cg-incidence-stela} before
stating the dual.
\begin{table}[H]
\centering
\small
\renewcommand{\arraystretch}{1.15}
\begin{tabular}{@{}l c c >{\raggedright\arraybackslash}p{0.52\linewidth}@{}}
\toprule
Variable & Domain & Obj. & Rows and coefficients \\
\midrule
$\xi_{d,\pi}$
  & $\ge 0$ & $0$
  & $(\mathrm S)_d$: $+1$;\ \
    $(\mathrm L)_{a,t}$: $-\Gamma_{d,a,t}(\pi)$;\ \
    $(\mathrm B)_t$: $+\beta_{d,t}(\pi)$ \\[2pt]
$\lambda(a,t)$
  & free & $0$
  & $(\mathrm L)_{a,t}$: $+1$;\ \ $(\mathrm M)_{a,t}$: $+1$ \\[2pt]
$U$
  & free & $1$
  & $(\mathrm M)_{a,t}$: $-1$, for every $(a,t)\in\AT$ \\
\bottomrule
\end{tabular}
\caption{\small Incidence of the primal variables of
$\mathrm{RMP}^{\textsc{Stela}}_1$~\eqref{eq:cg-rmp-mlu}: domain, objective
coefficient, and the rows in which each variable occurs. }
\label{tab:cg-incidence-stela}
\end{table}

\paragraph{The dual of $\mathrm{RMP}^{\textsc{Stela}}_1$}

We state the dual first and prove it afterwards, variable by variable. The
derivation uses nothing beyond \Cref{tab:cg-incidence-stela}: the dual has one
relation per primal variable, and that relation is read off the variable's
domain, its objective coefficient, and the rows in which it occurs.

\begin{proposition}
\label{prop:cg-dual-stela}
The LP dual of $\mathrm{RMP}^{\textsc{Stela}}_1$~\eqref{eq:cg-rmp-mlu} is
\begin{subequations}\label{eq:cg-dual-stela}
\begin{align}
\max\;&\;\sum_{d\in D}\alpha^{\textsc{Stela}}_d
        +\sum_{t\in T^*}\kappa(t)\,\psi^{\textsc{Stela}}_t
\label{eq:cg-dual-stela-obj}\\
\text{s.t.}\;
&\;\alpha^{\textsc{Stela}}_d
   -\!\!\sum_{(a,t)\in\AT}\!\!
     \eta^{\textsc{Stela}}_{a,t}\,\Gamma_{d,a,t}(\pi)
   +\!\sum_{t\in T^*}\!\psi^{\textsc{Stela}}_t\,\beta_{d,t}(\pi)\le0,
&&\forall d\in D,\ \pi\in\Pi_d^{\pool},
&& [\xi_{d,\pi}]
\label{eq:cg-dual-stela-xi}\\
&\;\eta^{\textsc{Stela}}_{a,t}+\mu^{\textsc{Stela}}_{a,t}=0,
&&\forall(a,t)\in\AT,
&& [\lambda(a,t)]
\label{eq:cg-dual-stela-lambda}\\
&\;\sum_{(a,t)\in\AT}\mu^{\textsc{Stela}}_{a,t}=-1,
&&
&& [U]
\label{eq:cg-dual-stela-U}\\
&\;\mu^{\textsc{Stela}}_{a,t}\le0,\quad \psi^{\textsc{Stela}}_t\le0,\quad
\alpha^{\textsc{Stela}}_d,\ \eta^{\textsc{Stela}}_{a,t}\ \text{free}.
&& &&
\label{eq:cg-dual-stela-signs}
\end{align}
\end{subequations}
\end{proposition}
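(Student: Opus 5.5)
I will derive the dual mechanically by standard Lagrangian duality for the minimisation primal~\eqref{eq:cg-rmp-mlu}, using the standing sign convention of \Cref{sec:cg-dual-core} and the incidence data of \Cref{tab:cg-incidence-stela}. I attach multipliers $\alpha^{\textsc{Stela}}_d$ (free) to the equality rows $(\mathrm S)$ and $\eta^{\textsc{Stela}}_{a,t}$ (free) to the equality rows $(\mathrm L)$. The $\le$ rows $(\mathrm M)$ and $(\mathrm B)$ receive $\mu^{\textsc{Stela}}_{a,t}\le0$ and $\psi^{\textsc{Stela}}_t\le0$. I form the Lagrangian
\[
\mathcal L = U-\sum_{d}\alpha_d\Bigl(\sum_\pi\xi_{d,\pi}-1\Bigr)-\sum_{a,t}\eta_{a,t}\Bigl(\lambda(a,t)-\sum_{d,\pi}\Gamma_{d,a,t}(\pi)\xi_{d,\pi}\Bigr)-\sum_{a,t}\mu_{a,t}\bigl(\lambda(a,t)-U\bigr)-\sum_t\psi_t\Bigl(\sum_{d,\pi}\beta_{d,t}(\pi)\xi_{d,\pi}-\kappa(t)\Bigr).
\]
The sign choices make each subtracted term nonnegative in value on the primal feasible set, so $\mathcal L\le U$ there. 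This gives weak duality directly.

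Next I collect the coefficient of each primal variable in $\mathcal L$. The constant part is $\sum_d\alpha_d+\sum_t\kappa(t)\psi_t$, which is the dual objective~\eqref{eq:cg-dual-stela-obj}. The infimum over the primal variables is finite exactly when certain coefficient conditions hold, and these conditions give the dual rows.
\begin{itemize}
\item The free variable $U$ has coefficient $1+\sum_{a,t}\mu_{a,t}$, which must vanish. This is~\eqref{eq:cg-dual-stela-U}.
\item Each free $\lambda(a,t)$ has coefficient $-\eta_{a,t}-\mu_{a,t}$, which must vanish. This is~\eqref{eq:cg-dual-stela-lambda}.
\item Each $\xi_{d,\pi}\ge0$ has coefficient $-\alpha_d+\sum_{a,t}\eta_{a,t}\Gamma_{d,a,t}(\pi)-\sum_t\psi_t\beta_{d,t}(\pi)$, which must be nonnegative. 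Rearranging gives~\eqref{eq:cg-dual-stela-xi}.
\end{itemize}
Under these conditions the infimum of $\mathcal L$ equals the constant part, so the dual is exactly the stated system together with the sign constraints~\eqref{eq:cg-dual-stela-signs}.

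To justify calling this \emph{the} LP dual with equal optimal value, I invoke strong duality. \Cref{sec:cg-dual-core} already assumes the restricted master is feasible. It is also bounded below, since all loads are nonnegative, so $U\ge0$. Hence both optima are finite and coincide.

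The only delicate point is the sign bookkeeping, in particular the minus signs on $\Gamma$ in $(\mathrm L)$ and on $U$ in $(\mathrm M)$. I will check the signs against \Cref{tab:cg-incidence-stela} row by row. As a sanity check, \eqref{eq:cg-dual-stela-lambda}--\eqref{eq:cg-dual-stela-U} force $\eta_{a,t}=-\mu_{a,t}\ge0$ with $\sum_{a,t}\eta_{a,t}=1$. This matches the interpretation of $\eta$ as a convex weighting of arc--time pairs, the expected dual of a min--max.
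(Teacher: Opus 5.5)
Your proposal is correct and is essentially the paper's derivation. The paper reads one dual relation per primal variable off \Cref{tab:cg-incidence-stela} using its standing primal--dual convention. Your Lagrangian coefficient collection yields exactly the same relations for $U$, $\lambda(a,t)$ and $\xi_{d,\pi}$, the same objective and the same signs, while also justifying that convention from first principles and giving weak duality directly. Your strong-duality paragraph is not needed for the statement, which only identifies the dual program, but it is consistent with the paper's standing feasibility and boundedness assumption.
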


\begin{proof}
\[
\begin{array}{:c}
\begin{minipage}{0.97\textwidth}
We derive the objective first, then one relation per primal variable.

\medskip
\noindent\emph{Dual objective.} The dual objective is the inner product of the
duals with the right-hand sides of the primal rows, which are $1$ for each
$(\mathrm S)_d$, $0$ for each $(\mathrm L)_{a,t}$, $0$ for each
$(\mathrm M)_{a,t}$ and $\kappa(t)$ for each $(\mathrm B)_t$. Hence
\[
\sum_{d\in D}\alpha^{\textsc{Stela}}_d\cdot 1
+\sum_{(a,t)\in\AT}\eta^{\textsc{Stela}}_{a,t}\cdot 0
+\sum_{(a,t)\in\AT}\mu^{\textsc{Stela}}_{a,t}\cdot 0
+\sum_{t\in T^*}\psi^{\textsc{Stela}}_t\,\kappa(t)
\;=\;
\sum_{d\in D}\alpha^{\textsc{Stela}}_d
+\sum_{t\in T^*}\kappa(t)\,\psi^{\textsc{Stela}}_t,
\]
which is~\eqref{eq:cg-dual-stela-obj}. Only the selection and budget rows carry
a nonzero right-hand side, so only $\alpha^{\textsc{Stela}}$ and
$\psi^{\textsc{Stela}}$ appear.

\medskip
\noindent\emph{Variable $U$.} It occurs only in the family $(\mathrm M)$,
written $\lambda(a,t)-U\le0$, hence with coefficient $-1$, once per pair
$(a,t)\in\AT$. Its objective coefficient is $1$ and it is free, so its dual
relation is an \emph{equality}:
\[
\sum_{(a,t)\in\AT}\mu^{\textsc{Stela}}_{a,t}\cdot(-1)=1,
\qquad\text{i.e.}\qquad
\sum_{(a,t)\in\AT}\mu^{\textsc{Stela}}_{a,t}=-1,
\]
which is~\eqref{eq:cg-dual-stela-U}. Each ingredient has a distinct origin: the
coefficient $-1$ comes from the way $(\mathrm M)$ is written, the right-hand
side $1$ is the objective coefficient of $U$, and the equality rather than an
inequality comes from $U$ being free.

\medskip
\noindent\emph{Variable $\lambda(a,t)$.} Fix a pair $(a,t)$. It occurs in
exactly two rows, both of the \emph{same} pair: the load-assembly row
$(\mathrm L)_{a,t}$ with coefficient $+1$, and the threshold row
$(\mathrm M)_{a,t}$ with coefficient $+1$. It occurs in no row of another pair,
in no selection row and in no budget row. Its objective coefficient is $0$ and
it is free, so
\[
\eta^{\textsc{Stela}}_{a,t}\cdot 1+\mu^{\textsc{Stela}}_{a,t}\cdot 1=0,
\qquad\text{i.e.}\qquad
\eta^{\textsc{Stela}}_{a,t}+\mu^{\textsc{Stela}}_{a,t}=0,
\]
which is~\eqref{eq:cg-dual-stela-lambda}.

\medskip
\noindent\emph{Variable $\xi_{d,\pi}$.} It occurs in $(\mathrm S)_d$ with
coefficient $+1$, in each $(\mathrm L)_{a,t}$ with coefficient
$-\Gamma_{d,a,t}(\pi)$, and in each $(\mathrm B)_t$ with coefficient
$\beta_{d,t}(\pi)$; it occurs in no other row, in particular in no threshold
row. Being sign-constrained, its dual relation is an \emph{inequality}, dual
activity at most the objective coefficient, the latter being $0$:
\[
\underbrace{\alpha^{\textsc{Stela}}_d\cdot 1}_{(\mathrm S)_d}
+\underbrace{\sum_{(a,t)\in\AT}\eta^{\textsc{Stela}}_{a,t}
  \bigl(-\Gamma_{d,a,t}(\pi)\bigr)}_{(\mathrm L)}
+\underbrace{\sum_{t\in T^*}\psi^{\textsc{Stela}}_t\,
  \beta_{d,t}(\pi)}_{(\mathrm B)}
\;\le\;0,
\]
which is~\eqref{eq:cg-dual-stela-xi}.

Finally the declared signs
\eqref{eq:cg-dual-stela-signs} are those of the rows: equality rows
$(\mathrm S)$ and $(\mathrm L)$ give free duals
$\alpha^{\textsc{Stela}}_d,\eta^{\textsc{Stela}}_{a,t}$, and the inequality
rows $(\mathrm M)$ and $(\mathrm B)$ of a minimisation give
$\mu^{\textsc{Stela}}_{a,t}\le0$ and $\psi^{\textsc{Stela}}_t\le0$.

\end{minipage}
\end{array}
\]
\end{proof}

\begin{proposition}
\label{prop:cg-simplex-stela}
At any dual feasible point of~\eqref{eq:cg-dual-stela},
\[
\eta^{\textsc{Stela}}_{a,t}=-\mu^{\textsc{Stela}}_{a,t}\ \ge\ 0
\quad\forall(a,t)\in\AT,
\qquad
\sum_{(a,t)\in\AT}\eta^{\textsc{Stela}}_{a,t}=1,
\]
that is, $\eta^{\textsc{Stela}}\in\Delta_{\calA_T}
=\{\eta\in\mathbb R^{\calA_T}_+:\sum_{(a,t)}\eta_{a,t}=1\}$.
\end{proposition}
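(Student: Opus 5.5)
The claim follows directly from two of the dual constraints of \Cref{prop:cg-dual-stela}. The objective block, the dual relation~\eqref{eq:cg-dual-stela-xi} for the selection variables, and the budget duals $\psi^{\textsc{Stela}}$ play no role. Fix an arbitrary dual feasible point $(\alpha^{\textsc{Stela}},\eta^{\textsc{Stela}},\mu^{\textsc{Stela}},\psi^{\textsc{Stela}})$ of~\eqref{eq:cg-dual-stela}.

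First, for each pair $(a,t)\in\AT$, I use the equality~\eqref{eq:cg-dual-stela-lambda}, which comes from the free load variable $\lambda(a,t)$. It gives $\eta^{\textsc{Stela}}_{a,t}=-\mu^{\textsc{Stela}}_{a,t}$. The sign condition $\mu^{\textsc{Stela}}_{a,t}\le 0$ in~\eqref{eq:cg-dual-stela-signs} holds because $(\mathrm M)$ is a $\le$ row of a minimisation. Hence $\eta^{\textsc{Stela}}_{a,t}\ge 0$. This is worth stating explicitly: although $\eta^{\textsc{Stela}}$ was declared free, as the dual of the equality rows $(\mathrm L)$, nonnegativity is inherited through the coupling with $\mu^{\textsc{Stela}}$.

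Second, I sum this identity over all $(a,t)\in\AT$ and substitute the equality~\eqref{eq:cg-dual-stela-U}, which comes from the free variable $U$ with objective coefficient $1$. This yields
\[
\sum_{(a,t)\in\AT}\eta^{\textsc{Stela}}_{a,t}=-\sum_{(a,t)\in\AT}\mu^{\textsc{Stela}}_{a,t}=1 .
\]
Together with nonnegativity, this is precisely membership of $\eta^{\textsc{Stela}}$ in the simplex $\Delta_{\calA_T}$.

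There is no real obstacle here. The only point requiring care is to invoke the dual sign conventions exactly as fixed in the standing convention of \Cref{sec:cg-dual-core}. In particular, the nonpositivity of $\mu^{\textsc{Stela}}$ depends on writing $(\mathrm M)$ as $\lambda(a,t)-U\le 0$; with the opposite orientation, the signs of $\mu^{\textsc{Stela}}$ and of the right-hand side in~\eqref{eq:cg-dual-stela-U} would both flip, and the conclusion would be the same. I would add a brief remark that $\calA_T\neq\varnothing$, so that the simplex is nonempty. This is consistent with the dual being feasible, which in turn follows from the assumption made earlier that the restricted master is feasible and bounded, so strong duality applies.
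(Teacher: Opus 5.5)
Your proof is correct and follows the paper's argument exactly. Nonnegativity comes from the stationarity relation of the free variable $\lambda(a,t)$ combined with $\mu^{\textsc{Stela}}_{a,t}\le 0$, and unit total mass comes from summing that relation and invoking the dual equality of the free variable $U$. Your side remarks on row orientation and on $\calA_T\neq\varnothing$ are harmless but unnecessary, since the statement already assumes a dual feasible point, and none exists when $\calA_T=\varnothing$.
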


\begin{proof}
\[
\begin{array}{:c}
\begin{minipage}{0.97\textwidth}
\emph{Nonnegativity.} Stationarity of $\lambda(a,t)$,
\eqref{eq:cg-dual-stela-lambda}, gives
$\eta^{\textsc{Stela}}_{a,t}=-\mu^{\textsc{Stela}}_{a,t}$, and
$\mu^{\textsc{Stela}}_{a,t}\le0$ by~\eqref{eq:cg-dual-stela-signs}.
 Summing the previous identity over $\AT$ and
using~\eqref{eq:cg-dual-stela-U},
$\sum_{(a,t)}\eta^{\textsc{Stela}}_{a,t}
=-\sum_{(a,t)}\mu^{\textsc{Stela}}_{a,t}=1$.
\end{minipage}
\end{array}
\]

\end{proof}

\begin{remark}
$\eta^{\textsc{Stela}}$ is a probability distribution over the arc--time pairs.
Complementary slackness on $(\mathrm M)$ forces
$\eta^{\textsc{Stela}}_{a,t}=0$ whenever $\lambda(a,t)<U$, so the mass sits
entirely on the pairs achieving the maximum load. 
\end{remark}
 We first derive the reduced cost associated with a trajectory at rank 1.
\begin{proposition}
\label{prop:cg-rc-stela}
Setting $\gamma^{\textsc{Stela}}_t:=-\psi^{\textsc{Stela}}_t\ge0$, the reduced
cost of a trajectory $\pi\in\Pi_d$, pooled or not, is
\begin{equation}
\label{eq:cg-rc-stela}
\rc^{\textsc{Stela}}_{d}(\pi)
=-\alpha^{\textsc{Stela}}_d
+\sum_{(a,t)\in\AT}\eta^{\textsc{Stela}}_{a,t}\,\Gamma_{d,a,t}(\pi)
+\sum_{t\in T^*}\gamma^{\textsc{Stela}}_t\,\beta_{d,t}(\pi).
\end{equation}
\end{proposition}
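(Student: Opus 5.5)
The argument is the standard reduced-cost computation for a minimisation LP, specialised with the incidence data already recorded in \Cref{tab:cg-incidence-stela}. For a nonnegative primal variable, the reduced cost is its objective coefficient minus the inner product of the duals with its column. The dual inequality~\eqref{eq:cg-dual-stela-xi} is exactly the statement that this quantity is nonnegative. So I would take the column of an arbitrary trajectory $\pi\in\Pi_d$ and read off its entries: $+1$ in $(\mathrm S)_d$, $-\Gamma_{d,a,t}(\pi)$ in each $(\mathrm L)_{a,t}$, $+\beta_{d,t}(\pi)$ in each $(\mathrm B)_t$, zero in every $(\mathrm M)_{a,t}$, and objective coefficient $0$. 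This column is defined by the per-trajectory constants~\eqref{eq:cg-Gamma}--\eqref{eq:cg-beta} alone, so it is the same object whether or not $\pi$ lies in the pool. That is what justifies the phrase ``pooled or not'': the duals $(\alpha^{\textsc{Stela}},\eta^{\textsc{Stela}},\psi^{\textsc{Stela}})$ come from the restricted master, and the formula is then evaluated on any column of the full master.

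Concretely, the computation is
\[
\rc^{\textsc{Stela}}_d(\pi)=0-\Bigl(\alpha^{\textsc{Stela}}_d-\sum_{(a,t)\in\AT}\eta^{\textsc{Stela}}_{a,t}\Gamma_{d,a,t}(\pi)+\sum_{t\in T^*}\psi^{\textsc{Stela}}_t\beta_{d,t}(\pi)\Bigr).
\]
Distributing the sign and substituting $\gamma^{\textsc{Stela}}_t=-\psi^{\textsc{Stela}}_t$ gives~\eqref{eq:cg-rc-stela}. The sign claim $\gamma^{\textsc{Stela}}_t\ge0$ follows directly from $\psi^{\textsc{Stela}}_t\le0$ in~\eqref{eq:cg-dual-stela-signs}. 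I would also note the consistency check: for pooled columns, $\rc\ge0$ is equivalent to~\eqref{eq:cg-dual-stela-xi}, and a negative value for an unpooled $\pi$ is exactly a violated dual constraint of the full master.

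There is no real technical obstacle. The one point needing care is bookkeeping of signs. The $(\mathrm L)$ rows carry $-\Gamma$ on $\xi$, because the load variable is on the left with $+1$, and this produces the $+\eta\Gamma$ term. Together with \Cref{prop:cg-simplex-stela}, the term is therefore an $\eta$-weighted average of footprints. I would also add a short remark that $U$ and $\lambda$ do not enter the column of $\xi$, so the $(\mathrm M)$ duals affect $\rc$ only through the stationarity identity $\eta^{\textsc{Stela}}=-\mu^{\textsc{Stela}}$.
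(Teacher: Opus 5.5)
Your proposal is correct and matches the paper's proof: a zero direct cost minus the dual activity of the $\xi_{d,\pi}$ column (read off the incidence table), followed by the substitution $\gamma^{\textsc{Stela}}_t=-\psi^{\textsc{Stela}}_t$. You also justify ``pooled or not'' the same way the paper does, by noting that $\Gamma_{d,a,t}(\pi)$ and $\beta_{d,t}(\pi)$ are defined for every trajectory, so the formula is a definition on all of $\Pi_d$.
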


\begin{proof}
The reduced cost of a column is its direct objective cost minus its dual
activity. The direct cost is zero: the objective of $\mathrm{RMP}^{\textsc{Stela}}_1$
is $\min U$, and $\xi_{d,\pi}$ does not appear in it; the column reaches the
objective only indirectly, through the free variables $\lambda(a,t)$. The dual
activity was computed in the proof of \Cref{prop:cg-dual-stela}, so
\[
\rc^{\textsc{Stela}}_d(\pi)
=0-\Bigl[\alpha^{\textsc{Stela}}_d
-\sum_{(a,t)\in\AT}\eta^{\textsc{Stela}}_{a,t}\Gamma_{d,a,t}(\pi)
+\sum_{t\in T^*}\psi^{\textsc{Stela}}_t\beta_{d,t}(\pi)\Bigr],
\]
and substituting $\gamma^{\textsc{Stela}}_t=-\psi^{\textsc{Stela}}_t$
gives~\eqref{eq:cg-rc-stela}. The three signs are worth reading. The term
$-\alpha^{\textsc{Stela}}_d$ is the price already paid for serving demand $d$,
which a new column must beat. The load term is a \emph{charge}: the two minus
signs, one from writing $(\mathrm L)$ as $\lambda-\sum\Gamma\xi=0$, one from
the subtraction, cancel, so a column loading the network where
$\eta^{\textsc{Stela}}$ is concentrated is expensive. The budget term is a
charge as well, since $\gamma^{\textsc{Stela}}_t\ge0$ penalises columns that
reconfigure heavily, and a tight budget row makes it large.

Finally, $\Gamma_{d,a,t}(\pi)$ and $\beta_{d,t}(\pi)$ are defined
by~\eqref{eq:cg-Gamma}--\eqref{eq:cg-beta} for \emph{every} trajectory, pooled
or not. The dual constraint~\eqref{eq:cg-dual-stela-xi} is stated for
$\pi\in\Pi_d^{\pool}$, but the right-hand side of~\eqref{eq:cg-rc-stela}
defines a function on all of $\Pi_d$; this extension is a definition, not a
constraint of the restricted dual, and it is what makes pricing over the whole
of $\Pi_d$ meaningful.
\end{proof}

Using the reduced-cost expression above, we now derive the corresponding pricing problem.
\begin{proposition}
\label{prop:cg-cert}
Fix an optimal dual solution of the LP relaxation of
$\mathrm{RMP}^{\textsc{Stela}}_1$ and a demand $d$, and let
\begin{equation}
\label{eq:cg-SP1}
\val(\mathrm{SP}^{\textsc{Stela}}_d):=\min_{\pi\in\Pi_d}
\Bigl\{\sum_{(a,t)\in\AT}\eta^{\textsc{Stela}}_{a,t}\Gamma_{d,a,t}(\pi)
+\sum_{t\in T^*}\gamma^{\textsc{Stela}}_t\beta_{d,t}(\pi)\Bigr\}.
\end{equation}
Then $\min_{\pi\in\Pi_d}\rc^{\textsc{Stela}}_d(\pi)
=-\alpha^{\textsc{Stela}}_d+\val(\mathrm{SP}^{\textsc{Stela}}_d)$, and a
negative-reduced-cost column exists if and only if
$\val(\mathrm{SP}^{\textsc{Stela}}_d)<\alpha^{\textsc{Stela}}_d$.
\end{proposition}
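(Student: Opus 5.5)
The plan is to read the claim off the reduced-cost formula of \Cref{prop:cg-rc-stela}. That formula defines $\rc^{\textsc{Stela}}_d(\pi)$ on all of $\Pi_d$, not only on the pool. The only step with real content is to see that the term $-\alpha^{\textsc{Stela}}_d$ depends on $d$ alone and not on $\pi$.

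First, fix the optimal dual $(\alpha^{\textsc{Stela}},\eta^{\textsc{Stela}},\mu^{\textsc{Stela}},\psi^{\textsc{Stela}})$ and set $\gamma^{\textsc{Stela}}_t=-\psi^{\textsc{Stela}}_t$. Write \eqref{eq:cg-rc-stela} as $\rc^{\textsc{Stela}}_d(\pi)=-\alpha^{\textsc{Stela}}_d+g_d(\pi)$, where $g_d(\pi)$ is the bracketed expression minimized in \eqref{eq:cg-SP1}.

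Second, observe that $\min_{\pi\in\Pi_d}$ commutes with adding the constant $-\alpha^{\textsc{Stela}}_d$. This gives $\min_{\pi}\rc^{\textsc{Stela}}_d(\pi)=-\alpha^{\textsc{Stela}}_d+\val(\mathrm{SP}^{\textsc{Stela}}_d)$.

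For this to be a genuine minimum rather than an infimum, I check that it is attained. $\Pi_d$ is finite and non-empty: each period offers finitely many segment paths with at most $\maxSeg$ pairwise distinct segments, as in the finiteness argument of \Cref{sec:exact-overview}, and $\Pi_d$ is non-empty because the pool (hence $\Pi_d$) contains the nominal trajectories by the standing feasibility assumption. The coefficients $\Gamma_{d,a,t}(\pi)$ and $\beta_{d,t}(\pi)$ are well-defined constants for every $\pi$ by \eqref{eq:cg-Gamma}--\eqref{eq:cg-beta}, so $g_d$ is a real-valued function on a finite set and attains its minimum.

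Third, for the equivalence: a negative-reduced-cost column exists iff $\min_{\pi}\rc^{\textsc{Stela}}_d(\pi)<0$. Indeed, if some $\pi$ has negative reduced cost then the minimum is negative, and conversely the minimizer itself is such a column. By the identity above this holds iff $\val(\mathrm{SP}^{\textsc{Stela}}_d)<\alpha^{\textsc{Stela}}_d$.

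The main obstacle is conceptual rather than technical: being clear that the duals are computed on the restricted pool, while the reduced-cost function is evaluated on all of $\Pi_d$. This extension is licensed exactly by the remark at the end of the proof of \Cref{prop:cg-rc-stela}, which I would cite. I would also note that the nonnegativity $\eta^{\textsc{Stela}}\ge0$ (\Cref{prop:cg-simplex-stela}) and $\gamma^{\textsc{Stela}}\ge0$ play no role in the equivalence itself; they only make $\mathrm{SP}^{\textsc{Stela}}_d$ a nonnegatively weighted pricing problem.
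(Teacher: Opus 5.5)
Your proposal is correct and takes the paper's approach. You isolate the $\pi$-independent term $-\alpha^{\textsc{Stela}}_d$ (the convexity-row dual), shift the minimum over $\Pi_d$ by this constant, and invoke finiteness of $\Pi_d$ for attainment, which yields the criterion. Your additional remarks on nonemptiness of $\Pi_d$ and on evaluating the reduced-cost function beyond the pool are accurate refinements, not a different route.
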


\begin{proof}
\[
\begin{array}{:c}
\begin{minipage}{0.97\textwidth}
In~\eqref{eq:cg-rc-stela} the term $-\alpha^{\textsc{Stela}}_d$ does not depend
on $\pi$: $\alpha^{\textsc{Stela}}_d$ is the dual of the convexity row of
demand $d$, in which every column of $d$ has coefficient $1$. It is therefore a
constant of the minimisation, which leaves exactly~\eqref{eq:cg-SP1}. The
minimum is attained since $\Pi_d$ is finite, and the criterion follows.
\end{minipage}
\end{array}
\]
\end{proof}
\begin{proposition}
\label{prop:cg-theta}
Define, for $t\in T$ and $i\neq j$,
\begin{equation}
\label{eq:cg-theta1}
\theta^{\textsc{Stela}}_d(i,j,t):=
\sum_{a\,:\,(a,t)\in\AT}\eta^{\textsc{Stela}}_{a,t}\,
\frac{\nu(d,t)}{c(a)}\,r(i,j,a,t)\ \ge\ 0 .
\end{equation}
Then, writing $\theta^{\textsc{Stela}}_d(p,t)
:=\sum_{(i,j)\in\segs(p)}\theta^{\textsc{Stela}}_d(i,j,t)$,
\begin{equation}
\label{eq:cg-SP1-path}
\val(\mathrm{SP}^{\textsc{Stela}}_d)=\min_{\pi\in\Pi_d}
\Bigl\{\sum_{t\in T}\theta^{\textsc{Stela}}_d\bigl(p^{(t)},t\bigr)
+\sum_{t\in T^*}\gamma^{\textsc{Stela}}_t\,
  \dist\bigl(p^{(t-1)},p^{(t)}\bigr)\Bigr\}.
\end{equation}
\end{proposition}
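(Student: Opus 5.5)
The plan is to rewrite the objective of $(\mathrm{SP}^{\textsc{Stela}}_d)$ in \eqref{eq:cg-SP1} term by term, for a fixed trajectory $\pi=(p^{(0)},\dots,p^{(h-1)})\in\Pi_d$. The minimisation is over the same finite set $\Pi_d$ on both sides, so equality of the minima follows once the two objectives agree pointwise on $\Pi_d$. The nonnegativity claim in \eqref{eq:cg-theta1} is handled separately. It follows from \Cref{prop:cg-simplex-stela}, which gives $\eta^{\textsc{Stela}}_{a,t}\ge0$. It also uses $\nu(d,t)\ge0$, $c(a)>0$ and $r(i,j,a,t)\ge0$, the last being a property of split coefficients.

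\emph{Load term.} Substitute the footprint definition \eqref{eq:cg-Gamma} into $\sum_{(a,t)\in\AT}\eta^{\textsc{Stela}}_{a,t}\Gamma_{d,a,t}(\pi)$. This gives a finite triple sum over $(a,t)\in\AT$ and ordered pairs $i\neq j$, with summand $\eta^{\textsc{Stela}}_{a,t}\tfrac{\nu(d,t)}{c(a)}r(i,j,a,t)\,\delta(p^{(t)},ij)$.

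\begin{itemize}
\item Split the sum over $\AT$ as $\sum_{t\in T}\sum_{a:(a,t)\in\AT}$.
\item Exchange the sum over $a$ with the sum over $(i,j)$. Since $\delta(p^{(t)},ij)$ does not depend on $a$, it factors out.
\item The remaining inner sum over $a$ is exactly $\theta^{\textsc{Stela}}_d(i,j,t)$.
\item Because $\delta(p^{(t)},ij)$ is the indicator of $(i,j)\in\segs(p^{(t)})$, the sum over $(i,j)$ collapses to $\theta^{\textsc{Stela}}_d(p^{(t)},t)$.
\end{itemize}

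This step needs care on one point. Paths are segment trails, so each segment appears at most once and the indicator does not undercount multiplicities. The sum over segments of the path therefore matches the sum over $i,j$ weighted by $\delta$ with no double counting. Segments restricted away in preprocessing (those with empty forwarding graphs) never appear in a feasible $p^{(t)}$, so they contribute nothing either way.

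\emph{Budget term.} This is immediate from the second equality in \eqref{eq:cg-beta}: $\beta_{d,t}(\pi)=\dist(p^{(t-1)},p^{(t)})$, read as the distance between the single-demand paths. Multiplying by $\gamma^{\textsc{Stela}}_t$ and summing over $t\in T^*$ reproduces the second sum in \eqref{eq:cg-SP1-path}.

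Adding the two terms shows that the objectives of \eqref{eq:cg-SP1} and \eqref{eq:cg-SP1-path} coincide for every $\pi\in\Pi_d$, and the minima are therefore equal. No step is a genuine obstacle. The only point requiring attention is the index bookkeeping in the load term: the sum over $a$ must be restricted to available arcs $(a,t)\in\AT$, which is why $\theta$ is defined with that restriction. The paper's interchange of finite sums and the trail convention justify the rest.
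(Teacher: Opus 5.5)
Your proposal is correct and follows the paper's proof essentially step for step: substitute the footprint \eqref{eq:cg-Gamma}, exchange the finite sums so that the inner sum over available arcs is $\theta^{\textsc{Stela}}_d(i,j,t)$, collapse the sum via $\delta(p^{(t)},ij)=\mathbf 1[(i,j)\in\segs(p^{(t)})]$ under the trail convention, read the budget term off \eqref{eq:cg-beta}, and get nonnegativity from Proposition~\ref{prop:cg-simplex-stela} together with $\nu(d,t)\ge 0$, $c(a)>0$ and $r(i,j,a,t)\ge 0$. The only difference is presentational: you state explicitly that equality of the two minima follows from pointwise equality of the objectives on the common finite set $\Pi_d$, which the paper leaves implicit.
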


\begin{proof}
\[
\begin{array}{:c}
\begin{minipage}{0.97\textwidth}
Insert~\eqref{eq:cg-Gamma} into the first sum of~\eqref{eq:cg-SP1} and exchange
the order of summation, legitimate because all sums are finite:
$$
\begin{aligned}
\sum_{(a,t)\in\AT}\eta^{\textsc{Stela}}_{a,t}\Gamma_{d,a,t}(\pi)
&=\sum_{t\in T}\ \sum_{a:(a,t)\in\AT}\eta^{\textsc{Stela}}_{a,t}\,
  \frac{\nu(d,t)}{c(a)}\sum_{i,j\in V}r(i,j,a,t)\,\delta\bigl(p^{(t)},ij\bigr)\\
&=\sum_{t\in T}\ \sum_{i,j\in V}\delta\bigl(p^{(t)},ij\bigr)\,
  \theta^{\textsc{Stela}}_d(i,j,t)\\
&=\sum_{t\in T}\sum_{(i,j)\in\segs(p^{(t)})}
  \theta^{\textsc{Stela}}_d(i,j,t),
\end{aligned}
$$
the last equality using $\delta(p,ij)\in\{0,1\}$ with
$\delta(p,ij)=1\Leftrightarrow(i,j)\in\segs(p)$, i.e.\ the fact that a segment
path uses each segment at most once. The second sum
of~\eqref{eq:cg-SP1} is $\sum_{t\in T^*}\gamma^{\textsc{Stela}}_t
\dist(p^{(t-1)},p^{(t)})$ by the definition~\eqref{eq:cg-beta} of $\beta$.
Nonnegativity of $\theta^{\textsc{Stela}}_d$ follows from
$\eta^{\textsc{Stela}}\ge0$ (\Cref{prop:cg-simplex-stela}), $\nu(d,t)\ge0$,
$c(a)>0$ and $r(i,j,a,t)\ge0$.
\end{minipage}
\end{array}
\]
\end{proof}

\subsubsection{Dual of $\mathrm{RMP}^{\textsc{Carla}}_1$}
\label{sec:cg-dual-carla}

\paragraph{Identification of the primal variables.}
The dual has one relation per primal variable, determined by the variable's
domain, its objective coefficient, and the rows in which it occurs. We record
these three items before deriving anything.

\begin{table}[ht]
\centering
\small
\renewcommand{\arraystretch}{1.15}
\begin{tabular}{@{}l c c >{\raggedright\arraybackslash}p{0.52\linewidth}@{}}
\toprule
Variable & Domain & Obj. & Rows and coefficients \\
\midrule
$\xi_{d,\pi}$
  & $\ge 0$ & $0$
  & $(\mathrm S)_d$: $+1$;\ \
    $(\mathrm L)_{a,t}$: $-\Gamma_{d,a,t}(\pi)$;\ \
    $(\mathrm B)_t$: $+\beta_{d,t}(\pi)$ \\[2pt]
$\lambda(a,t)$
  & free & $0$
  & $(\mathrm L)_{a,t}$: $+1$;\ \ $(\mathrm C)_{a,t}$: $+1$ \\[2pt]
$z_1$
  & free & $1$
  & $(\mathrm C)_{a,t}$: $-1$, for every $(a,t)\in\calA_T$ \\[2pt]
$u_{a,t}$
  & $\ge 0$ & $1$
  & $(\mathrm C)_{a,t}$: $-1$ \\
\bottomrule
\end{tabular}
\caption{\small Incidence of the primal variables of
$\mathrm{RMP}^{\textsc{Carla}}_1$~\eqref{eq:cg-carla-mlu}: domain, objective
coefficient, and the rows in which each variable occurs. Each variable listed
occurs in no row other than those shown.}
\label{tab:cg-incidence-carla}
\end{table}

The first line of \Cref{tab:cg-incidence-carla} is again the decisive one:
$\xi_{d,\pi}$ occurs in the core rows $(\mathrm S)$, $(\mathrm L)$,
$(\mathrm B)$ and \emph{not} in $(\mathrm C)$. The cumulative block reaches the
columns only through the free load variables $\lambda(a,t)$, exactly as the
threshold block did in \textsc{Stela}, which is why the two reduced costs will
share one functional form.

\begin{proposition}
\label{prop:cg-dual-carla}
The LP dual of $\mathrm{RMP}^{\textsc{Carla}}_1$~\eqref{eq:cg-carla-mlu} is
\begin{subequations}\label{eq:cg-dual-carla}
\begin{align}
\max\;&\;\sum_{d\in D}\alpha^{\textsc{Carla}}_d
        +\sum_{t\in T^*}\kappa(t)\,\psi^{\textsc{Carla}}_t
\label{eq:cg-dual-carla-obj}\\
\text{s.t.}\;
&\;\alpha^{\textsc{Carla}}_d
   -\!\!\sum_{(a,t)\in\calA_T}\!\!
     \eta^{\textsc{Carla}}_{a,t}\,\Gamma_{d,a,t}(\pi)
   +\!\sum_{t\in T^*}\!\psi^{\textsc{Carla}}_t\,\beta_{d,t}(\pi)\le0,
&&\forall d\in D,\ \pi\in\Pi_d^{\mathrm{pool}},\ [\xi_{d,\pi}]
\label{eq:cg-dual-carla-xi}\\
&\;\eta^{\textsc{Carla}}_{a,t}+\mu^{\textsc{Carla}}_{a,t}=0,
&&\forall(a,t)\in\calA_T,\ [\lambda(a,t)]
\label{eq:cg-dual-carla-lambda}\\
&\;\sum_{(a,t)\in\calA_T}\mu^{\textsc{Carla}}_{a,t}=-1,
&&[z_1]
\label{eq:cg-dual-carla-z}\\
&\;\mu^{\textsc{Carla}}_{a,t}\ge-1,
&&\forall(a,t)\in\calA_T,\ [u_{a,t}]
\label{eq:cg-dual-carla-u}\\
&\;\mu^{\textsc{Carla}}_{a,t}\le0,\quad \psi^{\textsc{Carla}}_t\le0,\quad
\alpha^{\textsc{Carla}}_d,\ \eta^{\textsc{Carla}}_{a,t}\ \text{free}.
&&
\label{eq:cg-dual-carla-signs}
\end{align}
\end{subequations}
\end{proposition}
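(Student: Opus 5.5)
The plan is to derive the dual exactly as in the proof of \Cref{prop:cg-dual-stela}, one relation per primal variable, reading each relation off \Cref{tab:cg-incidence-carla} and applying the standing sign convention of \Cref{sec:cg-dual-core}. Only the variables touching the measure block differ from the \textsc{Stela} derivation. The core rows $(\mathrm S)$, $(\mathrm L)$, $(\mathrm B)$ and their duals $\alpha^{\textsc{Carla}}_d$, $\eta^{\textsc{Carla}}_{a,t}$, $\psi^{\textsc{Carla}}_t$ are unchanged. So the treatment of the objective, of $\xi_{d,\pi}$ and of $\lambda(a,t)$ can be copied almost verbatim, with only $\mu^{\textsc{Stela}}$ renamed to $\mu^{\textsc{Carla}}$, the dual of the cumulative row $(\mathrm C)_{a,t}$.

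\emph{Dual objective and signs.} The right-hand sides are $1$ on $(\mathrm S)_d$, $0$ on $(\mathrm L)_{a,t}$ and $(\mathrm C)_{a,t}$, and $\kappa(t)$ on $(\mathrm B)_t$. This gives \eqref{eq:cg-dual-carla-obj}. The equality rows $(\mathrm S)$ and $(\mathrm L)$ give free duals, and the $\le$ rows $(\mathrm B)$ and $(\mathrm C)$ of a minimisation give $\psi^{\textsc{Carla}}_t\le0$ and $\mu^{\textsc{Carla}}_{a,t}\le0$, which is \eqref{eq:cg-dual-carla-signs}.

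\emph{Variables $\xi_{d,\pi}$ and $\lambda(a,t)$.} The variable $\xi_{d,\pi}$ is nonnegative with zero cost and occurs only in $(\mathrm S)_d$, $(\mathrm L)_{a,t}$ and $(\mathrm B)_t$, with coefficients $+1$, $-\Gamma_{d,a,t}(\pi)$ and $\beta_{d,t}(\pi)$. Its dual inequality, activity $\le 0$, is therefore \eqref{eq:cg-dual-carla-xi}. The variable $\lambda(a,t)$ is free with zero cost and occurs with coefficient $+1$ in $(\mathrm L)_{a,t}$ and in $(\mathrm C)_{a,t}$, which yields the equality \eqref{eq:cg-dual-carla-lambda}.

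\emph{The new variables $z_1$ and $u_{a,t}$.} These are the only genuinely new steps. The variable $z_1$ is free with cost $1$ and coefficient $-1$ in every $(\mathrm C)_{a,t}$, so its equality reads $-\sum\mu^{\textsc{Carla}}_{a,t}=1$, which is \eqref{eq:cg-dual-carla-z}. The variable $u_{a,t}$ is nonnegative with cost $1$ and occurs only in $(\mathrm C)_{a,t}$ with coefficient $-1$. Its dual inequality is $-\mu^{\textsc{Carla}}_{a,t}\le1$, i.e.\ \eqref{eq:cg-dual-carla-u}.

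The main obstacle is purely bookkeeping: getting the direction of the $u_{a,t}$ inequality right. It must be activity $\le$ cost, not $\ge$, because $u_{a,t}\ge0$ in a minimisation. I will check this against the sanity test that $\eta^{\textsc{Carla}}=-\mu^{\textsc{Carla}}$ then lies in $[0,1]^{\calA_T}$ with unit total mass, which is consistent with the \textsc{Stela} simplex property and is automatically satisfied.
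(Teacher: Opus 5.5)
Your proposal is correct and follows essentially the same route as the paper: you read one dual relation per primal variable off the incidence table under the standing sign convention. The only new relations are the ones you identify, from $z_1$ (free, giving $\sum_{(a,t)}\mu^{\textsc{Carla}}_{a,t}=-1$) and from $u_{a,t}$ (nonnegative, giving $\mu^{\textsc{Carla}}_{a,t}\ge-1$), and your sanity check that $\eta^{\textsc{Carla}}\in[0,1]^{\calA_T}$ with unit mass is exactly the paper's subsequent \Cref{prop:cg-simplex-carla} and \Cref{rem:carla-ub-rank1}.
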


\begin{proof}
\[
\begin{array}{:c}
\begin{minipage}{0.97\textwidth}
We derive the objective first, then one relation per primal variable.

\medskip
\noindent\emph{Dual objective.} The dual objective is the inner product of the
duals with the right-hand sides, which are $1$ for each $(\mathrm S)_d$, $0$
for each $(\mathrm L)_{a,t}$, $\kappa(t)$ for each $(\mathrm B)_t$ and $0$ for
each $(\mathrm C)_{a,t}$. Hence
$$
\sum_{d\in D}\alpha^{\textsc{Carla}}_d\cdot 1
+\sum_{(a,t)}\eta^{\textsc{Carla}}_{a,t}\cdot 0
+\sum_{t\in T^*}\psi^{\textsc{Carla}}_t\,\kappa(t)
+\sum_{(a,t)}\mu^{\textsc{Carla}}_{a,t}\cdot 0
=\sum_{d\in D}\alpha^{\textsc{Carla}}_d
+\sum_{t\in T^*}\kappa(t)\,\psi^{\textsc{Carla}}_t,
$$
which is~\eqref{eq:cg-dual-carla-obj}. It is identical in form to the
\textsc{Stela} dual objective, the two formulations sharing the core rows; but
the dual feasible sets differ, so the optimal values of $\alpha$ and $\psi$
need not coincide.

\medskip
\noindent\emph{Variable $\lambda(a,t)$.} Free, objective coefficient $0$,
occurring in $(\mathrm L)_{a,t}$ with coefficient $+1$ and in
$(\mathrm C)_{a,t}$ with coefficient $+1$, and in no other row. Its dual
relation is an equality,
$$
\eta^{\textsc{Carla}}_{a,t}\cdot 1+\mu^{\textsc{Carla}}_{a,t}\cdot 1=0,
\qquad\text{i.e.}\qquad
\eta^{\textsc{Carla}}_{a,t}=-\mu^{\textsc{Carla}}_{a,t}\ \ge\ 0,
$$
the nonnegativity because $(\mathrm C)$ is a ``$\le$'' row of a minimisation,
so $\mu^{\textsc{Carla}}_{a,t}\le0$. This
is~\eqref{eq:cg-dual-carla-lambda}.

\medskip
\noindent\emph{Variable $z_1$.} Free, objective coefficient $1$, occurring in
every row of the family $(\mathrm C)$ with coefficient $-1$ and nowhere else.
Being free, its dual relation is an equality,
$$
\sum_{(a,t)\in\calA_T}\mu^{\textsc{Carla}}_{a,t}\cdot(-1)=1,
\qquad\text{i.e.}\qquad
\sum_{(a,t)\in\calA_T}\mu^{\textsc{Carla}}_{a,t}=-1,
$$
which is~\eqref{eq:cg-dual-carla-z}.

\medskip
\noindent\emph{Variables $u_{a,t}$.} \emph{Nonnegative}, objective coefficient
$1$, occurring only in $(\mathrm C)_{a,t}$ with coefficient $-1$. Because the
variable is sign-constrained, the dual relation is an \emph{inequality}, dual
row activity at most the objective coefficient:
$$
\mu^{\textsc{Carla}}_{a,t}\cdot(-1)\ \le\ 1,
\qquad\text{i.e.}\qquad
\mu^{\textsc{Carla}}_{a,t}\ \ge\ -1,
$$
which is~\eqref{eq:cg-dual-carla-u}, equivalently
$\eta^{\textsc{Carla}}_{a,t}\le1$. The reduced cost of $u_{a,t}$ is
$1+\mu^{\textsc{Carla}}_{a,t}\ge0$, and complementary slackness makes it
vanish wherever $u_{a,t}>0$: $\eta^{\textsc{Carla}}_{a,t}=1$ at any pair
carrying a strictly positive excess.

\medskip
\noindent\emph{Variable $\xi_{d,\pi}$.} Nonnegative, zero objective
coefficient, occurring in $(\mathrm S)_d$ with coefficient $+1$, in each
$(\mathrm L)_{a,t}$ with coefficient $-\Gamma_{d,a,t}(\pi)$, in each
$(\mathrm B)_t$ with coefficient $\beta_{d,t}(\pi)$, and \emph{not} in
$(\mathrm C)$. Its dual relation is therefore the
inequality~\eqref{eq:cg-dual-carla-xi}. Finally the declared
signs~\eqref{eq:cg-dual-carla-signs} are those of the rows: the equality rows
$(\mathrm S)$ and $(\mathrm L)$ give free duals, and the inequality rows
$(\mathrm B)$ and $(\mathrm C)$ of a minimisation give
$\psi^{\textsc{Carla}}_t\le0$ and $\mu^{\textsc{Carla}}_{a,t}\le0$.
\end{minipage}
\end{array}
\]
\end{proof}

\paragraph{Dual signs.}
Declared: $\alpha^{\textsc{Carla}}_d$ and $\eta^{\textsc{Carla}}_{a,t}$ free,
$\mu^{\textsc{Carla}}_{a,t}\le0$, $\psi^{\textsc{Carla}}_t\le0$. The derived
ones are what the pricing problem rests on.

\begin{proposition}
\label{prop:cg-simplex-carla}
At any dual feasible point of~\eqref{eq:cg-dual-carla},
\[
\eta^{\textsc{Carla}}_{a,t}=-\mu^{\textsc{Carla}}_{a,t}\in[0,1]
\quad\forall(a,t)\in\calA_T,
\qquad
\sum_{(a,t)\in\calA_T}\eta^{\textsc{Carla}}_{a,t}=1,
\]
that is, $\eta^{\textsc{Carla}}\in\Delta_{\calA_T}$.
\end{proposition}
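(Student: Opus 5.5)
The argument mirrors the proof of \Cref{prop:cg-simplex-stela}, using the three relations of the dual~\eqref{eq:cg-dual-carla} that involve $\eta^{\textsc{Carla}}$ and $\mu^{\textsc{Carla}}$. It needs no optimality or complementary slackness, only dual feasibility.

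\emph{Step 1: identification.} The stationarity relation~\eqref{eq:cg-dual-carla-lambda} of the free load variable $\lambda(a,t)$ gives
\[
\eta^{\textsc{Carla}}_{a,t}=-\mu^{\textsc{Carla}}_{a,t}
\qquad\text{for every }(a,t)\in\calA_T .
\]

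\emph{Step 2: the interval $[0,1]$.} This comes from two sign constraints on $\mu^{\textsc{Carla}}_{a,t}$.
\begin{itemize}
  \item The lower endpoint uses the sign constraint $\mu^{\textsc{Carla}}_{a,t}\le 0$ in~\eqref{eq:cg-dual-carla-signs}. It reflects that $(\mathrm C)$ is a ``$\le$'' row of a minimisation, and it yields $\eta^{\textsc{Carla}}_{a,t}\ge 0$.
  \item The upper endpoint uses the dual inequality~\eqref{eq:cg-dual-carla-u} attached to the nonnegative excess variable $u_{a,t}$. It reads $\mu^{\textsc{Carla}}_{a,t}\ge -1$, hence $\eta^{\textsc{Carla}}_{a,t}\le 1$.
\end{itemize}

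\emph{Step 3: normalisation.} Summing the identity of Step 1 over $\calA_T$ and invoking the relation~\eqref{eq:cg-dual-carla-z} of the free variable $z_1$ gives
\[
\sum_{(a,t)}\eta^{\textsc{Carla}}_{a,t}=-\sum_{(a,t)}\mu^{\textsc{Carla}}_{a,t}=1 .
\]
Together with nonnegativity, this places $\eta^{\textsc{Carla}}$ in $\Delta_{\calA_T}$.

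There is no real obstacle. The only point needing care is the bookkeeping of signs, that is, reading the coefficients $+1,-1,-1$ of $\lambda(a,t)$, $z_1$ and $u_{a,t}$ in $(\mathrm C)$ correctly, which was already settled in \Cref{prop:cg-dual-carla}. One can add that the upper bound $\eta^{\textsc{Carla}}_{a,t}\le 1$ is in fact implied by the simplex condition, so it is automatic rather than an extra restriction; the dual row of $u_{a,t}$ is redundant on the feasible set.
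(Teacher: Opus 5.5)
Your proposal is correct and follows the paper's proof essentially step for step. You use stationarity of $\lambda(a,t)$ for the identification $\eta^{\textsc{Carla}}_{a,t}=-\mu^{\textsc{Carla}}_{a,t}$, the sign $\mu^{\textsc{Carla}}_{a,t}\le 0$ for nonnegativity, the dual row of $z_1$ for the unit mass, and the dual row of $u_{a,t}$ for the upper bound, and your closing remark that $\eta^{\textsc{Carla}}_{a,t}\le 1$ is automatic at rank~$1$ is exactly the content of \Cref{rem:carla-ub-rank1}.
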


\begin{proof}
\[
\begin{array}{:c}
\begin{minipage}{0.97\textwidth}
\emph{Nonnegativity.} Stationarity of $\lambda(a,t)$,
\eqref{eq:cg-dual-carla-lambda}, gives
$\eta^{\textsc{Carla}}_{a,t}=-\mu^{\textsc{Carla}}_{a,t}$, and
$\mu^{\textsc{Carla}}_{a,t}\le0$ because $(\mathrm C)$ is a ``$\le$'' row of a
minimisation.

\emph{Total mass.} Summing that identity over $\calA_T$ and
using~\eqref{eq:cg-dual-carla-z},
$\sum_{(a,t)}\eta^{\textsc{Carla}}_{a,t}
=-\sum_{(a,t)}\mu^{\textsc{Carla}}_{a,t}=1$.

\emph{Upper bound.} The dual relation of $u_{a,t}$,
\eqref{eq:cg-dual-carla-u}, gives $-\mu^{\textsc{Carla}}_{a,t}\le1$, i.e.\
$\eta^{\textsc{Carla}}_{a,t}\le1$.
\end{minipage}
\end{array}
\]
\end{proof}

\begin{remark}
\label{rem:carla-ub-rank1}
Given $\eta^{\textsc{Carla}}\ge0$ and
$\sum_{(a,t)}\eta^{\textsc{Carla}}_{a,t}=1$, the bound
$\eta^{\textsc{Carla}}_{a,t}\le1$ is automatic, so at rank $1$ the dual
relation of $u_{a,t}$ carries no information beyond the simplex. This ceases
to hold at rank $k$, where the total mass of $-\mu^{\textsc{Carla}}_k$ is $k$
and the constraint $-\mu^{\textsc{Carla}}_k(a,t)\le1$ becomes the binding
description of a genuinely different polytope.
\end{remark}

\begin{proposition}
\label{prop:cg-rc-carla}
Setting $\gamma^{\textsc{Carla}}_t:=-\psi^{\textsc{Carla}}_t\ge0$, the reduced
cost of a trajectory $\pi\in\Pi_d$, pooled or not, is
\begin{equation}
\label{eq:cg-rc-carla}
\rc^{\textsc{Carla}}_{d}(\pi)
=-\alpha^{\textsc{Carla}}_d
+\sum_{(a,t)\in\calA_T}\eta^{\textsc{Carla}}_{a,t}\,\Gamma_{d,a,t}(\pi)
+\sum_{t\in T^*}\gamma^{\textsc{Carla}}_t\,\beta_{d,t}(\pi).
\end{equation}
\end{proposition}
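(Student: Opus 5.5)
The argument copies the proof of \Cref{prop:cg-rc-stela}, with the incidence data of \Cref{tab:cg-incidence-carla} used in place of \Cref{tab:cg-incidence-stela}. I will use the standard definition: the reduced cost of a column is its direct objective coefficient minus its dual activity, where the dual activity is the inner product of the column's row coefficients with the row duals.

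\emph{Direct cost.} The objective of $\mathrm{RMP}^{\textsc{Carla}}_1$ is $z_1+\sum_{(a,t)}u_{a,t}$. The variable $\xi_{d,\pi}$ does not appear in it, so its direct cost is $0$.

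\emph{Dual activity.} By the first line of \Cref{tab:cg-incidence-carla}, $\xi_{d,\pi}$ occurs only in the following rows:
\begin{itemize}
\item $(\mathrm S)_d$ with coefficient $+1$;
\item each $(\mathrm L)_{a,t}$ with coefficient $-\Gamma_{d,a,t}(\pi)$;
\item each $(\mathrm B)_t$ with coefficient $\beta_{d,t}(\pi)$.
\end{itemize}
It does not occur in any cumulative row $(\mathrm C)$. This is the point that has to be checked carefully. If $\xi$ appeared in $(\mathrm C)$, the multipliers $\mu^{\textsc{Carla}}$ would enter the reduced cost directly. Because it does not, the cumulative block reaches the column only through $\eta^{\textsc{Carla}}$, via the stationarity relation \eqref{eq:cg-dual-carla-lambda}.

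The dual activity is therefore exactly the left-hand side of \eqref{eq:cg-dual-carla-xi}:
\[
\alpha^{\textsc{Carla}}_d-\sum_{(a,t)}\eta^{\textsc{Carla}}_{a,t}\Gamma_{d,a,t}(\pi)+\sum_{t\in T^*}\psi^{\textsc{Carla}}_t\beta_{d,t}(\pi).
\]
Subtracting this from $0$ and substituting $\psi^{\textsc{Carla}}_t=-\gamma^{\textsc{Carla}}_t$ gives \eqref{eq:cg-rc-carla}.

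\emph{Sign of $\gamma^{\textsc{Carla}}_t$.} The budget row $(\mathrm B)_t$ is a ``$\le$'' row of a minimisation, so \eqref{eq:cg-dual-carla-signs} gives $\psi^{\textsc{Carla}}_t\le 0$, hence $\gamma^{\textsc{Carla}}_t\ge0$.

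\emph{Validity for unpooled trajectories.} The constants $\Gamma_{d,a,t}(\pi)$ and $\beta_{d,t}(\pi)$ are defined by \eqref{eq:cg-Gamma}--\eqref{eq:cg-beta} for every $\pi\in\Pi_d$. So the right-hand side of \eqref{eq:cg-rc-carla} is a well-defined function on all of $\Pi_d$, and the formula holds by definition for pooled and unpooled trajectories alike, as in the \textsc{Stela} case.

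No step is a real obstacle; the only care needed is the bookkeeping of the excluded $(\mathrm C)$ incidence and of the sign conventions.
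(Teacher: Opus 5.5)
Your proof is correct and follows the paper's own argument: the direct cost of $\xi_{d,\pi}$ is zero, its dual activity is read off the rows $(\mathrm S)$, $(\mathrm L)$, $(\mathrm B)$, and substituting $\gamma^{\textsc{Carla}}_t=-\psi^{\textsc{Carla}}_t$ gives the formula. Your remarks on the absence of $\xi_{d,\pi}$ from $(\mathrm C)$ and on the extension to unpooled trajectories also match the paper, which makes the first point in this proof and the second in the \textsc{Stela} case.
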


\begin{proof}
\[
\begin{array}{:c}
\begin{minipage}{0.97\textwidth}
The direct objective cost of $\xi_{d,\pi}$ is zero: the objective
$z_1+\sum_{(a,t)}u_{a,t}$ contains no selection variable. Its dual activity was
computed in the proof of \Cref{prop:cg-dual-carla}, so
$$
\rc^{\textsc{Carla}}_d(\pi)
=0-\Bigl[\alpha^{\textsc{Carla}}_d
-\sum_{(a,t)}\eta^{\textsc{Carla}}_{a,t}\Gamma_{d,a,t}(\pi)
+\sum_{t\in T^*}\psi^{\textsc{Carla}}_t\beta_{d,t}(\pi)\Bigr],
$$
and substituting $\gamma^{\textsc{Carla}}_t=-\psi^{\textsc{Carla}}_t$
gives~\eqref{eq:cg-rc-carla}.

Observe that $z_1$ and $u_{a,t}$, although they carry the entire objective, do
\emph{not} contribute to this reduced cost. They contribute to the dual
\emph{relations} that pin down $\eta^{\textsc{Carla}}$, but they share no row
with $\xi_{d,\pi}$, so their duals never multiply a coefficient of
$\xi_{d,\pi}$. This is the precise sense in which the auxiliary variables of
the rank block do not enter the reduced cost, and it is what makes
\eqref{eq:cg-rc-carla} identical in form to its \textsc{Stela} counterpart.
\end{minipage}
\end{array}
\]
\end{proof}

The derivation of \Cref{prop:cg-cert} and \Cref{prop:cg-theta} used only three
inputs: $\alpha_d$ constant over $\Pi_d$, $\eta\ge0$, and $\gamma\ge0$. All
three hold here, the second by \Cref{prop:cg-simplex-carla} and the third by
$\psi^{\textsc{Carla}}_t\le0$. Defining
\begin{equation}
\label{eq:cg-theta-carla-1}
\theta^{\textsc{Carla}}_d(i,j,t):=
\sum_{a\,:\,(a,t)\in\calA_T}\eta^{\textsc{Carla}}_{a,t}\,
\frac{\nu(d,t)}{c(a)}\,r(i,j,a,t)\ \ge\ 0,
\end{equation}
we therefore obtain, verbatim,
\begin{equation}
\label{eq:cg-SP-carla-1}
\val(\mathrm{SP}^{\textsc{Carla}}_d)=\min_{\pi\in\Pi_d}
\Bigl\{\sum_{t\in T}\theta^{\textsc{Carla}}_d\bigl(p^{(t)},t\bigr)
+\sum_{t\in T^*}\gamma^{\textsc{Carla}}_t\,
  \dist\bigl(p^{(t-1)},p^{(t)}\bigr)\Bigr\},
\end{equation}
together with
$\min_{\pi\in\Pi_d}\rc^{\textsc{Carla}}_d(\pi)
=-\alpha^{\textsc{Carla}}_d+\val(\mathrm{SP}^{\textsc{Carla}}_d)$, and a
negative-reduced-cost column exists if and only if
$\val(\mathrm{SP}^{\textsc{Carla}}_d)<\alpha^{\textsc{Carla}}_d$. The two
formulations thus feed the same pricing oracle; only the numerical values of
$\theta^{\textsc{Carla}}_d$ and $\gamma^{\textsc{Carla}}_t$ differ.

\subsection{Common pricing structure}
\label{sec:cg-dual-compare}

Putting~\eqref{eq:cg-rc-stela} and~\eqref{eq:cg-rc-carla} side by side reveals
the structural invariant:
\begin{equation}
\label{eq:cg-rc-sidebyside}
\rc^{\mathsf X}_{d}(\pi)
\;=\;-\alpha^{\mathsf X}_d
+\sum_{(a,t)\in\calA_T}\eta^{\mathsf X}_{a,t}\,\Gamma_{d,a,t}(\pi)
+\sum_{t\in T^*}\gamma^{\mathsf X}_t\,\beta_{d,t}(\pi),
\qquad \mathsf X\in\{\textsc{Stela},\textsc{Carla}\}.
\end{equation}
In both formulations the reduced cost is an affine function of the \emph{same}
pair of column attributes
\[
\pi\ \longmapsto\
\bigl(\Gamma_{d,\bullet}(\pi),\ \beta_{d,\bullet}(\pi)\bigr)
\in\mathbb R^{\calA_T}\times\mathbb R^{T^*},
\]
a map defined by \Cref{def:cg-traj}, \eqref{eq:cg-Gamma} and~\eqref{eq:cg-beta}
alone and independent of the MLU encoding. What differs is the triple of
coefficients $(\alpha^{\mathsf X},\eta^{\mathsf X},\gamma^{\mathsf X})$ against
which this map is evaluated: each triple is produced by the dual of a different
restricted master and carries the imprint of its encoding. We therefore separate
two kinds of information:
\begin{itemize}\itemsep2pt
\item \emph{formulation-dependent master-side prices}: the triple
      $(\alpha^{\mathsf X},\eta^{\mathsf X},\gamma^{\mathsf X})$ with
      $\eta^{\mathsf X}\in\Delta_{\calA_T}$ and $\gamma^{\mathsf X}\ge0$,
      extracted from the dual of $\mathrm{RMP}^{\mathsf X}_1$;
\item \emph{formulation-independent pricing structure}: the minimisation over
      $\Pi_d$ of a nonnegative linear functional of
      $\bigl(\Gamma_{d,\bullet}(\pi),\beta_{d,\bullet}(\pi)\bigr)$, whose
      combinatorial content; one bounded-hop segment path per period, coupled
      along the reconfiguration chain; is the same for both~$\mathsf X$.
\end{itemize}
In particular, the two formulations need not produce the same numerical dual
values: degeneracy of the restricted master is the rule rather than the
exception when several pairs $(a,t)$ attain the maximum load, and two solver
runs, one on each master, typically return different optimal triples. The
numerical reduced cost of a given trajectory therefore need not agree between
the two formulations; only its functional form~\eqref{eq:cg-rc-sidebyside} does.
This is the exact sense in which different restricted masters produce different
dual solutions while sharing one pricing problem.

\subsubsection{The factorized pricing problem}
\label{sec:cg-dual-pricing}

The comparison above licenses a single pricing development. Fix
$\mathsf X\in\{\textsc{Stela},\textsc{Carla}\}$ and let
$(\alpha^{\mathsf X},\eta^{\mathsf X},\gamma^{\mathsf X})$ be an optimal dual
solution of $\mathrm{RMP}^{\mathsf X}_1$, so that
$\eta^{\mathsf X}\in\Delta_{\calA_T}$ and $\gamma^{\mathsf X}\ge0$. The pricing
subproblem of demand~$d$ is
\begin{equation}
\label{eq:cg-SP-generic}
(\mathrm{SP}^{\mathsf X}_d)\qquad
\operatorname{val}(\mathrm{SP}^{\mathsf X}_d)
:=\min_{\pi\in\Pi_d}
\Bigl\{\sum_{(a,t)\in\calA_T}\eta^{\mathsf X}_{a,t}\,\Gamma_{d,a,t}(\pi)
+\sum_{t\in T^*}\gamma^{\mathsf X}_t\,\beta_{d,t}(\pi)\Bigr\}.
\end{equation}
By~\eqref{eq:cg-rc-sidebyside} the term $-\alpha^{\mathsf X}_d$ is constant over
$\Pi_d$, so
\begin{equation}
\label{eq:cg-cert-generic}
\min_{\pi\in\Pi_d}\rc^{\mathsf X}_d(\pi)
=-\alpha^{\mathsf X}_d+\operatorname{val}(\mathrm{SP}^{\mathsf X}_d).
\end{equation}

\begin{proposition}\label{prop:cert}
For a fixed dual solution of the LP relaxation of\/
$\mathrm{RMP}^{\mathsf X}_1$ and a fixed demand~$d$, the minimum reduced cost
among all columns of $d$ is
$\min_{\pi\in\Pi_d}\rc^{\mathsf X}_d(\pi)
=-\alpha^{\mathsf X}_d+\operatorname{val}(\mathrm{SP}^{\mathsf X}_d)$.
Hence a negative reduced-cost column for $d$ exists if and only if
$\operatorname{val}(\mathrm{SP}^{\mathsf X}_d)<\alpha^{\mathsf X}_d$, and every
optimiser of $(\mathrm{SP}^{\mathsf X}_d)$ gives a minimum reduced-cost column.
\end{proposition}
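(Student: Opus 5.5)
The proof is a direct consequence of the common reduced-cost form~\eqref{eq:cg-rc-sidebyside}, which I take as established for both formulations by \Cref{prop:cg-rc-stela} and \Cref{prop:cg-rc-carla}. The plan has three steps.

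\textbf{Step 1 (split the reduced cost).} For fixed $\mathsf X$, a fixed dual solution $(\alpha^{\mathsf X},\eta^{\mathsf X},\gamma^{\mathsf X})$ and a fixed demand $d$, I write $\rc^{\mathsf X}_d(\pi)=-\alpha^{\mathsf X}_d+f_d(\pi)$ for every $\pi\in\Pi_d$, pooled or not. Here $f_d(\pi)$ is the objective of $(\mathrm{SP}^{\mathsf X}_d)$ in~\eqref{eq:cg-SP-generic}. The key observation is that $\alpha^{\mathsf X}_d$ does not depend on $\pi$. It is the dual of the convexity row $(\mathrm S)_d$, in which every column of $d$ has coefficient $1$, exactly as argued in \Cref{prop:cg-cert}.

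\textbf{Step 2 (take minima).} Since $\Pi_d$ is finite (\Cref{def:cg-traj}, with at most $\maxSeg$ distinct segments per period), both minima are attained. Subtracting a constant commutes with minimization, which gives~\eqref{eq:cg-cert-generic}. The existence criterion then follows immediately: some $\pi$ has $\rc^{\mathsf X}_d(\pi)<0$ if and only if the minimum is negative, that is, if and only if $\operatorname{val}(\mathrm{SP}^{\mathsf X}_d)<\alpha^{\mathsf X}_d$.

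\textbf{Step 3 (optimizers coincide).} If $\pi^\star$ minimizes $f_d$, then for every $\pi\in\Pi_d$,
\[
\rc^{\mathsf X}_d(\pi^\star)=-\alpha^{\mathsf X}_d+f_d(\pi^\star)\le-\alpha^{\mathsf X}_d+f_d(\pi)=\rc^{\mathsf X}_d(\pi),
\]
so $\pi^\star$ is a minimum reduced-cost column. The converse holds by the same identity.

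The only point needing care is that the reduced-cost formula must be valid on all of $\Pi_d$, not merely on the pool. I would invoke the remark at the end of the proof of \Cref{prop:cg-rc-stela}: $\Gamma$ and $\beta$ are defined for every trajectory, so the right-hand side of~\eqref{eq:cg-rc-sidebyside} extends to $\Pi_d$ by definition, identically for $\mathsf X=\textsc{Carla}$. Nonnegativity of $\eta^{\mathsf X}$ and $\gamma^{\mathsf X}$ (\Cref{prop:cg-simplex-stela,prop:cg-simplex-carla}) is not actually needed for this statement; it matters only for the path form of the oracle.
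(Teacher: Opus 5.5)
Your proposal is correct and matches the paper's proof: both isolate the $\pi$-independent term $-\alpha^{\mathsf X}_d$ in the common reduced-cost form and reduce the minimisation over $\Pi_d$ to the pricing objective. Your extra points simply spell out what the paper's two-line argument leaves implicit: attainment by finiteness of $\Pi_d$, the explicit optimiser comparison, and the validity of the reduced-cost formula beyond the pool.
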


\begin{proof} \[ \begin{array}{:c} \begin{minipage}{0.97\textwidth}
In the reduced cost~\eqref{eq:cg-rc-sidebyside}, the term
$-\alpha^{\mathsf X}_d$ is independent of the trajectory~$\pi$. Minimising
$\rc^{\mathsf X}_d(\pi)$ over $\Pi_d$ therefore amounts to minimising the
pricing objective of~\eqref{eq:cg-SP-generic}, whose optimum is
$\operatorname{val}(\mathrm{SP}^{\mathsf X}_d)$; the minimum reduced cost
equals $-\alpha^{\mathsf X}_d+\operatorname{val}(\mathrm{SP}^{\mathsf X}_d)$,
negative if and only if
$\operatorname{val}(\mathrm{SP}^{\mathsf X}_d)<\alpha^{\mathsf X}_d$.
\end{minipage} \end{array} \]\end{proof}

It remains to state when the pricing loop may stop. The following corollary
turns the subproblems $(\mathrm{SP}^{\mathsf X}_d)$ into an exact optimality
certificate for the LP relaxation: once no demand prices out a
negative-reduced-cost column, the restricted master already solves the full LP.

\begin{corollary}\label{cor:lp}
If\/ $(\mathrm{SP}^{\mathsf X}_d)$ is solved for every $d\in D$ and
$\operatorname{val}(\mathrm{SP}^{\mathsf X}_d)\ge\alpha^{\mathsf X}_d$ for
all~$d$, then no column of the full trajectory master has negative reduced
cost, and the current restricted-master LP solution is optimal for the full LP
relaxation.
\end{corollary}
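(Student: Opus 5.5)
The argument is the standard LP-duality certificate, applied to the full (unrestricted) trajectory master $\overline{\mathrm{RMP}}^{\mathsf X}_1$, i.e.\ the LP relaxation of $\mathrm{RMP}^{\mathsf X}_1$ with every pool $\Pi_d^{\mathrm{pool}}$ replaced by $\Pi_d$. Let $(\xi^\star,\lambda^\star,\ldots)$ be an optimal primal solution of the restricted LP and let $(\alpha^{\mathsf X},\eta^{\mathsf X},\mu^{\mathsf X},\psi^{\mathsf X})$ be the optimal dual solution used to build the subproblems. The plan is to show two things: this dual vector is feasible for the dual of the full master, and the padded primal solution is feasible for the full master with the same objective value. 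Weak duality then gives optimality.

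\emph{Dual feasibility.} Passing from the pool to $\Pi_d$ adds only primal columns $\xi_{d,\pi}$. It therefore adds only dual constraints of type~\eqref{eq:cg-dual-stela-xi} (resp.~\eqref{eq:cg-dual-carla-xi}) for $\pi\in\Pi_d\setminus\Pi_d^{\mathrm{pool}}$. All other dual relations, namely those for $\lambda(a,t)$, $U$, or $z_1,u_{a,t}$, and the sign conditions, are unchanged, since these variables and their rows are identical in both masters (\Cref{tab:cg-incidence-stela,tab:cg-incidence-carla}). The new constraint for $(d,\pi)$ reads $-\rc^{\mathsf X}_d(\pi)\le 0$, that is, $\rc^{\mathsf X}_d(\pi)\ge0$, by the reduced-cost identity~\eqref{eq:cg-rc-sidebyside}, which Propositions~\ref{prop:cg-rc-stela} and~\ref{prop:cg-rc-carla} state for pooled or unpooled trajectories. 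By \Cref{prop:cert}, $\min_{\pi\in\Pi_d}\rc^{\mathsf X}_d(\pi)=-\alpha^{\mathsf X}_d+\val(\mathrm{SP}^{\mathsf X}_d)\ge0$ under the hypothesis. Hence no column of the full master has negative reduced cost, which is the first claim, and the restricted dual optimum is feasible for the full dual.

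\emph{Primal feasibility and conclusion.} Extend $\xi^\star$ by zero on all unpooled columns. Every row of $(\mathcal C)$ and of the MLU block is satisfied unchanged, because the added variables vanish. So the padded point is feasible for $\overline{\mathrm{RMP}}^{\mathsf X}_1$ with the same objective value. By strong duality for the restricted LP (assumed feasible and bounded below by $0$, per the standing convention), this primal value equals the restricted dual value. The latter is also the objective value of a feasible solution of the full dual, since the dual objective~\eqref{eq:cg-dual-stela-obj} involves only $\alpha,\psi$ and not the column set. Weak duality for the full LP then shows the padded point is optimal, and that the two LP values coincide.

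The only delicate point is the first step: confirming that the full master's dual differs from the restricted one only by the extra $\xi$-constraints, with no new rows. This holds because the row set $(\mathrm S),(\mathrm L),(\mathrm B),(\mathrm M)/(\mathrm C)$ does not depend on the pool. It also uses that $\Pi_d$ is finite, so the full master is an honest finite LP. Both facts were noted earlier in the paper, so I expect no real obstacle.
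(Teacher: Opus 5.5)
Your proposal is correct and follows the paper's route: Proposition~\ref{prop:cert} turns the hypothesis into $\rc^{\mathsf X}_d(\pi)\ge0$ for every $\pi\in\Pi_d$, and the paper then simply invokes the standard reduced-cost optimality condition. You make that final step explicit: only $\xi$-dual constraints are added, so the restricted optimal duals stay dual-feasible, and the zero-padded primal is feasible with equal value, so weak duality gives optimality.
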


\begin{proof} \[ \begin{array}{:c} \begin{minipage}{0.97\textwidth}
By \Cref{prop:cert},
$\operatorname{val}(\mathrm{SP}^{\mathsf X}_d)\ge\alpha^{\mathsf X}_d$ means
$\rc^{\mathsf X}_d(\pi)\ge0$ for every $\pi\in\Pi_d$, i.e.\ every column of
demand~$d$ has nonnegative reduced cost. When this holds for all $d\in D$, the
standard reduced-cost optimality condition gives optimality of the restricted LP
for the full LP.
\end{minipage} \end{array} \]\end{proof}

\medskip

It remains to observe that~\eqref{eq:cg-SP-generic} is the problem already
analysed in \Cref{sec:cg-pricing}. Substituting the load
footprint~\eqref{eq:cg-Gamma} and exchanging summations isolates the effective
segment cost
\begin{equation}
\label{eq:cg-theta-generic}
\theta^{\mathsf X}_d(i,j,t)
:=\sum_{a\in A}\eta^{\mathsf X}_{a,t}\,\frac{\nu(d,t)}{c(a)}\,r(i,j,a,t)\ \ge\ 0,
\end{equation}
nonnegative because $\eta^{\mathsf X}\ge0$, $\nu(d,t)\ge0$, $c(a)>0$ and
$r(i,j,a,t)\ge0$; writing $\theta^{\mathsf X}_d(p,t)
=\sum_{(i,j)\in\segs(p)}\theta^{\mathsf X}_d(i,j,t)$ and
$\beta_{d,t}(\pi)=\dist(p^{(t-1)},p^{(t)})$, \eqref{eq:cg-SP-generic} becomes
\begin{equation}
\label{eq:cg-SP-generic-path}
\operatorname{val}(\mathrm{SP}^{\mathsf X}_d)
=\min_{\pi\in\Pi_d}
\Bigl\{\sum_{t\in T}\theta^{\mathsf X}_d\bigl(p^{(t)},t\bigr)
+\sum_{t\in T^*}\gamma^{\mathsf X}_t\,\dist\bigl(p^{(t-1)},p^{(t)}\bigr)\Bigr\}.
\end{equation}
The formulation index $\mathsf X$ enters~\eqref{eq:cg-SP-generic-path} only through
the numerical values of $\theta^{\mathsf X}_d$ and $\gamma^{\mathsf X}_t$: the
feasible set $\Pi_d$, the per-period bounded-hop path structure, the
$\texttt{maxSeg}$ restriction absorbed in it, and the reconfiguration chain
coupling consecutive periods are the same objects in both cases.

\subsubsection{The overlap-reward structure}
\label{sec:cg-overlap}

The overlap identity is a property of segment paths alone and carries no
formulation index. For a segment path~$p$, let
$\segs(p)=\{(i,j):\delta(p,ij)=1\}$ be its set of segments, so $|\segs(p)|$
is their number and $\segs(p)\cap\segs(q)$ the segments shared by two paths.
Since each $\delta(p,ij)\in\{0,1\}$, one has
$|\delta(p,ij)-\delta(q,ij)|=\delta(p,ij)+\delta(q,ij)
-2\,\delta(p,ij)\delta(q,ij)$ on every pair $(i,j)$; summing over
$(i,j)\in V\times V$ turns the reconfiguration
cost~\eqref{eq:cg-beta} into an overlap form,
\begin{equation}\label{eq:cg-dist-overlap}
\dist(p,q)=|\segs(p)|+|\segs(q)|-2\,\bigl|\segs(p)\cap\segs(q)\bigr|.
\end{equation}

This identity exposes the structure of the pricing
problem~\eqref{eq:cg-SP-generic-path} and motivates the oracles. Write
$e=(i,j)$ for a segment and $\theta^{\mathsf X}_d(e,t)$ for its dual
price~\eqref{eq:cg-theta-generic}, which depends on the formulation
$\mathsf X\in\{\textsc{Stela},\textsc{Carla}\}$ through the load duals
$\eta^{\mathsf X}_{a,t}$. Using~\eqref{eq:cg-dist-overlap} on each chain
link and collecting the per-period segment charges,
\begin{equation}
\label{eq:cg-overlap}
\sum_{t\in T}\theta^{\mathsf X}_d(p^{(t)},t)
+\sum_{t\in T^*}\gamma^{\mathsf X}_t\,\dist\bigl(p^{(t-1)},p^{(t)}\bigr)
=\sum_{t\in T}\sum_{e\in\segs(p^{(t)})}\hat\theta^{\,\mathsf X,t}_e
-\;2\!\sum_{t\in T^*}\!\gamma^{\mathsf X}_t
  \,\bigl|\segs(p^{(t-1)})\cap\segs(p^{(t)})\bigr|,
\end{equation}
where the \emph{augmented per-period cost}
$\hat\theta^{\,\mathsf X,t}_e$ charges each segment its dual price
$\theta^{\mathsf X}_d(e,t)$ plus the budget price $\gamma^{\mathsf X}_\cdot$
of every chain link adjacent to period~$t$: both $\gamma^{\mathsf X}_t$ and
$\gamma^{\mathsf X}_{t+1}$ for an interior period, and only the single
adjacent one for $t=0$ or $t=h{-}1$. The per-period terms are $h$
independent bounded-hop shortest-path problems with nonnegative costs; the
entire coupling is the sum of consecutive overlap rewards. The pricing
problem~\eqref{eq:cg-SP-generic-path} is thus: \emph{choose one cheap
bounded-hop path per period so that consecutive paths overlap as much as
possible.} This is why a greedy per-period choice is incomplete, a path
individually unattractive can become optimal because it shares segments with
a neighbour, and why no separable single-period oracle is exact.

\paragraph{Solving by column generation.}

Fix $\mathsf X\in\{\textsc{Stela},\textsc{Carla}\}$. The LP relaxation of
$\mathrm{RMP}^{\mathsf X}_1$ replaces $\xi_{d,\pi}\in\{0,1\}$ by
$\xi_{d,\pi}\ge0$; the selection rows $(\mathrm S)$ then force
$\xi_{d,\pi}\le1$ automatically. It is solved over the pools and grown by
\emph{pricing}: given the optimal duals
$(\alpha^{\mathsf X}_d,\eta^{\mathsf X}_{a,t},\gamma^{\mathsf X}_t)$ of the
current restricted master, the effective segment costs
$\theta^{\mathsf X}_d$ of~\eqref{eq:cg-theta-generic} and the budget prices
$\gamma^{\mathsf X}_t$ are fed to the pricing
problem~$(\mathrm{SP}^{\mathsf X}_d)$
of~\eqref{eq:cg-SP-generic-path}, which searches $\Pi_d$ for a trajectory of
negative reduced cost~\eqref{eq:cg-rc-sidebyside}; any such column is
appended to $\Pi^{\mathrm{pool}}_d$, and the loop repeats until no demand
admits one (\Cref{cor:lp}). Because the search ranges over~$\Pi_d$, every
generated column satisfies~\eqref{ctn:flow} and~\eqref{ctn:segments} by
construction: the two absorbed constraints are enforced at generation time
rather than in the master.

The pricing problem $(\mathrm{SP}^{\mathsf X}_d)$ and the oracles that solve
it are formulation-independent up to their input data: the same algorithms
apply to both $\mathsf X=\textsc{Stela}$ and $\mathsf X=\textsc{Carla}$, the
sole difference being the numerical values of $\theta^{\mathsf X}_d$ and
$\gamma^{\mathsf X}_t$ extracted from the respective restricted masters.

\subsection{Three pricing oracles}
\label{sec:cg-pricing}

We develop the oracle for $(\mathrm{SP}^{\mathsf X}_d)$
of~\eqref{eq:cg-SP-generic-path} incrementally, along a speed--rigour
spectrum. We start from a fast, fully heuristic top-$K$ search
(\textsc{TCG-TopK}); we then refine it into an exact-diagonal search with a
top-$K$ completion (\textsc{TCG-Diag}), which recovers exactness on the
diagonal regime while keeping the off-diagonal part heuristic; and we finally
provide a fully exact reformulation, the layered MIP (\textsc{TCG-Exact}),
which prices $(\mathrm{SP}^{\mathsf X}_d)$ to certified optimality. Each
oracle strengthens the guarantee of the previous one at an added
computational cost, so the three together let pricing trade speed against
rigour.

\begin{center}
\begin{tabular}{@{}lllll@{}}
\toprule
Method & Pricing oracle & Guarantee & Per-demand cost & Subsections \\
\midrule
\textsc{TCG-TopK} & fully heuristic top-$K$ chain & heuristic~\xmark
  & $O(hK_{\mathrm{cand}}^2)$ & \ref{sec:cg-topk} \\
\textsc{TCG-Diag} & diagonal $\oplus$ top-$K$ chain off-diagonal
  & heuristic~\xmark
  & diag.\ SP $+\;O(hK_{\mathrm{cand}}^2)$
  & \ref{sec:cg-diag}--\ref{sec:cg-topk} \\
\textsc{TCG-Exact} & exact layered MIP & exact~\cmark
  & one layered MIP, $O(hK|\Es|)$ & \ref{sec:cg-exact} \\
\bottomrule
\end{tabular}
\end{center}

\noindent
All three produce routing schemes that are optimal over their generated pool;
what distinguishes them is what they can \emph{certify} about the full
trajectory set, and at what computational price. Since each oracle takes as
input a nonnegative segment cost $\theta^{\mathsf X}_d(\cdot,\cdot,t)$ and a
nonnegative budget price $\gamma^{\mathsf X}_t$, and is otherwise indifferent
to how those prices were produced, a single implementation serves both
$\mathsf X=\textsc{Stela}$ and $\mathsf X=\textsc{Carla}$. All oracles run
verbatim with the stratified rank-$k$ costs $\theta^{\mathsf X}_{d,k}$ of
\Cref{sec:cg-lex}; we develop them here for the rank-$1$ (MLU) duals
$\theta^{\mathsf X}_d$.

\subsubsection{\textsc{TCG-TopK}: fully heuristic top-$K$ pricing}
\label{sec:cg-topk}

The overlap-reward structure~\eqref{eq:cg-overlap} shows that the pricing
problem $(\mathrm{SP}^{\mathsf X}_d)$ of~\eqref{eq:cg-SP-generic-path}
decomposes into $h$ bounded-hop shortest-path problems coupled only through
consecutive overlaps. The simplest oracle restricts each period to a short
list of promising paths and optimises the chain over those lists.

\emph{Candidate lists.} For each period $t\in T$ and demand $d$, let
$\mathcal P_{d,t}^{K_{\mathrm{cand}}}$ denote the set of
$K_{\mathrm{cand}}$ segment paths of smallest per-period cost
$\theta^{\mathsf X}_d(p,t)=\sum_{(i,j)\in\segs(p)}\theta^{\mathsf X}_d(i,j,t)$
among all $s_d$--$t_d$ paths in $G_t$ with at most $\texttt{maxSeg}$
segments. These are obtained by a standard top-$K$ bounded-hop shortest-path
computation.

\emph{Chain dynamic program.} Over the product
$\mathcal P_{d,0}^{K_{\mathrm{cand}}}\times\cdots\times
\mathcal P_{d,h-1}^{K_{\mathrm{cand}}}$, the overlap coupling
of~\eqref{eq:cg-overlap} is optimised exactly by a forward dynamic program
(\Cref{alg:cg-topk}). Let $D[t,p]$ be the minimum pricing cost of a partial
trajectory from period~$0$ to period~$t$ ending with path~$p$. The base case
is $D[0,p]=\theta^{\mathsf X}_d(p,0)$ for each
$p\in\mathcal P_{d,0}^{K_{\mathrm{cand}}}$. The transition from period
$t{-}1$ to period $t$ adds the per-period path cost and the reconfiguration
cost of the chain link:
\[
D[t,p]=\theta^{\mathsf X}_d(p,t)
+\min_{p'\in\mathcal P_{d,t-1}^{K_{\mathrm{cand}}}}
\bigl\{D[t{-}1,p']+\gamma^{\mathsf X}_t\,\dist(p',p)\bigr\},
\qquad p\in\mathcal P_{d,t}^{K_{\mathrm{cand}}}.
\]
The optimum $v_{\mathrm{topK}}=\min_{p}D[h{-}1,p]$, together with a
standard backtracking pass, yields the best trajectory
$\pi_{\mathrm{topK}}$ whose every period lies in its candidate list, with
reduced cost $-\alpha^{\mathsf X}_d+v_{\mathrm{topK}}$. We call the column
generation driven by this oracle \textsc{TCG-TopK}.

\emph{Guarantees.} \textsc{TCG-TopK} runs in $O(h\,K_{\mathrm{cand}}^2)$
per demand, since each of the $h{-}1$ transitions evaluates
$K_{\mathrm{cand}}^2$ predecessor--successor pairs. It is \emph{sound}:
every trajectory it returns is a genuine sequence of segment paths, scored by
its exact reduced cost~\eqref{eq:cg-rc-sidebyside}, so any
negative-reduced-cost column it reports is truly improving. It is not
\emph{complete}: the globally optimal trajectory may use, in some period, a
path outside that period's top-$K$ list, in which case the oracle misses it.
Consequently a pricing round in which \textsc{TCG-TopK} finds no improving
column does \emph{not} certify LP optimality; the stopping guarantee of
\Cref{cor:lp} requires an oracle that searches all of~$\Pi_d$, which is the
role of the exact oracle developed later.

\begin{algorithm}[H]
\caption{\textsc{TCG-TopK}: heuristic top-$K$ chain pricing for demand~$d$}
\label{alg:cg-topk}
\begin{algorithmic}[1]
\Require dual prices
$(\alpha^{\mathsf X}_d,\,\theta^{\mathsf X}_d,\,\gamma^{\mathsf X})$,
candidate-list size $K_{\mathrm{cand}}$, segment bound $\texttt{maxSeg}$.
\For{$t\in T$}
  \State Compute the $K_{\mathrm{cand}}$ cheapest $s_d$--$t_d$ paths in $G_t$
         under $\theta^{\mathsf X}_d(\cdot,t)$ with $\le\texttt{maxSeg}$
         segments $\to\mathcal P_{d,t}^{K_{\mathrm{cand}}}$.
\EndFor
\State $D[0,p]\gets\theta^{\mathsf X}_d(p,0)$ for each
       $p\in\mathcal P_{d,0}^{K_{\mathrm{cand}}}$.
\For{$t=1,\dots,h-1$}
  \For{$p\in\mathcal P_{d,t}^{K_{\mathrm{cand}}}$}
    \State $D[t,p]\gets\theta^{\mathsf X}_d(p,t)
            +\min_{p'\in\mathcal P_{d,t-1}^{K_{\mathrm{cand}}}}
            \bigl\{D[t{-}1,p']
            +\gamma^{\mathsf X}_t\,\dist(p',p)\bigr\}$
  \EndFor
\EndFor
\State $v_{\mathrm{topK}}\gets\min_{p}D[h{-}1,p]$; backtrack
       $\to\pi_{\mathrm{topK}}$.
\State \Return $\pi_{\mathrm{topK}}$ with reduced cost
       $-\alpha^{\mathsf X}_d+v_{\mathrm{topK}}$.
\end{algorithmic}
\end{algorithm}

\subsubsection{\textsc{TCG-Diag}: exact diagonal search with top-$K$ completion
for the off-diagonal}
\label{sec:cg-diag}

Recall the pricing objective in overlap form~\eqref{eq:cg-overlap}: choose
one bounded-hop path per period minimising the per-period costs
$\theta^{\mathsf X}_d(\cdot,t)$ minus the reward
$2\gamma^{\mathsf X}_t\,|\segs(p^{(t-1)})\cap\segs(p^{(t)})|$ for overlap
between consecutive periods. \textsc{TCG-TopK} keeps, in each period, only
the paths that are cheap \emph{on their own}; it therefore misses exactly the
trajectories that are attractive only through the overlap reward, paths
individually mediocre in some period but heavily shared with their neighbours.
The extreme such case is a trajectory that uses the \emph{same} path in every
period: its overlap reward is maximal (full overlap at every chain link, so
every $\dist=0$), yet that path need not be top-ranked in any single period,
so \textsc{TCG-TopK} can overlook it entirely.

This motivates a two-part oracle. We first solve the same-path
(\emph{diagonal}) regime \emph{exactly}, it is where \textsc{TCG-TopK} is
weakest and, conveniently, where the chain decouples into a single shortest
path. We then run the top-$K$ chain search of \Cref{sec:cg-topk} for the
remaining, off-diagonal trajectories, and keep the better of the two. The
diagonal part removes \textsc{TCG-TopK}'s main blind spot at negligible cost;
the top-$K$ part covers trajectories that do vary across periods. We call the
resulting column generation \textsc{TCG-Diag}.

\paragraph{Diagonal regime (exact).}

Restrict attention to trajectories using the \emph{same} path in every
period, $p^{(0)}=\cdots=p^{(h-1)}=p$, so that every $\dist(p,p)=0$ and the
chain decouples. On the diagonal the pricing
objective~\eqref{eq:cg-SP-generic-path} reduces to
$\sum_{t\in T}\theta^{\mathsf X}_d(p,t)
=\sum_{(i,j)\in\segs(p)}\theta^{\mathsf X,\cap}_d(i,j)$,
where
\begin{equation}\label{eq:cg-theta-diag}
\theta^{\mathsf X,\cap}_d(i,j)
\;:=\;\sum_{t\in T}\theta^{\mathsf X}_d(i,j,t)
\end{equation}
is the \emph{aggregated diagonal segment cost}, summing the dual prices of
segment $(i,j)$ over all periods. The path $p$ must be feasible in every
$G_t$ simultaneously; we therefore define the \emph{all-periods segment
graph} $H_d^{\cap}$ as the directed graph on $V$ whose arc set is
\begin{equation}\label{eq:cg-Hcap}
A(H_d^{\cap})
\;:=\;\bigl\{(i,j)\in V\times V\ :\ i\neq j,\;
\text{$(i,j)$ is a valid segment in $G_t$ for every $t\in T$}\bigr\},
\end{equation}
where $(i,j)$ is a valid segment in $G_t$ if $j$ is reachable from $i$ by a
shortest path in $G_t$ (equivalently, the forwarding graph $FG_t(i,j)$ is
nonempty). The diagonal problem is then a bounded-hop shortest-path problem
on $H_d^{\cap}$.

\begin{proposition}
\label{prop:cg-diagonal}
The diagonal restriction of $(\mathrm{SP}^{\mathsf X}_d)$ is
$\min_{p}\sum_{(i,j)\in\segs(p)}\theta^{\mathsf X,\cap}_d(i,j)$ over
$s_d$--$t_d$ paths $p$ in $H_d^{\cap}$ with at most $\texttt{maxSeg}$
segments. It is solved exactly by the recurrence
\[
D[v,0]=\begin{cases}0,&v=s_d,\\+\infty,&v\neq s_d,\end{cases}
\qquad
D[j,k{+}1]=\min_{(i,j)\in A(H_d^{\cap})}
\bigl\{D[i,k]+\theta^{\mathsf X,\cap}_d(i,j)\bigr\},
\]
for $k=0,\dots,\texttt{maxSeg}-1$, where $D[v,k]$ is the minimum cost of
reaching node~$v$ from $s_d$ using exactly $k$ segments in $H_d^{\cap}$. The
diagonal value is $v_{\mathrm{diag}}=\min_{0\le k\le\texttt{maxSeg}}D[t_d,k]$
and the reduced cost is $-\alpha^{\mathsf X}_d+v_{\mathrm{diag}}$. The
complexity is $O(\texttt{maxSeg}\cdot|A(H_d^{\cap})|)$, i.e.\
$O(\texttt{maxSeg}\cdot|V|^2)$ on dense segment graphs, independently
of~$h$.
\end{proposition}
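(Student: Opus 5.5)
The plan is to prove the statement in three steps. First reduce the diagonal pricing objective to a single-path cost. Then identify the feasible set of diagonal trajectories with bounded-hop paths in $H_d^{\cap}$. Finally, verify the Bellman recurrence and count its operations.

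\emph{Step 1 (objective).} Fix a diagonal trajectory $\pi=(p,\dots,p)$. Each chain term $\dist(p,p)$ vanishes, by \eqref{eq:cg-beta} or directly from \eqref{eq:cg-dist-overlap}, so the budget part of \eqref{eq:cg-SP-generic-path} is zero whatever the value of $\gamma^{\mathsf X}$. The remaining term is $\sum_{t\in T}\sum_{(i,j)\in\segs(p)}\theta^{\mathsf X}_d(i,j,t)$. Exchanging the two finite sums and applying definition \eqref{eq:cg-theta-diag} gives $\sum_{(i,j)\in\segs(p)}\theta^{\mathsf X,\cap}_d(i,j)$.

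\emph{Step 2 (feasible set).} By \Cref{def:cg-traj}, $\pi\in\Pi_d$ requires $p$ to be an $s_d$--$t_d$ segment path in every $G_t$ with at most $\maxSeg$ segments. Recall the preprocessing convention of the Decisions paragraph: a segment is admissible in $G_t$ exactly when its forwarding graph there is nonempty. Hence $p$ is admissible in all periods if and only if each of its segments lies in $A(H_d^{\cap})$ of \eqref{eq:cg-Hcap}. So diagonal trajectories correspond bijectively to $s_d$--$t_d$ walks in $H_d^{\cap}$ with at most $\maxSeg$ arcs and no repeated arc, that is, segment trails.

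\emph{Step 3 (recurrence).} I would show by induction on $k$ that $D[v,k]$ is the minimum of $\sum\theta^{\mathsf X,\cap}_d$ over walks from $s_d$ to $v$ with exactly $k$ arcs in $H_d^{\cap}$. This is the standard Bellman--Ford layered argument: the last arc $(i,j)$ of an optimal $(k{+}1)$-walk leaves a $k$-walk to $i$, and conversely any $k$-walk to $i$ extends by the arc $(i,j)$. Minimizing over $0\le k\le\maxSeg$ then yields the best walk with at most $\maxSeg$ arcs. The reduced-cost claim follows from \Cref{prop:cert}, since $-\alpha^{\mathsf X}_d$ is constant over all columns of demand $d$. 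For the complexity, each of the $\maxSeg$ layers relaxes every arc of $H_d^{\cap}$ once, which gives $O(\maxSeg\cdot|A(H_d^{\cap})|)\subseteq O(\maxSeg\cdot|V|^2)$. The aggregation \eqref{eq:cg-theta-diag} and the construction of $H_d^{\cap}$ are preprocessing done before the recurrence, so they do not enter this bound, which is independent of $h$.

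\emph{Main obstacle.} The recurrence optimizes over \emph{walks}, whereas admissible paths are segment trails, in which a segment may not repeat. I would close this gap with a nonnegativity argument. All costs $\theta^{\mathsf X,\cap}_d\ge0$ by \eqref{eq:cg-theta-generic}. If an optimal walk repeats a segment $(i,j)$, it contains a closed subwalk from $i$ back to $i$ between the two occurrences. Deleting that subwalk gives a walk with fewer arcs, still at most $\maxSeg$, still in $H_d^{\cap}$, and of no greater cost. Iterating this deletion produces a segment trail of equal optimal value, so the walk minimum equals the trail minimum. Two caveats remain. First, the algorithm as stated returns this value, but the path obtained by backtracking must be post-processed by removing repeated segments to yield a valid column. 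Second, walks with zero arcs must be excluded when $s_d\neq t_d$, which holds automatically because $D[t_d,0]=+\infty$ in that case.
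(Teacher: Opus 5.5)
Your proposal is correct and follows the same route as the paper's proof: the reconfiguration terms vanish on the diagonal, the objective reduces to the additive cost $\theta^{\mathsf X,\cap}_d$ over bounded-hop $s_d$--$t_d$ paths in $H_d^{\cap}$, and the layered Bellman--Ford recurrence relaxes each arc once per layer. You also make explicit two points the paper's brief proof leaves implicit: the reduction from walks to segment trails via $\theta^{\mathsf X,\cap}_d\ge 0$ (with its backtracking caveat), and the fact that the aggregation into $\theta^{\mathsf X,\cap}_d$ is excluded from the $h$-independent bound.
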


\begin{proof} \[ \begin{array}{:c} \begin{minipage}{0.97\textwidth}
On the diagonal every reconfiguration term vanishes, so the objective reduces
to the additive segment cost $\theta^{\mathsf X,\cap}_d$ over bounded-hop
$s_d$--$t_d$ paths. The recurrence is the standard Bellman--Ford iteration on
the layered graph indexed by the hop count: at layer $k{+}1$ one extends
every $k$-segment partial path by one arc of $H_d^{\cap}$. Each of the
$\texttt{maxSeg}$ layers relaxes every arc once. The formulation index
$\mathsf X$ enters only through the numerical values of
$\theta^{\mathsf X,\cap}_d$; the graph $H_d^{\cap}$ and the bounded-hop
structure are formulation-independent.
\end{minipage} \end{array} \]\end{proof}

The diagonal optimum is exact over same-path trajectories and cheap; it also
upper-bounds $\operatorname{val}(\mathrm{SP}^{\mathsf X}_d)$ and seeds the
incumbent of the off-diagonal search below.

\paragraph{Off-diagonal completion (top-$K$ along the chain).}

Off the diagonal, \textsc{TCG-Diag} falls back on the candidate-list
strategy of \textsc{TCG-TopK}. For each period $t\in T$ the same candidate
lists $\mathcal P_{d,t}^{K_{\mathrm{cand}}}$ of \Cref{sec:cg-topk} are
computed under the single-period cost $\theta^{\mathsf X}_d(\cdot,t)$, and
the chain dynamic program of \Cref{alg:cg-topk} returns the best trajectory
$\pi_{\mathrm{topK}}$ with value $v_{\mathrm{topK}}$ over those lists.
\textsc{TCG-Diag} then returns the better of the two:

\begin{algorithm}[H]
\caption{\textsc{TCG-Diag}: exact diagonal $\oplus$ top-$K$ chain pricing
for demand~$d$}
\label{alg:cg-diag}
\begin{algorithmic}[1]
\Require dual prices
$(\alpha^{\mathsf X}_d,\,\theta^{\mathsf X}_d,\,\gamma^{\mathsf X})$,
candidate-list size $K_{\mathrm{cand}}$, segment bound $\texttt{maxSeg}$.
\Statex\hrulefill\ \textsc{Diagonal part}\ \hrulefill
\State Build $H_d^{\cap}$ and compute
       $\theta^{\mathsf X,\cap}_d(i,j)=\sum_{t\in T}\theta^{\mathsf X}_d(i,j,t)$
       for each $(i,j)\in A(H_d^{\cap})$.
\State Solve the bounded-hop shortest path on $H_d^{\cap}$
       (\Cref{prop:cg-diagonal}) $\to(v_{\mathrm{diag}},\,p^{\star})$;
       set $\pi_{\mathrm{diag}}=(p^{\star},\dots,p^{\star})$.
\Statex\hrulefill\ \textsc{Off-diagonal part}\ \hrulefill
\State Run the chain DP of \Cref{alg:cg-topk}
       $\to(v_{\mathrm{topK}},\,\pi_{\mathrm{topK}})$.
\Statex\hrulefill\ \textsc{Selection}\ \hrulefill
\State \textbf{if} $v_{\mathrm{diag}}\le v_{\mathrm{topK}}$ \textbf{then}
       $(\pi,v)\gets(\pi_{\mathrm{diag}},v_{\mathrm{diag}})$
       \textbf{else}
       $(\pi,v)\gets(\pi_{\mathrm{topK}},v_{\mathrm{topK}})$.
\State \Return $\pi$ with reduced cost $-\alpha^{\mathsf X}_d+v$.
\end{algorithmic}
\end{algorithm}

\subsubsection{\textsc{TCG-Exact}: exact layered-MIP pricing}
\label{sec:cg-exact}

The coupling in $(\mathrm{SP}^{\mathsf X}_d)$ is entirely due to the
reconfiguration chain. A complete binary formulation is obtained through a
layered representation: one layered block per period and one distance block
per chain link. We first introduce the notation specific to this formulation,
then state the MIP.

\paragraph{Segment sets.}

Fix a demand $d$ and write $s=s_d$, $r=t_d$, $K:=\texttt{maxSeg}$. Recall
from~\eqref{eq:cg-Hcap} that a pair $(i,j)$ with $i\neq j$ is a
\emph{valid segment} in the period-$t$ graph $G_t$ whenever $j$ is reachable
from $i$ by a shortest path in $G_t$ (equivalently, the forwarding graph
$FG_t(i,j)$ is nonempty). For each period $t\in T$ define the
\emph{per-period segment set}
\begin{equation}\label{eq:cg-Et}
E_d(t)\;:=\;\bigl\{(i,j)\in V\times V\ :\ i\neq j,\;
(i,j)\text{ is a valid segment in }G_t\bigr\},
\end{equation}
and the \emph{extended segment set}
\begin{equation}\label{eq:cg-Eext}
\overline E_d(t)\;:=\;\bigl\{(i,j)\in E_d(t)\ :\ i\neq r\bigr\}
\;\cup\;\{(r,r)\},
\end{equation}
where the artificial segment $(r,r)$ is a dummy with zero cost: it pads
paths that reach the destination before using all $K$ slots, and since all
real outgoing segments from $r$ are excluded, once a path reaches $r$ it
stays there. Finally, let
\begin{equation}\label{eq:cg-Eall}
E_d\;:=\;\bigcup_{t\in T}E_d(t)
\end{equation}
be the union of the segment sets over all periods. Note that $E_d(t)$ is the
per-period analogue of the all-periods arc set
$A(H_d^{\cap})=\bigcap_{t\in T}E_d(t)$ of~\eqref{eq:cg-Hcap} used in the
diagonal oracle.

\paragraph{Variables.}

Three families of binary variables encode one trajectory:
\begin{itemize}\itemsep2pt
\item \emph{Layer variables}
      $y^t_{\ell,e}\in\{0,1\}$
      for each period $t\in T$, layer $\ell=1,\dots,K$ and segment
      $e\in\overline E_d(t)$: $y^t_{\ell,e}=1$ if segment $e$ is assigned to
      layer $\ell$ of the period-$t$ path.
\item \emph{Usage indicators}
      $x^t_e\in\{0,1\}$
      for each period $t\in T$ and real segment $e\in E_d$:
      $x^t_e=1$ if $e$ appears somewhere in the period-$t$ path (in any
      layer).
\item \emph{Change indicators}
      $h^t_e\in\{0,1\}$
      for each chain link $t\in T^*$ and segment $e\in E_d$:
      $h^t_e=1$ if the usage of $e$ differs between periods $t{-}1$ and $t$,
      i.e.\ $h^t_e=|x^{t-1}_e-x^t_e|$.
\end{itemize}

\paragraph{Formulation (EP).}

With the segment costs $\theta^{\mathsf X}_d(e,t)$ of~\eqref{eq:cg-theta-generic}
and the budget prices $\gamma^{\mathsf X}_t$ extracted from the dual of
$\mathrm{RMP}^{\mathsf X}_1$, the exact pricing MIP reads
\begin{subequations}\label{eq:ep}
\begin{align}
\min\quad
& \sum_{t\in T}\sum_{\ell=1}^{K}\sum_{e\in\overline E_d(t)}
  \theta^{\mathsf X}_d(e,t)\,y^t_{\ell,e}
  +\sum_{t\in T^*}\gamma^{\mathsf X}_t\sum_{e\in E_d}h^t_e
\notag\\
\text{s.t.}\quad
& \sum_{e\in\overline E_d(t)}y^t_{\ell,e}=1
&& t\in T,\ \ell=1,\dots,K,
\label{ep:layer}\\
& \sum_{e=(s,j)\in\overline E_d(t)}y^t_{1,e}=1
&& t\in T,
\label{ep:start}\\
& \sum_{e=(i,v)\in\overline E_d(t)}y^t_{\ell,e}
  =\sum_{e=(v,j)\in\overline E_d(t)}y^t_{\ell+1,e}
&& t\in T,\ \ell=1,\dots,K{-}1,\ v\in V,
\label{ep:cont}\\
& \sum_{e=(i,r)\in\overline E_d(t)}y^t_{K,e}=1
&& t\in T,
\label{ep:sink}\\[2pt]
& \sum_{\ell=1}^{K}y^t_{\ell,e}\le 1
&& t\in T,\ e\in E_d(t),
\label{ep:simple}\\[2pt]
& x^t_e\ge y^t_{\ell,e}
&& t\in T,\ e\in E_d(t),\ \ell=1,\dots,K,
\label{ep:xlow}\\
& x^t_e\le\sum_{\ell=1}^{K}y^t_{\ell,e}
&& t\in T,\ e\in E_d(t),
\label{ep:xup}\\
& x^t_e=0
&& t\in T,\ e\in E_d\setminus E_d(t),
\label{ep:xzero}\\
& h^t_e\ge x^{t-1}_e-x^t_e,\quad h^t_e\ge x^t_e-x^{t-1}_e
&& t\in T^*,\ e\in E_d,
\label{ep:hlow}\\
& h^t_e\le x^{t-1}_e+x^t_e,\quad h^t_e\le 2-x^{t-1}_e-x^t_e
&& t\in T^*,\ e\in E_d,
\label{ep:hup}\\
& y^t_{\ell,e},\;x^t_e,\;h^t_e\in\{0,1\}.
&& \notag
\end{align}
\end{subequations}

\emph{Reading the constraints.}
Constraints~\eqref{ep:layer} choose exactly one segment per layer;
\eqref{ep:start} starts each period-$t$ walk at $s=s_d$;
\eqref{ep:cont} enforces continuity (the head of layer $\ell$ equals the tail
of layer $\ell{+}1$); \eqref{ep:sink} requires the layer-$K$ segment to end
at $r=t_d$. Together these four families build a walk from $s$ to $r$ through
$K$ layers. Constraint~\eqref{ep:simple} forbids the same real segment in
more than one layer, so each period path is a trail (no repeated segment; a
repeated node is still allowed). The dummy segment $(r,r)$ is exempt
from~\eqref{ep:simple} and can fill arbitrarily many layers, which is how
paths shorter than $K$ segments are represented.

Constraints~\eqref{ep:xlow}--\eqref{ep:xup} link the layer variables to the
usage indicators: $x^t_e=1$ if and only if $e$ is selected in some layer at
period $t$. Constraint~\eqref{ep:xzero} forces $x^t_e=0$ for segments not
valid in $G_t$. Constraints~\eqref{ep:hlow}--\eqref{ep:hup} enforce
$h^t_e=|x^{t-1}_e-x^t_e|$ on each chain link, so that the reconfiguration
cost $\sum_{e\in E_d}h^t_e=\dist(p^{(t-1)},p^{(t)})$ is computed exactly.

\emph{Size.} The formulation has $O(h\,K\,|E_d|)$ variables and constraints,
i.e.\ size \emph{linear} in the horizon $h$.

\emph{Objective.} The two terms of the objective mirror the pricing
problem~\eqref{eq:cg-SP-generic-path}: the first sums the segment costs
$\theta^{\mathsf X}_d(e,t)$ over the selected layers, and the second charges
the reconfiguration cost at each chain link with the budget price
$\gamma^{\mathsf X}_t$. As in the other oracles, the formulation index
$\mathsf X$ enters only through the numerical values of these prices; the
constraint structure is formulation-independent.

\begin{figure*}[t]
\centering
\resizebox{0.8\textwidth}{!}{%
\begin{tikzpicture}[
  >={Stealth[length=1.8mm]},
  st/.style={circle,draw=black!30,inner sep=0pt,minimum size=6.5mm,
             font=\scriptsize,text=black!35},
  sel/.style={circle,draw=acc,line width=0.7pt,inner sep=0pt,minimum size=6.5mm,
              font=\scriptsize,text=black},
  bg/.style={->,draw=black!15,thin},
  route/.style={->,draw=acc,line width=1pt},
  dum/.style={->,draw=acc,dashed,line width=0.9pt},
  lay/.style={font=\scriptsize,text=black!55},
  subcap/.style={font=\small},
  ann/.style={font=\scriptsize,text=black!60}
]

\begin{scope}
\foreach \l in {0,1,2,3}{
  \node[lay] at (1.5*\l,0.75) {$\ell=\l$};
  \foreach \v/\y/\lb in {s/0/{s_d},u/-0.9/{w_1},w/-1.8/{w_2},d/-2.7/{t_d}}{
    \node[st] (A-\v-\l) at (1.5*\l,\y) {$\lb$};}}
\draw[bg] (A-s-0) -- (A-w-1);
\draw[bg] (A-u-1) -- (A-d-2);
\draw[bg] (A-w-1) -- (A-u-2);
\draw[bg] (A-u-2) -- (A-d-3);
\draw[route] (A-s-0) -- (A-u-1);
\draw[route] (A-u-1) -- (A-w-2);
\draw[route] (A-w-2) -- (A-d-3);
\node[sel] at (A-s-0) {$s_d$}; \node[sel] at (A-u-1) {$w_1$};
\node[sel] at (A-w-2) {$w_2$}; \node[sel] at (A-d-3) {$t_d$};
\node[ann,anchor=north west] at (-0.5,-3.35)
  {$p^{(t)}=\langle s_d,w_1,w_2,t_d\rangle$, all $K$ slots used};
\node[subcap] at (2.25,-4.15) {(a) layered block of period $t$};
\end{scope}

\begin{scope}[xshift=6.6cm]
\foreach \l in {0,1,2,3}{
  \node[lay] at (1.5*\l,0.75) {$\ell=\l$};
  \foreach \v/\y/\lb in {s/0/{s_d},u/-0.9/{w_1},w/-1.8/{w_2},d/-2.7/{t_d}}{
    \node[st] (B-\v-\l) at (1.5*\l,\y) {$\lb$};}}
\draw[bg] (B-s-0) -- (B-w-1);
\draw[bg] (B-w-1) -- (B-d-2);
\draw[route] (B-s-0) -- (B-u-1);
\draw[route] (B-u-1) -- (B-d-2);
\draw[dum]  (B-d-2) -- (B-d-3);
\node[sel] at (B-s-0) {$s_d$}; \node[sel] at (B-u-1) {$w_1$};
\node[sel] at (B-d-2) {$t_d$}; \node[sel] at (B-d-3) {$t_d$};
\node[ann,anchor=north] at (3.75,-2.95) {$(t_d,t_d)$};
\node[ann,anchor=north west] at (-0.5,-3.35)
  {once $t_d$ is reached, the dummy segment pads to $\ell=K$};
\node[subcap] at (2.25,-4.15) {(b) absorption at the destination};
\end{scope}

\begin{scope}[xshift=2.2cm,yshift=-5.6cm]
\foreach \i/\x/\lb in {0/0/{s_d},1/1.55/{w_1},2/3.1/{t_d}}{
  \node[sel] (P-\i) at (\x,0) {$\lb$};}
\foreach \i/\x/\lb in {0/0/{s_d},1/1.55/{w_1},2/3.1/{w_2},3/4.65/{t_d}}{
  \node[sel] (Q-\i) at (\x,-2.4) {$\lb$};}
\draw[route] (P-0) -- (P-1);
\draw[->,draw=black!45,line width=0.9pt] (P-1) -- (P-2);
\draw[route] (Q-0) -- (Q-1);
\draw[->,draw=black!45,line width=0.9pt] (Q-1) -- (Q-2);
\draw[->,draw=black!45,line width=0.9pt] (Q-2) -- (Q-3);
\draw[dotted,draw=acc,line width=0.8pt] (0.78,-0.35) -- (0.78,-2.05);
\node[ann,anchor=west,text=acc] at (0.95,-1.2) {$h^t_e=0$ (shared)};
\node[ann,anchor=east] at (3.0,-0.55) {$h^t_e=1$};
\node[ann,anchor=west] at (1.7,-2.85) {$h^t_e=1$ on the two new segments};
\node[lay,anchor=east] at (-0.35,0)    {$t-1$};
\node[lay,anchor=east] at (-0.35,-2.4) {$t$};
\node[ann,anchor=north west] at (-0.5,-3.35)
  {$\dist\bigl(p^{(t-1)},p^{(t)}\bigr)=\sum_e h^t_e=3$, priced
   $\gamma^{\mathsf X}_t\sum_e h^t_e$};
\node[subcap] at (2.25,-4.15) {(c) coupling across a chain link};
\end{scope}

\end{tikzpicture}%
}
\caption{Structure of the exact pricing formulation (EP) for a demand $d$,
with $K=\texttt{maxSeg}=3$. (a)~Each period $t$ owns one layered block: a
copy of the router set per layer $\ell=0,\dots,K$, and $y^t_{\ell,e}=1$
selects one segment per layer
(\eqref{ep:layer}--\eqref{ep:cont}), so a feasible assignment is a walk
leaving $s_d$ that must be at $t_d$ at layer $K$ (\eqref{ep:sink});
\eqref{ep:simple} forbids reusing a real segment, making the walk a trail.
(b)~When $t_d$ is reached before layer $K$, the dummy segment $(t_d,t_d)$
is the only one leaving $t_d$ in
$\overline E_d(t)$~\eqref{eq:cg-Eext}, so the trajectory stays at $t_d$ for
all remaining layers at zero cost. (c)~Blocks of consecutive periods are
coupled only through the change indicators: $h^t_e=|x^{t-1}_e-x^t_e|$ is
zero on shared segments and one on changed ones, and the reconfiguration is
\emph{priced} by $\gamma^{\mathsf X}_t$ rather than capped, which is what
keeps (EP) linear in the horizon.}
\label{fig:cg-layered}
\end{figure*}

The correctness of (EP) is established in three steps: the layered block
encodes exactly the admissible paths of one period, the usage variables
aggregate those layers correctly, and the distance block linearises the
reconfiguration cost. The first step is the following.

\begin{lemma}\label{lem:layer}
For each period $t\in T$, constraints
\eqref{ep:layer}--\eqref{ep:sink} together with~\eqref{ep:simple} are in
one-to-one correspondence with the trails in $\Pi^t_d$ padded with dummy
segments up to length $K$.
\end{lemma}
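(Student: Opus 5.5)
The plan is to construct the two maps explicitly and check that they are mutually inverse. Fix $t$. First I make precise what a "padded trail" is. An element $p=\langle s,w_1,\dots,w_{m-1},r\rangle\in\Pi^t_d$ has segments $e_1,\dots,e_m$ with $m\le K$. These segments are pairwise distinct real segments of $E_d(t)$, and no segment leaves $r$ before the end. Its padding is the length-$K$ sequence $(e_1,\dots,e_m,(r,r),\dots,(r,r))$. The forward map sends $p$ to $y^t_{\ell,e}=\mathbf 1[e=e_\ell]$ for $\ell\le m$ and $y^t_{\ell,(r,r)}=1$ for $\ell>m$.

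\textbf{Forward direction.} I check the constraints one at a time.
\begin{itemize}
\item \eqref{ep:layer} holds because exactly one segment is placed in each layer.
\item \eqref{ep:start} holds since $e_1$ leaves $s$. In the degenerate case $s=r$, the first layer is $(r,r)$, which also leaves $s$.
\item \eqref{ep:cont} holds at each node $v$: the left side is $1$ exactly when the head of the layer-$\ell$ segment is $v$. The right side is $1$ exactly when the tail of the layer-$(\ell+1)$ segment is $v$. Consecutive segments of the path, and consecutive dummies at $r$, share that endpoint.
\item \eqref{ep:sink} holds because layer $K$ is either $e_m$ (when $m=K$), whose head is $r$, or the dummy $(r,r)$.
\item \eqref{ep:simple} holds because the segments of $p$ are pairwise distinct. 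The dummy is exempt from this constraint.
\end{itemize}

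\textbf{Backward direction.} Take any binary $y$ satisfying \eqref{ep:layer}--\eqref{ep:sink} and \eqref{ep:simple}. By \eqref{ep:layer}, each layer $\ell$ selects a unique segment $f_\ell=(i_\ell,j_\ell)$. Constraint \eqref{ep:start} gives $i_1=s$. Constraint \eqref{ep:cont} is a node-by-node equality between two indicators, each of which is a single $0/1$ value; applying it at $v=j_\ell$ gives $i_{\ell+1}=j_\ell$. So $(f_1,\dots,f_K)$ is a walk from $s$, and \eqref{ep:sink} forces $j_K=r$. Let $m$ be the first layer with $j_m=r$.

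The key structural step is to show that the walk stays at $r$ after layer $m$. By \eqref{eq:cg-Eext}, the only element of $\overline E_d(t)$ with tail $r$ is $(r,r)$. Hence $f_\ell=(r,r)$ for all $\ell>m$. Moreover $f_1,\dots,f_m$ are real segments, because the tail of the dummy is $r$ and $r$ is not reached before layer $m$. The edge case $s=r$ is the exception and is handled as before. Constraint \eqref{ep:simple} makes $f_1,\dots,f_m$ pairwise distinct. Therefore $\langle s,j_1,\dots,j_{m-1},r\rangle$ is a trail with $m\le K$ segments, each valid in $G_t$, that is, an element of $\Pi^t_d$. Its padding is exactly $y$. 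The two maps are thus inverse on the nose: the forward map reads the layers off the padding, and the backward map reconstructs the padding from the layers.

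\textbf{Expected obstacle.} The delicate point is the absorption argument at $r$. I need that $r$ is not visited at an intermediate layer of a genuine trail. This holds because real segments out of $r$ are excluded from $\overline E_d(t)$. It means trails in $\Pi^t_d$ must be understood as reaching $r$ only at their last segment, and I will state this convention explicitly. A path that passes through $r$ and returns is not representable, and correctly so, since it would have been truncated.

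I also need uniqueness of the padding, meaning that $m$ is determined by $y$. This follows because $m$ is the first layer whose head is $r$. Finally, the integrality of $y$ is used in reading \eqref{ep:cont} as a single-successor condition. This is the only place integrality is needed, and I will point it out.
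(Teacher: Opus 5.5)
Your proposal is correct and follows the same route as the paper's proof: read off a walk layer by layer from \eqref{ep:layer}--\eqref{ep:sink}, use that $(r,r)$ is the only element of $\overline E_d(t)$ leaving $r$ to force absorption after the first visit to $r$, invoke \eqref{ep:simple} for the trail property, and conversely pad a trail with dummy segments. You make explicit the convention that a trail in $\Pi^t_d$ enters $r$ only on its last segment; the paper's converse direction uses this silently (a trail such as $\langle s,r,w,r\rangle$ has no representation in (EP)), but your aside that excluding such trails is harmless ``since it would have been truncated'' is neither needed nor proved by the lemma, and truncation can change the reconfiguration cost under the budget coupling (cf.\ \Cref{fig:interventionsar3}).
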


\begin{proof} \[ \begin{array}{:c} \begin{minipage}{0.97\textwidth}
Let $y^t$ satisfy \eqref{ep:layer}--\eqref{ep:sink}. By~\eqref{ep:layer}
exactly one segment is chosen per layer. By~\eqref{ep:start} the first
leaves $s$. By~\eqref{ep:cont}, the segment at layer $\ell{+}1$ leaves the
node entered at layer $\ell$; the selected segments form a directed walk
from $s$. By~\eqref{ep:sink} the last enters $r$, and since $(r,r)$ is the
only segment leaving $r$ in
$\overline E_d(t)$~\eqref{eq:cg-Eext}, every layer after the first visit
to $r$ uses the dummy segment. Removing dummies yields an $s$--$r$ walk with
at most $K$ real segments; \eqref{ep:simple} makes each real segment appear
at most once, so the walk is a trail. Conversely, a trail
$p=(v_0,\dots,v_q)\in\Pi^t_d$ with $q\le K$ has pairwise distinct segments;
setting $y^t_{\ell,(v_{\ell-1},v_\ell)}=1$ for $\ell\le q$ and
$y^t_{\ell,(r,r)}=1$ for $\ell>q$ satisfies
\eqref{ep:layer}--\eqref{ep:simple}.
\end{minipage} \end{array} \]\end{proof}

The layered block certifies \emph{which} paths are representable, but the
objective and the budget are written in terms of the usage indicators $x^t_e$
rather than the layer variables. The next lemma checks that the two agree:
$x^t_e$ is exactly the indicator that segment $e$ belongs to the period-$t$
path, which is where the trail constraint~\eqref{ep:simple} earns its place.

\begin{lemma}\label{lem:usage}
Under~\eqref{ep:simple}, constraints \eqref{ep:xlow}--\eqref{ep:xup} force,
for every period $t\in T$ and real segment $e\in E_d(t)$,
\[
x^t_e\;=\;\sum_{\ell=1}^{K}y^t_{\ell,e}\;=\;\mathbf 1\bigl[e\in\segs(p^{(t)})\bigr].
\]
\end{lemma}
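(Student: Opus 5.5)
The argument is a two-sided squeeze on $x^t_e$, carried out for a fixed period $t\in T$ and a fixed real segment $e\in E_d(t)$. The first step is to note that~\eqref{ep:simple} gives $\sum_{\ell=1}^{K}y^t_{\ell,e}\le 1$. Since the $y^t_{\ell,e}$ are binary, this sum therefore lies in $\{0,1\}$, and it equals $\max_{\ell}y^t_{\ell,e}$.

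The second step applies the two usage constraints. Constraint~\eqref{ep:xlow}, taken over all layers, yields $x^t_e\ge\max_\ell y^t_{\ell,e}=\sum_\ell y^t_{\ell,e}$. Constraint~\eqref{ep:xup} yields $x^t_e\le\sum_\ell y^t_{\ell,e}$. Together they give the first equality $x^t_e=\sum_{\ell}y^t_{\ell,e}$, and this value is automatically in $\{0,1\}$, consistent with the binary domain of $x^t_e$.

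The third step identifies this sum with membership in $\segs(p^{(t)})$. Lemma~\ref{lem:layer} shows that the layered block encodes a trail $p^{(t)}\in\Pi^t_d$ padded with dummy segments. In that encoding, a real segment $e$ appears in some layer exactly when $e\in\segs(p^{(t)})$. The dummy segment $(r,r)$ is not in $E_d(t)$, because $i\neq j$ there, so padding never contributes to the sum for a real segment. Hence $\sum_\ell y^t_{\ell,e}=1$ iff $e\in\segs(p^{(t)})$, and otherwise the sum is $0$.

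The one point needing care is the role of~\eqref{ep:simple}. Without it, a segment repeated across layers would make $\sum_\ell y^t_{\ell,e}\ge 2$, and then~\eqref{ep:xlow} and~\eqref{ep:xup} would no longer pin $x^t_e$ to the sum. The proof will make explicit that this is exactly where the constraint is used.

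As a remark, segments $e\in E_d\setminus E_d(t)$ are handled separately by~\eqref{ep:xzero}. They give $x^t_e=0=\mathbf 1[e\in\segs(p^{(t)})]$, since such segments are invalid in $G_t$ and so cannot occur in $p^{(t)}$.
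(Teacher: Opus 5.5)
Your proposal is correct and follows essentially the paper's proof: \eqref{ep:simple} makes $\sum_{\ell}y^t_{\ell,e}\in\{0,1\}$, the two usage constraints \eqref{ep:xlow}--\eqref{ep:xup} squeeze $x^t_e$ onto that sum, and Lemma~\ref{lem:layer} identifies it with $\mathbf 1[e\in\segs(p^{(t)})]$. The only cosmetic difference is that you get the lower bound in one stroke via $\max_\ell y^t_{\ell,e}$, whereas the paper splits into the cases where the sum is $0$ or $1$.
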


\begin{proof} \[ \begin{array}{:c} \begin{minipage}{0.97\textwidth}
By~\eqref{ep:simple}, $\sum_\ell y^t_{\ell,e}\in\{0,1\}$. If it is $0$,
\eqref{ep:xup} gives $x^t_e\le0$; if it is $1$, \eqref{ep:xlow} gives
$x^t_e\ge1$ and \eqref{ep:xup} gives $x^t_e\le1$. In both cases
$x^t_e=\sum_\ell y^t_{\ell,e}$, which equals
$\mathbf 1[e\in\segs(p^{(t)})]$ by \cref{lem:layer}.
\end{minipage} \end{array} \]\end{proof}

It remains to handle the term coupling consecutive periods. The
reconfiguration cost is an absolute value, which is not linear; the following
lemma shows that the four inequalities~\eqref{ep:hlow}--\eqref{ep:hup}
reproduce it exactly on binary usages, so the chain link costs
$\gamma^{\mathsf X}_t\sum_e h^t_e$ with no additional modelling.

\begin{lemma}\label{lem:dist}
For binary $x^{t-1}_e,x^t_e$ and $t\in T^*$, constraints
\eqref{ep:hlow}--\eqref{ep:hup} with $h^t_e\in\{0,1\}$ are equivalent to
$h^t_e=|x^{t-1}_e-x^t_e|$.
\end{lemma}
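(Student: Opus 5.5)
The plan is a direct finite case analysis. Since $x^{t-1}_e,x^t_e\in\{0,1\}$, there are exactly four configurations of the pair $(x^{t-1}_e,x^t_e)$. In each one we evaluate the four right-hand sides of \eqref{ep:hlow}--\eqref{ep:hup} and intersect the resulting interval with $\{0,1\}$. We then check that the unique surviving value of $h^t_e$ is $|x^{t-1}_e-x^t_e|$.

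For $(0,0)$, \eqref{ep:hlow} gives $h^t_e\ge0$ and the first inequality of \eqref{ep:hup} gives $h^t_e\le0$, so $h^t_e=0$. For $(1,1)$, \eqref{ep:hlow} again gives $h^t_e\ge0$, and the second inequality of \eqref{ep:hup} gives $h^t_e\le 2-1-1=0$, so $h^t_e=0$. In the two mixed cases $(0,1)$ and $(1,0)$, one inequality of \eqref{ep:hlow} gives $h^t_e\ge1$. The upper bounds of \eqref{ep:hup} are $1$ and $1$, so $h^t_e=1$. In all four cases the constraints pin $h^t_e$ to $|x^{t-1}_e-x^t_e|$.

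For the converse, setting $h^t_e=|x^{t-1}_e-x^t_e|$ satisfies each inequality in every case, by the same table. The two systems therefore coincide on binary points.

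There is no real obstacle. The only point worth noting is why the upper-bound pair \eqref{ep:hup} is included at all. The objective coefficient $\gamma^{\mathsf X}_t\ge0$ would already push $h^t_e$ down, but \eqref{ep:hup} makes the equivalence hold at the constraint level, independently of the objective sign. As a consequence, by \Cref{lem:usage}, $\sum_{e\in E_d}h^t_e=\dist(p^{(t-1)},p^{(t)})$.
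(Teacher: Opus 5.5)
Your proof is correct and matches the paper's argument: a four-case check on $(x^{t-1}_e,x^t_e)$, with \eqref{ep:hup} forcing $h^t_e=0$ when the two usages agree and \eqref{ep:hlow} forcing $h^t_e\ge1$ when they differ. You also spell out the converse direction, which the paper leaves implicit.
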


\begin{proof} \[ \begin{array}{:c} \begin{minipage}{0.97\textwidth}
Four cases for $(x^{t-1}_e,x^t_e)$. $(0,0)$: \eqref{ep:hup} gives
$h^t_e\le0$. $(1,1)$: the second inequality of~\eqref{ep:hup} gives
$h^t_e\le0$. $(1,0)$: the first of~\eqref{ep:hlow} gives $h^t_e\ge1$.
$(0,1)$: the second of~\eqref{ep:hlow} gives $h^t_e\ge1$. These match
$|x^{t-1}_e-x^t_e|$.
\end{minipage} \end{array} \]\end{proof}

The three lemmas now combine: paths per period, usages per segment, and
distances per chain link. Together they give a bijection between the feasible
points of (EP) and the trajectories of
$(\mathrm{SP}^{\mathsf X}_d)$~\eqref{eq:cg-SP-generic-path} that preserves
the objective, hence exactness of the pricing formulation.

\begin{theorem}\label{thm:exact}
The optimal value of\/ \textup{(EP)} equals
$\operatorname{val}(\mathrm{SP}^{\mathsf X}_d)$. Moreover, every optimal
solution of\/ \textup{(EP)} induces an optimal trajectory for
$(\mathrm{SP}^{\mathsf X}_d)$, and every optimal trajectory admits a feasible
representation in \textup{(EP)} with the same objective.
\end{theorem}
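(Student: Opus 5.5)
The plan is to build an objective-preserving correspondence between feasible points of (EP) and trajectories $\pi\in\Pi_d$, using \cref{lem:layer}, \cref{lem:usage} and \cref{lem:dist} period by period and chain link by chain link. Both claims then follow: equality of optimal values, and transfer of optimal solutions in each direction. Since $\Pi_d$ is finite and (EP) has finitely many binary points, both minima are attained. It therefore suffices to construct two maps and check that each preserves the objective.

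\emph{From (EP) to trajectories.} Take a feasible $(y,x,h)$. For each $t\in T$, \cref{lem:layer} applied to $y^t$ gives a padded trail, hence a path $p^{(t)}\in\Pi^t_d$ with at most $K$ segments; set $\pi=(p^{(0)},\dots,p^{(h-1)})\in\Pi_d$. For the objective, the dummy segment $(r,r)$ has zero cost. By \cref{lem:usage}, the first term equals
\[
\sum_t\sum_{e\in\segs(p^{(t)})}\theta^{\mathsf X}_d(e,t)=\sum_t\theta^{\mathsf X}_d(p^{(t)},t).
\]
For $e\in E_d\setminus E_d(t)$, \eqref{ep:xzero} sets $x^t_e=0$, and this agrees with $\mathbf 1[e\in\segs(p^{(t)})]$ because $p^{(t)}$ only uses segments valid in $G_t$. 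Hence $x^t_e=\delta(p^{(t)},e)$ for all $e\in E_d$. \Cref{lem:dist} then gives
\[
\sum_{e\in E_d}h^t_e=\sum_{e\in E_d}|\delta(p^{(t-1)},e)-\delta(p^{(t)},e)|.
\]
Every segment used in some period lies in $E_d$, so this sum equals $\dist(p^{(t-1)},p^{(t)})=\beta_{d,t}(\pi)$ by \eqref{eq:cg-beta}. The (EP) objective therefore coincides with the objective of \eqref{eq:cg-SP-generic-path} at $\pi$, and so $\operatorname{val}(\mathrm{EP})\ge\operatorname{val}(\mathrm{SP}^{\mathsf X}_d)$.

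\emph{From trajectories to (EP).} Take any $\pi\in\Pi_d$. Define $y^t$ from each period path using the converse direction of \cref{lem:layer}. Set $x^t_e=\delta(p^{(t)},e)$ and $h^t_e=|x^{t-1}_e-x^t_e|$. Constraints \eqref{ep:xlow}--\eqref{ep:xzero} hold because a trail uses each real segment in exactly one layer and only valid segments. Constraints \eqref{ep:hlow}--\eqref{ep:hup} hold by \cref{lem:dist}. The same computation as before shows the objective is unchanged, so $\operatorname{val}(\mathrm{EP})\le\operatorname{val}(\mathrm{SP}^{\mathsf X}_d)$. The two maps preserve the objective, so an optimal point in either problem maps to an optimal point in the other.

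The main obstacle is bookkeeping about index sets rather than anything conceptual. First, one must check that the sums over $\overline E_d(t)$ and over $E_d$ in (EP) correctly cover every segment entering $\dist$ and $\theta$. This requires the dummy to cost nothing, the excluded segments leaving $r$ to be unable to occur in a valid trail to $r$ (they would be cut off by absorption at $r$), and the usages $x^t_e$ to be zero outside $E_d(t)$. Second, \eqref{ep:cont} must be checked for the dummy layers in the converse construction: the padding $(r,r)$ enters and leaves $r$, so continuity holds at $v=r$ and trivially elsewhere. I would verify these points explicitly before concluding.
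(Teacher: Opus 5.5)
Your proof takes the same route as the paper: two objective-preserving maps built from \cref{lem:layer}, \cref{lem:usage} and \cref{lem:dist}. Your direction (EP)$\,\to\Pi_d$ is sound. The gap is in the one bookkeeping point you raise and then wave through. It is \emph{not} true that a real segment leaving $r=t_d$ cannot occur in an admissible period path.

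The model forbids repeating a segment, not a node, and the destination is no exception. For instance, $\langle s,r,w,r\rangle$ is a trail in $\Pi^t_d$; it even satisfies \eqref{ctn:flow} at $r$, with one outgoing and two incoming segments. Yet its segment $(r,w)$ is not in $\overline E_d(t)$, so no $y^t$ encodes it. Invoking ``absorption at $r$'' is circular, because absorption is a property of (EP), not of $\Pi^t_d$.

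Your converse map is therefore undefined on such trajectories. What survives is only $\operatorname{val}(\mathrm{EP})\ge\operatorname{val}(\mathrm{SP}^{\mathsf X}_d)$, which is the direction that does \emph{not} support the stopping certificate of \cref{cor:lp}. The inequality can be strict when $\gamma^{\mathsf X}_t>0$. A zero-cost detour $r\to w_1\to w_2\to r$ in one period can pay off purely because it shares $(w_1,w_2)$ and $(w_2,r)$ with the next period's path; this lowers $\dist$ by one net of the unshared segment $(r,w_1)$.

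The paper's proof has the same hole, hidden in the converse half of \cref{lem:layer}. That lemma sets $y^t_{\ell,(v_{\ell-1},v_\ell)}=1$ without checking that $(v_{\ell-1},v_\ell)\in\overline E_d(t)$ when an intermediate node equals $r$. There are two ways to close it:
\begin{itemize}
\item State the result over trails that meet $t_d$ only as their last node, and make that restriction explicit in $\Pi^t_d$.
\item Keep the real segments leaving $r$ in $\overline E_d(t)$. The dummy $(r,r)$ is a self-loop, so deleting dummy layers from any feasible $y^t$ still leaves an $s$--$r$ trail with at most $K$ real segments, by \eqref{ep:cont}, \eqref{ep:sink} and \eqref{ep:simple}. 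Every trail is encoded by padding at the end. The correspondence becomes many-to-one rather than bijective, but an objective-preserving surjection onto $\Pi_d$ is all your argument needs.
\end{itemize}
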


\begin{proof} \[ \begin{array}{:c} \begin{minipage}{0.97\textwidth}
\emph{(EP)$\to(\mathrm{SP}^{\mathsf X}_d)$.} Let $(y,x,h)$ be feasible for
(EP). By \cref{lem:layer} the $y$-variables define, after removal of
dummies, one trail $p^{(t)}$ per period. By \cref{lem:usage},
$x^t_e=\mathbf 1[e\in\segs(p^{(t)})]$ for real $e\in E_d(t)$ (and
$x^t_e=0$ for $e\in E_d\setminus E_d(t)$ by~\eqref{ep:xzero}). By
\cref{lem:dist}, for each $t\in T^*$,
$\sum_{e\in E_d}h^t_e
=\sum_{e\in E_d}|x^{t-1}_e-x^t_e|
=\dist(p^{(t-1)},p^{(t)})$.
For the segment-cost term, using \cref{lem:usage} and the fact that the
dummy segment $(r,r)$ carries zero cost,
\[
\sum_{t\in T}\sum_{\ell=1}^{K}
\sum_{e\in\overline E_d(t)}\theta^{\mathsf X}_d(e,t)\,y^t_{\ell,e}
\;=\;\sum_{t\in T}\sum_{e\in E_d(t)}\theta^{\mathsf X}_d(e,t)\,x^t_e
\;=\;\sum_{t\in T}\theta^{\mathsf X}_d\bigl(p^{(t)},t\bigr),
\]
so the (EP) objective equals the
$(\mathrm{SP}^{\mathsf X}_d)$~objective at
$(p^{(0)},\dots,p^{(h-1)})$.

\emph{$(\mathrm{SP}^{\mathsf X}_d)\to$\textup{(EP)}.} Conversely, any
trajectory is represented via the converse of \cref{lem:layer}, with
$x^t_e=\mathbf 1[e\in\segs(p^{(t)})]$ and $h^t_e=|x^{t-1}_e-x^t_e|$, of
identical objective value. The two maps are mutually inverse and
objective-preserving, so the optima coincide.
\end{minipage} \end{array} \]\end{proof}

\paragraph{Column-generation loop.}

The MLU column generation is then the standard loop, stated for an arbitrary
formulation $\mathsf X\in\{\textsc{Stela},\textsc{Carla}\}$.

\begin{algorithm}[H]
\caption{\textsc{TCG-Exact}: exact trajectory column generation (root LP)
under formulation $\mathsf X$}
\label{alg:cg-exact}
\begin{algorithmic}[1]
\Require formulation $\mathsf X\in\{\textsc{Stela},\textsc{Carla}\}$,
         demands $D$, segment bound $\texttt{maxSeg}$.
\State For each $d$, initialise a nonempty feasible pool
       $\Pi^{\mathrm{pool}}_d$.
\Repeat
  \State Solve the LP relaxation of $\mathrm{RMP}^{\mathsf X}_1$; extract
         optimal duals
         $(\alpha^{\mathsf X}_d,\eta^{\mathsf X}_{a,t},\gamma^{\mathsf X}_t)$.
  \State Build segment costs
         $\theta^{\mathsf X}_d(e,t)$
         via~\eqref{eq:cg-theta-generic} for each demand~$d$.
  \For{each demand $d\in D$}
    \State Solve (EP) for demand $d$
           $\to\operatorname{val}(\mathrm{SP}^{\mathsf X}_d)$ and
           trajectory $\pi^\star_d$.
    \If{$\operatorname{val}(\mathrm{SP}^{\mathsf X}_d)
         <\alpha^{\mathsf X}_d$}
      \State $\Pi^{\mathrm{pool}}_d\gets
             \Pi^{\mathrm{pool}}_d\cup\{\pi^\star_d\}$.
    \EndIf
  \EndFor
\Until{no column is added.}
\end{algorithmic}
\end{algorithm}

Exactness of the oracle (\cref{thm:exact}) settles a single pricing round;
it does not yet say that the loop stops, nor what it has proved when it does.
The next theorem closes both points: the column set being finite,
\cref{alg:cg-exact} cannot iterate forever, and the exact oracle turns its
termination test into the certificate of \cref{cor:lp}. This is what
distinguishes \textsc{TCG-Exact} from the two heuristic oracles, whose
failure to find a column proves nothing.

\begin{theorem}\label{thm:conv}
If each \textup{(EP)} is solved to global optimality,
\cref{alg:cg-exact} terminates finitely, and at termination the
restricted-master LP is optimal for the full trajectory LP relaxation of\/
$\mathrm{RMP}^{\mathsf X}_1$.
\end{theorem}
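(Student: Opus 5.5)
The proof splits into two parts: finite termination, and then LP optimality at termination. Both rest on two facts. The column universe $\bigcup_d\Pi_d$ is finite, and (EP) is an exact oracle by \cref{thm:exact}.

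\emph{Finiteness.} Each $\Pi_d$ is finite. A trajectory is an $h$-tuple of segment trails in $G_t$, each with at most $\texttt{maxSeg}$ segments drawn from the finite set $\{(i,j):i\neq j\}$, so there are finitely many of them. The key claim is that a column added in a given iteration is never already in the pool. A column $\pi^\star_d$ is appended only when $\operatorname{val}(\mathrm{SP}^{\mathsf X}_d)<\alpha^{\mathsf X}_d$, which by \cref{prop:cert} means $\rc^{\mathsf X}_d(\pi^\star_d)<0$. The duals used here are optimal for the current restricted LP, so they are dual feasible for it. Constraint~\eqref{eq:cg-dual-stela-xi} (resp.~\eqref{eq:cg-dual-carla-xi}) therefore gives $\rc^{\mathsf X}_d(\pi)\ge0$ for every $\pi\in\Pi^{\mathrm{pool}}_d$. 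Hence $\pi^\star_d\notin\Pi^{\mathrm{pool}}_d$. Every non-final iteration adds at least one new element, the pools only grow, and they stay inside the finite set $\bigcup_d\Pi_d$. So the number of iterations is at most $\sum_d|\Pi_d|-\sum_d|\Pi^{\mathrm{pool},0}_d|+1$.

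I also need each restricted LP to have an optimal dual solution, so that the loop is well defined. By the standing assumption in \Cref{sec:cg-dual-core}, the initial pools are nonempty and make the master feasible. Adding columns preserves feasibility. Both masters are bounded below by $0$, so strong duality holds at every iteration and optimal duals exist.

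\emph{Optimality at termination.} The loop stops when no column is added. This means $\operatorname{val}(\mathrm{SP}^{\mathsf X}_d)\ge\alpha^{\mathsf X}_d$ for every $d$, where each value is computed exactly because (EP) is solved to global optimality and \cref{thm:exact} applies. \Cref{cor:lp} then gives nonnegative reduced cost for every column of the full master. To make this explicit, I extend the restricted primal solution by zeros on the unpooled columns; it stays feasible for the full LP with the same objective. The current duals satisfy every dual constraint of the full master: the $\xi$-constraints follow from the reduced-cost condition, and the remaining constraints, for $\lambda$, $U$ or $z_1$, $u$, and the sign conditions, do not depend on the pool. So the dual is feasible for the full LP and its objective equals the primal objective. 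Weak duality then gives optimality for the full trajectory LP relaxation.

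The step I expect to need the most care is the "no cycling" argument: that an added column is genuinely new. This depends on the pricing returning the exact minimizer and on using optimal, hence dual-feasible, restricted duals; the heuristic oracles give neither guarantee. I would state it explicitly rather than fold it into finiteness. I would also note that ties or degeneracy do not matter, since strict negativity of the reduced cost is what is used.
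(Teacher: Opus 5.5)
Your proof is correct and follows the same route as the paper. Finiteness of $\bigcup_d\Pi_d$, together with the fact that every non-final iteration enlarges the pool, gives termination, and at termination the exactly computed pricing values feed \cref{cor:lp}. You are more careful than the paper's two-line proof in justifying that each added column is genuinely new (dual feasibility of the restricted duals forces $\rc^{\mathsf X}_d(\pi)\ge0$ on pooled columns) and that optimal duals exist at every iteration; the paper spells out the first point only later, in \cref{bp-lem:cg-finite}.
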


\begin{proof} \[ \begin{array}{:c} \begin{minipage}{0.97\textwidth}
The full column set $\bigcup_d\Pi_d$ is finite ($V,D,K,h$ finite); each
non-terminal iteration adds at least one column, so the loop terminates. At
termination
$\operatorname{val}(\mathrm{SP}^{\mathsf X}_d)\ge\alpha^{\mathsf X}_d$ for
all $d$; by \cref{cor:lp} the restricted LP is optimal for the full LP.
\end{minipage} \end{array} \]\end{proof}

\begin{remark}\label{rem:cg-integer}
Root-node exact pricing certifies the LP relaxation, not integer optimality:
a column with nonnegative reduced cost at the root LP optimum may still be
required in an optimal integer solution. Global integer exactness requires
branch-and-price, deferred to \cref{bp-sec:main}
\end{remark}

\subsection{Integration with the lexicographic minimax objective}
\label{sec:cg-lex-obj}

Everything developed so far prices and solves the MLU problem, i.e.\ the
\emph{first} component $\lambda^{\downarrow}_1$ of the lexicographic minimax
objective~\eqref{ctn:objlex}: the restricted master
$\mathrm{RMP}^{\mathsf X}_1$ carries a single MLU block
$(\mathcal M^{\mathsf X})$, and the oracles of
\cref{sec:cg-topk}--\cref{sec:cg-exact} price against the duals of that
block. The question is now how to recover the full lexicographic order
$\lambda^{\downarrow}_1,\lambda^{\downarrow}_2,\dots$

The difficulty is that the lexicographic minimax objective is not a single linear
criterion. Beyond the first rank, one must fix the optimal value
$\bar\lambda_{1}$ already achieved, then minimise the second largest load among
the schemes attaining it, and so on, and, as noted in \cref{ss:obj}, the
sorting permutation is \emph{endogenous}, so the identity of the pair
carrying rank $k$ is itself a decision. The route taken here is to keep the
trajectory master intact and change only the rows it is priced against,
exploiting the sequential lexicographic minimax decomposition of \cref{sec:milp}.

\paragraph{Sequential lexicographic principle.}

The lexicographic minimax objective is solved by minimising the sorted components one at a
time. At rank $k$, the problem is
\begin{equation}\label{eq:cg-lex-principle}
\bar\lambda_k
=
\min\Bigl\{
\lambda_k^{\downarrow}(P):
P\in\mathcal F,\;
\lambda_j^{\downarrow}(P)=\bar\lambda_j,\ 
j=1,\dots,k-1
\Bigr\}.
\end{equation}
where $(\bar\lambda_1,\dots,\bar\lambda_{k-1})$ is the \emph{certified prefix}
produced by the previous ranks. At $k=1$ there is no prefix and the problem
reduces to the MLU. Each formulation
$\mathsf X\in\{\textsc{Stela},\textsc{Carla}\}$ turns the equality
constraints of~\eqref{eq:cg-lex-principle} into a tractable block of primal
rows; the two blocks are derived in \cref{sec:milp} for the compact model
and are now embedded into the trajectory master. Since the certified prefix
is the same object in both formulations (\cref{prop:same-sequence}), only
the encoding of the fixation differs.

\begin{proposition}
\label{prop:cg-monotone}
A trajectory generated at rank $k$ remains valid and reusable at every rank
$k'>k$. Consequently a single pool $\Pi_d^{\mathrm{pool}}$ per demand may
grow monotonically across ranks without ever discarding a column, and
rank~$k'$ starts from the full accumulated pool.
\end{proposition}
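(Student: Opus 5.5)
The argument rests on one observation. What makes a trajectory a legitimate column is fixed by \Cref{def:cg-traj} and the separable block $S_d$ of~\eqref{eq:sepblock}, and neither depends on the rank $k$. A column $\pi\in\Pi_d$ is one bounded-hop $s_d$--$t_d$ trail per period in $G_t$, satisfying~\eqref{ctn:flow} and~\eqref{ctn:segments}. Its master coefficients are its selection coefficient $1$ in $(\mathrm S)$, its load footprint $\Gamma_{d,a,t}(\pi)$ from~\eqref{eq:cg-Gamma}, and its reconfiguration cost $\beta_{d,t}(\pi)$ from~\eqref{eq:cg-beta}. All of these are computed from $\pi$ and the instance data alone: the split coefficients, volumes, capacities and $G_t$. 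None involves a dual price, a rank index, or the certified prefix $(\bar\lambda_1,\dots,\bar\lambda_{k-1})$. So I will first record that the set $\Pi_d$ and the map $\pi\mapsto(\Gamma_{d,\bullet}(\pi),\beta_{d,\bullet}(\pi))$ are the same at every rank.

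The second step checks how the rank-$k'$ master differs from the rank-$k$ master for each $\mathsf X\in\{\textsc{Stela},\textsc{Carla}\}$. As described in \Cref{sec:cg-lex-obj}, the core $(\mathcal C)$, made of $(\mathrm S)$, $(\mathrm L)$ and $(\mathrm B)$, is kept intact. Moving to a higher rank only changes the block that constrains the free load variables $\lambda(a,t)$. For \textsc{Stela} this block consists of the frozen threshold and cardinality rows together with the current threshold rows. For \textsc{Carla} it consists of the cumulative rows~\eqref{eq:carla-frozen-lin}--\eqref{eq:carla-current}. As in \Cref{tab:cg-incidence-stela,tab:cg-incidence-carla}, no $\xi_{d,\pi}$ appears in these blocks. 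Therefore a column $\pi$ generated at rank $k$ has a well-defined column vector in $\mathrm{RMP}^{\mathsf X}_{k'}$, and that vector is the same one it had at rank $k$. Adding it to the rank-$k'$ pool is a legal column insertion.

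The third step is validity. By the argument of \Cref{prop:cg-valid}, any integer selection over the enlarged pool induces a routing scheme in $\mathcal F$ whose loads and reconfiguration costs match the master variables. The rank-$k'$ rows then decide whether that scheme lies in $\mathcal F_{k'-1}$. Those rows act on the scheme through $\lambda$, not on individual columns. A column is therefore never invalid at rank $k'$; at worst it is unused, because any selection using it that violates the frozen prefix is already cut off by the frozen rows. The completeness direction of \Cref{thm:stela-correctness,thm:carla}, which uses only schemes built from $\Pi_d$, gives that keeping extra columns cannot exclude a scheme of $\mathcal F_{k'-1}$. Since the pool is only ever enlarged, the restricted feasible set at rank $k'$ contains the one built from any subpool. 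Monotone growth without discarding is therefore safe, and rank $k'$ can start from the full accumulated pool.

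The main obstacle is the second step. It requires that the rank-$k'$ block genuinely adds no term in $\xi$, for example that the \textsc{Carla} auxiliaries $u^{(j)}$ and the \textsc{Stela} exceedance variables $y^{(j)}$ link to the routing only through $\lambda(a,t)$. I would verify this directly by writing out the incidence of $\xi_{d,\pi}$ in the rank-$k'$ masters, in the same way as \Cref{tab:cg-incidence-carla}. A secondary concern is feasibility of the initial rank-$k'$ master. I would handle it by noting that the incumbent rank-$k$ integer solution uses only pooled columns and, by \Cref{thm:stela-correctness,thm:carla}, belongs to $\mathcal F_k\subseteq\mathcal F_{k'-1}$ when $k'=k+1$, so it is feasible there.
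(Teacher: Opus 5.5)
Your proposal is correct and follows the paper's own argument. The coefficients $\Gamma_{d,a,t}(\pi)$ and $\beta_{d,t}(\pi)$ are intrinsic to $\pi$, the rank block touches only $\lambda$ and the auxiliaries $(z,y)$ or $(z,u)$ and never $\xi_{d,\pi}$, and adding columns only enlarges the restricted master, with validity inherited from \Cref{prop:cg-valid}.

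One small caveat concerns your side remark on feasibility. The incumbent of a pool-restricted rank-$k$ MILP need not lie in $\mathcal F_k$ by \Cref{thm:stela-correctness,thm:carla}, since those results concern the compact models over all of $\mathcal F$. Feasibility at the next rank holds instead because $\bar\lambda_k$ is read off that incumbent by evaluation, as in \Cref{lem:feasible-k}.
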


\begin{proof} \[ \begin{array}{:c} \begin{minipage}{0.97\textwidth}
The coefficients $\Gamma_{d,a,t}(\pi)$ and $\beta_{d,t}(\pi)$ of
\eqref{eq:cg-Gamma}--\eqref{eq:cg-beta} are intrinsic to $\pi$ and
independent of the rank: they depend only on topology, precomputed split
coefficients, volumes and the chosen paths. The rank~$k$ affects only the
objective and the threshold/cumulative rows, which constrain the auxiliary
variables $(z,y)$ or $(z,u)$, not the admissible columns. Adding a column
only enlarges a restricted master's feasible region, so reuse never violates
feasibility; validity follows from \cref{prop:cg-valid}.
\end{minipage} \end{array} \]\end{proof}
\subsubsection{Dual formulations of rank $k$ master problem}

\paragraph{Dual of $\mathrm{RMP}^{\textsc{Stela}}_k$ and rank-$k$ pricing.}
\label{sec:cg-lex-dual-stela}

Two points must be settled before the block of~\eqref{eq:cg-Pk} can be
dualised. First, the global row $\lambda(a,t)\le\bar\lambda_1$ and the
inherited family exist only for $k\ge2$: their constants are \emph{produced}
by the earlier ranks, and $\bar\lambda_1$ also serves as the big-$M$ of the
current row. At $k=1$ the block reduces to the MLU encoding
$(\mathcal M^{\textsc{Stela}})$ of~\eqref{eq:cg-block-stela}. Second, the
exceedance variables must be \emph{relaxed} to $y^{(j)}_{a,t}\in[0,1]$: the LP
relaxation is what carries the duals, and the integrality of $y$ is recovered
only in the MILP closing the rank. With these conventions, for $k\ge2$,

\begin{equation}
\label{eq:cg-Pk}
\mathrm{RMP}^{\textsc{Stela}}_k:\quad
\begin{array}{llll}
\min & z &&\\[4pt]
\text{s.t.} & (\mathrm S),\ (\mathrm L),\ (\mathrm B)
  && \text{[core, unchanged]}\\[4pt]
& \lambda(a,t)\le\bar\lambda_1,
  & \forall(a,t)\in\calA_T, & [\nu^{\textsc{Stela}}_1(a,t)\le0]\quad(\mathrm G)\\[3pt]
& \lambda(a,t)-(\bar\lambda_1-\bar\lambda_j)\,y^{(j)}_{a,t}\le\bar\lambda_j,
  & \forall(a,t),\ 2\le j\le k-1, & [\nu^{\textsc{Stela}}_j(a,t)\le0]\quad(\mathrm F_j)\\[3pt]
& \sum_{(a,t)}y^{(j)}_{a,t}\le j-1,
  & 2\le j\le k-1, & [\rho^{\textsc{Stela}}_j\le0]\quad(\mathrm K_j)\\[3pt]
& \lambda(a,t)-z-\bar\lambda_1\,y^{(k)}_{a,t}\le0,
  & \forall(a,t)\in\calA_T, & [\mu^{\textsc{Stela}}_k(a,t)\le0]\quad(\mathrm C_k)\\[3pt]
& \sum_{(a,t)}y^{(k)}_{a,t}\le k-1,
  & & [\rho^{\textsc{Stela}}_k\le0]\quad(\mathrm K_k)\\[3pt]
& z\le\bar\lambda_{k-1},
  & & [\sigma^{\textsc{Stela}}\le0]\quad(\mathrm Z)\\[3pt]
& y^{(j)}_{a,t}\le1,
  & \forall(a,t),\ 2\le j\le k, & [\omega^{\textsc{Stela}}_j(a,t)\le0]\quad(\mathrm Y_j)\\[3pt]
& z\ge0,\quad y^{(j)}_{a,t}\ge0,\quad \xi_{d,\pi}\ge0,\quad
  \lambda(a,t)\ \text{free.} &&
\end{array}
\end{equation}

\begin{lemma}
\label{lem:feasible-k}
Suppose the pool is grown monotonically across ranks
(\cref{prop:cg-monotone}) and let $P^\star_{k-1}$ be the integer incumbent
returned at rank $k-1$, with
$\lambda^{\downarrow}_j(P^\star_{k-1})=\bar\lambda_j$ for all $j\le k-1$. Then
$\mathrm{RMP}^{\textsc{Stela}}_k$ over the current pool is feasible, and so is
$\mathrm{RMP}^{\textsc{Carla}}_k$. Both being bounded below by $0$, strong
duality applies at every rank.
\end{lemma}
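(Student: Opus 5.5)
The plan is to exhibit an explicit feasible point of each restricted master, built from the incumbent $P^\star_{k-1}$. Feasibility, combined with the lower bound $0$ on the objective, then gives a finite LP optimum, and LP strong duality follows. The first step is to place the incumbent in the pool. $P^\star_{k-1}$ was read from an integer solution of the rank-$(k-1)$ restricted master, so each of its trajectories $\pi_d^\star$ belongs to $\Pi_d^{\mathrm{pool}}$ at rank $k-1$. By \Cref{prop:cg-monotone} the pool only grows, so these columns are still present at rank $k$. Set $\xi_{d,\pi_d^\star}=1$ and all other selections to $0$, and define $\lambda(a,t)$ through $(\mathrm L)$. By \Cref{prop:cg-valid}, $(\mathrm S)$ holds, $\lambda(a,t)=\lambda(a,t;P^\star_{k-1})$, and $(\mathrm B)$ reduces to the budget constraint~\eqref{ctn:budget_3}. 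That constraint holds because $P^\star_{k-1}\in\mathcal F$.

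For $\mathrm{RMP}^{\textsc{Stela}}_k$, I would copy the construction from the upper-bound half of \Cref{thm:stela-correctness}, with $z=\bar\lambda_{k-1}$ in place of $\bar\lambda_k$, since $\bar\lambda_k$ is not yet known. The variables are set as follows:
\[
y^{(j)}_{a,t}=\mathbf 1\bigl[\lambda(a,t)>\bar\lambda_j\bigr]\ (2\le j\le k-1),\qquad y^{(k)}_{a,t}=\mathbf 1\bigl[\lambda(a,t)>\bar\lambda_{k-1}\bigr].
\]
Row $(\mathrm G)$ holds because $\lambda^{\downarrow}_1(P^\star_{k-1})=\bar\lambda_1$. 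Rows $(\mathrm F_j)$ and $(\mathrm K_j)$ hold by \Cref{prop:stela-threshold}, since $\lambda^{\downarrow}_j=\bar\lambda_j$. For the current rank, \Cref{prop:stela-threshold} gives that at most $k-2\le k-1$ loads exceed $\lambda^{\downarrow}_{k-1}=\bar\lambda_{k-1}$, so $(\mathrm K_k)$ holds. When $y^{(k)}_{a,t}=1$, $(\mathrm C_k)$ holds because every load is at most $\bar\lambda_1$. Rows $(\mathrm Z)$ and $(\mathrm Y_j)$ and the sign constraints are immediate. The edge case $k=2$ only has the $(\mathrm G)$ row and the current block, and the same argument applies.

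For $\mathrm{RMP}^{\textsc{Carla}}_k$, I would use the completeness half of \Cref{thm:carla}. That step only needs $P\in\mathcal F_{k-1}$, and the incumbent satisfies this since its prefix equals $(\bar\lambda_1,\dots,\bar\lambda_{k-1})$. Concretely, set $u^{(j)}_{a,t}=(\lambda(a,t)-\bar\lambda_j)_+$ for $j<k$, $z_k=\bar\lambda_{k-1}$ (or $\lambda^{\downarrow}_k$), and $u^{(k)}=(\lambda-z_k)_+$. By \Cref{lem:ot}, each cardinality row holds with equality, and the box constraints hold.

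The step I expect to need the most care is the very first one: justifying that the incumbent's columns are in the current pool. This requires that $P^\star_{k-1}$ was obtained as an integer solution over the pool, not from some separate heuristic. I would state this as part of the monotone-pool hypothesis. With feasibility established, each objective ($z\ge0$, or $k z_k+\sum u\ge0$) is bounded below by $0$. The LP relaxation is therefore feasible and bounded, so it has an optimum, and LP strong duality applies at every rank.
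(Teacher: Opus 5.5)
Your proposal is correct and follows essentially the same argument as the paper: the incumbent's trajectories stay in the pool by monotone growth. Their indicator, together with the natural exceedance flags $y^{(j)}_{a,t}=\mathbf 1[\lambda(a,t)>\bar\lambda_j]$ for \textsc{Stela} or excesses $u^{(j)}_{a,t}=(\lambda(a,t)-\bar\lambda_j)_+$ for \textsc{Carla}, gives a feasible point, and the lower bound $0$ then yields strong duality. The only difference is cosmetic: you take $z=\bar\lambda_{k-1}$ where the paper takes $z=\lambda^{\downarrow}_k(P^\star_{k-1})$, and both choices satisfy $(\mathrm C_k)$, $(\mathrm K_k)$ and $(\mathrm Z)$.
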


\begin{proof}
\[
\begin{array}{:c}
\begin{minipage}{0.97\textwidth}
The trajectories realising $P^\star_{k-1}$ belong to the pool, columns being
rank-independent and never discarded (\cref{prop:cg-monotone}). Take $\xi$ to
be their indicator, $\lambda$ the induced loads,
$y^{(j)}_{a,t}=\mathbf 1[\lambda(a,t)>\bar\lambda_j]$ and
$z=\lambda^{\downarrow}_k(P^\star_{k-1})$. Since
$\lambda^{\downarrow}_j=\bar\lambda_j$, at most $j-1$ loads exceed
$\bar\lambda_j$ by \Cref{prop:stela-threshold}, so every cardinality row holds;
the threshold rows hold by construction; the budget rows hold because
$P^\star_{k-1}$ is feasible; and
$z=\lambda^{\downarrow}_k\le\lambda^{\downarrow}_{k-1}=\bar\lambda_{k-1}$
gives~$(\mathrm Z)$. The same construction with
$u^{(j)}_{a,t}=(\lambda(a,t)-\bar\lambda_j)_+$ works for \textsc{Carla}.
\end{minipage}
\end{array}
\]
\end{proof}

\paragraph{Identification of the primal variables.}
As at rank~$1$, the dual has one relation per primal variable, determined by
its domain, its objective coefficient, and the rows in which it occurs. The
rank block adds three families of variables to the core, and it is their
relations that change from rank~$1$.

\begin{table}[ht]
\centering
\small
\renewcommand{\arraystretch}{1.15}
\begin{tabular}{@{}l c c >{\raggedright\arraybackslash}p{0.50\linewidth}@{}}
\toprule
Variable & Domain & Obj. & Rows and coefficients \\
\midrule
$\xi_{d,\pi}$
  & $\ge 0$ & $0$
  & $(\mathrm S)_d$: $+1$;\ \ $(\mathrm L)_{a,t}$: $-\Gamma_{d,a,t}(\pi)$;\ \
    $(\mathrm B)_t$: $+\beta_{d,t}(\pi)$;\ \ \emph{no row of the rank block}
    \\[2pt]
$\lambda(a,t)$
  & free & $0$
  & $(\mathrm L)_{a,t}$: $+1$;\ \ $(\mathrm G)_{a,t}$: $+1$;\ \
    $(\mathrm F_j)_{a,t}$: $+1$ for $2\le j\le k-1$;\ \
    $(\mathrm C_k)_{a,t}$: $+1$ \\[2pt]
$z$
  & $\ge 0$ & $1$
  & $(\mathrm C_k)_{a,t}$: $-1$, for every $(a,t)$;\ \ $(\mathrm Z)$: $+1$
    \\[2pt]
$y^{(k)}_{a,t}$
  & $[0,1]$ & $0$
  & $(\mathrm C_k)_{a,t}$: $-\bar\lambda_1$;\ \ $(\mathrm K_k)$: $+1$;\ \
    $(\mathrm Y_k)$: $+1$ \\[2pt]
$y^{(j)}_{a,t}$, $j<k$
  & $[0,1]$ & $0$
  & $(\mathrm F_j)_{a,t}$: $-(\bar\lambda_1-\bar\lambda_j)$;\ \
    $(\mathrm K_j)$: $+1$;\ \ $(\mathrm Y_j)$: $+1$ \\
\bottomrule
\end{tabular}
\caption{\small Incidence of the primal variables of
$\mathrm{RMP}^{\textsc{Stela}}_k$~\eqref{eq:cg-Pk}. Each variable occurs
in no row other than those shown. The decisive line is the first: the rank
block reaches $\xi_{d,\pi}$ only through the free variables $\lambda(a,t)$.}
\label{tab:cg-incidence-stela-k}
\end{table}

\paragraph{Dual derivation.}
We treat each variable in turn. Two of the five relations differ from their
rank-1 counterparts, and it is worth saying which and why.

\emph{Variable $\lambda(a,t)$.} Free, objective coefficient $0$, occurring
with coefficient $+1$ in the load-assembly row of its own pair and in
\emph{every} load-bounding row of its own pair: one global, $k-2$ inherited,
one current. Its dual relation is therefore an equality,
\begin{equation}\label{eq:cg-stela-stationarity-k}
\eta^{\textsc{Stela}}_{a,t}
+\mu^{\textsc{Stela}}_k(a,t)
+\sum_{j<k}\nu^{\textsc{Stela}}_j(a,t)=0 .
\end{equation}
This is the only structural change from rank~$1$, where the sum was empty.

\emph{Variable $z$.} Objective coefficient $1$, occurring with coefficient
$-1$ in the $|\calA_T|$ current rows and with $+1$ in $(\mathrm Z)$. It is
\emph{sign-constrained}, $z\ge0$, so its dual relation is an
\textbf{inequality}, not the equality one would get from a free variable:
\[
\sum_{(a,t)\in\calA_T}\mu^{\textsc{Stela}}_k(a,t)\cdot(-1)
+\sigma^{\textsc{Stela}}\cdot(+1)\ \le\ 1,
\qquad\text{i.e.}\qquad
\sum_{(a,t)\in\calA_T}\bigl(-\mu^{\textsc{Stela}}_k(a,t)\bigr)
\ \le\ 1-\sigma^{\textsc{Stela}} .
\]
Complementary slackness on the sign constraint gives equality when
$z^\star>0$, and complementary slackness on $(\mathrm Z)$ gives
$\sigma^{\textsc{Stela}}=0$ when $z^\star<\bar\lambda_{k-1}$. In the generic case
$0<z^\star<\bar\lambda_{k-1}$ one gets exactly
$\sum_{(a,t)}(-\mu^{\textsc{Stela}}_k(a,t))=1$.

\emph{Variable $y^{(k)}_{a,t}$.} Objective coefficient $0$, nonnegative,
occurring in $(\mathrm C_k)_{a,t}$ with coefficient $-\bar\lambda_1$, in
$(\mathrm K_k)$ with $+1$ and in its own upper bound with $+1$:
\[
-\bar\lambda_1\,\mu^{\textsc{Stela}}_k(a,t)+\rho^{\textsc{Stela}}_k
+\omega^{\textsc{Stela}}_k(a,t)\le0
\quad\Longleftrightarrow\quad
\bar\lambda_1\bigl(-\mu^{\textsc{Stela}}_k(a,t)\bigr)
\le-\rho^{\textsc{Stela}}_k-\omega^{\textsc{Stela}}_k(a,t).
\]
This relation is worth reading: the load pressure at a pair is bounded by the
price of the exceedance budget divided by the big-$M$. A loose $\bar\lambda_1$
therefore \emph{flattens} the load duals, which is the dual manifestation of
the weakness of a big-$M$ relaxation.

\emph{Variable $y^{(j)}_{a,t}$, $j<k$.} The same computation with the
coefficient $-(\bar\lambda_1-\bar\lambda_j)$ in the inherited row.

\emph{Variable $\xi_{d,\pi}$.} Nonnegative, objective coefficient $0$, and it
occurs in \textbf{no row of the rank block}: that block constrains only
$\lambda$, $z$ and the $y$'s. Its dual relation is therefore unchanged from
rank~$1$,
\[
\alpha^{\textsc{Stela}}_d
-\sum_{(a,t)\in\calA_T}\eta^{\textsc{Stela}}_{a,t}\,\Gamma_{d,a,t}(\pi)
+\sum_{t\in T^*}\psi^{\textsc{Stela}}_t\,\beta_{d,t}(\pi)\ \le\ 0,
\qquad \forall d\in D,\ \pi\in\Pi_d^{\mathrm{pool}},
\]
which is exactly why the cardinality duals $\rho^{\textsc{Stela}}_j$ and the box
duals $\sigma^{\textsc{Stela}},\omega^{\textsc{Stela}}_j$ never enter the reduced
cost.

\paragraph{Dual objective at rank $k$.}
\[
\begin{aligned}
\max\quad
& \sum_{d\in D}\alpha^{\textsc{Stela}}_d
 + \sum_{t\in T^*}\kappa(t)\,\psi^{\textsc{Stela}}_t
 + \bar\lambda_1\!\sum_{(a,t)}
   \nu^{\textsc{Stela}}_1(a,t)
\\
&\quad
 + \sum_{j=2}^{k-1}\bar\lambda_j
   \sum_{(a,t)}\nu^{\textsc{Stela}}_j(a,t)
 + \sum_{j=2}^{k}(j-1)\,\rho^{\textsc{Stela}}_j
\\
&\quad
 + \bar\lambda_{k-1}\sigma^{\textsc{Stela}}
 + \sum_{j=2}^{k}\sum_{(a,t)}
   \omega^{\textsc{Stela}}_j(a,t).
\end{aligned}
\]
Every term added to the rank-1 objective is $\le0$, consistently with the fact
that the rank-$k$ master has a larger optimal value than the pure MLU master.

\begin{proposition}
\label{prop:cg-stratified-stela}
Define the \emph{stratified dual} of $\mathrm{RMP}^{\textsc{Stela}}_k$
\begin{equation}\label{eq:cg-stratified-stela}
\bar\mu^{\textsc{Stela}}_k(a,t)
\;:=\;\mu^{\textsc{Stela}}_k(a,t)+\sum_{j<k}\nu^{\textsc{Stela}}_j(a,t)\;\le\;0 .
\end{equation}
Then $\eta^{\textsc{Stela}}_{a,t}=-\bar\mu^{\textsc{Stela}}_k(a,t)\ge0$
by~\eqref{eq:cg-stela-stationarity-k}, and the total dual mass satisfies
\begin{equation}
\label{eq:mass-stela-k}
\sum_{(a,t)\in\calA_T}\eta^{\textsc{Stela}}_{a,t}
=\underbrace{\sum_{(a,t)}\bigl(-\mu^{\textsc{Stela}}_k(a,t)\bigr)}
   _{\le\ 1-\sigma^{\textsc{Stela}}}
+\sum_{j<k}\sum_{(a,t)}\bigl(-\nu^{\textsc{Stela}}_j(a,t)\bigr).
\end{equation}
At an optimal primal--dual pair with $0<z^\star<\bar\lambda_{k-1}$ one has
$\sigma^{\textsc{Stela}}=0$ and $\sum_{(a,t)}(-\mu^{\textsc{Stela}}_k(a,t))=1$, hence
\begin{equation}
\label{eq:mass-stela-k-exact}
\sum_{(a,t)\in\calA_T}\eta^{\textsc{Stela}}_{a,t}
\;=\;1+\sum_{j<k}\bigl\lVert\nu^{\textsc{Stela}}_j\bigr\rVert_1\ \ \ge\ 1,
\end{equation}
with equality to $1$ if and only if every inherited dual vanishes. In
particular $\eta^{\textsc{Stela}}\notin\Delta_{\calA_T}$ in general as soon as
$k\ge2$: the simplex property established at rank~$1$
(\Cref{prop:cg-simplex-stela}) is a rank-1 property.
\end{proposition}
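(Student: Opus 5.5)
The argument is a bookkeeping exercise on the dual relations already read off Table~\ref{tab:cg-incidence-stela-k}, carried out in three steps.

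First, the sign claim. Every load-bounding row of $\mathrm{RMP}^{\textsc{Stela}}_k$ is a ``$\le$'' row of a minimisation, so $\mu^{\textsc{Stela}}_k(a,t)\le0$ and $\nu^{\textsc{Stela}}_j(a,t)\le0$ for all $j<k$; this includes the global row $(\mathrm G)$ with dual $\nu^{\textsc{Stela}}_1$. Their sum $\bar\mu^{\textsc{Stela}}_k(a,t)$ is therefore nonpositive. The stationarity equality~\eqref{eq:cg-stela-stationarity-k} for the free variable $\lambda(a,t)$ then rewrites exactly as $\eta^{\textsc{Stela}}_{a,t}=-\bar\mu^{\textsc{Stela}}_k(a,t)\ge0$. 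This argument mirrors the proof of \Cref{prop:cg-simplex-stela}, the only change being that the rank-$1$ single dual $\mu$ is replaced by the stratified sum.

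Second, the mass identity~\eqref{eq:mass-stela-k}. I sum the previous identity over $\calA_T$ and split the sum into the current-row part and the inherited part. The bound $\sum_{(a,t)}(-\mu^{\textsc{Stela}}_k(a,t))\le1-\sigma^{\textsc{Stela}}$ is precisely the dual inequality attached to the sign-constrained variable $z\ge0$. That inequality collects coefficient $-1$ from each $(\mathrm C_k)$ row and $+1$ from $(\mathrm Z)$, and is capped by the objective coefficient $1$.

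Third, the exact case. Assume an optimal primal--dual pair with $0<z^\star<\bar\lambda_{k-1}$, which exists by strong duality (\Cref{lem:feasible-k}). Complementary slackness on $(\mathrm Z)$ gives $\sigma^{\textsc{Stela}}=0$, because the row is slack. Complementary slackness on the variable $z$ turns its dual inequality into an equality, because $z^\star>0$. Together these yield $\sum(-\mu^{\textsc{Stela}}_k)=1$. Substituting into~\eqref{eq:mass-stela-k}, and writing each inherited sum as $\lVert\nu^{\textsc{Stela}}_j\rVert_1$ (valid since every component is nonpositive), gives~\eqref{eq:mass-stela-k-exact}. The ``$\ge1$'' follows at once. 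Equality to $1$ holds if and only if every $\lVert\nu_j\rVert_1=0$, i.e.\ every inherited dual vanishes identically; this is immediate because a sum of nonnegative terms is zero exactly when each term is.

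The delicate point is the final ``in general'' assertion that $\eta^{\textsc{Stela}}\notin\Delta_{\calA_T}$. It is not a universal claim, so I would justify it by noting that any binding inherited row with a nonzero dual pushes the mass strictly above $1$. For that I would point to the rank-$2$ situation: the global row $(\mathrm G)$ is tight at the pairs carrying $\bar\lambda_1$, and complementary slackness allows, and generically forces, $\nu^{\textsc{Stela}}_1\neq0$ there. A fully rigorous witness would need an explicit small instance, which I would only sketch rather than construct.
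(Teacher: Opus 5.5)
Your proof is correct and follows the paper's argument: nonpositivity of $\bar\mu^{\textsc{Stela}}_k$ from the ``$\le$'' rows (including $(\mathrm G)$ through $\nu^{\textsc{Stela}}_1$), stationarity of the free variable $\lambda(a,t)$, summation over $\calA_T$, and the dual relation of $z$ with complementary slackness on $z^\star>0$ and on the slack row $(\mathrm Z)$. One remark is loose: complementary slackness cannot ``generically force'' $\nu^{\textsc{Stela}}_1\neq0$, since it only forces duals of slack rows to vanish; however, the paper also leaves the ``in general'' clause as an unproved observation, so your proof gives up nothing relative to it.
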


\begin{proof}
\[
\begin{array}{:c}
\begin{minipage}{0.97\textwidth}
Nonpositivity of $\bar\mu^{\textsc{Stela}}_k$ is immediate: it is a finite sum of
duals of ``$\le$'' rows of a minimisation, each $\le0$. The identity
$\eta^{\textsc{Stela}}_{a,t}=-\bar\mu^{\textsc{Stela}}_k(a,t)$
is~\eqref{eq:cg-stela-stationarity-k} rearranged. Summing it over
$(a,t)\in\calA_T$ and inserting the dual relation of $z$ derived above
gives~\eqref{eq:mass-stela-k}, and~\eqref{eq:mass-stela-k-exact} in the generic
case, using
$\lVert\nu^{\textsc{Stela}}_j\rVert_1
=\sum_{(a,t)}\bigl(-\nu^{\textsc{Stela}}_j(a,t)\bigr)$ since
$\nu^{\textsc{Stela}}_j\le0$.

Note that the sign of $\bar\mu^{\textsc{Stela}}_k$ is not an assumption: it is
forced by the sign of every summand, itself forced by the direction of the
corresponding inequality. Had any load-bounding row been written as ``$\ge$'',
the sign would flip and $\theta^{\textsc{Stela}}_{d,k}$ could become negative,
destroying the shortest-path structure of the pricing.
\end{minipage}
\end{array}
\]
\end{proof}

\begin{proposition}
\label{prop:cg-rc-stela-k}
For every $\pi\in\Pi_d$, with
$\gamma^{\textsc{Stela}}_t:=-\psi^{\textsc{Stela}}_t\ge0$,
\begin{equation}\label{eq:cg-rc-stela-k}
\rc^{\textsc{Stela}}_{d,k}(\pi)
\;=\;-\alpha^{\textsc{Stela}}_d
+\sum_{(a,t)\in\calA_T}
  \bigl(-\bar\mu^{\textsc{Stela}}_k(a,t)\bigr)\,\Gamma_{d,a,t}(\pi)
+\sum_{t\in T^*}\gamma^{\textsc{Stela}}_t\,\beta_{d,t}(\pi).
\end{equation}
The effective segment cost at rank~$k$ is
\begin{equation}\label{eq:cg-theta-stela-k}
\theta^{\textsc{Stela}}_{d,k}(i,j,t)
\;:=\;\sum_{a\,:\,(a,t)\in\calA_T}
  \bigl(-\bar\mu^{\textsc{Stela}}_k(a,t)\bigr)
  \,\frac{\nu(d,t)}{c(a)}\,r(i,j,a,t)
\;\ge\;0,
\end{equation}
and the rank-$k$ pricing subproblem is
\begin{equation}\label{eq:cg-SP-stela-k}
(\mathrm{SP}^{\textsc{Stela}}_{d,k})\qquad
\operatorname{val}(\mathrm{SP}^{\textsc{Stela}}_{d,k})
=\min_{\pi\in\Pi_d}
\Bigl\{\sum_{t\in T}\theta^{\textsc{Stela}}_{d,k}\bigl(p^{(t)},t\bigr)
+\sum_{t\in T^*}\gamma^{\textsc{Stela}}_t\,
  \dist\bigl(p^{(t-1)},p^{(t)}\bigr)\Bigr\}.
\end{equation}
At $k=1$ the sum in~\eqref{eq:cg-stratified-stela} is empty,
$\bar\mu^{\textsc{Stela}}_1=\mu^{\textsc{Stela}}_1$, and
\eqref{eq:cg-rc-stela-k}--\eqref{eq:cg-SP-stela-k} reduce to the rank-1
reduced cost~\eqref{eq:cg-rc-stela} and pricing
problem~\eqref{eq:cg-SP-generic-path}.
\end{proposition}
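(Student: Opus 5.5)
The plan is to compute the reduced cost of $\xi_{d,\pi}$ in $\mathrm{RMP}^{\textsc{Stela}}_k$ exactly as at rank~$1$ (\Cref{prop:cg-rc-stela}), and then to use the stratified stationarity relation to eliminate $\eta^{\textsc{Stela}}$. The starting point is \Cref{tab:cg-incidence-stela-k}. It shows that $\xi_{d,\pi}$ has zero objective coefficient and occurs only in the core rows $(\mathrm S)_d$, $(\mathrm L)_{a,t}$ and $(\mathrm B)_t$, with coefficients $+1$, $-\Gamma_{d,a,t}(\pi)$ and $+\beta_{d,t}(\pi)$. Its dual activity is therefore the left-hand side of the unchanged dual inequality derived for $\xi_{d,\pi}$ above. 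The reduced cost is zero minus this activity, which gives
\[
\rc^{\textsc{Stela}}_{d,k}(\pi)=-\alpha^{\textsc{Stela}}_d+\sum_{(a,t)}\eta^{\textsc{Stela}}_{a,t}\Gamma_{d,a,t}(\pi)+\sum_{t\in T^*}\gamma^{\textsc{Stela}}_t\beta_{d,t}(\pi),
\]
where $\gamma^{\textsc{Stela}}_t=-\psi^{\textsc{Stela}}_t\ge0$ because $(\mathrm B)$ is a ``$\le$'' row of a minimisation. As in \Cref{prop:cg-rc-stela}, the right-hand side is taken as the definition of the reduced cost on all of $\Pi_d$, pooled or not.

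Next, I substitute $\eta^{\textsc{Stela}}_{a,t}=-\bar\mu^{\textsc{Stela}}_k(a,t)$, which is the stationarity equation~\eqref{eq:cg-stela-stationarity-k} for the free variable $\lambda(a,t)$ rewritten through~\eqref{eq:cg-stratified-stela}. This yields~\eqref{eq:cg-rc-stela-k}. The point to stress is that the rank block (the $y$, $z$ rows and their duals $\rho$, $\sigma$, $\omega$) never shares a row with $\xi_{d,\pi}$. Its entire influence on the reduced cost is therefore carried by $\lambda$ through the stratified dual.

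To obtain the segment costs and the pricing problem, I reproduce the proof of \Cref{prop:cg-theta}. I insert~\eqref{eq:cg-Gamma}, exchange the finite sums over $t$, $a$ and $(i,j)$, and collect the coefficient of $\delta(p^{(t)},ij)$. That coefficient is exactly $\theta^{\textsc{Stela}}_{d,k}(i,j,t)$. Because $\delta\in\{0,1\}$ and segment paths are trails, the load term becomes $\sum_t\theta^{\textsc{Stela}}_{d,k}(p^{(t)},t)$. The budget term becomes $\sum_{t\in T^*}\gamma^{\textsc{Stela}}_t\dist(p^{(t-1)},p^{(t)})$ by~\eqref{eq:cg-beta}. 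Nonnegativity of $\theta^{\textsc{Stela}}_{d,k}$ follows from $-\bar\mu^{\textsc{Stela}}_k\ge0$ (\Cref{prop:cg-stratified-stela}), $\nu\ge0$, $c>0$ and $r\ge0$. The constant $-\alpha^{\textsc{Stela}}_d$ is independent of $\pi$, since every column of $d$ has coefficient $1$ in $(\mathrm S)_d$. Minimizing over $\Pi_d$ therefore gives~\eqref{eq:cg-SP-stela-k}, as in \Cref{prop:cg-cert}.

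For the case $k=1$, the inherited sum over $j<k$ is empty, so $\bar\mu^{\textsc{Stela}}_1=\mu^{\textsc{Stela}}_1$. The rank-$1$ stationarity relation~\eqref{eq:cg-dual-stela-lambda} then gives $\eta^{\textsc{Stela}}=-\mu^{\textsc{Stela}}_1$, and the formulas collapse to~\eqref{eq:cg-rc-stela} and~\eqref{eq:cg-SP-generic-path}.

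The main obstacle is not algebraic but one of bookkeeping. I must verify that the sign of every load-bounding row $(\mathrm G)$, $(\mathrm F_j)$ and $(\mathrm C_k)$ makes each dual nonpositive, so that $\theta^{\textsc{Stela}}_{d,k}\ge0$ and the shortest-path structure of the pricing survives. I also need to check that the $(\mathrm G)$ dual $\nu^{\textsc{Stela}}_1$ is included in the sum over $j<k$ in~\eqref{eq:cg-stratified-stela}, which is the reading of the $j=1$ index used there.
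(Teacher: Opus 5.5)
Your proposal is correct and follows essentially the same route as the paper. The paper likewise uses zero direct cost and a dual activity confined to the core rows $(\mathrm S)$, $(\mathrm L)$, $(\mathrm B)$, substitutes $\eta^{\textsc{Stela}}=-\bar\mu^{\textsc{Stela}}_k$ from the stationarity of the free variable $\lambda(a,t)$, and repeats the exchange of summations of \Cref{prop:cg-theta} using only $-\bar\mu^{\textsc{Stela}}_k\ge0$ and $\gamma^{\textsc{Stela}}\ge0$; your reading of the $(\mathrm G)$ dual as the $j=1$ term $\nu^{\textsc{Stela}}_1$ matches the stationarity relation~\eqref{eq:cg-stela-stationarity-k}.
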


\begin{proof}
\[
\begin{array}{:c}
\begin{minipage}{0.97\textwidth}
The reduced cost of a column is its direct objective cost minus its dual
activity. The direct cost is $0$, the objective $\min z$ containing no
selection variable. The activity involves only the duals of the rows
containing $\xi_{d,\pi}$, namely $\alpha^{\textsc{Stela}}_d$,
$\eta^{\textsc{Stela}}$ and $\psi^{\textsc{Stela}}$: the duals
$\mu^{\textsc{Stela}}_k,\nu^{\textsc{Stela}}_j,\rho^{\textsc{Stela}}_j,
\sigma^{\textsc{Stela}},\omega^{\textsc{Stela}}_j$ multiply coefficients of
$\xi_{d,\pi}$ that are \emph{zero}. Hence
$$
\rc^{\textsc{Stela}}_{d,k}(\pi)
=0-\Bigl[\alpha^{\textsc{Stela}}_d
  -\sum_{(a,t)}\eta^{\textsc{Stela}}_{a,t}\,\Gamma_{d,a,t}(\pi)
  +\sum_{t\in T^*}\psi^{\textsc{Stela}}_t\,\beta_{d,t}(\pi)\Bigr],
$$
and substituting $\eta^{\textsc{Stela}}=-\bar\mu^{\textsc{Stela}}_k$
(\Cref{prop:cg-stratified-stela}) and
$\gamma^{\textsc{Stela}}_t=-\psi^{\textsc{Stela}}_t$
gives~\eqref{eq:cg-rc-stela-k}. The rank-$k$ duals thus enter \emph{only}
through this substitution; the functional form is unchanged.

For the pricing, insert $\Gamma_{d,a,t}(\pi)$ from~\eqref{eq:cg-Gamma} and
exchange the order of summation, legitimate because all sums are finite:
$$
\begin{aligned}
\sum_{(a,t)\in\calA_T}\eta^{\textsc{Stela}}_{a,t}\Gamma_{d,a,t}(\pi)
&=\sum_{t\in T}\ \sum_{a:(a,t)\in\calA_T}\eta^{\textsc{Stela}}_{a,t}\,
  \frac{\nu(d,t)}{c(a)}\sum_{i,j\in V}r(i,j,a,t)\,\delta\bigl(p^{(t)},ij\bigr)\\
&=\sum_{t\in T}\ \sum_{i,j\in V}\delta\bigl(p^{(t)},ij\bigr)\,
  \theta^{\textsc{Stela}}_{d,k}(i,j,t)\\
&=\sum_{t\in T}\sum_{(i,j)\in\segs(p^{(t)})}\theta^{\textsc{Stela}}_{d,k}(i,j,t),
\end{aligned}
$$
the last equality using $\delta(p,ij)\in\{0,1\}$ with
$\delta(p,ij)=1\Leftrightarrow(i,j)\in\segs(p)$. Nonnegativity of
$\theta^{\textsc{Stela}}_{d,k}$ follows from
$-\bar\mu^{\textsc{Stela}}_k\ge0$, $\nu(d,t)\ge0$, $c(a)>0$ and
$r(i,j,a,t)\ge0$. Separating the constant $-\alpha^{\textsc{Stela}}_d$
gives~\eqref{eq:cg-SP-stela-k}.

Only $\eta^{\textsc{Stela}}\ge0$ and $\gamma^{\textsc{Stela}}\ge0$ were used; the
total mass of $\eta^{\textsc{Stela}}$ played no role. This is what licenses
running the rank-1 oracles unchanged at rank~$k$, despite
\Cref{prop:cg-stratified-stela}.
\end{minipage}
\end{array}
\]
\end{proof}

\subsubsection{Dual of $\mathrm{RMP}^{\textsc{Carla}}_k$ and rank-$k$ pricing.}
\label{eq:cg-carla-Pk}

Recall from \cref{sec:milp} that \textsc{Carla} solves rank~$k$ by
minimising the $k$-th cumulative ordered load
$\Theta_k=\sum_{r\le k}\lambda^{\downarrow}_r$ through the Ogryczak--Tamir
representation, using continuous auxiliaries $u^{(j)}_{a,t}\ge0$ and
freezing each inherited rank $j<k$ at its certified cumulative value.
Setting
\begin{equation}
\label{eq:cg-carla-cj}
\bar\Theta_j=\sum_{r\le j}\bar\lambda_{r},
\qquad
c_j=\bar\Theta_j-j\,\bar\lambda_{j}
=\sum_{r<j}\bigl(\bar\lambda_{r}-\bar\lambda_{j}\bigr)\ \ge0,
\quad 1\le j<k,
\end{equation}
\begin{equation}
\label{eq:rmp-carla-k}
\mathrm{RMP}^{\textsc{Carla}}_k:\quad
\begin{array}{llll}
\min & k\,z_k+\sum_{(a,t)\in\calA_T}u^{(k)}_{a,t} &&\\[4pt]
\text{s.t.} & (\mathrm S),\ (\mathrm L),\ (\mathrm B)
  && \text{[core, unchanged]}\\[4pt]
& \lambda(a,t)-z_k-u^{(k)}_{a,t}\le0,
  & \forall(a,t)\in\calA_T, & [\mu^{\textsc{Carla}}_k(a,t)\le0]\quad(\mathrm C_k)\\[3pt]
& \lambda(a,t)-u^{(j)}_{a,t}\le\bar\lambda_j,
  & \forall(a,t),\ j<k, & [\nu^{\textsc{Carla}}_j(a,t)\le0]\quad(\mathrm F_j)\\[3pt]
& \sum_{(a,t)}u^{(j)}_{a,t}\le c_j,
  & j<k, & [\rho^{\textsc{Carla}}_j\le0]\quad(\mathrm D_j)\\[3pt]
& z_k\le\bar\lambda_{k-1},
  & & [\sigma^{\textsc{Carla}}\le0]\quad(\mathrm Z)\\[3pt]
& z_k\ge0,\ u^{(k)}_{a,t}\ge0,\ u^{(j)}_{a,t}\ge0,\
  \xi_{d,\pi}\ge0,\ \lambda(a,t)\ \text{free,} &&
\end{array}
\end{equation}
with $c_j=\bar\Theta_j-j\bar\lambda_j
=\sum_{r<j}(\bar\lambda_r-\bar\lambda_j)\ge0$ as in~\eqref{eq:cg-carla-cj}.

\paragraph{Identification of the primal variables.}

\begin{table}[ht]
\centering
\small
\renewcommand{\arraystretch}{1.15}
\begin{tabular}{@{}l c c >{\raggedright\arraybackslash}p{0.50\linewidth}@{}}
\toprule
Variable & Domain & Obj. & Rows and coefficients \\
\midrule
$\xi_{d,\pi}$
  & $\ge 0$ & $0$
  & $(\mathrm S)_d$: $+1$;\ \ $(\mathrm L)_{a,t}$: $-\Gamma_{d,a,t}(\pi)$;\ \
    $(\mathrm B)_t$: $+\beta_{d,t}(\pi)$;\ \ \emph{no row of the rank block}
    \\[2pt]
$\lambda(a,t)$
  & free & $0$
  & $(\mathrm L)_{a,t}$: $+1$;\ \ $(\mathrm C_k)_{a,t}$: $+1$;\ \
    $(\mathrm F_j)_{a,t}$: $+1$ for $j<k$ \\[2pt]
$z_k$
  & $\ge 0$ & $k$
  & $(\mathrm C_k)_{a,t}$: $-1$, for every $(a,t)$;\ \ $(\mathrm Z)$: $+1$
    \\[2pt]
$u^{(k)}_{a,t}$
  & $\ge 0$ & $1$
  & $(\mathrm C_k)_{a,t}$: $-1$ \\[2pt]
$u^{(j)}_{a,t}$, $j<k$
  & $\ge 0$ & $0$
  & $(\mathrm F_j)_{a,t}$: $-1$;\ \ $(\mathrm D_j)$: $+1$ \\
\bottomrule
\end{tabular}
\caption{\small Incidence of the primal variables of
$\mathrm{RMP}^{\textsc{Carla}}_k$~\eqref{eq:rmp-carla-k}. Compare with
\Cref{tab:cg-incidence-stela-k}: the objective coefficient of the threshold is
$k$, not $1$.}
\label{tab:cg-incidence-carla-k}
\end{table}

Three lines of \Cref{tab:cg-incidence-carla-k} are where \textsc{Carla}
genuinely departs from \textsc{Stela}, and where transposing the
\textsc{Stela} derivation would be wrong.

\begin{enumerate}[label=(\arabic*),itemsep=3pt]
\item \emph{The objective coefficient of the threshold is $k$, not $1$}, the
      rank-$k$ objective being $k z_k+\sum u^{(k)}$. This single number changes
      the normalisation from $\sum(-\mu_k)\le1$ to $\sum(-\mu_k)\le k$.
\item \emph{The current excesses $u^{(k)}$ carry objective coefficient $1$},
      producing the box $-\mu^{\textsc{Carla}}_k(a,t)\le1$ which, combined
      with~(1), makes $-\mu^{\textsc{Carla}}_k$ a top-$k$ selector
      (\Cref{prop:topk-selector}). \textsc{Stela} has no analogue.
\item \emph{The inherited excesses $u^{(j)}$ carry objective coefficient $0$}
      and are bounded only through the cumulative cardinality row, producing
      $0\le-\nu^{\textsc{Carla}}_j(a,t)\le-\rho^{\textsc{Carla}}_j$. The
      \textsc{Stela} analogue involves the big-$M$
      $(\bar\lambda_1-\bar\lambda_j)$ and is scaled by it.
\end{enumerate}

\paragraph{Dual derivation.}

\emph{Variable $\lambda(a,t)$.} Free, objective coefficient $0$, occurring
with coefficient $+1$ in $(\mathrm L)_{a,t}$, in $(\mathrm C_k)_{a,t}$ and in
each $(\mathrm F_j)_{a,t}$. Equality relation:
\begin{equation}\label{eq:cg-carla-stationarity-k}
\eta^{\textsc{Carla}}_{a,t}
+\mu^{\textsc{Carla}}_k(a,t)
+\sum_{j<k}\nu^{\textsc{Carla}}_j(a,t)=0 .
\end{equation}

\emph{Variable $z_k$.} \textbf{Objective coefficient $k$}; sign-constrained
($z_k\ge0$); coefficient $-1$ in every $(\mathrm C_k)_{a,t}$ and $+1$ in
$(\mathrm Z)$. Hence the inequality
\[
\sum_{(a,t)\in\calA_T}\bigl(-\mu^{\textsc{Carla}}_k(a,t)\bigr)
+\sigma^{\textsc{Carla}}\ \le\ k,
\]
with equality when $z_k^\star>0$, and $\sigma^{\textsc{Carla}}=0$ when
$z_k^\star<\bar\lambda_{k-1}$.

\emph{Variable $u^{(k)}_{a,t}$.} Nonnegative, objective coefficient $1$,
occurring only in $(\mathrm C_k)_{a,t}$ with coefficient $-1$:
$-\mu^{\textsc{Carla}}_k(a,t)\le1$.

\emph{Variable $u^{(j)}_{a,t}$, $j<k$.} Nonnegative, objective coefficient
$0$, occurring in $(\mathrm F_j)_{a,t}$ with coefficient $-1$ and in
$(\mathrm D_j)$ with coefficient $+1$:
$-\nu^{\textsc{Carla}}_j(a,t)+\rho^{\textsc{Carla}}_j\le0$, i.e.\
$-\nu^{\textsc{Carla}}_j(a,t)\le-\rho^{\textsc{Carla}}_j$.

\emph{Variable $\xi_{d,\pi}$.} It occurs in no row of the rank block: not in
the current rows (which contain $\lambda,z_k,u^{(k)}$), not in the inherited
rows (which contain $\lambda,u^{(j)}$), not in the cumulative cardinality rows
(which contain $u^{(j)}$ only), not in the boxes. Its dual relation is
therefore that of rank~$1$.

\paragraph{Dual objective at rank $k$.}
\[
\max\ \sum_{d\in D}\alpha^{\textsc{Carla}}_d
+\sum_{t\in T^*}\kappa(t)\,\psi^{\textsc{Carla}}_t
+\sum_{j<k}\bar\lambda_j\!\!\sum_{(a,t)}\!\!\nu^{\textsc{Carla}}_j(a,t)
+\sum_{j<k}c_j\,\rho^{\textsc{Carla}}_j
+\bar\lambda_{k-1}\,\sigma^{\textsc{Carla}} .
\]

\begin{proposition}[stratified dual and normalisation, \textsc{Carla}]
\label{prop:cg-stratified-carla}
Define the \emph{stratified dual}
\begin{equation}\label{eq:cg-stratified-carla}
\bar\mu^{\textsc{Carla}}_k(a,t)
\;:=\;\mu^{\textsc{Carla}}_k(a,t)+\sum_{j<k}\nu^{\textsc{Carla}}_j(a,t)\;\le\;0 .
\end{equation}
Then $\eta^{\textsc{Carla}}_{a,t}=-\bar\mu^{\textsc{Carla}}_k(a,t)\ge0$
by~\eqref{eq:cg-carla-stationarity-k}, and
\begin{equation}
\label{eq:mass-carla-k}
\sum_{(a,t)}\bigl(-\mu^{\textsc{Carla}}_k(a,t)\bigr)\ \le\ k-\sigma^{\textsc{Carla}},
\qquad
-\mu^{\textsc{Carla}}_k(a,t)\ \le\ 1\quad\forall(a,t),
\qquad
0\le-\nu^{\textsc{Carla}}_j(a,t)\le-\rho^{\textsc{Carla}}_j .
\end{equation}
At an optimal pair with $0<z_k^\star<\bar\lambda_{k-1}$,
\begin{equation}
\label{eq:mass-carla-k-exact}
\sum_{(a,t)\in\calA_T}\eta^{\textsc{Carla}}_{a,t}
\;=\;k+\sum_{j<k}\bigl\lVert\nu^{\textsc{Carla}}_j\bigr\rVert_1\ \ \ge\ k .
\end{equation}
\end{proposition}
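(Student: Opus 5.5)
The argument mirrors the proof of \Cref{prop:cg-stratified-stela}: every claim follows from the dual relations of the individual primal variables of $\mathrm{RMP}^{\textsc{Carla}}_k$ listed in \Cref{tab:cg-incidence-carla-k}, together with the sign conventions of the rows.

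\emph{Sign and identity.} The rows $(\mathrm C_k)$ and $(\mathrm F_j)$ are ``$\le$'' rows of a minimisation, so $\mu^{\textsc{Carla}}_k\le0$ and $\nu^{\textsc{Carla}}_j\le0$. Their finite sum $\bar\mu^{\textsc{Carla}}_k$ is therefore nonpositive. Rearranging the stationarity equality~\eqref{eq:cg-carla-stationarity-k} of the free variable $\lambda(a,t)$ gives $\eta^{\textsc{Carla}}_{a,t}=-\bar\mu^{\textsc{Carla}}_k(a,t)\ge0$.

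\emph{The three inequalities~\eqref{eq:mass-carla-k}.} Each one is a single dual relation from the derivation just above.
\begin{itemize}
\item The relation of $z_k$ has objective coefficient $k$, coefficient $-1$ in each $(\mathrm C_k)$, and $+1$ in $(\mathrm Z)$. It reads $\sum(-\mu^{\textsc{Carla}}_k)+\sigma^{\textsc{Carla}}\le k$, which is the first inequality.
\item The relation of $u^{(k)}_{a,t}$ gives $-\mu^{\textsc{Carla}}_k(a,t)\le1$.
\item The relation of $u^{(j)}_{a,t}$ gives $-\nu^{\textsc{Carla}}_j(a,t)\le-\rho^{\textsc{Carla}}_j$. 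The lower bound $0\le-\nu^{\textsc{Carla}}_j$ is the sign of $(\mathrm F_j)$.
\end{itemize}
In particular this forces $\rho^{\textsc{Carla}}_j\le0$, consistent with its declared sign.

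\emph{Exact mass.} Fix an optimal primal--dual pair, which exists by strong duality (\Cref{lem:feasible-k}), with $0<z_k^\star<\bar\lambda_{k-1}$. Complementary slackness on the bound $z_k\ge0$ turns the $z_k$ relation into an equality, since $z_k^\star>0$. Complementary slackness on $(\mathrm Z)$ gives $\sigma^{\textsc{Carla}}=0$, since that row is slack. Hence $\sum(-\mu^{\textsc{Carla}}_k)=k$. Summing $\eta^{\textsc{Carla}}=-\mu^{\textsc{Carla}}_k-\sum_{j<k}\nu^{\textsc{Carla}}_j$ over $\calA_T$ and using $\sum_{(a,t)}(-\nu^{\textsc{Carla}}_j(a,t))=\lVert\nu^{\textsc{Carla}}_j\rVert_1$ (valid because $\nu^{\textsc{Carla}}_j\le0$) yields~\eqref{eq:mass-carla-k-exact}. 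The bound $\ge k$ follows because the norms are nonnegative.

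The only delicate step is applying complementary slackness correctly to the \emph{sign-constrained} variable $z_k$. The mass is exactly $k$, rather than merely $\le k$, only under the interiority hypothesis on $z_k^\star$. I would state explicitly that outside this case only the inequality in~\eqref{eq:mass-carla-k} is available.
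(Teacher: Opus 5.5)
Your proposal is correct and follows the paper's own argument essentially step for step. You get nonpositivity of $\bar\mu^{\textsc{Carla}}_k$ from the signs of the duals of the ``$\le$'' rows $(\mathrm C_k)$ and $(\mathrm F_j)$, the identity $\eta^{\textsc{Carla}}_{a,t}=-\bar\mu^{\textsc{Carla}}_k(a,t)$ from stationarity of $\lambda(a,t)$, the three inequalities as the dual relations of $z_k$, $u^{(k)}_{a,t}$ and $u^{(j)}_{a,t}$, and the exact mass from complementary slackness on $z_k\ge0$ and on $(\mathrm Z)$ followed by summing the stationarity identity.
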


\begin{proof}
\[
\begin{array}{:c}
\begin{minipage}{0.97\textwidth}
The bookkeeping is that of \Cref{prop:cg-stratified-stela} applied to the rows
of~\eqref{eq:rmp-carla-k}: $\bar\mu^{\textsc{Carla}}_k\le0$ as a sum of duals of
``$\le$'' rows of a minimisation, and
$\eta^{\textsc{Carla}}_{a,t}=-\bar\mu^{\textsc{Carla}}_k(a,t)$
is~\eqref{eq:cg-carla-stationarity-k} rearranged. The three inequalities
of~\eqref{eq:mass-carla-k} are the dual relations of $z_k$, of
$u^{(k)}_{a,t}$ and of $u^{(j)}_{a,t}$ derived above. Summing
$\eta^{\textsc{Carla}}_{a,t}=-\mu^{\textsc{Carla}}_k(a,t)-\sum_{j<k}
\nu^{\textsc{Carla}}_j(a,t)$ over $\calA_T$ and using $\sigma^{\textsc{Carla}}=0$ with
equality in the relation of $z_k$ gives~\eqref{eq:mass-carla-k-exact}.
\end{minipage}
\end{array}
\]
\end{proof}

\begin{proposition}[$-\mu^{\textsc{Carla}}_k$ is a top-$k$ selector]
\label{prop:topk-selector}
At a nondegenerate optimum of the \textsc{Carla} rank-$k$ block (no inherited
row active and $0<z_k^\star<\bar\lambda_{k-1}$), the vector
$w:=-\mu^{\textsc{Carla}}_k$ satisfies
\[
0\le w_{a,t}\le 1\quad\forall(a,t)\in\calA_T,
\qquad \sum_{(a,t)\in\calA_T}w_{a,t}=k,
\]
which is exactly the feasible set of the linear characterisation of the sum of
the $k$ largest entries,
\[
\Theta_k(\lambda)=\max\Bigl\{\textstyle\sum_{(a,t)}w_{a,t}\,\lambda(a,t)\ :\
0\le w\le1,\ \sum_{(a,t)}w_{a,t}=k\Bigr\} .
\]
The load dual produced by the \textsc{Carla} block is therefore the optimal
top-$k$ weight vector of the current load vector: weight $1$ on each of the
$k$ most loaded pairs and $0$ elsewhere, fractional weights appearing only
under ties.
\end{proposition}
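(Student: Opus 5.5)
The plan has three parts: derive the two constraints on $w$ from the dual relations already computed, identify the resulting set with the dual of the Ogryczak--Tamir program of \Cref{lem:ot}, and conclude from complementary slackness at the primal optimum.

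\emph{Step 1: the box and the total mass.} Set $w=-\mu^{\textsc{Carla}}_k$. The sign convention for ``$\le$'' rows of a minimisation gives $\mu^{\textsc{Carla}}_k\le0$, hence $w\ge0$. The dual relation of $u^{(k)}_{a,t}$ gives $w_{a,t}\le1$. For the mass, take the dual relation of $z_k$,
\[
\textstyle\sum_{(a,t)}w_{a,t}+\sigma^{\textsc{Carla}}\le k .
\]
The hypothesis $z_k^\star>0$ makes this tight by complementary slackness on the sign constraint of $z_k$. The hypothesis $z_k^\star<\bar\lambda_{k-1}$ gives $\sigma^{\textsc{Carla}}=0$ by complementary slackness on $(\mathrm Z)$. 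Together these yield $\sum w=k$. This is the specialisation of \eqref{eq:mass-carla-k} to the nondegenerate case.

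\emph{Step 2: the linear characterisation of $\Theta_k$.} Write the LP dual of the representation \eqref{eq:ot} in \Cref{lem:ot}, keeping $z$ free, which is harmless since the loads are nonnegative. The multipliers $w_{a,t}$ of the rows $u_{a,t}\ge\lambda(a,t)-z$ satisfy three relations.
\begin{itemize}
\item The relation of the free variable $z$ gives $\sum w=k$.
\item The relation of $u\ge0$ gives $w\le1$.
\item The dual objective is $\sum w_{a,t}\lambda(a,t)$.
\end{itemize}
LP strong duality with \Cref{lem:ot} then yields $\Theta_k(\lambda)=\max\{\sum w\lambda: 0\le w\le1,\ \sum w=k\}$. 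So the set obtained in Step 1 is exactly this feasible set.

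\emph{Step 3: $w$ attains the maximum (main obstacle).} Showing that $w$ is \emph{optimal}, not merely feasible, is the delicate part. I would argue by complementary slackness at the primal optimum $(\lambda^\star,z_k^\star,u^{(k)\star})$.
\begin{itemize}
\item Whenever $w_{a,t}>0$, row $(\mathrm C_k)_{a,t}$ is tight, so $u^{(k)\star}_{a,t}=\lambda^\star(a,t)-z_k^\star$.
\item Whenever $u^{(k)\star}_{a,t}>0$, the reduced cost $1-w_{a,t}$ of $u^{(k)}_{a,t}$ vanishes, so $w_{a,t}=1$.
\end{itemize}
Summing, and using $\sum w=k$,
\[
\textstyle\sum w\lambda^\star = kz_k^\star+\sum u^{(k)\star}.
\]
The no-inherited-row-active hypothesis gives $\nu^{\textsc{Carla}}_j=0$. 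For fixed $\lambda^\star$ the block objective $kz_k^\star+\sum u^{(k)\star}$ equals $\Theta_k(\lambda^\star)$ by \Cref{lem:ot} applied at the optimum. Hence $w$ attains the maximum in Step 2.

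Finally, a vector in this polytope with $\sum w\lambda^\star=\Theta_k(\lambda^\star)$ places weight $1$ on every pair with $\lambda^\star(a,t)>\lambda^{\downarrow}_k$ and $0$ on every pair with $\lambda^\star(a,t)<\lambda^{\downarrow}_k$. Fractional weights can therefore occur only among pairs tied at the value $\lambda^{\downarrow}_k$, which is the top-$k$ selector description in the statement.
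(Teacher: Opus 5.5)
Your proposal is correct. Steps~1 and~2 are essentially the paper's own proof: the box and the mass come from the sign of $\mu^{\textsc{Carla}}_k$ and the dual relations of $u^{(k)}_{a,t}$ and $z_k$, and the polytope is identified with the LP dual of the Ogryczak--Tamir program of \Cref{lem:ot}. You also make explicit the complementary slackness on $z_k^\star>0$ needed to turn \eqref{eq:mass-carla-k}, which is only an inequality, into $\sum w=k$. Step~3 goes beyond the paper. The paper stops after showing that $w$ lies in the polytope and that the maximisers of the characterisation are top-$k$ indicators. That alone does not show $w$ is one of those maximisers, and your complementary-slackness identity $\sum w\lambda^\star=kz_k^\star+\sum u^{(k)\star}$ is what closes this gap. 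Two small refinements. First, you can avoid invoking \Cref{lem:ot} ``at the optimum''. That appeal silently needs the box $(\mathrm Z)$ to be inactive together with convexity in $z$. Instead, use the sandwich $\sum w\lambda^\star\le\Theta_k(\lambda^\star)\le kz_k^\star+\sum u^{(k)\star}$. The first inequality holds because $w$ is feasible for the maximisation of Step~2, and the second because $(z_k^\star,u^{(k)\star})$ is feasible in \eqref{eq:ot}. Your identity then forces equality throughout. Second, the no-inherited-row-active hypothesis (hence $\nu^{\textsc{Carla}}_j=0$) plays no role in your chain as written. Its actual role is to identify the load dual $\eta^{\textsc{Carla}}$ with $w$ through the stationarity relation \eqref{eq:cg-carla-stationarity-k}, and you should state that identification explicitly.
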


\begin{proof}
\[
\begin{array}{:c}
\begin{minipage}{0.97\textwidth}
The displayed constraints are~\eqref{eq:mass-carla-k} with
$\sigma^{\textsc{Carla}}=0$ and the inherited duals set to zero. The linear
characterisation of $\Theta_k$ is the LP dual, in the variables $(z,u)$ for
fixed $\lambda$, of the Ogryczak--Tamir minimisation of \Cref{lem:ot},
$\Theta_k(\lambda)=\min_{z,u\ge0}\{kz+\sum_{(a,t)}u_{a,t}:
u_{a,t}\ge\lambda(a,t)-z\}$; dualising gives
$\max\{w^{\top}\lambda:0\le w\le1,\ \sum w=k\}$, whose optimal solutions are
the indicator vectors of the $k$ largest entries.
\end{minipage}
\end{array}
\]
\end{proof}

\begin{remark}
\label{rem:carla-ub-rankk}
At rank~$1$, $\eta^{\textsc{Carla}}\ge0$ and $\sum\eta^{\textsc{Carla}}=1$ make the
bound $\eta^{\textsc{Carla}}_{a,t}\le1$ automatic, so the dual relation of
$u^{(1)}_{a,t}$ carries no information beyond the simplex
(\Cref{prop:cg-simplex-carla}). At rank $k\ge2$ the total mass is $k$ and the
bound becomes the binding description of a genuinely different polytope, the
one of \Cref{prop:topk-selector}.
\end{remark}

\begin{proposition}
\label{prop:cg-rc-carla-k}
For every $\pi\in\Pi_d$, with
$\gamma^{\textsc{Carla}}_t:=-\psi^{\textsc{Carla}}_t\ge0$,
\begin{equation}\label{eq:cg-rc-carla-k}
\rc^{\textsc{Carla}}_{d,k}(\pi)
\;=\;-\alpha^{\textsc{Carla}}_d
+\sum_{(a,t)\in\calA_T}
  \bigl(-\bar\mu^{\textsc{Carla}}_k(a,t)\bigr)\,\Gamma_{d,a,t}(\pi)
+\sum_{t\in T^*}\gamma^{\textsc{Carla}}_t\,\beta_{d,t}(\pi),
\end{equation}
the effective segment cost is
\begin{equation}\label{eq:cg-theta-carla-k}
\theta^{\textsc{Carla}}_{d,k}(i,j,t)
\;:=\;\sum_{a\,:\,(a,t)\in\calA_T}
  \bigl(-\bar\mu^{\textsc{Carla}}_k(a,t)\bigr)
  \,\frac{\nu(d,t)}{c(a)}\,r(i,j,a,t)
\;\ge\;0,
\end{equation}
and the rank-$k$ pricing subproblem is
\begin{equation}\label{eq:cg-SP-carla-k}
(\mathrm{SP}^{\textsc{Carla}}_{d,k})\qquad
\operatorname{val}(\mathrm{SP}^{\textsc{Carla}}_{d,k})
=\min_{\pi\in\Pi_d}
\Bigl\{\sum_{t\in T}\theta^{\textsc{Carla}}_{d,k}\bigl(p^{(t)},t\bigr)
+\sum_{t\in T^*}\gamma^{\textsc{Carla}}_t\,
  \dist\bigl(p^{(t-1)},p^{(t)}\bigr)\Bigr\}.
\end{equation}
At $k=1$ the sum in~\eqref{eq:cg-stratified-carla} is empty,
$\bar\mu^{\textsc{Carla}}_1=\mu^{\textsc{Carla}}_1$, and
\eqref{eq:cg-rc-carla-k}--\eqref{eq:cg-SP-carla-k} reduce to the rank-1
reduced cost~\eqref{eq:cg-rc-carla} and pricing
problem~\eqref{eq:cg-SP-generic-path}.
\end{proposition}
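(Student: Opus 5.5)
The argument will follow the proof of \Cref{prop:cg-rc-stela-k} step for step, since the incidence table \Cref{tab:cg-incidence-carla-k} has the same decisive first line: $\xi_{d,\pi}$ appears only in the core rows $(\mathrm S)_d$, $(\mathrm L)_{a,t}$ and $(\mathrm B)_t$. First I compute the reduced cost as direct objective cost minus dual activity. The direct cost is $0$, because the objective $k\,z_k+\sum u^{(k)}$ contains no selection variable. The rows $(\mathrm C_k)$, $(\mathrm F_j)$, $(\mathrm D_j)$ and $(\mathrm Z)$ have zero coefficient on $\xi_{d,\pi}$, so the duals $\mu^{\textsc{Carla}}_k$, $\nu^{\textsc{Carla}}_j$, $\rho^{\textsc{Carla}}_j$ and $\sigma^{\textsc{Carla}}$ do not appear directly. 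This gives
\[
\rc^{\textsc{Carla}}_{d,k}(\pi)=-\alpha^{\textsc{Carla}}_d+\sum_{(a,t)}\eta^{\textsc{Carla}}_{a,t}\Gamma_{d,a,t}(\pi)-\sum_{t\in T^*}\psi^{\textsc{Carla}}_t\beta_{d,t}(\pi).
\]
I then substitute $\eta^{\textsc{Carla}}_{a,t}=-\bar\mu^{\textsc{Carla}}_k(a,t)$, which is the stationarity relation~\eqref{eq:cg-carla-stationarity-k} as packaged in \Cref{prop:cg-stratified-carla}, together with $\gamma^{\textsc{Carla}}_t=-\psi^{\textsc{Carla}}_t$. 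This yields~\eqref{eq:cg-rc-carla-k}. The sign $\gamma^{\textsc{Carla}}_t\ge0$ follows because $(\mathrm B)$ is a ``$\le$'' row of a minimisation.

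Second, I derive the pricing form. I insert the footprint~\eqref{eq:cg-Gamma} into the load term and exchange the finite sums over $t$, over $a$ with $(a,t)\in\calA_T$, and over $(i,j)$. This collects the coefficient of $\delta(p^{(t)},ij)$ into exactly $\theta^{\textsc{Carla}}_{d,k}(i,j,t)$ as defined in~\eqref{eq:cg-theta-carla-k}. Since $\delta\in\{0,1\}$ and a segment trail uses each segment at most once, the sum becomes $\sum_{(i,j)\in\segs(p^{(t)})}\theta^{\textsc{Carla}}_{d,k}(i,j,t)$. The budget term becomes $\gamma^{\textsc{Carla}}_t\dist(p^{(t-1)},p^{(t)})$ by~\eqref{eq:cg-beta}. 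Because $-\alpha^{\textsc{Carla}}_d$ does not depend on $\pi$ (the argument of \Cref{prop:cert}), minimising the reduced cost over $\Pi_d$ is the problem~\eqref{eq:cg-SP-carla-k}.

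Third, I establish nonnegativity. It follows from $-\bar\mu^{\textsc{Carla}}_k\ge0$, which holds because $\bar\mu^{\textsc{Carla}}_k$ is a sum of duals of ``$\le$'' rows, each nonpositive. The remaining factors satisfy $\nu(d,t)\ge0$, $c(a)>0$ and $r\ge0$. I will stress that only these signs are used. The total mass $k+\sum_j\lVert\nu_j\rVert_1$ and the top-$k$ box from \Cref{prop:topk-selector} play no role, so the rank-$1$ oracles apply unchanged.

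Finally, at $k=1$ the inherited sum is empty, so $\bar\mu^{\textsc{Carla}}_1=\mu^{\textsc{Carla}}_1$ and $\eta^{\textsc{Carla}}=-\mu^{\textsc{Carla}}_1$. This recovers~\eqref{eq:cg-rc-carla} and~\eqref{eq:cg-SP-generic-path}.

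The only delicate step is confirming the stationarity relation. Each inherited row $(\mathrm F_j)$ is written as $\lambda-u^{(j)}\le\bar\lambda_j$, with coefficient $+1$ on $\lambda$, so every $\nu^{\textsc{Carla}}_j$ enters with the same sign as $\mu^{\textsc{Carla}}_k$. If any of these rows carried the opposite sign, the stratified dual could lose its sign and $\theta$ could become negative. I would therefore recheck this against \Cref{tab:cg-incidence-carla-k} before the substitution.
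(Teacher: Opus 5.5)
Your proposal is correct and follows essentially the same route as the paper. The paper also uses the zero direct cost of $\xi_{d,\pi}$, drops the rank-block duals $\mu^{\textsc{Carla}}_k,\nu^{\textsc{Carla}}_j,\rho^{\textsc{Carla}}_j,\sigma^{\textsc{Carla}}$ because $\xi_{d,\pi}$ has zero coefficient in those rows, substitutes $\eta^{\textsc{Carla}}=-\bar\mu^{\textsc{Carla}}_k$ and $\gamma^{\textsc{Carla}}_t=-\psi^{\textsc{Carla}}_t$, and repeats the summation exchange of \Cref{prop:cg-rc-stela-k}, relying only on nonnegativity of the stratified dual and not on its total mass.
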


\begin{proof}
\[
\begin{array}{:c}
\begin{minipage}{0.97\textwidth}
The direct objective cost of $\xi_{d,\pi}$ is $0$, and $\xi_{d,\pi}$ occurs in
$(\mathrm S),(\mathrm L),(\mathrm B)$ only, as listed in
\Cref{tab:cg-incidence-carla-k}. The duals
$\mu^{\textsc{Carla}}_k,\nu^{\textsc{Carla}}_j,\rho^{\textsc{Carla}}_j,
\sigma^{\textsc{Carla}}$ therefore multiply zero coefficients in its activity and
drop out, leaving
$\alpha^{\textsc{Carla}}_d-\sum_{(a,t)}\eta^{\textsc{Carla}}_{a,t}
\Gamma_{d,a,t}(\pi)+\sum_{t\in T^*}\psi^{\textsc{Carla}}_t\beta_{d,t}(\pi)$.
Subtracting and substituting $\eta^{\textsc{Carla}}=-\bar\mu^{\textsc{Carla}}_k$ and
$\gamma^{\textsc{Carla}}_t=-\psi^{\textsc{Carla}}_t$ gives~\eqref{eq:cg-rc-carla-k}.
The derivation of $\theta^{\textsc{Carla}}_{d,k}$ repeats the exchange of
summations of \Cref{prop:cg-rc-stela-k}, which used no property of
$\eta^{\textsc{Carla}}$ beyond nonnegativity, and in particular not its total
mass, which by \Cref{prop:cg-stratified-carla} is not $1$.
\end{minipage}
\end{array}
\]
\end{proof}

\subsection{Common pricing structure at rank $k$.}
\label{sec:cg-lex}

\begin{theorem}
\label{thm:cg-common}
For $\mathsf X\in\{\textsc{Stela},\textsc{Carla}\}$ and every $k\ge1$, the
rank-$k$ reduced cost of $\pi\in\Pi_d$ in $\mathrm{RMP}^{\mathsf X}_k$ is
\begin{equation}\label{eq:cg-rc-rankk}
\rc^{\mathsf X}_{d,k}(\pi)
\;=\;-\alpha^{\mathsf X}_d
+\sum_{(a,t)\in\calA_T}
  \bigl(-\bar\mu^{\mathsf X}_k(a,t)\bigr)\,\Gamma_{d,a,t}(\pi)
+\sum_{t\in T^*}\gamma^{\mathsf X}_t\,\beta_{d,t}(\pi),
\end{equation}
with $-\bar\mu^{\mathsf X}_k\ge0$ and $\gamma^{\mathsf X}\ge0$, and the
pricing subproblem is
\begin{equation}\label{eq:cg-SP-rankk}
(\mathrm{SP}^{\mathsf X}_{d,k})\qquad
\operatorname{val}(\mathrm{SP}^{\mathsf X}_{d,k})
=\min_{\pi\in\Pi_d}
\Bigl\{\sum_{t\in T}\theta^{\mathsf X}_{d,k}\bigl(p^{(t)},t\bigr)
+\sum_{t\in T^*}\gamma^{\mathsf X}_t\,
  \dist\bigl(p^{(t-1)},p^{(t)}\bigr)\Bigr\},
\end{equation}
a problem whose feasible set $\Pi_d$, bounded-hop structure,
$\texttt{maxSeg}$ restriction and chain coupling do not depend on
$(\mathsf X,k)$, and whose data
$(\theta^{\mathsf X}_{d,k},\gamma^{\mathsf X})\ge0$ do.
\end{theorem}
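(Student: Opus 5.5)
The theorem is essentially a consolidation of results already proved for each formulation separately, so the plan is a case split on $\mathsf X$ with a separate check of the base case $k=1$. For $k=1$ I would invoke \Cref{prop:cg-rc-stela} and \Cref{prop:cg-rc-carla}. They give \eqref{eq:cg-rc-rankk} with $\bar\mu^{\mathsf X}_1=\mu^{\mathsf X}_1$, since the inherited sum is empty. Nonnegativity of $-\mu^{\mathsf X}_1=\eta^{\mathsf X}$ comes from \Cref{prop:cg-simplex-stela} and \Cref{prop:cg-simplex-carla}. The pricing problem is then \eqref{eq:cg-SP-generic-path}, which is \eqref{eq:cg-SP-rankk} at $k=1$.

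For $k\ge2$ and $\mathsf X=\textsc{Stela}$, I would cite \Cref{prop:cg-rc-stela-k} directly for the reduced cost and the pricing problem. The sign $-\bar\mu^{\textsc{Stela}}_k\ge0$ comes from \Cref{prop:cg-stratified-stela}: it is a sum of duals of ``$\le$'' rows in a minimisation. The case $\mathsf X=\textsc{Carla}$ is handled the same way through \Cref{prop:cg-rc-carla-k} and \Cref{prop:cg-stratified-carla}. In both cases $\gamma^{\mathsf X}_t=-\psi^{\mathsf X}_t\ge0$, because $(\mathrm B)$ is a ``$\le$'' row of a minimisation. The one point I would state explicitly is that the incidence tables (\Cref{tab:cg-incidence-stela-k}, \Cref{tab:cg-incidence-carla-k}) show that $\xi_{d,\pi}$ meets no row of the rank block. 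Its dual activity therefore involves only $(\alpha^{\mathsf X},\eta^{\mathsf X},\psi^{\mathsf X})$, and stationarity of the free $\lambda(a,t)$ replaces $\eta^{\mathsf X}$ by $-\bar\mu^{\mathsf X}_k$.

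The structural claim about independence from $(\mathsf X,k)$ needs the most care. The feasible set of \eqref{eq:cg-SP-rankk} is $\Pi_d$ from \Cref{def:cg-traj}, and it is defined from $G_t$, $s_d,t_d$ and $\texttt{maxSeg}$ alone. The reconfiguration term is $\dist$, using \eqref{eq:cg-beta}, which is intrinsic to the trajectory (\Cref{prop:cg-monotone}). The rank and the formulation therefore enter only through the numerical arrays $\theta^{\mathsf X}_{d,k}$ and $\gamma^{\mathsf X}$. Their nonnegativity follows from the exchange-of-summation argument in \Cref{prop:cg-rc-stela-k}, which uses only $-\bar\mu^{\mathsf X}_k\ge0$ and not the total dual mass. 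The main subtlety, rather than a real obstacle, is to stress that the mass normalisation differs: it is at least $1$ for \textsc{Stela}, while for \textsc{Carla} it is $k+\sum_{j<k}\lVert\nu^{\textsc{Carla}}_j\rVert_1$ at a generic optimum (\eqref{eq:mass-carla-k-exact}). This difference is immaterial to the functional form. The argument also relies on strong duality at every rank, which \Cref{lem:feasible-k} supplies.
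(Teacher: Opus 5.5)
Your proposal is correct and follows the paper's proof. The paper also collects the four reduced-cost propositions (rank $1$ and rank $k$, for each formulation) and rests on the same three facts: $\xi_{d,\pi}$ has zero direct cost, it occurs only in the core rows $(\mathrm S),(\mathrm L),(\mathrm B)$, and stationarity of the free $\lambda(a,t)$ gives $\eta^{\mathsf X}=-\bar\mu^{\mathsf X}_k$.
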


\begin{proof}
\[
\begin{array}{:c}
\begin{minipage}{0.97\textwidth}
Collect \Cref{prop:cg-rc-stela}, \Cref{prop:cg-rc-carla},
\Cref{prop:cg-rc-stela-k} and \Cref{prop:cg-rc-carla-k}. Each derivation used
exactly three facts, verified separately in each case: (i) $\xi_{d,\pi}$ has
zero objective coefficient; (ii) $\xi_{d,\pi}$ occurs only in
$(\mathrm S),(\mathrm L),(\mathrm B)$, with coefficients $1$,
$-\Gamma_{d,a,t}(\pi)$, $\beta_{d,t}(\pi)$; (iii) the stationarity of the free
variable $\lambda(a,t)$ expresses $\eta^{\mathsf X}_{a,t}$ as the negative of
the sum of the load-bounding duals of its own pair. Facts (i)--(ii) are
structural properties of the \emph{core}, common to all four masters; fact
(iii) is the only place where the rank block enters, and it enters only
through the value of $\eta^{\mathsf X}$.
\end{minipage}
\end{array}
\]
\end{proof}

\begin{corollary}
\label{cor:scales}
Under the hypotheses of \Cref{prop:cg-stratified-stela} and
\Cref{prop:cg-stratified-carla},
\[
\bigl\lVert\eta^{\textsc{Stela}}\bigr\rVert_1
 =1+\sum_{j<k}\lVert\nu^{\textsc{Stela}}_j\rVert_1,
\qquad
\bigl\lVert\eta^{\textsc{Carla}}\bigr\rVert_1
 =k+\sum_{j<k}\lVert\nu^{\textsc{Carla}}_j\rVert_1 .
\]

\end{corollary}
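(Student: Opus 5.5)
The corollary follows directly from Propositions~\ref{prop:cg-stratified-stela} and~\ref{prop:cg-stratified-carla}. The only additional point is that the sum of the entries of $\eta^{\mathsf X}$ equals its $\ell_1$ norm.

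\emph{Step 1: nonnegativity.} For $\mathsf X\in\{\textsc{Stela},\textsc{Carla}\}$, the stratified-dual propositions give
$\eta^{\mathsf X}_{a,t}=-\bar\mu^{\mathsf X}_k(a,t)\ge0$. This comes from the stationarity of the free variable $\lambda(a,t)$, namely \eqref{eq:cg-stela-stationarity-k} and \eqref{eq:cg-carla-stationarity-k}. It also uses the fact that every load-bounding row is a ``$\le$'' row of a minimisation. Hence
$\lVert\eta^{\mathsf X}\rVert_1=\sum_{(a,t)\in\calA_T}\eta^{\mathsf X}_{a,t}$.

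\emph{Step 2: total mass.} Under the stated hypotheses we have an optimal primal--dual pair with $0<z^\star<\bar\lambda_{k-1}$ for \textsc{Stela}, and with $0<z_k^\star<\bar\lambda_{k-1}$ for \textsc{Carla}. Complementary slackness on $(\mathrm Z)$ then gives $\sigma^{\mathsf X}=0$. Complementary slackness on the sign constraint of the threshold turns the dual relation of $z$ (resp.\ $z_k$) into an equality. That equality reads $\sum_{(a,t)}(-\mu^{\textsc{Stela}}_k(a,t))=1$ for \textsc{Stela}, whose threshold has objective coefficient $1$. It reads $\sum_{(a,t)}(-\mu^{\textsc{Carla}}_k(a,t))=k$ for \textsc{Carla}, whose threshold has objective coefficient $k$. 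Now sum the stationarity identity $\eta^{\mathsf X}_{a,t}=-\mu^{\mathsf X}_k(a,t)-\sum_{j<k}\nu^{\mathsf X}_j(a,t)$ over $\calA_T$. This recovers \eqref{eq:mass-stela-k-exact} and \eqref{eq:mass-carla-k-exact}.

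\emph{Step 3: norms of the inherited duals.} Each $\nu^{\mathsf X}_j\le0$, so $\sum_{(a,t)}(-\nu^{\mathsf X}_j(a,t))=\lVert\nu^{\mathsf X}_j\rVert_1$. Substituting this into the sums from Step~2 gives the two displayed identities.

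No step is a real obstacle; the only care needed is bookkeeping. First, the index range of the inherited sum must match. For \textsc{Stela} the global row $(\mathrm G)$ contributes $\nu^{\textsc{Stela}}_1$, so the sum over $j<k$ includes $j=1$, consistent with \eqref{eq:cg-stratified-stela}. Second, one must check that the nondegeneracy hypotheses indeed supply both $\sigma^{\mathsf X}=0$ and the equality form of the $z$-relation, since the proof relies on both.
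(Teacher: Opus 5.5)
Your proposal is correct and follows the same route as the paper. The paper gives no separate proof: the corollary is just \eqref{eq:mass-stela-k-exact} and \eqref{eq:mass-carla-k-exact} rewritten using $\eta^{\mathsf X}=-\bar\mu^{\mathsf X}_k\ge 0$, so that total mass equals the $\ell_1$ norm. Your Steps 2--3 re-derive those two identities exactly as in the proofs of the two propositions, through complementary slackness on the threshold variable and on $(\mathrm Z)$, and through $\nu^{\mathsf X}_j\le 0$; your remark that the \textsc{Stela} sum includes $j=1$ via the global row $(\mathrm G)$ is also correct.
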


\begin{remark}
\label{rem:no-simplex-needed}
Nothing in the pricing is invalidated. The oracles require only
$\theta^{\mathsf X}_{d,k}\ge0$, for the bounded-hop shortest paths, and
$\gamma^{\mathsf X}\ge0$; both survive at rank~$k$
by \Cref{prop:cg-stratified-stela} and \Cref{prop:cg-stratified-carla}.
Likewise the stopping test
$\operatorname{val}(\mathrm{SP}^{\mathsf X}_{d,k})<\alpha^{\mathsf X}_d$ is
\emph{scale-consistent}: both sides come from the same dual solution, so a
common rescaling of the master objective rescales them identically. What must
be avoided is invoking $\eta^{\mathsf X}\in\Delta_{\calA_T}$ as a hypothesis
beyond rank~$1$; the correct standing hypothesis is
$\eta^{\mathsf X}\ge0$.
\end{remark}

Every oracle of \cref{sec:cg-pricing}; \textsc{TCG-TopK},
\textsc{TCG-Diag}, \textsc{TCG-Exact}; therefore runs unchanged with
$\theta^{\mathsf X}_{d,k}$ and $\gamma^{\mathsf X}_t$ as input.

\subsubsection{Stopping criterion at rank $k$.}

\begin{proposition}\label{prop:certk}
Fix formulation $\mathsf X$, rank $k$, and an optimal dual solution of the
LP relaxation of\/ $\mathrm{RMP}^{\mathsf X}_k$. The minimum rank-$k$
reduced cost of demand~$d$ is
\[
\min_{\pi\in\Pi_d}\rc^{\mathsf X}_{d,k}(\pi)
\;=\;-\alpha^{\mathsf X}_d
+\operatorname{val}(\mathrm{SP}^{\mathsf X}_{d,k}).
\]
A negative reduced-cost column exists if and only if
$\operatorname{val}(\mathrm{SP}^{\mathsf X}_{d,k})<\alpha^{\mathsf X}_d$.
\end{proposition}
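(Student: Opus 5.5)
The plan mirrors the rank-$1$ certificate of \Cref{prop:cert}, with the rank-$k$ reduced cost of \Cref{thm:cg-common} in place of~\eqref{eq:cg-rc-sidebyside}. Fix $\mathsf X$, $k$ and an optimal dual solution of the LP relaxation of $\mathrm{RMP}^{\mathsf X}_k$. Such a solution exists: the restricted master is feasible by \Cref{lem:feasible-k} and bounded below by $0$, so strong duality applies. From this dual we read $\alpha^{\mathsf X}_d$, the stratified dual $\bar\mu^{\mathsf X}_k$ and $\gamma^{\mathsf X}$. \Cref{prop:cg-rc-stela-k} for \textsc{Stela} and \Cref{prop:cg-rc-carla-k} for \textsc{Carla}, or \Cref{thm:cg-common} directly, give the reduced cost~\eqref{eq:cg-rc-rankk} of every $\pi\in\Pi_d$. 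This formula holds whether or not $\pi$ is in the pool, since $\Gamma_{d,a,t}(\pi)$ and $\beta_{d,t}(\pi)$ are intrinsic to the trajectory.

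The first step is to isolate the constant. The term $-\alpha^{\mathsf X}_d$ is the dual of the convexity row $(\mathrm S)_d$, in which every column of $d$ has coefficient $1$, so it does not depend on $\pi$. Minimising~\eqref{eq:cg-rc-rankk} over $\Pi_d$ therefore reduces to minimising
\[
\sum_{(a,t)}\bigl(-\bar\mu^{\mathsf X}_k(a,t)\bigr)\Gamma_{d,a,t}(\pi)+\sum_{t\in T^*}\gamma^{\mathsf X}_t\beta_{d,t}(\pi).
\]

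The second step rewrites this quantity as the objective of $(\mathrm{SP}^{\mathsf X}_{d,k})$. I would insert~\eqref{eq:cg-Gamma} and exchange the finite summations, as in the proof of \Cref{prop:cg-rc-stela-k}, which turns the load term into $\sum_t\theta^{\mathsf X}_{d,k}(p^{(t)},t)$. This exchange uses the trail property $\delta(p,ij)\in\{0,1\}$. The budget term becomes $\sum_t\gamma^{\mathsf X}_t\dist(p^{(t-1)},p^{(t)})$ by~\eqref{eq:cg-beta}. Since $\Pi_d$ is finite, the minimum is attained, and the claimed identity
\[
\min_{\pi\in\Pi_d}\rc^{\mathsf X}_{d,k}(\pi)=-\alpha^{\mathsf X}_d+\operatorname{val}(\mathrm{SP}^{\mathsf X}_{d,k})
\]
follows.

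The third step is the equivalence. A negative reduced-cost column exists exactly when the minimum above is negative, that is, when $\operatorname{val}(\mathrm{SP}^{\mathsf X}_{d,k})<\alpha^{\mathsf X}_d$. The main point needing care is consistency of the dual data. Both $\alpha^{\mathsf X}_d$ and $\bar\mu^{\mathsf X}_k$ must come from the same optimal dual solution, because the rank-$k$ duals may be degenerate and non-unique. I would also stress that only the stationarity identity $\eta^{\mathsf X}=-\bar\mu^{\mathsf X}_k$ is used, not any normalisation of its mass (\Cref{rem:no-simplex-needed}). Nonnegativity of $\theta^{\mathsf X}_{d,k}$ is not needed for the identity itself; it matters only for the oracles.
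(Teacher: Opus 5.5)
Your proposal is correct and follows the paper's proof. You isolate the $\pi$-independent term $-\alpha^{\mathsf X}_d$ (the dual of the convexity row) in the rank-$k$ reduced cost~\eqref{eq:cg-rc-rankk}, identify the remaining minimisation with $(\mathrm{SP}^{\mathsf X}_{d,k})$, and use finiteness of $\Pi_d$ for attainment. Your re-derivation of the $\Gamma\to\theta$ rewriting and your remarks on dual existence and consistency are harmless extras, since the paper already packages the rewriting into \Cref{thm:cg-common} and takes the optimal dual as given.
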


\begin{proof}
\[
\begin{array}{:c}
\begin{minipage}{0.97\textwidth}
In the reduced cost~\eqref{eq:cg-rc-rankk}, the term
$-\alpha^{\mathsf X}_d$ is independent of $\pi$: $\alpha^{\mathsf X}_d$ is the
dual of the convexity row of demand $d$, in which every column of $d$ has
coefficient $1$. Minimising over $\Pi_d$ therefore amounts to minimising the
pricing objective of~\eqref{eq:cg-SP-rankk}, whose optimum is
$\operatorname{val}(\mathrm{SP}^{\mathsf X}_{d,k})$; the minimum is attained
since $\Pi_d$ is finite, and the criterion follows.
\end{minipage}
\end{array}
\]
\end{proof}

\begin{corollary}\label{cor:lpk}
If\/ $(\mathrm{SP}^{\mathsf X}_{d,k})$ is solved for every $d\in D$ and
$\operatorname{val}(\mathrm{SP}^{\mathsf X}_{d,k})\ge\alpha^{\mathsf X}_d$
for all~$d$, then the LP relaxation of\/ $\mathrm{RMP}^{\mathsf X}_k$ is
optimal for the full rank-$k$ trajectory LP.
\end{corollary}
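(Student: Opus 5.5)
The argument reduces \Cref{cor:lpk} to \Cref{prop:certk} together with the standard LP reduced-cost optimality condition. It mirrors the rank-1 argument of \Cref{cor:lp}, with the rank-$k$ block added.

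\emph{Step 1: no improving column exists.} Fix the optimal primal--dual pair of the restricted LP $\mathrm{RMP}^{\mathsf X}_k$. Such a pair exists by \Cref{lem:feasible-k}, since the restricted master is feasible and bounded below, so strong duality holds. By \Cref{prop:certk}, the hypothesis $\operatorname{val}(\mathrm{SP}^{\mathsf X}_{d,k})\ge\alpha^{\mathsf X}_d$ gives $\rc^{\mathsf X}_{d,k}(\pi)\ge0$ for every $\pi\in\Pi_d$, pooled or not. The reduced-cost expression \eqref{eq:cg-rc-rankk} of \Cref{thm:cg-common} is defined on all of $\Pi_d$. This is exactly the statement that the dual inequality associated with $\xi_{d,\pi}$ holds for every column of the full master, not only those in the pool.

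\emph{Step 2: extend the dual to the full master.} The full rank-$k$ trajectory LP differs from the restricted one only by its additional columns $\xi_{d,\pi}$, $\pi\in\Pi_d\setminus\Pi_d^{\mathrm{pool}}$. Its rows $(\mathrm S),(\mathrm L),(\mathrm B)$ and the rank block, whether \eqref{eq:cg-Pk} or \eqref{eq:rmp-carla-k}, are unchanged. The rank block contains no $\xi$ (\Cref{tab:cg-incidence-stela-k,tab:cg-incidence-carla-k}). Hence the current dual vector, with the same $\alpha,\eta,\psi$ and the same rank-block duals, satisfies every dual constraint of the full LP. The constraints attached to $\lambda$, $z$, $y$ or $u$ are identical to those of the restricted dual. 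The constraints attached to the new columns hold by Step 1.

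\emph{Step 3: conclude by weak duality.} Extend the restricted primal optimum by setting $\xi_{d,\pi}=0$ for every new column; this point is feasible for the full LP with the same objective value. Its value equals the restricted dual objective by strong duality. That dual objective is also the value of a feasible dual point for the full LP, so by weak duality it is a lower bound on the full LP optimum. The two values therefore coincide, and the extended restricted solution is optimal for the full rank-$k$ trajectory LP.

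The only delicate point is Step 2, namely checking that the rank-block duals impose no constraint involving the new columns. This follows directly from the incidence tables. A secondary point is that the $\alpha^{\mathsf X}_d$ used in the test must come from an optimal dual of the current restricted LP, which the hypothesis provides.
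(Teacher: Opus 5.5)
Your proposal is correct and follows the same route as the paper: the paper's proof derives $\rc^{\mathsf X}_{d,k}(\pi)\ge 0$ for all $\pi\in\Pi_d$ from \Cref{prop:certk} and then simply invokes the standard reduced-cost optimality condition. Your Steps~2--3 spell out that condition rather than argue differently: the current dual stays feasible for the full master because no row of the rank block contains $\xi_{d,\pi}$, and weak duality then closes the gap.
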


\begin{proof}
\[
\begin{array}{:c}
\begin{minipage}{0.97\textwidth}
By \cref{prop:certk}, the condition means
$\rc^{\mathsf X}_{d,k}(\pi)\ge0$ for every $\pi\in\Pi_d$ and every~$d$;
the standard reduced-cost optimality condition gives optimality of the
restricted LP for the full LP.
\end{minipage}
\end{array}
\]
\end{proof}

\subsection{Lexicographic column-generation algorithm}
\label{sec:cg-lex-algo}

The complete procedure nests two mechanisms: an \emph{outer} loop over
lexicographic ranks $k=1,\dots,K$ and, for each rank, an \emph{inner}
column-generation loop that solves the LP relaxation of
$\mathrm{RMP}^{\mathsf X}_k$ and then closes the rank-$k$ MILP over the
pool. Moving from rank $k$ to rank $k{+}1$ requires three pieces of
information:
\begin{itemize}\itemsep2pt
\item the certified level $\bar\lambda_{k}$, recovered from the integer
      incumbent's sorted load vector by evaluation, never by differencing
      cumulative values, which amplifies the numerical error when two
      consecutive $\bar\Theta$ are close (cf.\ \cref{sec:milp});
\item the prefix $(\bar\lambda_{1},\dots,\bar\lambda_{k})$, used to build the
      rank-$(k{+}1)$ block $(\mathcal P^{\mathsf X}_{k+1})$;
\item the accumulated pool $\Pi_d^{\mathrm{pool}}$
      (\cref{prop:cg-monotone}), which carries every column generated at
      ranks $1,\dots,k$ into rank~$k{+}1$.
\end{itemize}
Dual solutions, reduced costs and the LP basis are \emph{not} carried across
ranks: each $\mathrm{RMP}^{\mathsf X}_k$ is solved from scratch over the
accumulated pool. This is consistent with \Cref{cor:scales}: the rank-$k$
duals live at a different scale from the rank-$(k{-}1)$ ones, so warm-starting
the dual would be meaningless.

For each $d\in D$, let
$p_d^{\mathrm{IGP}}:=\langle s_d,t_d\rangle$ denote the zero-waypoint
segment path. Under the standing reachability assumption,
$p_d^{\mathrm{IGP}}$ is feasible in every period. Initialize
\[
\Pi_d^{\mathrm{pool}}
\gets
\{(p_d^{\mathrm{IGP}},\dots,p_d^{\mathrm{IGP}})\}.
\]
Its reconfiguration cost is zero because its segment set is unchanged across
periods; by \Cref{lem:feasible-k} the rank-$k$ restricted master is then
feasible at every rank, so strong duality holds and the duals used above
exist.

\begin{algorithm}[H]
\caption{Lexicographic trajectory column generation for
$T$-\srchallenge{} under $\mathrm{RMP}^{\mathsf X}_k$}
\label{alg:cg-lex}
\begin{algorithmic}[1]
\Require formulation $\mathsf X\in\{\textsc{Stela},\textsc{Carla}\}$,
demands $D$, horizon $T=\{0,\dots,h{-}1\}$, number of ranks $K$, tolerance
$\varepsilon>0$, candidate-list size $K_{\mathrm{cand}}$, segment bound
$\texttt{maxSeg}$, pricing oracle $\mathcal O$.
\Ensure routing scheme over the pool and attained prefix
$(\bar\lambda_{1},\dots,\bar\lambda_{K})$.
\State Initialise
$\Pi_d^{\mathrm{pool}}\gets\{(\mathrm{IGP},\dots,\mathrm{IGP})\}$ for all
$d\in D$.
\Comment{nominal shortest-path trajectory}
\For{$k=1,\dots,K$}
  \State Build $\mathrm{RMP}^{\mathsf X}_k$: core $(\mathcal C)$ plus
         rank block~\eqref{eq:cg-Pk}
         if $\mathsf X=\textsc{Stela}$, or~\eqref{eq:rmp-carla-k}
         if $\mathsf X=\textsc{Carla}$.
  \Repeat
    \State Solve the LP relaxation of $\mathrm{RMP}^{\mathsf X}_k$; extract
           duals $\alpha^{\mathsf X}_d$, $\psi^{\mathsf X}_t$,
           $\mu^{\mathsf X}_k(a,t)$,
           $\nu^{\mathsf X}_j(a,t)$ for $j<k$.
    \State
           $\bar\mu^{\mathsf X}_k(a,t)
           \gets\mu^{\mathsf X}_k(a,t)+\sum_{j<k}\nu^{\mathsf X}_j(a,t)$;
           \quad form $\theta^{\mathsf X}_{d,k}$
           via~\eqref{eq:cg-theta-stela-k}
           or~\eqref{eq:cg-theta-carla-k};
           \quad set
           $\gamma^{\mathsf X}_t\gets-\psi^{\mathsf X}_t$.
    \State $\mathrm{added}\gets\textsc{false}$
    \For{$d\in D$}
      \State $(\pi_d^{\star},\rc_d^{\star})\gets
             \mathcal O\bigl(d,\,
             \{\theta^{\mathsf X}_{d,k}(\cdot,t)\}_{t\in T},\,
             \alpha^{\mathsf X}_d,\,
             \{\gamma^{\mathsf X}_t\}_{t\in T^*},\,
             K_{\mathrm{cand}},\,\texttt{maxSeg}\bigr)$
      \If{$\rc_d^{\star}<-\varepsilon$}
        \State $\Pi_d^{\mathrm{pool}}\gets
               \Pi_d^{\mathrm{pool}}\cup\{\pi_d^{\star}\}$;
               \quad $\mathrm{added}\gets\textsc{true}$
      \EndIf
    \EndFor
  \Until{$\mathrm{added}=\textsc{false}$}
  \State Solve $\mathrm{RMP}^{\mathsf X}_k$ as a MILP over the pool
    
  \State Recover $\bar\lambda_{k}\gets\lambda^{\downarrow}_k(\text{incumbent})$
         by evaluation; freeze it as an inherited level.
\EndFor
\State \Return routing scheme and
$(\bar\lambda_{1},\dots,\bar\lambda_{K})$.
\end{algorithmic}
\end{algorithm}

The oracle $\mathcal O$ is any of \cref{sec:cg-pricing}: \textsc{TCG-Exact}
(EP with $\theta^{\mathsf X}_{d,k}$) for exact certification of the LP,
\textsc{TCG-Diag} (diagonal $\oplus$ top-$K$) or \textsc{TCG-TopK} for
speed. By \Cref{thm:cg-common} all three run unchanged at every rank and for
either formulation: they consume only the nonnegative data
$(\theta^{\mathsf X}_{d,k},\gamma^{\mathsf X}_t)$ and are indifferent to how
those prices were produced. The all-nominal trajectory
$(\mathrm{IGP},\dots,\mathrm{IGP})$ guarantees a feasible starting pool.
The warm-starting scheme of \cref{sec:cg-warm} (\textsc{TCG-Exact-Warm})
applies unchanged: the seed pool is filtered for feasibility, and the
columns are rank- and encoding-independent (\cref{prop:cg-monotone}).

For the theoretical exactness statements, the stopping rule is understood with
$\varepsilon=0$. With a positive numerical tolerance $\varepsilon$, termination
provides a tolerance-dependent LP solution rather than the exact reduced-cost
certificate of \Cref{cor:lpk}.

\subsubsection{Warm-starting the exact oracle from a heuristic pool}
\label{sec:cg-warm}

The oracles of \cref{sec:cg-pricing} trade rigour for speed:
\textsc{TCG-TopK} and \textsc{TCG-Diag} price a demand in (almost)
shortest-path time but do not, on their own, certify optimality over the full
trajectory set, whereas \textsc{TCG-Exact} certifies each rank at the price
of one layered MIP per demand and iteration. The two regimes are
complementary, and the structure of $\mathrm{RMP}^{\mathsf X}_k$ lets us
combine them without weakening any guarantee: a fast heuristic run
\emph{seeds} the column pool, and a single exact pass \emph{certifies} it.
We call this variant \textsc{TCG-Exact-Warm}.

A cold-started exact run spends its first iterations rediscovering, through
expensive layered MIPs, columns that \textsc{TCG-Diag} would have produced
in shortest-path time. Since a column is an intrinsic object; its
coefficients $\Gamma_{d,a,t}(\pi)$ and $\beta_{d,t}(\pi)$ do not depend on
how it was generated; any feasible trajectory
found heuristically is a legitimate member of the exact run's pool. Seeding
therefore replaces the first, most redundant, exact pricing rounds by a cheap
heuristic phase.

\begin{algorithm}[H]
\caption{\textsc{TCG-Exact-Warm}: heuristic seeding then exact certification
under $\mathrm{RMP}^{\mathsf X}_k$}
\label{alg:cg-warm}
\begin{algorithmic}[1]
\Require formulation $\mathsf X\in\{\textsc{Stela},\textsc{Carla}\}$,
demands $D$, ranks $K$, tolerance $\varepsilon>0$, seeding oracle
$\mathcal O_{\mathrm{seed}}\in\{\textsc{TCG-Diag},\textsc{TCG-TopK}\}$.
\Statex\hrulefill\ \textsc{Seeding phase}\ \hrulefill
\State Run \cref{alg:cg-lex} with oracle $\mathcal O_{\mathrm{seed}}$ and
       formulation $\mathsf X$ to convergence of its inner loop; let
       $\Pi_d^{\mathrm{seed}}$ be the pool it accumulates for each
       demand.
\State Discard from $\Pi_d^{\mathrm{seed}}$ any trajectory not feasible in
       every $G_t$ or exceeding $\texttt{maxSeg}$; set
       $\Pi_d^{\mathrm{pool}}\gets
       \{(\mathrm{IGP},\dots,\mathrm{IGP})\}\cup\Pi_d^{\mathrm{seed}}$.
\Statex\hrulefill\ \textsc{Exact certification phase}\ \hrulefill
\For{$k=1,\dots,K$}
  \State Build $\mathrm{RMP}^{\mathsf X}_k$: core $(\mathcal C)$ plus
         rank block
         $(\mathcal P^{\textsc{S}}_k)$~\eqref{eq:cg-Pk} or
         $(\mathcal P^{\textsc{C}}_k)$~\eqref{eq:cg-carla-Pk}.
  \Repeat
    \State Solve the LP relaxation of $\mathrm{RMP}^{\mathsf X}_k$;
           extract duals $\alpha^{\mathsf X}_d$, $\psi^{\mathsf X}_t$,
           $\mu^{\mathsf X}_k(a,t)$,
           $\nu^{\mathsf X}_j(a,t)$ for $j<k$.
    \State $\bar\mu^{\mathsf X}_k(a,t)\gets
           \mu^{\mathsf X}_k(a,t)+\sum_{j<k}\nu^{\mathsf X}_j(a,t)$;
           \quad form $\theta^{\mathsf X}_{d,k}$
           via~\eqref{eq:cg-theta-stela-k}
           or~\eqref{eq:cg-theta-carla-k};
           \quad set
           $\gamma^{\mathsf X}_t\gets-\psi^{\mathsf X}_t$.
    \State $\mathrm{added}\gets\textsc{false}$
    \For{each demand $d\in D$}
      \State Solve the \emph{exact} layered pricing (EP) with costs
             $\theta^{\mathsf X}_{d,k}$ and budget prices
             $\gamma^{\mathsf X}_t$ (\cref{thm:exact})
             $\to(\pi_d^{\star},\,
             \operatorname{val}(\mathrm{SP}^{\mathsf X}_{d,k}))$.
      \If{$\operatorname{val}(\mathrm{SP}^{\mathsf X}_{d,k})
           <\alpha^{\mathsf X}_d-\varepsilon$}
        \State $\Pi_d^{\mathrm{pool}}\gets
               \Pi_d^{\mathrm{pool}}\cup\{\pi_d^{\star}\}$;
               \quad $\mathrm{added}\gets\textsc{true}$
      \EndIf
    \EndFor
  \Until{$\mathrm{added}=\textsc{false}$}
  \State Solve $\mathrm{RMP}^{\mathsf X}_k$ as a MILP over the pool
        
  \State Recover $\bar\lambda_{k}\gets
         \lambda^{\downarrow}_k(\text{incumbent})$ by evaluation;
         freeze it as an inherited level.
\EndFor
\State \Return routing scheme and
$(\bar\lambda_{1},\dots,\bar\lambda_{k})$.
\end{algorithmic}
\end{algorithm}

\subsection{Numerical experiments}

We evaluate both the \textsc{Stela}-driven and \textsc{Carla}-driven TCGs on the complete \textsc{setAP} benchmark, comprising 125 instances with network sizes ranging from $N=100$ to $N=500$ nodes. For each instance, the warm-start procedure is initialized from the solution produced by the corresponding diagnostic TCG, \textsc{TCG-DIAG}. We impose a wall-clock time limit of 3600 seconds per instance, while each rank subproblem in the integer master is given a time limit of 1000 seconds. The two variants are run independently under the same computational setting. To compare the quality of the solutions produced by the two approaches, we reconstruct the complete sorted load vector obtained by each TCG and perform a pairwise lexicographic comparison. An instance is counted as a win for a method if its sorted load vector is lexicographically smaller than that of the other method, while instances for which the two vectors are equal are counted as ties.~\Cref{fig:winner by N} reports, for each network size $N$, the number of instances won by the \textsc{Stela}-driven TCG, the number of ties, and the number of instances won by the \textsc{Carla}-driven TCG.

\subsubsection{Who wins}
\begin{figure}[H]
    \centering
    \includegraphics[width=0.4\linewidth]{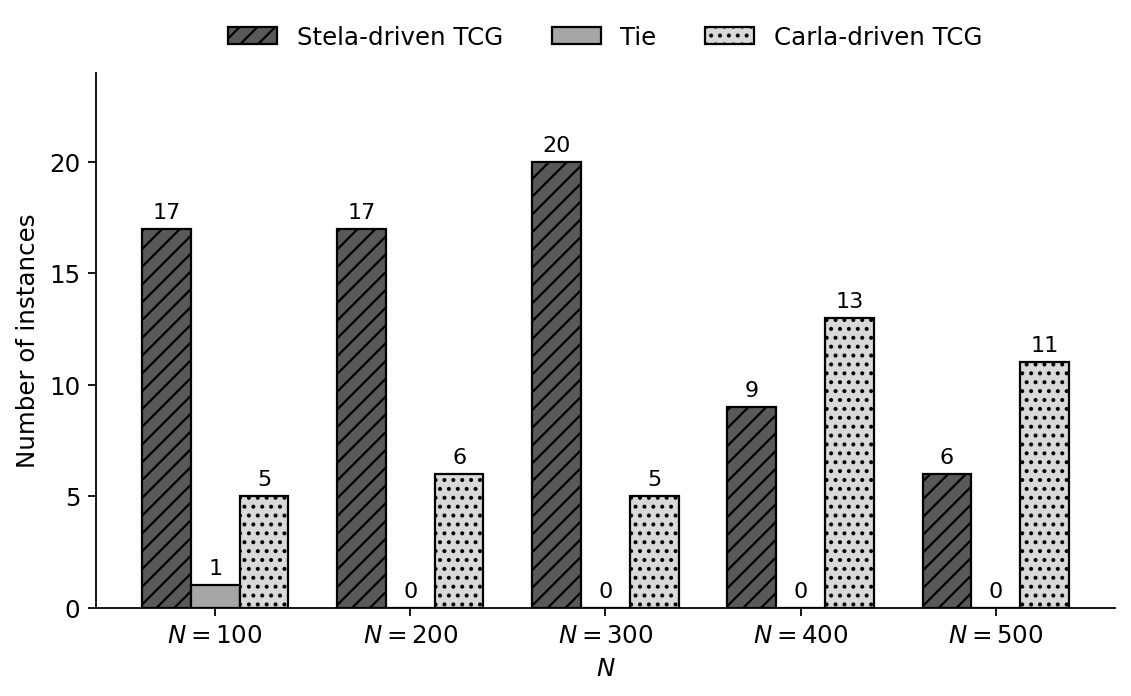}
    \caption{Lexicographic comparison of the \textsc{Stela}-driven and
\textsc{Carla}-driven TCGs by network size $N$. Bars report the number
of instances won by each method or resulting in a tie, based on the sorted load vector.}
    \label{fig:winner by N}
\end{figure}

\subsubsection{Where do they differ}
While the previous comparison of~\Cref{fig:winner by N} identifies which TCG variant yields the lexicographically better solution, it does not indicate where the two solutions start to differ. To further characterize this difference, for each instance we define $k^\star$ as the first rank at which the two complete sorted load vectors are different. Hence, $k^\star=1$ means that the two approaches differ already on the largest load, whereas a larger value of $k^\star$ indicates that their solutions agree on the preceding ranks and differ only at a later level of the lexicographic objective.~\Cref{fig:first_differing_rank_by_N} reports the distribution of $k^\star$ across instances for each network size $N$.
\begin{figure}[H]
    \centering
    \includegraphics[width=0.4\linewidth]{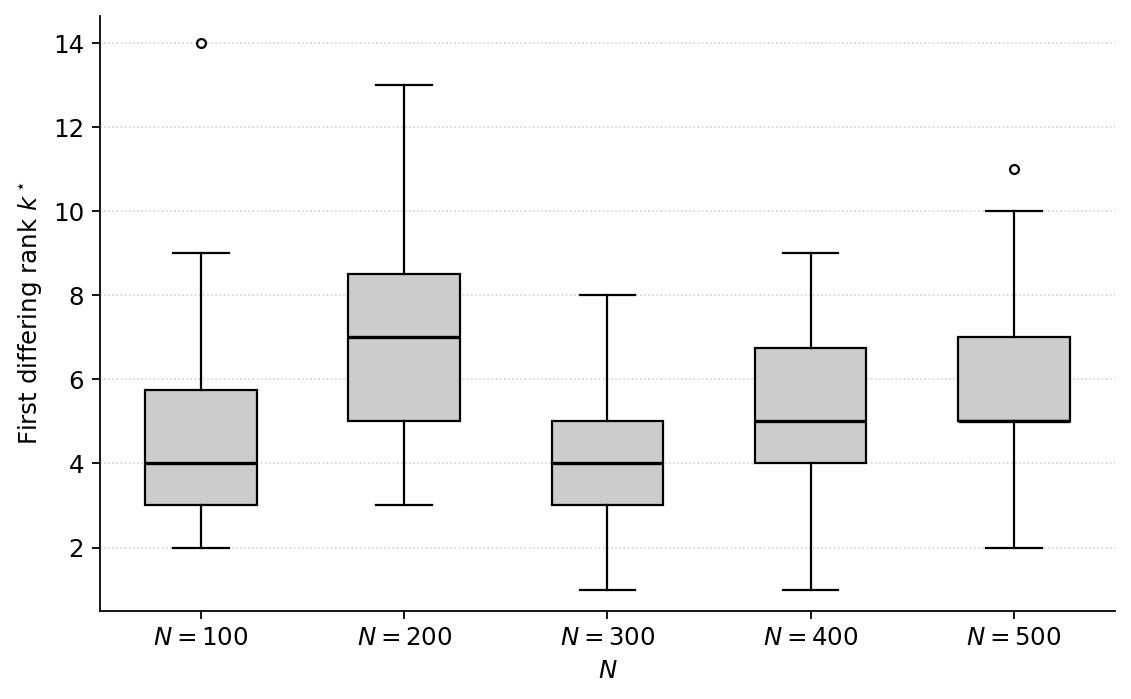}
    \caption{Distribution of the first rank $k^\star$ at which the sorted load vectors produced by the \textsc{Stela}-driven and \textsc{Carla}-driven TCGs differ, reported by network size $N$. Tied instances are excluded since no differing rank exists.}

    \label{fig:first_differing_rank_by_N}
\end{figure}

\subsubsection{By how much}
It is also informative to examine the \emph{magnitude} of the difference
between the two solutions at the first rank $k^\star$ where their
lexicographic load vectors diverge. This complements the previous
analysis, which examines \emph{where} the two approaches first differ,
by showing \emph{by how much} they differ at this rank. ~\Cref{fig:by-how-much}
reports the magnitude of this difference across the different network sizes.
\begin{figure}[H]
    \centering
    \includegraphics[width=0.4\linewidth]{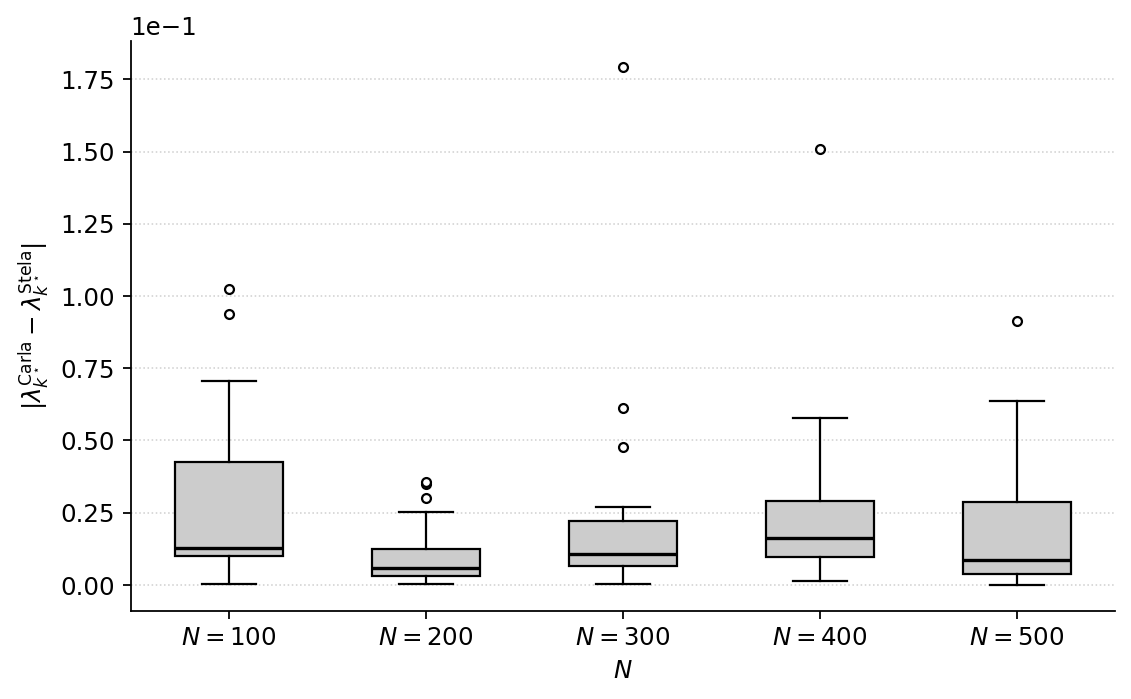}
   \caption{Magnitude of the difference between the \textsc{Stela}-driven
and \textsc{Carla}-driven TCGs at the first rank $k^\star$ where their
lexicographically sorted load vectors differ, reported by network size
$N$. Tied instances are excluded.}
\label{fig:by-how-much}

    \label{fig:by_how_much}
\end{figure}

\subsubsection{Conclusion and discussion}
\label{sec:cg-discussion}

The three views combine into one message. \Cref{fig:winner by N} shows a
size-dependent trend: the \textsc{Stela}-driven TCG wins more often on the small and
medium classes ($N \leq 300$), while the \textsc{Carla}-driven TCG catches up and
slightly leads on the two largest ($N = 400, 500$). But \Cref{fig:first_differing_rank_by_N}
and \Cref{fig:by-how-much} show that this sign says little about how far apart the
solutions actually are. Whichever variant wins, it wins early; the median first
differing rank $k^\star$ stays between $4$ and $7$ regardless of $N$, so the two
approaches agree on the largest loads and in particular almost always on the maximum
link utilisation. And they win by very little: at the decisive rank the two loads
differ by about one point of link utilisation on the median, well under three points
for most instances. Since lexicographic comparison is all-or-nothing, an arbitrarily
small gap at rank $k^\star$ is enough to label an instance a win, so the bar counts
overstate the real distance between the two solutions.

The practical conclusion is therefore simple: the two variants produce routing
schemes of comparable quality. \textsc{Stela}-driven TCG is a marginally safer
default at small and medium sizes, but the gap is small in magnitude even when it is
consistent in sign, and it vanishes on the largest instances. 

Both variants remain heuristics because column generation is applied only at the
root of the integer master, without branching to close the resulting integrality
gap; the next section (\Cref{bp-sec:main}) develops a branch-and-price extension that embeds the
pricing within the search tree to address this.

%% file: sections/06-branchprice.tex
\section{A Branch-and-Price Extension of Trajectory Column Generation}
\label{bp-sec:main}

\subsection{State of the art and positioning}
\label{bp-sec:sota}
\subsubsection{State of the art}
The combination of Dantzig--Wolfe decomposition and column generation is a
standard approach for large-scale linear and integer programs whose natural
formulations contain an exponential number of variables. The decomposition
principle of Dantzig and Wolfe replaces the original formulation by a master
problem whose columns represent feasible solutions of independent
subproblems, while column generation avoids explicit enumeration of these
columns by solving a pricing problem driven by the dual variables of the
restricted master problem~\cite{dantzig1960}. For integer programs, however,
column generation alone provides a certificate only for the linear relaxation
of the Dantzig--Wolfe reformulation. Exact integer optimization requires the
column-generation procedure to be embedded within a branch-and-bound
framework, leading to what is now known as
branch-and-price~\cite{vanderbeck1996,barnhart1998}.

A central difficulty in branch-and-price is that branching decisions cannot
be chosen independently of pricing. A conventional branch on a master
variable may exclude a single column from an exponentially large family and
therefore be difficult to propagate efficiently to the pricing problem. More
generally, an arbitrary branching constraint can substantially modify the
structure of the subproblem and destroy the algorithmic properties that make
column generation attractive. This interaction between branching and pricing
has motivated both problem-specific branching schemes and generic approaches
designed to preserve the structure of the pricing
problem~\cite{barnhart1998,vanderbeck2011}. In particular, Vanderbeck's
generic branching framework emphasises branching decisions that can be
enforced directly in the underlying formulation or in the pricing problem,
avoiding the explicit enumeration of excluded columns and thereby preserving
the decomposition structure~\cite{vanderbeck2011}.

The present work lies within this line of research but considers a different
structural setting. The columns of the Dantzig--Wolfe reformulation are
complete multi-period trajectories rather than single-period paths or
classical vehicle routes. A trajectory simultaneously determines segment
usage at every time period and the resulting inter-period reconfiguration
cost. Consequently, a branching decision on a trajectory variable is
naturally expressed through the underlying segment-usage variables of the
compact formulation. We exploit this correspondence to instantiate an
original-variable branching principle in the trajectory space: branching on
an aggregate segment-usage quantity $X_{d,t,e}$ is equivalent, in the
pricing problem, to fixing the corresponding segment-usage variable $x^t_e$
to zero or one. The resulting pricing problem therefore retains its layered
structure.

The distinction between the generic principle and the present contribution
is important. We do not claim that branching on original variables, or
branching compatible with pricing, is new. These ideas are well established
in the branch-and-price
literature~\cite{vanderbeck1996,barnhart1998,vanderbeck2011}. Our
contribution is instead to characterise their exact realisation for the
trajectory formulation underlying the TCG framework of \cref{sec:cg-lex}. In
particular, under a simple-path assumption, we establish that segment-usage
branching is valid, can be propagated to pricing by variable fixing, is
complete for the trajectory variables, and yields a finite branch-and-bound
tree. We then integrate this branch-and-price mechanism into the sequential
lexicographic decomposition for both $\mathrm{RMP}^{\textsc{Stela}}_k$ and
$\mathrm{RMP}^{\textsc{Carla}}_k$, and distinguish explicitly between the
certificates delivered by root column generation, price-and-branch over a
generated pool, and full branch-and-price.

This distinction is particularly relevant for the computational
interpretation of \textsc{TCG-Exact}. Exact pricing at the root certifies
the LP optimum of the trajectory master, whereas solving the resulting
restricted integer master certifies integer optimality only over the
generated column pool. Global integer optimality requires pricing to be
re-run under the restrictions induced by every branch-and-bound node. The
full branch-and-price algorithm developed in this section is therefore used
primarily as an exact reference method: its role is to identify the gap, if
any, between the restricted-pool integer optimum delivered by
\textsc{TCG-Exact} and the true integer optimum of the trajectory
formulation.

\subsubsection{What column generation certifies}
\label{bp-sec:certif-gap}

The trajectory column generation (TCG) scheme of the previous sections
solves, at each rank of the sequential decomposition, the \emph{linear
relaxation} of the trajectory master $\mathrm{RMP}^{\mathsf X}_k$ to proven
optimality. This section asks the question that this leaves open: \emph{what
exactly does that column generation certify, and what is needed to strengthen
the certificate to global integer optimality?}

The answer separates three claims that are routinely conflated. Column
generation at a fixed rank certifies the LP value of
$\mathrm{RMP}^{\mathsf X}_k$; it does \emph{not} certify integer
optimality, because the root column pool need not contain the trajectories
used by an integer optimum. Price-and-branch: optimising the integer
master over the root pool, certifies integer optimality \emph{over that
pool} and nothing more. Only branch-and-price, in which pricing is re-run
under each node's restrictions, certifies the global integer optimum of the
rank. This section develops that last scheme and states precisely, at each
step, which certificate is delivered.

The contribution over the plain TCG scheme is threefold. First
(\cref{bp-sec:certif-detail}), we identify the exact gap between the LP
certificate TCG delivers and the integer optimum. Second
(\cref{bp-sec:branching}), we show that the natural rule of branching on
master variables, although valid, destroys the pricing structure, and we
replace it by branching on the aggregate segment-usage quantities, which
are exactly the original compact variables $x^{dt}_{ij}$,  proving that
this rule is valid, complete, finite, and absorbed by the pricing problem at
no structural cost.

\subsubsection{Setting recalled}
\label{bp-sec:recall}
 
We recall only what the branch-and-price argument needs; all derivations are
those of the trajectory column generation sections and are not repeated
here. Throughout, $\mathsf X\in\{\textsc{Stela},\textsc{Carla}\}$ indexes
the formulation, and $e=(i,j)$ denotes a segment.
 
\paragraph{Trail restriction.}
We work under the standing modelling assumption of \cref{chap:model}: within
each period, the path of a trajectory is a \emph{trail} in the segment
digraph, i.e.\ no segment $(i,j)$ is used more than once in a period, while a
node may recur as a waypoint. Thus, for example $\langle a,b,c,d,c,e\rangle$ is admissible
(node $c$ repeats, all segments distinct) whereas $\langle a,b,a,b\rangle$ is
not (segment $(a,b)$ repeats). This is exactly what makes the segment-usage
indicator $\delta(p^{(t)},e)\in\{0,1\}$, hence the compact variable
$x^{dt}_{ij}\in\{0,1\}$, so that the compact model. The trail restriction is
enforced in the pricing MIP~(EP) of \cref{sec:cg-exact} by allowing each
segment digraph arc at most once per period. All exactness results below are
stated over this (trail-restricted) T-ASR.
 
\paragraph{Trajectories and column coefficients.}
A \emph{trajectory} of demand $d\in D$ is a choice of one feasible segment
path at every period,
$\pi=(p^{(0)},\dots,p^{(h-1)})\in\Pi_d$ (\cref{def:cg-traj}), where
$\Pi_d$ is finite and exponentially large in $h$ and $\texttt{maxSeg}$. For
a segment $e=(i,j)$ we write $\delta(p^{(t)},e)\in\{0,1\}$ for its usage in
$p^{(t)}$. The column coefficients of $\pi$ are the load footprint
$\Gamma_{d,a,t}(\pi)$ of~\eqref{eq:cg-Gamma} and the reconfiguration cost
$\beta_{d,t}(\pi)$ of~\eqref{eq:cg-beta}; once $\pi$ is fixed, both are
constants. All combinatorial difficulty is confined to the pricing problem.
 
\paragraph{The rank-$k$ restricted master $\mathrm{RMP}^{\mathsf X}_k$.}
As established in \cref{sec:cg-lex}, the rank-$k$ restricted master for
formulation $\mathsf X$ is
\[
\mathrm{RMP}^{\mathsf X}_k
\;=\;(\mathcal C)\;+\;(\mathcal P^{\mathsf X}_k),
\]
where the core $(\mathcal C)$ of~\eqref{eq:cg-core} contains the selection
rows $(\mathrm S)$, the load-assembly rows $(\mathrm L)$ and the budget rows
$(\mathrm B)$, and the rank block $(\mathcal P^{\mathsf X}_k)$ is
$(\mathcal P^{\textsc{S}}_k)$~\eqref{eq:cg-Pk} for $\mathsf X=\textsc{Stela}$ or
$(\mathcal P^{\textsc{C}}_k)$~\eqref{eq:cg-carla-Pk} for
$\mathsf X=\textsc{Carla}$.
 
\paragraph{Reduced cost and pricing at a node.}
At a fixed rank $k$ and a fixed branch-and-bound node $u$, the LP relaxation
of $\mathrm{RMP}^{\mathsf X}_k$ restricted to the trajectories
$\Pi_d(u)\subseteq\Pi_d$ admissible at $u$ yields optimal duals
$\alpha^{\mathsf X}_d$, $\psi^{\mathsf X}_t$, $\mu^{\mathsf X}_k(a,t)$ and
$\nu^{\mathsf X}_j(a,t)$ for $j<k$. The stratified dual
$\bar\mu^{\mathsf X}_k(a,t)$ of~\eqref{eq:cg-stratified-stela}
or~\eqref{eq:cg-stratified-carla}, the effective segment cost
$\theta^{\mathsf X}_{d,k}(e,t)$ of~\eqref{eq:cg-theta-stela-k}
or~\eqref{eq:cg-theta-carla-k}, and the budget price
$\gamma^{\mathsf X}_t:=-\psi^{\mathsf X}_t\ge0$ are formed as in
\cref{sec:cg-lex}. The reduced cost of a trajectory
$\pi\in\Pi_d$ is then~\eqref{eq:cg-rc-rankk}:
\begin{equation}
  \rc^{\mathsf X}_{d,k}(\pi) = -\alpha^{\mathsf X}_d
    + \sum_{t\in T}\sum_{e\in\segs(p^{(t)})}\theta^{\mathsf X}_{d,k}(e,t)
    + \sum_{t\in T^*}\gamma^{\mathsf X}_t\,\dist(p^{(t-1)},p^{(t)}).
  \label{bp-eq:rc}
\end{equation}
Writing $f^{\mathsf X}_{d,k}(\pi)$ for the two rightmost sums, the
\emph{node pricing problem} for demand $d$ restricted to the node's
trajectory set $\Pi_d(u)$ is
\begin{equation}
  \operatorname{val}(\mathrm{SP}^{\mathsf X}_{d,k}(u))
  = \min_{\pi\in\Pi_d(u)} f^{\mathsf X}_{d,k}(\pi),
  \label{bp-eq:val}
\end{equation}
and $d$ admits a negative-reduced-cost trajectory at $u$ if and only if
$\operatorname{val}(\mathrm{SP}^{\mathsf X}_{d,k}(u)) <
\alpha^{\mathsf X}_d$.
 
The pricing problem is solved as the layered MIP (EP)
of \cref{sec:cg-exact}, carrying binary usage variables $x^t_e$ with
$x^t_e=\delta(p^{(t)},e)$ at optimum. The only property we need downstream
is the following: branching decisions enter the pricing problem as
\emph{variable fixings} on these usage variables,
\begin{equation}
  \text{$e$ forbidden for $d$ at $t$}\Rightarrow x^t_e=0,
  \qquad
  \text{$e$ required for $d$ at $t$}\Rightarrow x^t_e=1,
  \label{bp-eq:node-fix}
\end{equation}
each equivalent to deleting or forcing one arc of the segment digraph,
leaving the layered structure intact.
 
Finally, we recall that the column pool is global across ranks.
 
\begin{lemma}
\label{bp-lem:pool}
A trajectory generated at rank $k$ remains feasible and correctly costed at
every rank $k'\neq k$: the coefficients $\Gamma_{d,a,t}(\pi)$
and $\beta_{d,t}(\pi)$ depend on $\pi$ alone
(\cref{prop:cg-monotone}), and the rank index enters
$\mathrm{RMP}^{\mathsf X}_k$ only through the rank block
$(\mathcal P^{\mathsf X}_k)$, which contains no $\xi_{d,\pi}$.
\end{lemma}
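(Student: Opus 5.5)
The plan is to separate the claim into two parts: the column's data are independent of the rank, and the rank-$k'$ master accepts any trajectory of $\Pi_d$ as an admissible column. I will then combine them using the validity argument of \Cref{prop:cg-valid}.

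First, I fix $\pi=(p^{(0)},\dots,p^{(h-1)})\in\Pi_d$, generated at some rank $k$. Its coefficients are the load footprint \eqref{eq:cg-Gamma} and the reconfiguration cost \eqref{eq:cg-beta}. Reading off these definitions, $\Gamma_{d,a,t}(\pi)$ depends only on $\nu(d,t)$, $c(a)$, the precomputed split coefficients $r(i,j,a,t)$ and the indicators $\delta(p^{(t)},ij)$. Likewise, $\beta_{d,t}(\pi)=\dist(p^{(t-1)},p^{(t)})$ depends only on the paths. No rank index, certified prefix $\bar\lambda_j$, or dual value enters, so the column vector of $\xi_{d,\pi}$ in the rows $(\mathrm S),(\mathrm L),(\mathrm B)$ is identical at every rank. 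This is exactly the content of \Cref{prop:cg-monotone}, which I will simply invoke. Feasibility of $\pi$ as a trajectory (one trail per period in $G_t$, at most $\texttt{maxSeg}$ segments) is likewise a property of $\pi$ alone, by \Cref{def:cg-traj}.

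Second, I inspect the master $\mathrm{RMP}^{\mathsf X}_{k'}=(\mathcal C)+(\mathcal P^{\mathsf X}_{k'})$ for both encodings. The incidence tables \Cref{tab:cg-incidence-stela-k} and \Cref{tab:cg-incidence-carla-k} show that the rank block contains the variables $\lambda,z,y$ or $\lambda,z_k,u$ but never $\xi_{d,\pi}$. Moreover, $\xi_{d,\pi}$ has zero objective coefficient. Hence the only rows in which $\pi$ appears are the core rows, whose form is rank-independent. It follows that inserting $\pi$ into the pool at rank $k'$ produces a legitimate column with the correct coefficients.

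Third, I note that the notion of "correctly costed" in the reduced-cost sense is also rank-uniform. \Cref{thm:cg-common} expresses $\rc^{\mathsf X}_{d,k'}(\pi)$ through the same $(\Gamma,\beta)$, with only the duals changing. Finally, whenever $\xi$ is integral and selects $\pi$, \Cref{prop:cg-valid} shows that the induced loads and budget consumption are the true ones, whatever $k'$ is. The statement also covers $k'<k$, but nothing in the argument uses the order of ranks, so this case is handled identically.

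The main obstacle is only bookkeeping. I must ensure that no rank-dependent constraint (for instance the global bound $\lambda\le\bar\lambda_1$, or the node fixings \eqref{bp-eq:node-fix}) is mistakenly viewed as a property of the column. I will stress that such restrictions act on the auxiliary variables or on the admissible set $\Pi_d(u)$, not on the coefficients of $\pi$. Consequently "feasible" here means being an element of $\Pi_d$ with correct coefficients, not that $\xi_{d,\pi}=1$ is feasible in every master.
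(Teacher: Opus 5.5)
Your proposal is correct and takes the same route as the paper. The paper's justification rests on exactly the two facts you isolate: $\Gamma_{d,a,t}(\pi)$ and $\beta_{d,t}(\pi)$ are intrinsic to $\pi$ (\Cref{prop:cg-monotone}), and the rank block $(\mathcal P^{\mathsf X}_{k'})$ never contains $\xi_{d,\pi}$, so the column is identical in every $\mathrm{RMP}^{\mathsf X}_{k'}$. Your added precision, that ``feasible'' means admissible as a column rather than $\xi_{d,\pi}=1$ being feasible in every master or at every node $u$, is a useful clarification the paper leaves implicit.
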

 
\subsubsection{The gap between LP and integer certificates}
\label{bp-sec:certif-detail}
 
Column generation at a fixed rank solves the LP relaxation of
$\mathrm{RMP}^{\mathsf X}_k$ exactly: on exit, no trajectory in
$\bigcup_d\Pi_d$ has negative reduced cost, so the restricted-master LP
optimum is the full-master LP optimum (\cref{cor:lpk}). It says nothing
about integrality.
 
The gap to the integer optimum is not a formality. At the root LP optimum a
demand may split fractionally between two trajectories, neither of which is
used by any integer optimum; the trajectory that an integer optimum needs
may never have had negative reduced cost \emph{at the root duals}, and so
may never enter the pool. Optimising the integer master over that pool;
price-and-branch; then provably certifies the LP value and provably
certifies nothing about the integer value. Closing the gap requires
re-pricing under each branching node's restrictions, at which point the duals
differ and the missing columns can appear. That is branch-and-price,
developed next.
 
\subsection{Branching}
\label{bp-sec:branching}
 
To obtain global integer optimality we embed column generation in a
branch-and-bound tree in which pricing is re-run at every node under that
node's restrictions. Everything hinges on a branching rule that partitions
the integer feasible set \emph{without destroying the pricing problem}.

\subsubsection{Why branching on master variables fails}
\label{bp-sec:col-branching}
 
The obvious rule branches on a fractional master variable
$0<\xi^{\star}_{d,\hat\pi}<1$, creating children $\xi_{d,\hat\pi}=1$ and
$\xi_{d,\hat\pi}=0$. It is valid; in any integer solution
$\xi_{d,\hat\pi}\in\{0,1\}$, so exactly one child holds; but unusable,
for two reasons.
 
The tree is unbalanced: the child $\xi_{d,\hat\pi}=1$ fixes a demand
completely, while $\xi_{d,\hat\pi}=0$ excludes a single trajectory out of
the exponential set $\Pi_d$, so the bound barely moves. Worse, the pricing
problem is destroyed: the restriction
$\Pi_d(u')=\Pi_d\setminus\{\hat\pi\}$ is not a variable fixing but a
no-good inequality
\begin{equation}
  \sum_{(t,e):\hat x^t_e=1}(1-x^t_e)
  \;+\; \sum_{(t,e):\hat x^t_e=0}x^t_e \;\ge\; 1,
  \label{bp-eq:nogood}
\end{equation}
and one such dense row accumulates per excluded trajectory along the path to
the node. At depth $\delta$ the pricing MIP carries $\delta$ no-good rows
and the layered structure that made~\eqref{bp-eq:val} tractable is gone.
This is the classical objection to branching on Dantzig--Wolfe master
variables~\cite{barnhart1998}. We record it and do not use it.

\subsubsection{Segment-usage branching}
\label{bp-sec:x-branching}
 
For a demand $d$, a period $t\in T$ and a segment $e=(i,j)$, define the
\emph{aggregate segment-usage}
\begin{equation}
  X_{d,t,e} = \sum_{\pi\in\Pi_d(u)}\delta(p^{(t)}_\pi,e)\,\xi_{d,\pi}.
  \label{bp-eq:X}
\end{equation}
This is not a new variable. With $e=(i,j)$,
\begin{equation}
  X_{d,t,e}
  \;=\;\sum_{\pi\in\Pi_d(u)}\delta(p^{(t)}_\pi,e)\,\xi_{d,\pi}
  \;=\;x^{dt}_{ij},
  \label{bp-eq:X-is-x}
\end{equation}
i.e.\ $X_{d,t,e}$ is exactly the compact segment-usage variable
$x^{dt}_{ij}$ of the original formulation (\cref{chap:model}), read in the
reformulated space; equivalently, $x^{dt}_{ij}$ is the projection of the
Dantzig--Wolfe variables $\xi$ onto the compact space. Segment-usage
branching is therefore branching on the \emph{original decision variables}
themselves, and every result below is stated in their terms. We use
$X_{d,t,e}$ and $x^{dt}_{ij}$ interchangeably, with $e=(i,j)$.
 
\begin{lemma}
\label{bp-lem:x-valid}
In every integer feasible solution of a node, $X_{d,t,e}\in\{0,1\}$; hence
the children $X_{d,t,e}=0$ and $X_{d,t,e}=1$ partition the integer feasible
solutions of the parent.
\end{lemma}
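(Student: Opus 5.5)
The argument runs directly from the definition~\eqref{bp-eq:X} together with the selection rows $(\mathrm S)$ of the node master, which are unchanged from \eqref{eq:cg-core-S}. First, I will fix an integer feasible solution $\xi$ at node $u$, so that every $\xi_{d,\pi}\in\{0,1\}$. The selection row $\sum_{\pi\in\Pi_d(u)}\xi_{d,\pi}=1$, together with integrality and nonnegativity, forces exactly one trajectory $\pi_d\in\Pi_d(u)$ with $\xi_{d,\pi_d}=1$, all others being zero. This is the same collapse argument already used in the proof of \Cref{prop:cg-valid}.

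Substituting into~\eqref{bp-eq:X}, the sum reduces to a single term, $X_{d,t,e}=\delta(p^{(t)}_{\pi_d},e)$. Under the trail restriction recalled in \cref{bp-sec:recall}, no segment occurs twice in a period path. Hence $\delta(p^{(t)}_{\pi_d},e)$ is a genuine $0/1$ indicator rather than a multiplicity, and so $X_{d,t,e}\in\{0,1\}$. This is the one place where a hypothesis beyond the master rows is used. Without the trail restriction, a count could reach $2$ and the dichotomy would fail, so I would state explicitly that it is invoked here.

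For the partition claim, I would argue as follows. Since every integer feasible solution of the parent has $X_{d,t,e}\in\{0,1\}$, it satisfies exactly one of the two child constraints $X_{d,t,e}=0$ and $X_{d,t,e}=1$. The children therefore cover the parent's integer set and are disjoint. Because each child only adds a constraint, no integer solution outside the parent is introduced.

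I expect no real obstacle. The only delicate points are, first, making explicit that the node's admissible set $\Pi_d(u)$ still carries the selection row, which branching preserves because it only fixes usage variables via~\eqref{bp-eq:node-fix}. The second is the reliance on trails for $\delta\in\{0,1\}$.
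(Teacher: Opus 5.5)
Your proposal is correct and is the paper's argument: the selection row $(\mathrm S)$ together with integrality leaves exactly one trajectory $\pi_0$ with $\xi_{d,\pi_0}=1$. Hence $X_{d,t,e}$ reduces to $\delta(p^{(t)}_{\pi_0},e)\in\{0,1\}$, and each integer solution satisfies exactly one child constraint. Your appeal to the trail restriction is harmless and matches how the paper frames it in its recap, but it is not needed here, since $\delta$ is defined in the model as a $0/1$ containment indicator and so never records a multiplicity of $2$.
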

 
\begin{proof} \[ \begin{array}{:c} \begin{minipage}{0.97\textwidth}
By the selection row $(\mathrm S)$~\eqref{eq:cg-core-S}, exactly one
$\pi_0\in\Pi_d(u)$ has $\xi_{d,\pi_0}=1$; then~\eqref{bp-eq:X} reduces to
$X_{d,t,e}=\delta(p^{(t)}_{\pi_0},e)\in\{0,1\}$, so every integer solution
satisfies exactly one child constraint.
\end{minipage} \end{array} \]\end{proof}
 
\begin{lemma}
\label{bp-lem:x-compat}
The constraint $X_{d,t,e}=0$ (resp.\ $=1$) is equivalent to the restriction
$\Pi_d(u')=\{\pi\in\Pi_d(u):\delta(p^{(t)}_\pi,e)=0\}$ (resp.\ $=1$),
imposed in the pricing problem by the single fixing $x^t_e=0$ (resp.\
$x^t_e=1$) of~\eqref{bp-eq:node-fix}. The decision affects only demand~$d$
and does not couple demands.
\end{lemma}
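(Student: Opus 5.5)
The plan is to prove the two directions of the equivalence first at the level of integer master solutions, and then at the level of the pricing problem. Fix the node $u$ and its child $u'$ obtained by imposing $X_{d,t,e}=0$; the case $X_{d,t,e}=1$ is symmetric. By \cref{bp-lem:x-valid}, in any integer solution exactly one $\pi_0\in\Pi_d(u)$ has $\xi_{d,\pi_0}=1$, and then $X_{d,t,e}=\delta(p^{(t)}_{\pi_0},e)$. Consequently, on integer points, the condition $X_{d,t,e}=0$ holds if and only if the selected trajectory satisfies $\delta(p^{(t)}_{\pi_0},e)=0$. This is precisely membership of $\pi_0$ in $\Pi_d(u')$.

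For the LP relaxation, which is what the master actually solves at $u'$, I will make an additional observation. Because $\delta\in\{0,1\}$ and $\xi\ge0$, the row $\sum_\pi\delta(p^{(t)}_\pi,e)\xi_{d,\pi}=0$ forces $\xi_{d,\pi}=0$ for every $\pi$ with $\delta=1$. Imposing the row is therefore the same as deleting those columns, so the LP over $\Pi_d(u)$ with the row coincides with the LP over $\Pi_d(u')$. In the case $X=1$, combining the row with the selection row $(\mathrm S)$ gives $\sum_{\pi:\delta=0}\xi_{d,\pi}=0$, and the same conclusion follows. The branching row thus never needs to be kept in the master: it generates no dual, and the reduced-cost formula \eqref{bp-eq:rc} is unchanged at $u'$.

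On the pricing side, I will invoke \cref{thm:exact} together with \cref{lem:usage}. At any feasible point of (EP), the usage variable $x^t_e$ equals $\mathbf 1[e\in\segs(p^{(t)})]=\delta(p^{(t)},e)$. Adding the fixing $x^t_e=0$ (resp.\ $=1$) to (EP) therefore cuts out exactly the trajectories with $\delta(p^{(t)},e)=1$ (resp.\ $=0$). The objective-preserving bijection of \cref{thm:exact} then restricts to a bijection between the fixed (EP) and $\Pi_d(u')$, so the fixed (EP) solves $\mathrm{SP}^{\mathsf X}_{d,k}(u')$. Nested branching decisions accumulate as additional fixings along the path to the node.

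The main point to check carefully is the case $e\notin E_d(t)$. There the fixing $x^t_e=0$ is already implied by \eqref{ep:xzero}, while $x^t_e=1$ is infeasible; this matches $\Pi_d(u')=\varnothing$, and the child is pruned. For the final claim, that the decision affects only demand $d$, I will note that $X_{d,t,e}$ involves only the columns $\xi_{d,\cdot}$ and that the fixing enters only the subproblem of demand $d$. The other demands' pools and pricing problems are untouched, and coupling with them continues to occur solely through the core rows $(\mathrm L)$ and $(\mathrm B)$.
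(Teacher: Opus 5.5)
Your proposal is correct and follows essentially the same route as the paper. Nonnegativity of $\xi$ together with $\delta(p^{(t)}_\pi,e)\in\{0,1\}$ makes the branching row equivalent to deleting the incompatible columns from $\Pi_d(u)$, and the identity $x^t_e=\delta(p^{(t)},e)$ at feasible points of (EP) turns this into a single fixing that involves only demand~$d$; your extras (the explicit use of the selection row $(\mathrm S)$ for $X_{d,t,e}=1$, which the paper covers only by ``symmetric'', and the treatment of $e\notin E_d(t)$ via \eqref{ep:xzero}) are sound refinements rather than a different argument.
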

 
\begin{proof} \[ \begin{array}{:c} \begin{minipage}{0.97\textwidth}
Consider $X_{d,t,e}=0$. With $\xi\ge0$,
$\delta(p^{(t)}_\pi,e)\in\{0,1\}$ and
$\sum_\pi\xi_{d,\pi}=1$, the equation
$\sum_\pi\delta(p^{(t)}_\pi,e)\,\xi_{d,\pi}=0$ forces $\xi_{d,\pi}=0$ for
every $\pi$ with $\delta(p^{(t)}_\pi,e)=1$, and any $\xi$ supported on
$\{\pi:\delta(p^{(t)}_\pi,e)=0\}$ satisfies it; the constraint is thus
equivalent to removing those trajectories from $\Pi_d(u)$. Since the pricing
usage variables satisfy $x^t_e=\delta(p^{(t)},e)$, this is expressed exactly
by $x^t_e=0$. The case $X_{d,t,e}=1$ is symmetric. Both~\eqref{bp-eq:X}
and the fixing involve only~$d$.
\end{minipage} \end{array} \]\end{proof}
 
\cref{bp-lem:x-compat} is the whole point: after branching, the pricing
problem is the same layered MIP with one arc deleted or forced; no
no-good rows, no structural degradation, and the same warm-starts apply.
This is exactly what column branching does not provide.

\subsubsection{Completeness}
\label{bp-sec:completeness}
 
Validity alone is not enough; we must know that when no $X$ is fractional,
an integer routing is already in hand. Under the trail restriction, distinct
trajectories may share the same segment sets at every period; e.g.\
$\langle s,h,x,h,y,h,t\rangle$ and $\langle s,h,y,h,x,h,t\rangle$; , so the
master variable $\xi$ may stay fractional even when every $X_{d,t,e}$ is
integral. This is harmless: by~\eqref{bp-eq:X-is-x} these trajectories induce
the \emph{same} values $x^{dt}_{ij}$, hence the same routing, and the object
T-ASR optimises is $x$, not $\xi$. The following lemma makes this precise,
and it is the reason the branching rule targets integrality of $X$ (that is,
of the original variables $x$) rather than of $\xi$.
 
\begin{lemma}
\label{bp-lem:coeff-equiv}
The column coefficients of a trajectory depend only on its \emph{signature}
$\sigma_d(\pi):=\big(\delta(p^{(t)}_\pi,e)\big)_{t\in T,\,e}
\in\{0,1\}^{T\times E}$: if $\sigma_d(\pi)=\sigma_d(\pi')$ then
$\Gamma_{d,a,t}(\pi)=\Gamma_{d,a,t}(\pi')$ for all $a,t$ and
$\beta_{d,t}(\pi)=\beta_{d,t}(\pi')$ for all $t$. Consequently $\pi$ and
$\pi'$ have identical columns in $\mathrm{RMP}^{\mathsf X}_k$ at every rank.
\end{lemma}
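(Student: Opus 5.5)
The plan is to read both families of coefficients directly off their definitions, \eqref{eq:cg-Gamma} and \eqref{eq:cg-beta}, and check that each is a function of the indicators $\delta(p^{(t)}_\pi,e)$ alone, that is, of the signature $\sigma_d(\pi)$. The point to keep in mind is that the ordering of segments inside a path never enters either formula; only the set of used segments does.

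\emph{Load footprint.} Fix $(a,t)\in\calA_T$. By \eqref{eq:cg-Gamma},
\[
\Gamma_{d,a,t}(\pi)=\frac{\nu(d,t)}{c(a)}\sum_{i,j\in V}r(i,j,a,t)\,\delta\bigl(p^{(t)}_\pi,ij\bigr).
\]
Here $\nu(d,t)$, $c(a)$ and the split coefficients $r(i,j,a,t)$ are instance constants that do not depend on $\pi$; this is the precomputability observation of \Cref{lem:ecmp}. The only quantities that depend on $\pi$ are the $\delta(p^{(t)}_\pi,ij)$, and these are coordinates of $\sigma_d(\pi)$. So if $\sigma_d(\pi)=\sigma_d(\pi')$, every term in the sum coincides and $\Gamma_{d,a,t}(\pi)=\Gamma_{d,a,t}(\pi')$.

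\emph{Reconfiguration cost.} For $t\in T^*$, \eqref{eq:cg-beta} gives
\[
\beta_{d,t}(\pi)=\sum_{i,j}\bigl|\delta(p^{(t)}_\pi,ij)-\delta(p^{(t-1)}_\pi,ij)\bigr|.
\]
This again uses only the coordinates of $\sigma_d(\pi)$ at periods $t-1$ and $t$, so it is equal for $\pi$ and $\pi'$. One detail needs care: the second expression in \eqref{eq:cg-beta}, $\dist(p^{(t-1)}_\pi,p^{(t)}_\pi)$, is itself defined through the $\delta$'s in the paragraph on the budget. It is therefore not a finer invariant of the path, and no ordering information slips in through it.

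\emph{Identical columns.} By \Cref{tab:cg-incidence-stela-k} and \Cref{tab:cg-incidence-carla-k}, the column of $\xi_{d,\pi}$ in $\mathrm{RMP}^{\mathsf X}_k$ has entry $+1$ in $(\mathrm S)_d$, $-\Gamma_{d,a,t}(\pi)$ in $(\mathrm L)_{a,t}$, $\beta_{d,t}(\pi)$ in $(\mathrm B)_t$, zero objective coefficient, and no entry in the rank block. The same holds at rank $1$ by \Cref{tab:cg-incidence-stela,tab:cg-incidence-carla}. Every entry is therefore a signature-determined quantity, so the two columns coincide entrywise, for every $k$ and for $\mathsf X\in\{\textsc{Stela},\textsc{Carla}\}$. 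This is consistent with \Cref{bp-lem:pool}.

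There is no real obstacle here. The only step requiring attention is confirming that neither formula secretly uses the order of segments within a path, and inspecting the definitions settles this.
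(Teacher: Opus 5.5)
Your proposal is correct and follows the paper's proof: both read $\Gamma_{d,a,t}$ and $\beta_{d,t}$ off their definitions as functions of the indicators $\delta(p^{(t)}_\pi,e)$ alone, and both then note that $\xi_{d,\pi}$ enters $\mathrm{RMP}^{\mathsf X}_k$ only through $(\mathrm S)$, $(\mathrm L)$ and $(\mathrm B)$ with these coefficients. Your entrywise check against the incidence tables at rank $1$ and rank $k$ spells out in more detail the paper's one-line remark that every core and rank row references $\pi$ only through these coefficients.
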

 
\begin{proof} \[ \begin{array}{:c} \begin{minipage}{0.97\textwidth}
By~\eqref{eq:cg-Gamma},
$\Gamma_{d,a,t}(\pi)=\sum_{e=(i,j)}r(i,j,a,t)\,\nu(d,t)\,
\delta(p^{(t)}_\pi,e)$, and by~\eqref{eq:cg-beta},
$\beta_{d,t}(\pi)=\sum_e|\delta(p^{(t)}_\pi,e)-\delta(p^{(t-1)}_\pi,e)|$;
both are functions of $\sigma_d(\pi)$ alone. Every core and rank row of
$\mathrm{RMP}^{\mathsf X}_k$ references $\pi$ only through these coefficients
together with the unit selection coefficient, so the two columns coincide.
\end{minipage} \end{array} \]\end{proof}
 
\begin{lemma}
\label{bp-lem:completeness}
Let $\xi$ be a node restricted-master LP solution with
$\sum_{\pi\in\Pi_d(u)}\xi_{d,\pi}=1$ and $\xi\ge0$ for every $d$, and suppose
$X_{d,t,e}(\xi)\in\{0,1\}$ for all $d,t,e$. Then, via~\eqref{bp-eq:X-is-x},
$x^{dt}_{ij}:=X_{d,t,e}$ is an integral routing that is feasible for the
node's rank-$k$ compact problem and whose rank-$k$ objective equals the node
LP objective of $\xi$. Moreover every $\pi\in\Pi_d(u)$ with $\xi_{d,\pi}>0$
has signature $\sigma_d(\pi)=x^{d\cdot}$ and realises this routing for
demand~$d$.
\end{lemma}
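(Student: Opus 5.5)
The plan is to prove the \emph{moreover} clause first and then derive feasibility and the objective identity from it. The key observation is that once the signatures are pinned down, the fractional combination $\xi$ is equivalent to the integral routing.

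\emph{Signatures of the support.} Fix $d$, $t$, $e$. Since $\xi\ge0$, $\sum_\pi\xi_{d,\pi}=1$ and $\delta\in\{0,1\}$, the quantity $X_{d,t,e}(\xi)=\sum_\pi\delta(p^{(t)}_\pi,e)\xi_{d,\pi}$ is a convex combination of $0/1$ values.
\begin{itemize}
\item If $X_{d,t,e}=0$, then every $\pi$ with $\xi_{d,\pi}>0$ has $\delta(p^{(t)}_\pi,e)=0$.
\item If $X_{d,t,e}=1$, then $\sum_\pi(1-\delta)\xi_{d,\pi}=0$, so every supported $\pi$ has $\delta(p^{(t)}_\pi,e)=1$.
\end{itemize}
This is the same argument as in \cref{bp-lem:x-compat}. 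Ranging over all $(t,e)$ gives $\sigma_d(\pi)=x^{d\cdot}$ for every supported $\pi$.

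\emph{The integral routing.} Pick any supported $\pi_d$ for each demand. It is a genuine trajectory in $\Pi_d(u)\subseteq\Pi_d$, so each of its periods is a trail $s_d$--$t_d$ segment path with at most $\texttt{maxSeg}$ segments. Its indicator vector equals $x^{d\cdot}$, so $x$ satisfies \eqref{ctn:flow}--\eqref{ctn:segments}. It also respects every node fixing \eqref{bp-eq:node-fix}, because all of $\Pi_d(u)$ does.

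\emph{Load and budget coefficients.} By \cref{bp-lem:coeff-equiv}, all supported columns of demand $d$ share the coefficients $\Gamma_{d,a,t}(\pi_d)$ and $\beta_{d,t}(\pi_d)$. Hence
\[
\sum_\pi\Gamma_{d,a,t}(\pi)\xi_{d,\pi}=\Gamma_{d,a,t}(\pi_d),
\qquad
\sum_\pi\beta_{d,t}(\pi)\xi_{d,\pi}=\beta_{d,t}(\pi_d).
\]
The $\lambda$ assembled by $(\mathrm L)$ is therefore exactly $\lambda(\cdot;P)$, where $P$ is the routing $x$. The budget row $(\mathrm B)$ becomes the compact budget constraint \eqref{ctn:budget_3}. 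This is the same argument as in \cref{prop:cg-valid}, except that it now applies to a fractional $\xi$ whose support is signature-uniform.

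\emph{Rank block and objective.} The rank block $(\mathcal P^{\mathsf X}_k)$ touches only $\lambda$ and the auxiliary variables, never $\xi$. So the auxiliary values $(z,y)$ or $(z,u)$ of the LP solution, together with the identical $\lambda$, stay feasible when paired with the integral $x$. The rank-$k$ objective value is therefore unchanged.

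One point needs care for \textsc{Stela}. The node LP relaxes $y\in[0,1]$, while the compact rank-$k$ problem has binary $y$. I plan to read ``the node's rank-$k$ compact problem'' as the node relaxation in which only the routing is compact-integral and the rank-block auxiliaries are carried over. Under that reading the objective equality is immediate. The alternative is to state the lemma with the auxiliary integrality handled separately, as in the paper's treatment where $y$-integrality is recovered in the MILP closing the rank. This interpretation is the main obstacle. The other steps are the convex-combination argument and the substitution into $(\mathrm L)$ and $(\mathrm B)$, and both are routine.
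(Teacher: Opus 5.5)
Your proof is correct and follows the paper's argument step for step. Signature uniformity of the support comes from viewing $X_{d,t,e}$ as a convex combination of $0/1$ values, and \cref{bp-lem:coeff-equiv} then transfers the activities of $(\mathrm L)$, $(\mathrm B)$ and the rank block to the single representative $\pi_d$. Your \textsc{Stela} caveat is justified and matches what the paper proves: its proof concludes only feasibility for $\mathrm{RMP}^{\mathsf X}_k$ at the node with the LP's own relaxed auxiliaries. With binary $y$ the objective identity can genuinely fail, since the LP threshold $z$ may lie strictly below $\lambda^{\downarrow}_k$ of the integral routing, so this reading is a real restriction rather than a presentational choice.
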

 
\begin{proof} \[ \begin{array}{:c} \begin{minipage}{0.97\textwidth}
Fix $d$ and let $S=\{\pi:\xi_{d,\pi}>0\}\neq\emptyset$. For fixed $t,e$,
$X_{d,t,e}=\sum_{\pi\in S}\delta(p^{(t)}_\pi,e)\,\xi_{d,\pi}$ is a convex
combination with strictly positive weights (summing to $1$) of values in
$\{0,1\}$; it lies in $\{0,1\}$ iff all those values are equal. By hypothesis
$X_{d,t,e}\in\{0,1\}$, so $\delta(p^{(t)}_\pi,e)=X_{d,t,e}$ for all
$\pi\in S$. As $t,e$ range, every $\pi\in S$ shares the full signature
$\sigma_d(\pi)=x^{d\cdot}$; in particular a single trajectory of $S$ realises
$x^{d\cdot}$ across all periods.
 
\emph{Feasibility.} Each $\pi\in S$ being a feasible trajectory, its
period-$t$ path is a trail $s_d\!\to\!t_d$ with incidence vector
$(x^{dt}_{ij})_{ij}$; hence $x^{d\cdot}$ satisfies flow conservation
\eqref{ctn:flow}, the segment bound \eqref{ctn:segments}, and; the whole
signature being that of one $\pi\in S$; the budget coupling
\eqref{ctn:budget_3} across periods. The node fixings are respected by every
$\pi\in S$, hence by $x^{d\cdot}$. Integrality of $x$ is the hypothesis.
 
\emph{Objective.} By \cref{bp-lem:coeff-equiv} every $\pi\in S$ contributes
identical $\Gamma,\beta$; since $\sum_{\pi\in S}\xi_{d,\pi}=1$, the
$\xi$-activity of every load-assembly row $(\mathrm L)$, budget row
$(\mathrm B)$ and rank-block row equals its activity at the single
representative $x^{d\cdot}$. As $d$ was arbitrary, the assembled routing $x$
reproduces every row activity of $\xi$; in particular it is feasible for
$\mathrm{RMP}^{\mathsf X}_k$ at the node and attains the same rank-$k$
objective.
\end{minipage} \end{array} \]\end{proof}

\subsubsection{Finiteness}
\label{bp-sec:finiteness}
 
\begin{lemma}
\label{bp-lem:finite}
Branch-and-bound with segment-usage branching, applied to
$\mathrm{RMP}^{\mathsf X}_k$, explores a tree of depth at most
$|D|\cdot h\cdot|V|^2$, and at every leaf either the node is infeasible or
all quantities $X_{d,t,e}$ are integral, in which case
\cref{bp-lem:completeness} induces;  via~\eqref{bp-eq:X-is-x}; an
integral compact routing $x^{dt}_{ij}=X_{d,t,e}$ of the same objective value.
\end{lemma}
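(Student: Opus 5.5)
The argument has two parts. The first bounds the depth by counting branching objects. The second shows that every leaf is either infeasible or integral in $X$, and then appeals to \cref{bp-lem:completeness}.

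\emph{Depth bound.} A branching object is a triple $(d,t,e)$ with $d\in D$, $t\in T$ and $e=(i,j)$, $i\neq j$. There are at most $|D|\cdot h\cdot|V|(|V|-1)\le|D|\cdot h\cdot|V|^2$ such triples. By \cref{bp-lem:x-compat}, branching on $(d,t,e)$ fixes $x^t_e$ in the pricing of demand $d$, so it fixes $X_{d,t,e}$ at every descendant. At a descendant node every admissible trajectory in $\Pi_d(u')$ has $\delta(p^{(t)}_\pi,e)$ equal to the fixed value. Since $\xi\ge0$ and $\sum_\pi\xi_{d,\pi}=1$, the quantity $X_{d,t,e}$ then equals that constant and can never be fractional again. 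The branching rule only selects triples with fractional $X$, so no triple is branched on twice along a root-to-leaf path. The path length is therefore bounded by the number of triples. In particular the tree is finite, since the branching is binary and the depth is bounded.

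\emph{Leaf characterisation.} A node is a leaf exactly when it is not branched further. This happens for one of two reasons. The first is that the node LP over $\Pi_d(u)$, solved to optimality by exact pricing (\cref{cor:lpk} applied with the node-restricted sets, which by \cref{bp-lem:x-compat} preserve the EP structure), is infeasible; this includes the case $\Pi_d(u)=\varnothing$ for some $d$. The second is that no $X_{d,t,e}$ is fractional. A third possibility must be ruled out: a node that is feasible and still has some fractional $X$, yet has no triple left to branch on. Suppose $X_{d,t,e}$ is fractional at such a node. By the argument in the depth bound, $(d,t,e)$ has not been fixed on the path, so it is available for branching and the node is not a leaf. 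Hence every feasible leaf has all $X_{d,t,e}\in\{0,1\}$. In that case the hypotheses of \cref{bp-lem:completeness} hold, namely the selection rows and $\xi\ge0$. That lemma then gives the integral compact routing $x^{dt}_{ij}=X_{d,t,e}$, feasible for the node's rank-$k$ problem, with the same objective value. Bound-based pruning only removes nodes and does not affect the claim.

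The main subtlety is the claim that a fixed triple stays integral at all descendants. This needs the fixing to be enforced in the pricing as well as in the master, so that newly generated columns cannot reintroduce fractional usage. That enforcement is exactly what \cref{bp-lem:x-compat} and the fixings \eqref{bp-eq:node-fix} provide, so I would state it explicitly as the invariant carried down the tree.
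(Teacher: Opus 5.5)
Your proof is correct and follows the same route as the paper's. Both count the at most $|D|\cdot h\cdot|V|^2$ branching triples, observe that none is branched on twice along a root-to-leaf path, and note that a node is branched exactly when some $X_{d,t,e}$ is fractional, so every feasible leaf is $X$-integral and the completeness lemma applies. The one addition is your explicit justification of the ``never branched on again'' step, which the paper only asserts: the fixing restricts $\Pi_d(u')$ and is enforced in pricing, so $X_{d,t,e}$ is constant at every descendant.
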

 
\begin{proof} \[ \begin{array}{:c} \begin{minipage}{0.97\textwidth}
Each branching decision fixes one of the $|D|\cdot h\cdot|V|^2$ quantities
$X_{d,t,e}=x^{dt}_{ij}$ to $\{0,1\}$ and is never branched on again along the
same root-to-node path, giving the depth bound. A node admits a branching
exactly when some $X_{d,t,e}$ is fractional in its LP solution; so at a leaf
either the node LP is infeasible, or every $X_{d,t,e}\in\{0,1\}$ and
\cref{bp-lem:completeness} induces an integral routing $x$, feasible at the
node with objective equal to the node LP value.
\end{minipage} \end{array} \]\end{proof}
 
Unlike column branching, whose finiteness relies on exhausting the
exponential set $\Pi_d$, \cref{bp-lem:finite} bounds the depth by a
quantity of the same order as the compact model's variable count: large, but
polynomial and independent of $|\Pi_d|$.
 
\subsection{Branch-and-price at one rank}
\label{bp-sec:bp-rank}
 
\begin{algorithm}[H]
\caption{Branch-and-price for $\mathrm{RMP}^{\mathsf X}_k$}
\label{bp-alg:bp-rank}
\begin{algorithmic}[1]
\Require formulation $\mathsf X\in\{\textsc{Stela},\textsc{Carla}\}$,
inherited values $\bar\lambda_{1},\dots,\bar\lambda_{k-1}$, global column pools
$\Pi_d^{\mathrm{pool}}$.
\Ensure $\bar\lambda_{k}$ and an optimal rank-$k$ integral routing.
\State Create root $u_0$ with $\Pi_d(u_0)=\Pi_d$ for all $d$;
  \quad $L\gets\{u_0\}$; \quad $UB\gets+\infty$.
\While{$L\neq\emptyset$}
  \State Select and remove a node $u$ from $L$.
  \Repeat \Comment{column generation at node $u$}
    \State Solve the LP relaxation of $\mathrm{RMP}^{\mathsf X}_k$
           over $\Pi_d^{\mathrm{pool}}\cap\Pi_d(u)$.
    \State Extract duals $\alpha^{\mathsf X}_d$, $\psi^{\mathsf X}_t$,
           $\mu^{\mathsf X}_k(a,t)$, $\nu^{\mathsf X}_j(a,t)$ for $j<k$;
           form $\bar\mu^{\mathsf X}_k$, $\theta^{\mathsf X}_{d,k}$ and
           $\gamma^{\mathsf X}_t$.
    \ForAll{demands $d$ not fixed at $u$}
      \State Solve the pricing MIP (EP) with the node fixings
             $\to\operatorname{val}(\mathrm{SP}^{\mathsf X}_{d,k}(u))$.
      \If{$\operatorname{val}(\mathrm{SP}^{\mathsf X}_{d,k}(u))
           <\alpha^{\mathsf X}_d$}
        \State Add the trajectory to $\Pi_d^{\mathrm{pool}}$.
      \EndIf
    \EndFor
  \Until{$\operatorname{val}(\mathrm{SP}^{\mathsf X}_{d,k}(u))
         \ge\alpha^{\mathsf X}_d$ for every unfixed demand $d$}
  \State $LB(u)\gets$ the certified node LP value.
  \If{$LB(u)\ge UB$} \Comment{fathom by bound}
    \State \textbf{continue}
  \ElsIf{$X_{d,t,e}(\xi^{\star})\in\{0,1\}$ for all $d,t,e$}
    \State $x^{dt}_{ij}\gets X_{d,t,e}(\xi^{\star})$;
           \Comment{integral routing by \cref{bp-lem:completeness}}
    \State $UB\gets LB(u)$; record $x$ as incumbent; \textbf{continue}.
  \Else
    \State Pick $(d,t,e)$ with $X_{d,t,e}(\xi^{\star})=x^{dt}_{ij}$ fractional.
    \State Add to $L$ the children with $x^t_e=0$ and $x^t_e=1$ in the
           pricing of $d$.
  \EndIf
\EndWhile
\State \Return the incumbent $x$ and its value $\bar\lambda_{k}$.
\end{algorithmic}
\end{algorithm}
 
\begin{lemma}
\label{bp-lem:cg-finite}
For a fixed node $u$ and rank $k$, the column-generation loop of lines
4--12 of \cref{bp-alg:bp-rank} terminates after finitely many iterations.
\end{lemma}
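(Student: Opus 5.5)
The plan is to combine two facts: the node-restricted trajectory set is finite, and every non-terminal iteration strictly enlarges the node pool. The argument is the one behind \cref{thm:conv}, adapted to a node $u$.

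First, I will note that $\Pi_d(u)\subseteq\Pi_d$, and that $\Pi_d$ is finite because $V$, $D$, $h$ and $\texttt{maxSeg}$ are finite and each period path is a trail of at most $\texttt{maxSeg}$ segments. Hence the node column universe $\bigcup_d \Pi_d(u)$ is finite.

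Second, I will show that each iteration of lines 4--12 that does not stop adds a column that is genuinely new for the node LP. Suppose the loop has not stopped. Then some unfixed demand $d$ has $\operatorname{val}(\mathrm{SP}^{\mathsf X}_{d,k}(u))<\alpha^{\mathsf X}_d$.
\begin{itemize}
\item By \cref{thm:exact} and \cref{bp-lem:x-compat}, (EP) with the node fixings returns an optimal trajectory $\pi^\star\in\Pi_d(u)$.
\item By \eqref{bp-eq:rc}, its reduced cost is negative at the current optimal duals.
\item Every column already in $\Pi_d^{\mathrm{pool}}\cap\Pi_d(u)$ has nonnegative reduced cost, by dual feasibility \eqref{eq:cg-dual-stela-xi}, \eqref{eq:cg-dual-carla-xi} and their rank-$k$ analogues.
\end{itemize}
So $\pi^\star\notin\Pi_d^{\mathrm{pool}}\cap\Pi_d(u)$, and the node pool grows by at least one element of the finite set $\Pi_d(u)$.

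Third, I will conclude: since the node pool is monotone (\cref{bp-lem:pool}) and bounded by $\sum_d|\Pi_d(u)|$, the number of non-terminal iterations is at most this quantity. The loop therefore terminates, and at termination the stopping test certifies the node LP exactly as in \cref{cor:lpk}.

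Two technical points need care. The first is that the duals must exist at every iteration, that is, the node LP must be feasible and bounded. I will handle this as in \cref{lem:feasible-k}: the value is bounded below by $0$, and if the node LP is infeasible the node is fathomed before pricing, so this case is excluded or handled separately. The second is demands fixed at $u$, which are not priced. For those, the node set $\Pi_d(u)$ is already determined, so they contribute no new columns and do not affect termination.

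The main obstacle is the second step. A repeated column must be ruled out even under degeneracy or alternative optimal duals, and this rests entirely on using exact optimal duals, where every pooled column has reduced cost $\ge 0$. With a positive tolerance $\varepsilon$ the same argument still applies, since the test $<-\varepsilon$ is stronger than $<0$.
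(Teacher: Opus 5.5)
Your proposal is correct and follows the paper's own proof. The set $\bigcup_d\Pi_d(u)$ is finite, and a trajectory priced out with strictly negative reduced cost cannot already lie in $\Pi_d^{\mathrm{pool}}\cap\Pi_d(u)$, because every pooled column has nonnegative reduced cost at an optimal dual; so each non-terminal iteration enlarges the node pool, and there are at most $\sum_d|\Pi_d(u)|$ of them.

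Two minor points. First, the pool grows monotonically simply because the loop only adds columns; \cref{bp-lem:pool} concerns reuse across ranks, not this. Second, an infeasible restricted LP over $\Pi_d^{\mathrm{pool}}\cap\Pi_d(u)$ does not imply that the node itself is infeasible. Fathoming in that case would therefore affect the correctness of the branch-and-price (it is normally handled by Farkas pricing or artificial columns), though not the termination claim proved here.
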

 
\begin{proof} \[ \begin{array}{:c} \begin{minipage}{0.97\textwidth}
At an LP optimum every pooled column has nonnegative reduced cost, so a
column added at line~10, having strictly negative reduced cost, is new. Each
non-terminating iteration adds at least one element of the finite set
$\bigcup_d\Pi_d(u)$, bounding the count by $\sum_d|\Pi_d(u)|$.
\end{minipage} \end{array} \]\end{proof}
 
\begin{theorem}
\label{bp-thm:rank-exact}
Assume every pricing problem invoked by \cref{bp-alg:bp-rank} is solved to
global optimality. Then \cref{bp-alg:bp-rank} terminates and returns an
integral routing $x^{dt}_{ij}$, feasible for the rank-$k$ compact problem,
whose objective value $\bar\lambda_{k}$ is optimal over that problem.
\end{theorem}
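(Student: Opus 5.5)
The plan is the standard branch-and-bound correctness argument, instantiated with the lemmas already established for segment-usage branching. There are three things to prove: termination, feasibility of the returned routing, and optimality of its value.

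\emph{Termination.} By \cref{bp-lem:cg-finite}, every node's column-generation loop stops after finitely many iterations. By \cref{bp-lem:finite}, the tree has depth at most $|D|\cdot h\cdot|V|^2$. Each branching fixes one previously unfixed binary quantity $X_{d,t,e}=x^{dt}_{ij}$, so the tree is binary and has finitely many nodes. Hence the while loop processes finitely many nodes.

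\emph{Partition invariant.} I would prove by induction on the tree that the integer feasible solutions of the rank-$k$ compact problem are partitioned among the leaves and open nodes. The base case is the root, where $\Pi_d(u_0)=\Pi_d$. For the inductive step, \cref{bp-lem:x-valid} shows that the two children split the parent's integer solutions exactly. \cref{bp-lem:x-compat} shows that the split is implemented faithfully in pricing by the fixings~\eqref{bp-eq:node-fix}. The link to the compact problem goes through~\eqref{bp-eq:X-is-x}: an integer $\xi$ corresponds to an integer compact routing $x$, and the converse holds by taking the trajectory formed by the routing's paths.

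\emph{Node bounds.} At each node, pricing is exact and the loop stops only when no unfixed demand prices out. Applying \cref{cor:lpk} to the node-restricted master, $LB(u)$ is then the exact LP optimum over all of $\Pi_d(u)$, not just over the pool. The same conclusion relies on \cref{bp-lem:pool}, so that pooled columns from other nodes and ranks are valid; restricting the pool to $\Pi_d(u)$ keeps the fixings respected. This makes $LB(u)$ a valid lower bound on every integer solution in the subtree, so fathoming by bound never discards a strictly better solution.

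\emph{Incumbents and conclusion.} At an integral node, \cref{bp-lem:completeness} turns $\xi^\star$ into a feasible integral routing $x$ whose rank-$k$ objective equals $LB(u)$. Therefore $UB$ is always attained by a recorded feasible routing. At termination every integer solution lay in a subtree that was either pruned with $LB\ge UB$, infeasible, or integral with value at least $UB$. Combining these cases, $UB$ equals the minimum of the node's rank-$k$ compact objective over all integer solutions. Finally, via \cref{thm:stela-correctness} or \cref{thm:carla}, together with the fact that the integer master projects onto the compact model (\cref{prop:cg-valid}), this minimum is $\bar\lambda_k$ for \textsc{Stela}. For \textsc{Carla} it is $\bar\Theta_k$, from which $\bar\lambda_k$ is recovered.

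The step I expect to be the main obstacle is the correspondence between integer master solutions and compact solutions at the rank-block level. For \textsc{Stela}, the exceedance variables $y$ are binary in the rank-$k$ problem but relaxed in the LP. Integrality of $X$ alone does not make $y$ integral, so $LB(u)$ at an $X$-integral leaf may be below the true integer value of that routing. I would handle this in one of two ways. The first is to argue that for a fixed integral $x$ one can re-solve over $(z,y)$ and update $UB$ with the re-evaluated $\lambda^{\downarrow}_k(x)$, while still using $LB(u)$ only as a bound. The second is to also branch on fractional $y$, which preserves the pricing structure because $y$ does not appear in any column. For \textsc{Carla} the auxiliaries are continuous, so the issue disappears and the argument closes directly.
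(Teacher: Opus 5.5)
Your proof follows the paper's own structure step for step. Termination comes from \cref{bp-lem:finite} and \cref{bp-lem:cg-finite}, a valid $LB(u)$ from exact node pricing, the partition from \cref{bp-lem:x-valid} and \cref{bp-lem:x-compat}, and incumbents from \cref{bp-lem:completeness}.

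Your closing caveat, however, exposes a genuine gap in the paper's proof. The paper fathoms every $X$-integral leaf with $UB\gets LB(u)$, relying on \cref{bp-lem:completeness}, that is, on the claim that the induced routing is feasible for the compact rank-$k$ problem with value $LB(u)$. That claim holds for \textsc{Carla}, whose only integer variables are the routing variables. It fails for \textsc{Stela}, whose master relaxes every $y^{(j)}$ to $[0,1]$.
\begin{itemize}
\item Current rank: take $\bar\lambda_1=1$, $k=2$ and an $X$-integral point with loads $(1,0.6,0.6)$. The choice $z=0.4$, $y^{(2)}=(0.6,0.2,0.2)$ is LP-feasible, so $LB(u)=0.4<0.6=\lambda^{\downarrow}_2$.
\item Inherited rows: take $\bar\lambda=(1,0.8,0.5)$ and loads $(1,0.8,0.7,0,\dots)$. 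These satisfy the relaxed $(\mathrm F_3)$--$(\mathrm K_3)$ with $y^{(3)}=(1,0.6,0.4,0,\dots)$, although $\lambda^{\downarrow}_3=0.7>\bar\lambda_3$. So an $X$-integral leaf need not even lie in $\mathcal F_{k-1}$.
\end{itemize}

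Of your two repairs, only the second closes the argument.

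\textbf{First repair (re-evaluate $x$).} This gives a legitimate upper bound, once you also check $x\in\mathcal F_{k-1}$. But the node is then left with $LB(u)<UB$ and no fractional $X$ to branch on, so it can be neither fathomed nor split. Nor can you simply drop it: another routing in the same node may have a larger LP value but a smaller true $\lambda^{\downarrow}_k$. For example, at $k=2$ the loads $(1,0.6,0,0)$ and $(1,0.5,0.5,0.5)$ have LP values $0.3<0.375$ but true values $0.6>0.5$.

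\textbf{Second repair (branch on fractional $y^{(j)}_{a,t}$).} This works, provided you branch on all $2\le j\le k$, not only on $j=k$.
\begin{itemize}
\item These variables appear in no column, so the new bound rows change neither the stationarity relation $\eta=-\bar\mu\ge0$ nor the reduced cost.
\item The tree stays finite, with only $N(k-1)$ extra binaries.
\item At a leaf where both $X$ and $y$ are integral, the LP point is feasible for $\mathcal M^{\textsc{Stela}}_k$. Its value is therefore the leaf's integer optimum, and \cref{thm:stela-correctness} identifies the overall minimum with $\bar\lambda_k$.
\end{itemize}
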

 
\begin{proof} \[ \begin{array}{:c} \begin{minipage}{0.97\textwidth}
\emph{Termination.} The tree is finite by \cref{bp-lem:finite} and each
node performs finitely many column-generation iterations by
\cref{bp-lem:cg-finite}.
 
\emph{Valid node bound.} On exit from the loop,
$\operatorname{val}(\mathrm{SP}^{\mathsf X}_{d,k}(u))\ge
\alpha^{\mathsf X}_d$ for every unfixed $d$, so no trajectory in $\Pi_d(u)$
has negative reduced cost and the restricted LP optimum is optimal for the
full node LP. This step uses the global-optimality hypothesis on pricing: a
suboptimal pricing solve could report
$\operatorname{val}(\mathrm{SP}^{\mathsf X}_{d,k}(u))\ge
\alpha^{\mathsf X}_d$ while a negative-reduced-cost column exists, and
$LB(u)$ would not be valid. As the node LP relaxes the node integer program,
$LB(u)$ lower-bounds every integer routing feasible at $u$.
 
\emph{Fathoming.} A node is discarded at line~15 only when $LB(u)\ge UB$
(no routing there improves the incumbent), and at lines~17--19 only when all
$X_{d,t,e}$ are integral, in which case \cref{bp-lem:completeness} induces an
integral routing of value $LB(u)$ that is optimal within the node.
 
\emph{Partition.} By \cref{bp-lem:x-valid} the children partition the node's
integer solutions in the variables $X_{d,t,e}=x^{dt}_{ij}$
(\eqref{bp-eq:X-is-x}), and by \cref{bp-lem:x-compat} each restriction is
represented exactly by one pricing fixing, so no feasible child routing is
unreachable. With a valid node bound, a complete partition, and an optimal
integral routing captured at every $X$-integral leaf, standard
branch-and-bound correctness applies: no optimal routing is discarded, and at
termination the incumbent is a global optimum of the rank-$k$ compact problem.
\end{minipage} \end{array} \]\end{proof}
 
\subsection{Sequential lexicographic branch-and-price}
\label{bp-sec:bp-lex}
 
\begin{algorithm}[H]
\caption{Lexicographic branch-and-price under
$\mathrm{RMP}^{\mathsf X}_k$}
\label{bp-alg:bp-lex}
\begin{algorithmic}[1]
\Require formulation $\mathsf X\in\{\textsc{Stela},\textsc{Carla}\}$,
number of ranks $N=|\calA_T|$.
\State Initialise the column pools $\Pi_d^{\mathrm{pool}}$ (e.g.\ from a
       \textsc{TCG-Diag} run).
\For{$k=1,\dots,N$}
  \State Build $\mathrm{RMP}^{\mathsf X}_k$ with inherited
         $\bar\lambda_{1},\dots,\bar\lambda_{k-1}$ and the rank block
         $(\mathcal P^{\mathsf X}_k)$.
  \State Solve it by \cref{bp-alg:bp-rank}, obtaining $\bar\lambda_{k}$ and
         optimal routing $P_k$.
  \State Retain all generated trajectories in the global pools.
    \Comment{valid by \cref{bp-lem:pool}}
\EndFor
\State \Return $P_N$ and the sorted load vector
       $\lambda^{\downarrow}(P_N)$.
\end{algorithmic}
\end{algorithm}

\begin{theorem}
\label{bp-thm:lex-exact}
Assume that for every $k$ the rank-$k$ problem is solved to global optimality
by \cref{bp-alg:bp-rank}. Then \cref{bp-alg:bp-lex} returns a routing whose
sorted load vector is lexicographically minimum over all feasible routings of
the (trail-restricted) T-ASR, and $(\bar\lambda_{1},\dots,\bar\lambda_{N})$
equals $\lambda^{\downarrow}(P_N)$.
\end{theorem}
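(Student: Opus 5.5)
The plan is an induction on the rank $k$, proving that after iteration $k$ of \cref{bp-alg:bp-lex} the returned routing $P_k$ lies in $\mathcal{F}_k$ and the recorded value $\bar\lambda_k$ coincides with the value defined in \eqref{eq:lex-recursion1}. Everything will be read in the original variables $x^{dt}_{ij}$ through \eqref{bp-eq:X-is-x}.

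\emph{Base case and bridge to the compact problem.} By \cref{bp-thm:rank-exact}, iteration $k$ returns an integral routing $x$ that is globally optimal for the \emph{rank-$k$ compact problem}, meaning the problem whose rank block is $(\mathcal P^{\mathsf X}_k)$ with the inherited constants. The first step is to identify this problem with $\mathcal{M}_k^{\mathsf X}$ of \cref{sec:milp}. For \textsc{Stela}, restricted to integral selections, $\mathrm{RMP}^{\textsc{Stela}}_k$ has exactly the rows of \eqref{mod:stela}:
\begin{itemize}
\item the core reproduces $\mathcal{F}$, by \cref{prop:cg-valid} together with the signature argument of \cref{bp-lem:completeness};
\item the rows $(\mathrm G)$, $(\mathrm F_j)$, $(\mathrm K_j)$, $(\mathrm C_k)$, $(\mathrm K_k)$ and $(\mathrm Z)$ are \eqref{stela:gb}--\eqref{stela:mon}, with $y$ binary at integer solutions.
\end{itemize}
\cref{thm:stela-correctness} then gives that its optimal value equals $\min_{\mathcal{F}_{k-1}}\lambda^{\downarrow}_k$, and that every optimal routing lies in $\mathcal{F}_k$. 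For \textsc{Carla}, $\mathrm{RMP}^{\textsc{Carla}}_k$ is \eqref{eq:carla}, except for the box \eqref{eq:carla-box}. That box is not needed: the soundness part of the proof of \cref{thm:carla} does not use it, and its completeness part shows the box is satisfied anyway. So \cref{thm:carla} together with \cref{prop:same-sequence} gives $P_k\in\mathcal{F}_k$ and value $\bar\Theta_k$. In both cases the same conclusion holds: $P_k\in\mathcal{F}_k$.

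\emph{Induction.} The inherited constants used to build rank $k$ are exactly $\bar\lambda_1,\dots,\bar\lambda_{k-1}$, by the inductive hypothesis. The cited correctness theorems require precisely this, namely that the frozen values are the true chain values. Pool reuse does not affect the argument, because branch-and-price prices over all of $\Pi_d(u)$ and pools only seed it (\cref{bp-lem:pool}). After $k=N$ we obtain $P_N\in\mathcal{F}_N$, and $\mathcal{F}_N$ is the set of lexicographic minimax optima by the characterisation following \cref{fig:lex-decomposition} (and by \cref{cor:lexopt} for \textsc{Carla}). Finally, since $P_N\in\mathcal{F}_N\subseteq\mathcal{F}_k$ for every $k$, we get $\lambda^{\downarrow}_k(P_N)=\bar\lambda_k$ for all $k$, which is the final claim. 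For \textsc{Carla} the recorded $\bar\lambda_k$ is either recovered by evaluation or equals $\bar\Theta_k-\bar\Theta_{k-1}$, and both give the same value.

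\emph{Main obstacle.} The delicate step is the identification of the integer trajectory master with the compact MILP of \cref{sec:milp}. Three points must be checked.
\begin{itemize}
\item The auxiliary variables $y$ and $u$ are not branched on, while \cref{bp-alg:bp-rank} branches only on $X$. I would handle this by treating the rank block as integer in the node problems, or by showing that at an $X$-integral leaf the loads are fixed, so the optimal auxiliaries solve the \textsc{Stela} or \textsc{Carla} characterisation for that fixed load vector (\cref{prop:stela-threshold}, \cref{lem:ot}).
\item For \textsc{Stela}, the LP relaxation of $y$ must be argued away. The closing rank-$k$ MILP keeps $y$ binary, so I would read ``solved to global optimality'' in the hypothesis as optimality of that mixed-integer rank problem.
\item Everything must be stated over the trail-restricted T-ASR, as the theorem says, since $\mathcal{F}$ itself is defined with trails.
\end{itemize}
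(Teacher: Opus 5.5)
Your proof is correct. It follows the paper's inductive skeleton but closes the inductive step by a different route. The paper re-derives inline that the inherited bounds cut out exactly $\mathcal{F}_{k-1}$, and appeals to the generic sequential scheme of \cite{abernethy2024}. You instead match the integer trajectory master row by row with $\mathcal{M}_k^{\mathsf X}$ and import \cref{thm:stela-correctness}, \cref{thm:carla} and \cref{prop:same-sequence}.

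The paper's version is shorter and uniform in $\mathsf X$. Yours is more precise exactly where the paper's proof is loose, in three places, all of which you account for:
\begin{itemize}
\item For \textsc{Carla} the inherited rows encode the cumulative bound $j\bar\lambda_j+\sum_{(a,t)}(\lambda(a,t)-\bar\lambda_j)_+\le\bar\Theta_j$, not literally $\lambda^{\downarrow}_j(P)\le\bar\lambda_j$.
\item The box \eqref{eq:carla-box} is missing from $\mathrm{RMP}^{\textsc{Carla}}_k$.
\item The rank-$k$ optimal value for \textsc{Carla} is $\bar\Theta_k$, not $\bar\lambda_k$.
\end{itemize}

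One correction to your discussion of the main obstacle. Your fallback says that at an $X$-integral leaf the optimal auxiliaries reproduce the characterisation for the fixed load vector. This holds for \textsc{Carla}, because \cref{lem:ot} is an LP identity in $(z,u)$. It fails for \textsc{Stela} with $y\in[0,1]$. For example, at $k=2$ with all $N$ loads equal to $\bar\lambda_1$, the relaxed rows $(\mathrm C_2)$ and $(\mathrm K_2)$ admit $z=\bar\lambda_1(1-1/N)<\lambda^{\downarrow}_2$. Fractional $y^{(j)}$ can likewise admit routings outside $\mathcal{F}_j$.

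So for \textsc{Stela}, reading the hypothesis as global optimality of the binary-$y$ rank problem is forced, not optional. The paper's proof makes the same reading tacitly when it writes $\bar\lambda_k=\min_{P\in\mathcal{F}_{k-1}}\lambda^{\downarrow}_k(P)$.

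To make that hypothesis achievable by \cref{bp-alg:bp-rank}, also branch on fractional $y^{(j)}_{a,t}$. Since $y$ occurs in no column (\cref{tab:cg-incidence-stela-k}), these branches only change bounds in the master and leave the pricing problem (EP) unchanged.
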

 
\begin{proof} \[ \begin{array}{:c} \begin{minipage}{0.97\textwidth}
Let $\mathcal{F}$ be the feasible routings of the compact problem (in the
variables $x^{dt}_{ij}$) and $\mathcal{F}_k\subseteq\mathcal{F}$ those
attaining the lexicographic optimum on the first $k$ components. This is the
sequential-lexicographic minimax scheme, whose generic correctness is that of
\cite[Thm.~3]{abernethy2024}; we instantiate it on the rank blocks
$(\mathcal P^{\mathsf X}_k)$ and verify each step.
 
\emph{Base case.} Rank~$1$: $\mathrm{RMP}^{\mathsf X}_1$ minimises the MLU
$\max_{(a,t)}\lambda(a,t)=\lambda^{\downarrow}_1$ over $\mathcal{F}$;
solved globally by \cref{bp-thm:rank-exact},
$\bar\lambda_{1}=\min_{P\in\mathcal{F}}\lambda^{\downarrow}_1(P)$ and the set
of minimisers is $\mathcal{F}_1$.
 
\emph{Step.} Suppose $\bar\lambda_{1},\dots,\bar\lambda_{k-1}$ are the first
$k{-}1$ lexicographic components and $(\mathcal P^{\mathsf X}_{k-1})$
characterises $\mathcal{F}_{k-1}$. The rank block
$(\mathcal P^{\mathsf X}_k)$ inherits the previous levels as the one-sided
bounds $\lambda^{\downarrow}_j(P)\le\bar\lambda_j$ for $j<k$ (never as
equalities) and minimises the $k$-th component: for \textsc{Stela}, the
threshold characterisation
$\lambda^{\downarrow}_k(P)=\min\{z\ge0:|\{(a,t):\lambda(a,t;P)>z\}|\le
k{-}1\}$ is encoded by the current exceedance rows~\eqref{eq:cg-Pk}; for
\textsc{Carla}, the cumulative $\Theta_k=\sum_{r\le k}\lambda^{\downarrow}_r$
is encoded by rows~\eqref{eq:cg-carla-Pk}. Solved globally by
\cref{bp-thm:rank-exact},
$\bar\lambda_{k}=\min_{P\in\mathcal{F}_{k-1}}\lambda^{\downarrow}_k(P)$.
 
\emph{Re-establishing the hypothesis.} We show
$\{P\in\mathcal{F}:\lambda^{\downarrow}_j(P)\le\bar\lambda_j,\ j<k\}
=\mathcal{F}_{k-1}$, so that the one-sided bound at level $k$ cuts out exactly
$\mathcal{F}_k$. Since $\bar\lambda_1=\min_{P\in\mathcal F}
\lambda^{\downarrow}_1(P)$, the bound $\lambda^{\downarrow}_1(P)\le
\bar\lambda_1$ forces equality; inductively,
$\bar\lambda_j=\min\{\lambda^{\downarrow}_j(P):
\lambda^{\downarrow}_i(P)=\bar\lambda_i,\ i<j\}$ forces
$\lambda^{\downarrow}_j(P)=\bar\lambda_j$ for each $j<k$, i.e.\
$P\in\mathcal{F}_{k-1}$; the converse is immediate. Hence
$\mathcal{F}_k=\{P\in\mathcal{F}_{k-1}:\lambda^{\downarrow}_k(P)\le
\bar\lambda_k\}$, which is what $(\mathcal P^{\mathsf X}_k)$ encodes: the
inductive hypothesis holds at level $k$.
 
After rank $N$, $P_N\in\mathcal{F}_N$ attains the lexicographic optimum on
all $N$ components, and $\lambda^{\downarrow}_k(P_N)=\bar\lambda_{k}$ for
every~$k$.
\end{minipage} \end{array} \]\end{proof}

%% file: sections/conclusionanddiscussion.tex
\section{Conclusion}
This report addressed the \textsc{T-ASR} problem posed by the EURO/ROADEF 2026
Challenge: routing each demand through a sequence of segment-routing waypoints,
over a multi-period horizon with scheduled link interventions and a
reconfiguration budget coupling consecutive periods, so as to minimise the vector
of arc--time loads in the leximin sense.

We first cast the problem into a compact formulation and established its
foundational property: once a routing scheme is fixed, the ECMP split coefficients
are constants, so arc loads are linear in the routing variables. On this basis we
developed three compact sequential exact algorithms for the leximin objective,
differing only in how the $k$-th largest load is encoded at each level:
\textsc{Alexa}, which materialises a full permutation matrix; \textsc{Stela}, which
uses threshold-exceedance binaries; and \textsc{Carla}, which uses the
Ogryczak--Tamir cumulative encoding with continuous auxiliaries only. Two
computational campaigns on the \textsc{setAP} benchmark showed that \textsc{Alexa}
does not scale and can be discarded, while \textsc{Stela} and \textsc{Carla} remain
comparable.

We then applied trajectory-based column generation (\textsc{Tcg}) to both surviving
drivers, deriving the master problem and the pricing subproblem for each at rank~$1$
and at a general rank~$k$. The two variants produce routing schemes of comparable
quality: they agree on the largest loads; and almost always on the maximum link
utilisation; and where they differ, they differ early and by little.
\textsc{Stela}-driven \textsc{Tcg} is a marginally safer default at small and medium
sizes, but the gap is small in magnitude and vanishes on the largest instances.
Finally, since column generation was applied only at the root of the integer master,
we developed a branch-and-price extension embedding the pricing within the search
tree, together with the correctness argument relating master columns to the compact
routing variables.

\section{Code Availability}

The full implementation supporting this report is available at
\url{https://github.com/kaoutarbouaachra/stage_3A}.

It includes the exact drivers and the column-generation code, and the experimental pipeline use to produce the figures.